\newtheorem{theorem}{Theorem}[section]
\newtheorem{lemma}[theorem]{Lemma}
\newtheorem{corollary}[theorem]{Corollary}
\numberwithin{equation}{section}
\theoremstyle{definition}
\theoremstyle{remark}
\newtheorem{remark}[theorem]{Remark}
\newcommand{\brac}[1]{\left(#1\right)}
\newcommand{\brab}[1]{\left\{#1\right\}}
\newcommand{\bj}{{\boldsymbol{j}}}
\newcommand{\bk}{{\boldsymbol{k}}}
\newcommand{\bh}{{\boldsymbol{h}}}
\newcommand{\br}{{\boldsymbol{r}}}
\newcommand{\bs}{{\boldsymbol{s}}}
\newcommand{\bt}{{\boldsymbol{t}}}
\newcommand{\bx}{{\boldsymbol{x}}}
\newcommand{\bF}{{\boldsymbol{F}}}
\newcommand{\bH}{{\boldsymbol{H}}}
\newcommand{\bX}{{\boldsymbol{X}}}
\newcommand{\by}{{\boldsymbol{y}}}
\newcommand{\bW}{{\boldsymbol{W}}}
\newcommand{\bone}{{\boldsymbol{1}}}
\newcommand{\bPhi}{{\boldsymbol{\Phi}}}
\newcommand{\bLambda}{{\boldsymbol{\Lambda}}}
\newcommand{\bxi}{{\boldsymbol{\xi}}}
\newcommand{\rd}{{\rm d}} 
\newcommand{\Int}{{\rm Int}}
\def\II{\mathbb I}
\def\ZZd{{\mathbb Z}^d}
\def\IId{{\mathbb I}^d}
\def\ZZ{{\mathbb Z}}
\def\ZZdp{{\mathbb Z}^d_+}
\def\RR{{\mathbb R}}
\def\RRd{{\mathbb R}^d}
\def\NN{{\mathbb N}}
\def\NNd{{\NN}^d}
\def\II{{\mathbb I}}
\def\NN{{\mathbb N}}
\def\RR{{\mathbb R}}
\def\FF{{\mathcal F}}
\def\TT{{\mathbb T}}
\def\IId{{\mathbb I}^d}
\def\NNd{{\mathbb N}^d}
\def\RRd{{\mathbb R}^d}
\def\TTd{{\mathbb T}^d}
\def\ZZd{{\mathbb Z}^d}
\def\Hh{{\mathcal H}}
\def\Pp{{\mathcal P}}
\def\Qq{{\mathcal Q}}
\def\II{{\mathbb I}}
\def\ZZ{{\mathbb Z}}
\def\NN{{\mathbb N}}
\def\RR{{\mathbb R}}
\def\FF{{\mathbb F}}
\def\TT{{\mathbb T}}
\def\IId{{\mathbb I}^d}
\def\NNd{{\mathbb N}^d}
\def\RRd{{\mathbb R}^d}
\def\TTd{{\mathbb T}^d}
\def\ZZdp{{\mathbb N}^d_0}
\def\supp{\rm{supp}}
\def\sign{\rm{sign}}
\def\Wpmix{W^r_p(\IId)}
\def\Wpgamma{W^r_p(\RRd;\mu)}
\def\Wap{W^r_p}
\def\Wa{W^r_2(\RRd,\gamma)}
\def\Lqgamma{L_q(\RRd;\mu)}
\def\Wrp{W^r_p(\TTd)}
\def\Urp{\bW^r_p(\TTd)}
\def\BWpmix{\bW^r_p(\IId)}
\def\BWpgamma{\bW^r_p(\RRd;\mu)}
\newcommand{\norm}[2]{\left\|{#1}\right\|_{#2}}
\title{\sffamily Sparse-grid sampling recovery and numerical integration  of functions having  mixed smoothness}
\author[a]{Dinh D\~ung}
\affil[a]{Information Technology Institute, Vietnam National University, Hanoi
	\protect\\
	144 Xuan Thuy, Cau Giay, Hanoi, Vietnam
	\protect\\
	Email: dinhzung@gmail.com}
\date{\today}
\begin{document}
\maketitle

\begin{abstract}
 We give a short survey of recent results on sparse-grid linear  algorithms of approximate recovery and integration of functions possessing a unweighted or weighted Sobolev mixed smoothness based on their sampled values at a certain finite set. Some of them are extended to more general cases.
	
	\bigskip
	\noindent
	{\bf Keywords and Phrases}:  Sampling recovery; sampling widths;  Numerical  weighted integration; Quadrature; Unweighted and weighted Sobolev spaces of mixed smoothness;  Sparse grids; Hyperbolic crosses in the function domain; Asymptotic order. 
	
	\medskip
	\noindent
	{\bf MSC (2020)}:   41A25; 41A55; 41A46; 65D30; 65D32.
	
\end{abstract}

\tableofcontents

\section{Introduction}
\label{Introduction}
In recent decades, there has been increasing interest in solving approximation and numerical problems that involve
functions depending on a large number $d$ of variables.  Without further assumptions the computation time typically grows exponentially in $d$, and
the problems become intractable even for mild dimensions $d$. 
This is the so-called curse of conditionality coined by Bellman \cite{Bel57B}.  
In sampling recovery and numerical integration, a classical model in attempt to overcome it which has been widely studied,  is to impose certain mixed smoothness or more general anisotropic smoothness conditions on the function to be approximated, and to employ sparse grids for construction of approximation algorithms for sampling recovery or numerical integration. 

In this survey paper, we discuss sparse-grid  linear algorithms of approximate reconstruction and integration of functions possessing a Sobolev mixed smoothness, based on their sampling values at a certain finite set. The problem of sampling recovery  is treated in both unweighted setting for functions defined on a compact subset in $\RRd$, and weighted setting for functions defined on $\RRd$. The optimality of sampling recovery is considered in terms of linear  sampling widths. The problem of numerical integration  and  optimal quadrature  is treated in weighted setting for functions defined on the whole  $\RRd$. We focus our attention on linear sampling algorithms and quadratures based on sparse grids. Some known results are extended to more general cases. We do not consider  the problem of dependence on dimension and postpone discussion on this problem  to other surveys.

We begin with some notions related to linear sampling recovery and numerical integration of functions based on  their values. Let  $\Omega$ be a domain in  $\RRd$,
$X$  a normed space of functions on $\Omega$ and  $f \in X$.
Given $\bX_k= \{\bx_i\}_{i=1}^k\subset \Omega$ and a collection $\bPhi_k:= \brab{\varphi_i}_{i=1}^k$  of $k$ functions in $X$, to approximately recover $f$  from the sampled values $\brab{f(\bx_i)}_{i=1}^k$ we can use a linear sampling algorithm defined by
\begin{equation} \label{R_n(f)}
	S_k(f): = 	S_k(\bX_k,\bPhi_k,f) := \sum_{i=1}^k  f(\bx_i) \varphi_i
\end{equation}
 For convenience, we assume that some points from $\bX_k$ and some functions from $\bPhi_k$ may coincide.
 
If $\bW$ is a compact set in $X$, for $n\in \NN$ we define the  linear sampling $n$-width  of the set $\bW$ in $X$ as
$$
\varrho_n(\bW,X):=\inf_{\bX_k, \,
	\bPhi_k, \ k \le n} \  \ \sup_{f\in \bW}
\|f- S_k(\bX_k,\bPhi_k,f)\|_X.
$$
This quantity characterizes the optimal sampling recovery for the set $\bW$ by linear algorithms  from $n$ sampled values. Some different concepts of optimal sampling recovery are discussed in Subsections \ref{Sampling recovery} and \ref{Optimalities in sampling recovery}.

Let $v$ be a nonnegative Lebesgue measurable  function on $\Omega$. Denote by $\mu_v$ the measure on $\Omega$ defined via the density function $v$. We are interested in numerical approximation of   weighted integrals 
\begin{equation} \label{If}
	\int_{\Omega} f(\bx) \mu_v(\rd\bx)
	=
	\int_{\RRd}f(\bx) v(\bx) \rd\bx.
\end{equation}
Given $\bX_k= \{\bx_i\}_{i=1}^k\subset \Omega$ and a collection $\bLambda_k:= \brab{\lambda_i}_{i=1}^k$  of integration weights, to approximate them we use quadratures  of the form
\begin{equation} \label{Q_nf-introduction}
	Q_k(f): = \sum_{i=1}^k \lambda_i f(\bx_i)
\end{equation}
with the convention $Q_0(f):=0$.  For convenience, we assume that some of the integration nodes may coincide. 

Let $\bW$ be a set of continuous functions on $\Omega$.  Denote by $\Qq_n$  the family of all quadratures $Q_k$ of the form \eqref{Q_nf-introduction} with $k \le n$. The optimality  of  quadratures from $\Qq_n$ for  $f \in \bW$  is characterized by the quantity 
\begin{equation} \label{Int_n}
	\Int_n(\bW) :=\inf_{Q_n \in \Qq_n} \ \sup_{f\in \bW} 
	\bigg|\int_{\Omega} f(\bx)  \, \mu_v(\rd\bx) - Q_n(f)\bigg|.
\end{equation}

For sampling recovery and numerical integration problems considered in the present paper,  the set  $\bW$ is usually the unit ball of a normed space $W$ of functions on $\Omega$, which is also called function class.

The unweighted sampling recovery of  functions on a compact set having a mixed smoothness, is a classical topic in multivariate approximation. 
There is a large number of works devoted to this topic. However, here are  still  many open problems on the right asymptotic order of linear sampling $n$-widths and asymptotically optimal sampling algorithms  of sampling recovery. 
We refer the reader to the book \cite[Section 5]{DTU18B} for a detailed survey and bibliography on this research direction.  The present paper updates it with some recent results mainly on linear sampling constructive algorithms, in particular,  on Smolyak sparse grids for periodic functions having a mixed Sobolev smoothness. We also discuss the right asymptotic order of linear sampling $n$-widths of the unit ball of unweighted Sobolev spaces (unweighted Sobolev function class) of a mixed smoothness, and  asymptotic optimality of sampling algorithms  in terms of   sampling $n$-widths.

Concerning weighted sampling recovery,   for  the Gaussian-weighted  Sobolev space $W^\alpha_p(\mathbb{R}^d; \gamma)$ of  mixed smoothness $r \in \mathbb{N}$ for $1 < p < \infty$, in \cite{DK2023}, we proved the asymptotic order  of sampling $n$-widths in the Gaussian-weighted space $L_q(\mathbb{R}^d; \gamma)$ of the unit ball of $W^\alpha_p(\mathbb{R}^d; \gamma)$ (Gaussian-weighted Sobolev function class) for $1 \leq q < p < \infty$ and  $q=p=2$, and propose a novel method for constructing asymptotically optimal linear sampling algorithms.
In \cite{DK2023}, we investigated the numerical approximation of integrals over $\mathbb{R}^d$ equipped with the standard Gaussian measure $\gamma$  for  integrands  belonging to the Gaussian-weighted  Sobolev spaces $W^\alpha_p(\mathbb{R}^d; \gamma)$ of  mixed smoothness $r \in \mathbb{N}$ for $1 < p < \infty$. We proved the the right asymptotic order  of the quantity  of  optimal quadrature and proposed a novel method for constructing asymptotically optimal quadratures. In the present paper, we extend these results on sampling recovery and numerical integration to weighted  Sobolev spaces $W^\alpha_p(\mathbb{R}^d; \mu)$, where $\mu$ is the measure having the tensor product of  Freud-type weights as the density function.

In \cite{DD2023}, we studied the  numerical approximation of weighted integrals over $\mathbb{R}^d$  for  integrands from weighted  Sobolev spaces of mixed smoothness $W^r_1(\mathbb{R}^d; \mu)$, where $\mu$ is the measure with the density function as the tensor product of a Freud-type weight. We proved upper and lower bounds  of the asymptotic order of optimal quadrature for functions from  these spaces.  In the one-dimensional case $(d=1)$,  we obtained the right asymptotic order the quantity of optimal quadrature. For $d \ge 2$, the upper bound is performed by sparse-grid quadratures with integration nodes on step hyperbolic crosses in the function domain $\mathbb{R}^d$.  We will give a brief description of the results and their proofs from \cite{DD2023}. 

We briefly describe the structure and content of the present paper.

In Section \ref{Unweighted sampling recovery}, we consider  the problem of  sampling recovery of  functions from a certain set  by linear algorithms  using its values at a certain finite set of points, and their asymptotic optimality in terms of linear sampling $n$-widths. We  focus our attention on functions mainly from unweighted periodic Sobolev spaces $W^r_p(\TTd)$ of a mixed smoothness, and lessly from unweighted  periodic H\"older-Nikol'skii spaces $H^r_p(\TTd)$ of a mixed smoothness.
As a related problem we also treat  linear approximations of functions from these spaces. In Subsection \ref{Introducing remarks Sec 1}, we introduce  notions of Kolmogorov and linear $n$-widths -- the well-known characterizations of optimal linear approximation; definitions of  Sobolev spaces $W^r_p(\TTd)$ and Besov spaces $B_{p, \theta}^r(\TTd)$  of periodic functions having a mixed smoothness.  Subsection~\ref{B-spline Q-I} presents Littlewood-Paley-type theorems on B-spline quasi-interpolation sampling representation for spaces $W^r_p(\TTd)$ and $B_{p, \theta}^r(\TTd)$ which play a central role in linear sampling recovery by sparse-grid Smolyak algorithms of functions from these spaces. Subsection \ref{Sampling recovery} is devoted to the sampling recovery by Smolyak sparse-grid algorithms for functions from unweighted periodic Sobolev spaces $W^r_p(\TTd)$ and Besov spaces $B_{p, \theta}^r(\TTd)$ of a mixed smoothness, and their asymptotic optimality in terms of  Smolyak and linear sampling $n$-widths.
In Subsection \ref{Sampling recovery in RK Hilbert spaces}, we present some recent results of \cite{DKU2023} on inequality between the linear sampling  and Kolmogorov $n$-widths and as consequences some new results on right asymptotic order of linear sampling $n$-widths for the unweighted  Sobolev function class $\bW^r_p(\TTd)$ and H\"older-Nikol'skii function class $\bH^r_p(\TTd)$
Subsection~\ref{Optimalities in sampling recovery} discusses 
some different concepts of optimality in sampling recovery and their relations for unweighted Sobolev spaces of a mixed smoothness.

In Section \ref{Weighted sampling recovery},
we consider the problems of linear sampling recovery and  approximation of functions in weighted Sobolev spaces. The optimality of linear sampling recovery and approximation is treated in terms of linear sampling, linear and Kolmogorov $n$-widths. In Subsection \ref{Introducing remarks Sec 3}, we introduce weighted Sobolev spaces of a mixed smoothness, and recall  previous results of \cite{DK2023} on right asymptotic order of sampling, Kolmogorov and linear $n$-widths of the Gaussian-weighted Sobolev  function classes $\bW^r_p(\RRd;\gamma)$ of a mixed smoothness  in the Gaussian-weighted space $L_q(\RRd;\gamma)$. In Subsection \ref{The case 1 le q < p < infty}, we extend these results  to the  weighted Sobolev  function class $\BWpgamma$ and weighted space $L_q(\RRd;\mu)$ for the measure $\mu$ associated with a Freud-type weight as the density function for the case $1\le q < p < \infty$. 

In Section \ref{Numerical weighted integration p}, we present and extend some recent results of \cite{DK2023} on numerical weighted integration over $\mathbb{R}^d$ for functions from weighted  Sobolev spaces $W^r_p(\mathbb{R}^d;\mu)$ of  mixed smoothness $r \in \mathbb{N}$ for $1<p<\infty$ and the measure $\mu$ associated with a Freud-type weight as the density function. In Subsection \ref{Introducing remarks sec 4},
we recall the previous  results from  \cite{DK2023} on numerical weighted integration over $\mathbb{R}^d$ for functions from Gaussian-weighted  Sobolev spaces $W^r_p(\mathbb{R}^d;\gamma)$ of  mixed smoothness $r \in \mathbb{N}$ for $1<p<\infty$. In Subsection \ref{Assembling quadratures}, based on a quadrature on  the  $d$-cube  for numerical unweighted integration of functions from classical Sobolev spaces of mixed smoothness $r$, by assembling we construct  a quadrature on $\RRd$ for numerical integration of functions from weighted Sobolev spaces $\Wpgamma$ which preserves   the  convergence rate. In Subsection \ref{Optimal numerical integration}, we prove the right asymptotic order of optimal quadrature for  the weighted  Sobolev function class $\bW^r_p(\mathbb{R}^d;\mu)$ for $1<p<\infty$.

In Section	\ref{Numerical weighted integration 1}, we present 
some recent results of \cite{DD2023} on numerical weighted integration over $\mathbb{R}^d$ for functions from weighted  Sobolev spaces  $W^r_1(\mathbb{R}^d;\mu)$ of  mixed smoothness $r \in \mathbb{N}$, in particular, 
upper and lower bounds of the quantity of  optimal quadrature $\Int_n\big(\bW^r_1(\mathbb{R}^d;\mu)\big)$ for the measure $\mu$ associated with Freud-type weights as the density function.
In Subsection \ref{Introducing remarks sec 5}, we briefly describe these results and then give comments on related works. In Subsection \ref{Univariate integration},
 for one-dimensional numerical integration, we present the right asymptotic order of the quantity of optimal quadrature $\Int_n\big(\bW^r_1(\mathbb{R};\mu)\big)$ and its shortened proof  from \cite{DD2023}. In Subsection \ref{Multivariate integration}, for multivariate numerical integration, we present some results and their shortened proofs from \cite{DD2023} on upper and lower bounds of   the quantity of optimal quadrature $\Int_n\big(\bW^r_1(\mathbb{R}^d;\mu)\big)$,  and   a construction of  quadratures based on  step-hyperbolic-cross grids of integration nodes which gives the upper bounds. In Subsection \ref{Extension}, we extend the results of the previous subsection to  Markov-Sonin weights.

\medskip
\noindent
{\bf Notation.} 
 Denote  $\bone:= (1,...,1) \in \RRd$; for $\bx \in \RRd$, $\bx=:\brac{x_1,...,x_d}$;
$|\bx|_p:= \brac{\sum_{j=1}^d |x_j|^p}^{1/p}$ $(1 \le p < \infty)$ and $|\bx|_\infty:= \max_{1\le j \le d} |x_j|$; we use the abbreviation: $|\bx|:= |\bx|_2$.  For $\bx, \by \in \RRd$, the inequality $\bx \le \by$ means $x_i \le y_i$ for every $i=1,...,d$.  For $x \in \RR$, denote $\sign (x):= 1$ if $x \ge 0$, and $\sign (x):= -1$ if  $x < 0$. We use letters $C$  and $K$ to denote general 
positive constants which may take different values. For the quantities $A_n(f,\bk)$ and $B_n(f,\bk)$ depending on 
$n \in \NN$, $f \in W$, $\bk \in \ZZd$,  
we write  $A_n(f,\bk) \ll B_n(f,\bk)$, $f \in W$, $\bk \in \ZZd$ ($n \in \NN$ is specially dropped),  
if there exists some constant $C >0$ such that 
$A_n(f,\bk) \le CB_n(f,\bk)$ for all $n \in \NN$,  $f \in W$, $\bk \in \ZZd$ (the notation $A_n(f,\bk) \gg B_n(f,\bk)$ has the obvious opposite meaning), and  
$A_n(f,\bk) \asymp B_n(f,\bk)$ if $A_n(f,\bk) \ll B_n(f,\bk)$
and $B_n(f,\bk) \ll A_n(f,\bk)$.  Denote by $|G|$ the cardinality of the set $G$. 
For a normed space $X$, denote by the boldface $\bX$ the unit ball in $X$.

\section{Unweighted sampling recovery}
\label{Unweighted sampling recovery}

\subsection{Introducing remarks}
\label{Introducing remarks Sec 1}
In this section, we consider one of basic problems in approximation theory. Namely, we are interested in sampling recovery, i.e.,  approximate reconstruction of a function by sparse-grid linear algorithms using its values at a certain finite set of points. We mainly focus our attention on functions from unweighted Sobolev spaces of a mixed smoothness. 
It is related to the well-known linear and Kolmogorov $n$-widths in linear approximation. Let us recall them.

Let $n \in \NN$ and 
let $X$ be a normed space and $\bW$ a central symmetric compact set in $X$.
Then the Kolmogorov $n$-width  of $\bW$ is defined by
\begin{equation*}
d_n(\bW,X)= \inf_{L_{n}}\sup_{f\in \bW}\inf_{g\in L_n}\|f-g\|_X, 
\end{equation*}
where the left-most  infimum is  taken over all  subspaces $L_{n}$ of dimension  at most $n$ in $X$.  
The linear $n$-width of the set $F$ which is defined by
$$
\lambda_n(\bW,X):=\inf_{A_n} \sup_{f\in \bW} \|f-A_n(f)\|_X,
$$
where the infimum is taken over all linear operators $A_n$ in  $X$ with  rank $\, A_n\leq n$.

The concepts of Kolmogorov $n$-widths and   linear $n$-widths are related to linear approximation. Namely, $d_n(\bW,X)$ characterizes the optimal approximation  of elements from $X$ by linear subspaces of dimension at most $n$, and $\lambda_n(\bW,X)$ by linear methods of rank at most $n$.

In the present paper, we pay our attention mainly to the problem of linear sampling recovery and its optimality in terms of linear sampling $n$-widths for Sobolev classes of a mixed smoothness. However, it is useful and convenient to consider this problem together with the problem of linear and Kolmogorov $n$-widths.

Obviously, we have the inequalities
\begin{equation}\label{eq-relations}
d_n(\bW,X) \leq \lambda_n(\bW,X)\leq \varrho_n(\bW,X).
\end{equation}

Notice that  if $X$ is a Hilbert space, then 
$$
\lambda_n(\bW,X) = d_n(\bW,X).
$$

We are interested in linear sampling recovery and approximation of functions having a mixed smoothness on $\RRd$ which are $1$-periodic at each variable. It is convenient to consider them as functions defined in the $d$-torus $\TTd = [0,1]^d$ which is defined as the Castrian product of $d$ copies of the interval $[0,1]$ with the identification of the end points. To avoid confusion, we use the notation $\IId$ to denote the standard unit $d$-cube $[0,1]^d$. 
Notice that all the results presented in this section for  functions on $\TTd$ also hold true for functions on $\IId$ in appropriate forms.

Let us give a notion of periodic Sobolev space of mixed smoothness. We define the univariate Bernoulli kernel
\[ 
F_r(x)
:= \
1 + 2\sum_{k=1}^\infty k^{-r}\cos (\pi kx - r \pi/2), \quad x \in \TT,
\]
and the multivariate Bernoulli kernels as the corresponding tensor products 
\begin{equation} \label{F_r}
F_r(\bx)
:= \
\prod_{j=1}^d F_r(x_j),\quad \bx \in
\TTd.
\end{equation}
Let $r > 0$ and $1 \le p \le \infty$. 
Denote by  
$L_p(\TTd)$ the normed space 
of functions on $\TTd$ with the $p$th integral norm 
$\|\cdot\|_p$ for $1 \le p < \infty,$ and 
the ess sup-norm $\|\cdot\|_p$ for $p = \infty$, where we make use of the abbreviation $\|\cdot\|_p:= \|\cdot\|_{L_p(\TTd)}$ .
If $r > 0$ and $1 \le p \le \infty$, we define the Sobolev space $\Wrp$ of mixed smoothness $r$ by
\begin{equation} \label{def[Wrp]}
\Wrp
:= \
\Big\{f \in L_p(\TTd):\, 
f \, = \, F_r\ast \varphi:= \, \int_{\TTd}F_r(\bx-\by)\varphi(\by){d}\by ,\quad
\|\varphi\|_p < \infty \Big\},
\end{equation}
and the norm in this space by
$$
\|f\|_{\Wrp}:= \|\varphi\|_p
$$ 
for $f$ represented as in \eqref{def[Wrp]}.
For $1 < p < \infty$, the  space $\Wrp$ coincides with the
set of all $f\in L_p(\TTd)$ such that the norm
\[
\Big\| \sum_{\bs \in \ZZd} \, \hat{f}(\bs) (1+|s_1|^2)^{r/2} \, \ldots \,
(1+|s_d|^2)^{r/2}
\, e^{\pi i(\bs,\cdot)} \Big\|_p
\]
is finite, where $\hat f(\bs)$ denotes the usual $\bs$th Fourier coefficient of $f$. 

We next give a definition of Besov spaces of mixed smoothness $r$.
For univariate functions $f$ on $\TT$ the $\ell$th difference operator $\Delta_h^\ell$ is defined by 
\begin{equation} \label{def[Delta_h^ell]}
	\Delta_h^\ell(f,x) := \
	\sum_{j =0}^\ell (-1)^{\ell - j} \binom{\ell}{j} f(x + jh).
\end{equation}
Denote by $[d]$  the set of all natural numbers from $1$ to $d$.  If $u$ is any subset of $[d]$, for multivariate functions on $\TTd$
the mixed $(\ell,u)$th difference operator $\Delta_\bh^{\ell,u}$ is defined by 
\begin{equation*}
	\Delta_\bh^{\ell,u} := \
	\prod_{i \in u} \Delta_{h_i}^\ell, \quad \Delta_\bh^{\ell,\varnothing} := \ I,
\end{equation*}
where the univariate operator
$\Delta_{h_i}^\ell$ is applied to the univariate function $f$ by considering $f$ as a 
function of  variable $x_i$ with the other variables held fixed, and $I(f):= f$ for functions $f$ on $\TTd$. 
We also use the abbreviation $\Delta_\bh^\ell:= \Delta_\bh^{\ell,[d]}$.

For $u \subset [d]$, let
\begin{equation*}
\omega_\ell^u(f,\bt)_p:= \sup_{h_i < t_i, i \in u}\|\Delta_\bh^{\ell,u}(f)\|_p, \ \bt \in {\II}^d,
\end{equation*} 
be the mixed $(\ell,u)$th modulus of smoothness of $f$ (in particular,
$\omega_l^{\varnothing}(f,t)_p = \|f\|_p$).

If $0 <  p, \theta \le \infty$, 
$r> 0$ and $\ell > r$, 
we introduce the quasi-semi-norm 
$|f|_{B_{p, \theta}^{r,u}}$ for functions $f \in L_p(\TTd)$ by
\begin{equation*} 
|f|_{B_{p, \theta}^{r,u}(\TTd)}:= 
\begin{cases}
\ \left(\int_{{\II}^d} \{ \prod_{i \in u} t_i^{- r}
\omega_\ell^u(f,\bt)_p \}^ \theta \prod_{i \in u} t_i^{-1} {d} \bt \right)^{1/\theta}, 
& \theta < \infty, \\
\sup_{\bt \in {\II}^d} \ \prod_{i \in u} t_i^{-r}\omega_\ell^u(f,\bt)_p,  & \theta = \infty
\end{cases}
\end{equation*}
(in particular, $|f|_{B_{p, \theta}^{r,\emptyset}(\TTd)} = \|f\|_p$).

For $0 <  p, \theta \le \infty$ and $0 < r< l,$ the Besov space 
$B_{p, \theta}^r(\TTd)$ is defined as the set of  functions $f \in L_p(\TTd)$ 
for which the Besov quasi-norm $\|f\|_{B_{p, \theta}^r(\TTd)}$ is finite. 
The  Besov quasi-norm is defined by
\begin{equation*} 
\|f\|_{B_{p, \theta}^r(\TTd)}
:= \ 
\sum_{u \subset [d]} |f|_{B_{p, \theta}^{r,u}(\TTd) }.
\end{equation*}
We also call the particular space $H_p^r(\TTd): = B_{p, \infty}^r(\TTd)$ H\"older-Nikol'skii space.

\subsection{B-spline quasi-interpolation representations} 
\label{B-spline Q-I}

A central role in sparse-grid linear sampling recovery on Smolyak sparse  grids of functions having a mixed smoothness play sampling representations which are based on dyadic scaled B-splines with integer knots or trigonometric kernels and constructed from function values at dyadic lattices. These representations are in the form of a B-spline or trigonometric polynomial series provided with discrete equivalent norm for functions in  Sobolev and Besov spaces of a mixed smoothness. By employing them we can construct sampling algorithms for recovery on Smolyak sparse grids of functions from the corresponding spaces which in some cases give the asymptotically optimal rate of the approximation error. We refer the reader to \cite[Section 5]{DTU18B} for detail and bibliography on this topic.
In this subsection, we present a Littlewood-Paley-type theorem from \cite{DD2018} on B-spline quasi-interpolation sampling representation for periodic unweighted Sobolev and Besov spaces of a mixed smoothness. 

In order to construct B-spline quasi-interpolation sampling representations on $\TTd$, we auxiliarily  introduce quasi-interpolation operators for functions on $\RRd$. For a given natural number $\ell,$ denote by  $M_\ell$ the cardinal B-spline of order $\ell$ with support $[0,\ell]$ and 
knots at the points $0, 1,...,\ell$. 
We fixed an even number $\ell \in \NN$ and take the cardinal B-spline $M= M_{\ell}$ of order $\ell$.
Let $\Lambda = \{\lambda(j)\}_{|j| \le \mu}$ be a given finite even sequence, i.e., 
$\lambda(-j) = \lambda(j)$ for some $\mu \ge \frac{\ell}{2} - 1$. 
We define the linear operator $Q$ for functions $f$ on $\RR$ by  
\begin{equation} \label{def:Q}
	Q(f,x):= \ \sum_{s \in \ZZ} \Lambda (f,s)M(x-s), 
\end{equation} 
where
\begin{equation} \label{def:Lambda}
	\Lambda (f,s):= \ \sum_{|j| \le \mu} \lambda (j) f(s-j + \ell/2).
\end{equation}
The operator $Q$ is local and bounded in $C(\RR)$  (see \cite[p. 100--109]{Chui92}).
An operator $Q$ of the form \eqref{def:Q}--\eqref{def:Lambda} is called a 
{\it quasi-interpolation operator in} $C(\RR)$ if  it reproduces 
$\Pp_{\ell-1}$, i.e., $Q(f) = f$ for every $f \in \Pp_{\ell-1}$, where $\Pp_{\ell-1}$ denotes the set of $d$-variate polynomials of degree at most $\ell-1$ in each variable.

We present some well-known examples of quasi-interpolation operators. 
A piecewise linear quasi-interpolation operator is defined as
\begin{equation} \nonumber
Q(f,x):= \ \sum_{s \in \ZZ} f(s) M(x-s), 
\end{equation} 
where $M$ is the   
symmetric piecewise linear B-spline with support $[-1,1]$ and 
knots at the integer points $-1, 0, 1$.
It is related to the classical Faber-Schauder basis of the hat functions 
(see, e.g., \cite{DD2011a, Tri2010B}, for details). 
A quadratic quasi-interpolation operator is defined by
\begin{equation*} 
Q(f,x):= \ \sum_{s \in \ZZ} \frac {1}{8} \{- f(s-1) + 10f(s) - f(s+1)\} M(x-s), 
\end{equation*} 
where $M$ is the symmetric quadratic B-spline with support $[-3/2,3/2]$ and 
knots at the half integer points $-3/2, -1/2, 1/2, 3/2$.
Another example is the cubic quasi-interpolation operator 
\begin{equation*} 
Q(f,x):= \ \sum_{s \in \ZZ} \frac {1}{6} \{- f(s-1) + 8f(s) - f(s+1)\} M(x-s), 
\end{equation*} 
where $M$ is the symmetric cubic B-spline with support $[-2,2]$ and 
knots at the integer points $-2, -1, 0, 1, 2$.

If $Q$ is a quasi-interpolation operator of the form 
\eqref{def:Q}--\eqref{def:Lambda}, for $h > 0$ and a function $f$ on $\RR$, 
we define the operator $Q(\cdot;h)$ by
$$
Q(f;h) 
:= \ 
\sigma_h \circ Q \circ \sigma_{1/h}(f),
$$
where $\sigma_h(f,x) = \ f(x/h)$.
Let $Q$ be a quasi-interpolation operator of the form \eqref{def:Q}--\eqref{def:Lambda} in $C({\RR}).$ 
If $k \in \NN_0 $, we introduce the operator $Q_k$  by  
\begin{equation*}
	Q_k(f,x) := \ Q(f,x;h^{(k)}), \  x \in \RR, \quad h^{(k)}:= \ \ell^{-1}2^{-k}. 
\end{equation*}
We define the integer translated dilation $M_{k,s}$ of $M$ by   
\begin{equation*}
	M_{k,s}(x):= \ M(\ell2^k x - s), \ k \in {\ZZ}_+, \ s \in \ZZ. 
\end{equation*}
Then we have for $k \in {\ZZ}_+$,
\begin{equation} \nonumber
	Q_k(f)(x)  \ = \ 
	\sum_{s \in \ZZ} a_{k,s}(f)M_{k,s}(x), \ \forall x \in \RR, 
\end{equation}
where the coefficient functional $a_{k,s}$ is defined by
\begin{equation} \label{def[a_{k,s}(f)]}
	a_{k,s}(f):= \ \Lambda(f,s;h^{(k)}) 
	= \   
	\sum_{|j| \le \mu} \lambda (j) f(h^{(k)}(s-j+r)).
\end{equation}

For $\bk \in \ZZdp$, let the mixed operator $Q_\bk$ be defined by
\begin{equation} \label{def:Mixed[Q_bk]} 
	Q_\bk:= \prod_{i=1}^d  Q_{k_i},
\end{equation}
where the univariate operator
$Q_{k_i}$ is applied to the univariate function $f$ by considering $f$ as a 
function of  variable $x_i$ with the other variables held fixed.
We define the $d$-variable B-spline $M_{k,s}$ by
\begin{equation} \label{def:Mixed[M_{k,s}]}
	M_{\bk,\bs}(\bx):=  \ \prod_{i=1}^d M_{k_i, s_i}( x_i),  
	\ \bk \in \ZZdp, \ \bs \in {\ZZ}^d,
\end{equation}
where $\ZZdp:= \{\bs \in {\ZZ}^d: s_i \ge 0, \ i \in [d] \}$.
Then we have
\begin{equation*} 
	Q_\bk(f,\bx)  \ = \ 
	\sum_{s \in \ZZ^d} a_{\bk,\bs}(f)M_{\bk,\bs}(\bx), \quad \forall \bx \in \RRd, 
\end{equation*}
where $M_{\bk,\bs}$ is the mixed B-spline  defined in \eqref{def:Mixed[M_{k,s}]}, 
and
\begin{equation} \label{def:Mixed[a_{k,s}(f)]}
	a_{\bk,\bs}(f) 
	\ = \   
	\biggl(\prod_{j=1}^d a_{k_j,s_j}\biggl)(f),
\end{equation}
and the univariate coefficient functional
$a_{k_i,s_i}$ is applied to the univariate function $f$ by considering $f$ as a 
function of  variable $x_i$ with the other variables held fixed. 

Since $M(\ell\,2^k x)=0$ for every $k \in \NN_0$ and $x \notin (0,1)$, we can extend  the restriction to the interval $[0,1]$ of the  B-spline  
$M(\ell\,2^k\cdot)$ to an $1$-periodic function on the whole $\RR$. Denote this periodic extension by $N_k$ and define 
\begin{equation*}
	N_{k,s}(x):= \ N_k(x - h^{(k)}s), \ k \in {\ZZ}_+, \ s \in I(k), 
\end{equation*}
where $$I(k) := \{0,1,..., \ell2^k - 1\}.$$
We define the $d$-variable B-spline $N_{\bk,\bs}$ by
\begin{equation} \nonumber
	N_{\bk,\bs}(\bx):=  \ \bigotimes_{i=1}^d N_{k_i, s_i}( x_i),  \ \bk \in \ZZdp, \ \bs \in I(\bk),
\end{equation}
where $$ I(\bk):=\prod_{i=1}^d I(k_i).$$
Then we have for functions $f$ on $\TTd$,
\begin{equation} \label{def[periodicQI]}
	Q_\bk(f,\bx)  \ = \ 
	\sum_{\bs \in I(\bk)} a_{\bk,\bs}(f) N_{\bk,\bs}(x), \quad \forall x \in \TTd. 
\end{equation}
There holds the inequality
\begin{equation} \nonumber
	\|Q_\bk(f)\|_{C(\TTd)} 
	\ \ll \ 
\|f\|_{C(\TT)} , \ \ f \in C(\TTd). 
\end{equation}

For $\bk \in \ZZdp$, we write  $\bk  \to  \infty$ if $k_i  \to  \infty$ for $i \in [d]$). We have the convergence for every $f \in C(\TTd)$,
\begin{equation} \label{ConvergenceMixedQ_bk(f)}
	\|f - Q_\bk(f)\|_{C(\TTd)} \to 0 , \ \bk  \to  \infty.
\end{equation}

For convenience we define the univariate operator $Q_{-1}$ by putting $Q_{-1}(f)=0$ for all $f$ on $\II$. Let  the operators $q_\bk$ be defined 
in the manner of the definition \eqref{def:Mixed[Q_bk]} by
\begin{equation} \nonumber
	q_\bk := \ \prod_{i=1}^d \biggl(Q_{k_i}- Q_{k_i-1}\biggl), \ \bk \in \ZZdp. 
\end{equation}

From the equation 
$$Q_\bk = \sum_{\bk' \le \bk}q_{\bk'}$$
and \eqref{ConvergenceMixedQ_bk(f)} it is easy to see that 
a continuous function  $f$ has the decomposition
$$
f \ =  \ \sum_{\bk \in \ZZdp} q_\bk(f)
$$
with the convergence in the norm of $C(\TTd)$.  
From  the refinement equation for the B-spline $M$, in the univariate case, we can represent the component functions $q_\bk(f)$ as 
\begin{equation} \label{eq:RepresentationMixedQ_bk(f)}
	q_\bk(f) 
	= \ \sum_{\bs \in I(\bk)}c_{\bk,\bs}(f) N_{\bk,\bs},
\end{equation}
where $c_{\bk,\bs}$ are certain coefficient functionals of 
$f$. In the multivariate case, the representation  \eqref{eq:RepresentationMixedQ_bk(f)} holds true 
with the $c_{k,s}$ which are defined in the manner of the definition  
\eqref{def:Mixed[a_{k,s}(f)]} by
\begin{equation} \nonumber
	c_{\bk,\bs}(f) 
	\ = \   
	\biggl(\prod_{j=1}^d c_{k_j,s_j}\biggl)(f).
\end{equation}
 The following periodic B-spline quasi-interpolation representation for continuous functions on $\TTd$ was proven in \cite[Lemma 2.1]{DD2018}.
\begin{lemma} \label{lemma[representation]}
	Every continuous function $f$ on $\TTd$ is represented as B-spline series 
	\begin{equation} \label{eq:B-splineRepresentation}
		f \ = \sum_{\bk \in \ZZdp} \ q_\bk(f) = 
		\sum_{\bk \in \ZZdp} \sum_{\bs \in I(\bk)} c_{\bk,\bs}(f)N_{\bk,\bs}, 
	\end{equation}  
	converging in the norm of $C(\TTd)$, where the coefficient functionals $c_{\bk,\bs}(f)$ are explicitly constructed as
	linear combinations of at most $m_0$ of function
	values of $f$ for some $m_0 \in \NN$ which is independent of $\bk,\bs$ and $f$.
\end{lemma}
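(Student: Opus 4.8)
The plan is to obtain the two equalities in \eqref{eq:B-splineRepresentation} by combining the telescoping structure of the operators $Q_\bk$ with the convergence \eqref{ConvergenceMixedQ_bk(f)}, and then to read off the explicit coefficient functionals $c_{\bk,\bs}(f)$ from the refinement equation for $M$; the only substantive point is the uniform bound $m_0$ on the number of sampled values entering each $c_{\bk,\bs}(f)$. For the first equality, I would note that since $Q_{-1}=0$ the univariate telescoping identity $Q_k=\sum_{k'=0}^{k}(Q_{k'}-Q_{k'-1})$ holds for every $k\in\NNo$, so expanding the tensor product $Q_\bk=\prod_{i=1}^{d}Q_{k_i}$ yields $Q_\bk=\sum_{\bk'\le\bk}q_{\bk'}$. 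Letting $\bk\to\infty$ componentwise and invoking the uniform convergence \eqref{ConvergenceMixedQ_bk(f)} gives
\[
f=\lim_{\bk\to\infty}Q_\bk(f)=\sum_{\bk\in\ZZdp}q_\bk(f),
\]
where convergence in $C(\TTd)$ is understood as the limit of the rectangular partial sums directed by the product order $\bk'\le\bk$.

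Next I would produce the single-scale B-spline expansion of each $q_\bk(f)$, which is where the refinement equation enters. In one variable $Q_k(f)=\sum_s a_{k,s}(f)M_{k,s}$ sits at scale $k$, whereas $Q_{k-1}(f)$ sits at the coarser scale $k-1$; applying the two-scale relation $M(x)=\sum_{j=0}^{\ell}p_j\,M(2x-j)$ with fixed mask coefficients $p_j$ rewrites each coarse B-spline $M_{k-1,s}$ as a fixed finite combination of fine B-splines $M_{k,s'}$. Hence $q_k(f)=Q_k(f)-Q_{k-1}(f)$ collapses to a single sum over the scale-$k$ B-splines, with coefficient functionals $c_{k,s}(f)$ that are fixed finite linear combinations of the $a_{k,s'}(f)$; this is \eqref{eq:RepresentationMixedQ_bk(f)}. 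Taking tensor products produces the multivariate $c_{\bk,\bs}(f)=\bigl(\prod_{j=1}^{d}c_{k_j,s_j}\bigr)(f)$, and periodizing each factor turns $M_{k,s'}$ into $N_{k,s'}$ and collapses the index set to the finite $I(\bk)$, giving the second equality.

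The main obstacle, and the only quantitative claim, is the uniform bound $m_0$. By \eqref{def[a_{k,s}(f)]} each univariate $a_{k,s}(f)$ is a linear combination of at most $2\mu+1$ values of $f$; the refinement mask has $\ell+1$ terms and the supports of $M_{k-1,s}$ and $M_{k,s'}$ overlap in a pattern whose size is independent of the scale, so for each $k$ and $s$ the functional $c_{k,s}(f)$ aggregates sampled values from a fixed, scale-independent number of neighbouring nodes. Multiplying these counts over the $d$ coordinates shows that $c_{\bk,\bs}(f)$ is a linear combination of at most $m_0$ function values, with $m_0$ depending only on $\ell$, $\mu$ and $d$ but not on $\bk$, $\bs$ or $f$. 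The one point needing care is the periodization near the endpoints of $[0,1]$: because $N_k$ is the $1$-periodic extension of the restriction of $M(\ell\,2^k\,\cdot)$ to a single period, wrapping identifies nodes but does not increase the number of values combined, so the bound $m_0$ survives periodization. This completes the plan.
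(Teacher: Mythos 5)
Your proposal is correct and takes essentially the same route as the paper, which sketches exactly this argument before deferring details to \cite[Lemma 2.1]{DD2018}: the telescoping identity $Q_\bk=\sum_{\bk'\le\bk}q_{\bk'}$ combined with the uniform convergence \eqref{ConvergenceMixedQ_bk(f)} gives the first equality, and the refinement equation collapses each $q_\bk(f)$ to the single-scale expansion \eqref{eq:RepresentationMixedQ_bk(f)} with tensor-product coefficient functionals. Your accounting for the uniform bound $m_0$ (at most $2\mu+1$ samples per $a_{k,s}$, a scale-independent overlap count from the finite mask, multiplied over the $d$ coordinates, unaffected by periodization) is precisely the intended justification of the final claim.
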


\begin{theorem} \label{DirectThm}
	Let $1 < p < \infty$ and $1/p < r < \ell$. Then every function $f \in \Wrp$ can be represented as the series \eqref{eq:B-splineRepresentation} converging in the norm of $\Wrp$, and there holds
	the inequality
	\begin{equation} \label{DirectIneq}
		\biggl\| \biggl(\sum_{\bk \in \ZZdp} 
		\Big|2^{r|\bk|_1} q_\bk(f)\Big|^2 \biggl)^{1/2}\biggl\|_p
		\ \ll \
		\|f\|_{\Wrp}. 
	\end{equation}
\end{theorem}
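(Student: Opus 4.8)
The plan is to compare the B-spline quasi-interpolation blocks $q_\bk(f)$ with the Fourier-analytic dyadic blocks of $f$ and to transfer the square-function bound through a vector-valued maximal inequality. For $1<p<\infty$ recall the Littlewood--Paley characterization of the mixed Sobolev norm: if $\delta_\bs(f)$ denotes the dyadic frequency block of $f$ supported on $\{\bx:\ |k_i|\asymp 2^{s_i}\}$ (with the obvious modification for $s_i=0$), then
\begin{equation*}
\|f\|_{\Wrp} \ \asymp \ \Bigl\|\Bigl(\sum_{\bs\in\ZZdp} 2^{2r|\bs|_1}\,|\delta_\bs(f)|^2\Bigr)^{1/2}\Bigr\|_p .
\end{equation*}
Thus it suffices to dominate the left-hand side of \eqref{DirectIneq} by the right-hand side of this equivalence, and the bridge between the two square functions is a pointwise estimate of $q_\bk(f)$ in terms of the maximal functions of the blocks $\delta_\bm(f)$.

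Concretely, first I would establish a pointwise bound of the form
\begin{equation*}
2^{r|\bk|_1}\,|q_\bk(f)(\bx)| \ \ll \ \sum_{\bm\in\ZZdp} 2^{-\rho\,|\bk-\bm|_1}\, 2^{r|\bm|_1}\,\mathcal M\bigl(\delta_\bm(f)\bigr)(\bx), \qquad \bx\in\TTd,
\end{equation*}
for some $\rho>0$, where $\mathcal M$ is the iterated Hardy--Littlewood maximal operator acting coordinatewise. Because of the tensor-product structure of $q_\bk=\prod_i\bigl(Q_{k_i}-Q_{k_i-1}\bigr)$ and of the blocks, this reduces to a one-dimensional decay estimate in each coordinate, split into two regimes. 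When $m_i<k_i$ (the block is smoother than the resolution of $q_\bk$), the decay comes from polynomial reproduction: since $Q$ reproduces $\Pp_{\ell-1}$, the detail operator $Q_{k_i}-Q_{k_i-1}$ annihilates polynomials of degree $<\ell$, producing a factor of order $2^{-\ell(k_i-m_i)}$; combined with the weight $2^{rk_i}$ this leaves net decay $2^{-(\ell-r)(k_i-m_i)}$, positive precisely because $r<\ell$. When $m_i>k_i$, the decay is provided by the smoothness and compact support of the B-spline $M$ together with a Bernstein-type estimate for $\delta_\bm(f)$, giving a high negative power of $2^{k_i-m_i}$. The assumption $r>1/p$ ensures the embedding $\Wrp\hookrightarrow C(\TTd)$, so that the sampling functionals $c_{\bk,\bs}(f)$ of Lemma~\ref{lemma[representation]} are well defined, and it lets me control the sampled values of each $\delta_\bm(f)$ by its maximal function through a Peetre-type maximal inequality.

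Granting this pointwise estimate, the rest is assembly. The discrete kernel $2^{-\rho|\bk-\bm|_1}$ is summable, so by Young's inequality for discrete convolutions on the lattice $\ZZdp$,
\begin{equation*}
\Bigl(\sum_{\bk}\bigl(2^{r|\bk|_1}|q_\bk(f)(\bx)|\bigr)^2\Bigr)^{1/2} \ \ll \ \Bigl(\sum_{\bm}\bigl(2^{r|\bm|_1}\mathcal M(\delta_\bm(f))(\bx)\bigr)^2\Bigr)^{1/2}.
\end{equation*}
Taking $L_p$ norms and invoking the Fefferman--Stein vector-valued maximal inequality, valid for $1<p<\infty$, removes $\mathcal M$, after which the Littlewood--Paley equivalence above returns exactly $\|f\|_{\Wrp}$, which is \eqref{DirectIneq}. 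Convergence of the series \eqref{eq:B-splineRepresentation} in $\Wrp$ then follows by applying the same bound to the tails $\sum_{\bk\notin G}q_\bk(f)$ and letting the finite set $G\uparrow\ZZdp$: the square function of the tail is majorized by a convergent series and tends to $0$ in $L_p$.

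The main obstacle is the pointwise decay estimate, and within it the high-relative-frequency regime $m_i>k_i$: there one must quantify how little a fine-scale oscillation of $\delta_\bm(f)$ is registered by the coarser quasi-interpolation sampling at scale $2^{-k_i}$, combining the finite smoothness of $M$, its compact support, and a Bernstein/Nikol'skii inequality for the blocks, all with constants independent of $\bk,\bm$ and $f$. The low-frequency regime is comparatively routine once polynomial reproduction is exploited, and the two borderline hypotheses enter transparently: $r<\ell$ secures positive decay against the smoothness weight, while $r>1/p$ secures boundedness of point evaluations.
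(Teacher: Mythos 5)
The survey itself contains no proof of Theorem \ref{DirectThm}: it is quoted from \cite[Theorem 3.1]{DD2018}, and the proof given there is of the same general type you describe (Fourier Littlewood--Paley decomposition of $f$, pointwise domination of $q_\bk(\delta_\bm(f))$ by maximal functions with geometric decay in $|\bk-\bm|_1$, discrete Young's inequality, vector-valued maximal inequality). So your overall strategy is the intended one, and your treatment of the low-frequency regime ($m_i<k_i$, via reproduction of $\Pp_{\ell-1}$ and Taylor expansion, net decay $2^{-(\ell-r)(k_i-m_i)}$) is correct.

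There is, however, a genuine gap in the high-frequency regime $m_i>k_i$, both in mechanism and in the range of $r$ your argument can reach. Once the frequency $2^{m_i}$ of a block exceeds the sampling density $2^{k_i}$, no smoothness or support property of the B-spline $M$ and no Bernstein inequality produce decay: the sampled values of $\delta_\bm(f)$ simply alias, and the sharp unweighted estimate is a \emph{growth} bound
\begin{equation*}
\bigl|q_\bk\bigl(\delta_\bm(f)\bigr)(\bx)\bigr| \ \ll \ \prod_{i:\, m_i>k_i} 2^{(m_i-k_i)/\theta}\;\bigl(\mathcal{M}_\theta\,\delta_\bm(f)\bigr)(\bx),
\qquad \mathcal{M}_\theta g:=\bigl(\mathcal{M}|g|^\theta\bigr)^{1/\theta},
\end{equation*}
coming from the Peetre maximal inequality for trigonometric polynomials of degree $2^{m_i}$; with the plain Hardy--Littlewood operator that appears in your displayed pointwise estimate (i.e.\ $\theta=1$) the growth is $2^{m_i-k_i}$ and the scheme would need $r>1$. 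Net decay in the weighted comparison is therefore $2^{-(r-1/\theta)(m_i-k_i)}$ and exists only if $r>1/\theta$. But in your assembly step the operator to be removed sits inside an $\ell_2$-valued norm, and the Fefferman--Stein inequality for $\mathcal{M}_\theta$ in $L_p(\ell_2)$ requires $\theta<\min(p,2)$, i.e.\ $1/\theta>\max(1/p,1/2)$. The two requirements are compatible only when $r>\max\{1/p,1/2\}$. Thus your proof covers the stated range only for $p\le 2$; for $2<p<\infty$ and $1/p<r\le 1/2$ the chain of estimates fails, and this is not a repairable technicality of the sketch but the known barrier of this method: compare the survey's own remark after Theorem \ref{theorem[r_n,p<q]} that the results of \cite{DD2018} required $r>\max\{1/p,1/2\}$ and that reaching $r>1/p$ needed the different techniques of \cite{BU2017}. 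A minor additional point: your deduction of convergence of the series in the $\Wrp$-norm from smallness of the tail square function implicitly invokes the synthesis inequality of Theorem \ref{InverseThm}, which is available only for $r<\ell-1$, slightly short of the claimed $r<\ell$.
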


\begin{theorem} \label{InverseThm}
	Let $1 < p < \infty$ and $0 < r < \ell - 1$.
	Then for every function $f$ on $\TTd$  represented as a B-spline series 
	\begin{equation} \label{InverseThm:Representation}
		f \ = \sum_{\bk \in \ZZdp} \ q_\bk = 
		\sum_{\bk \in \ZZdp} \sum_{\bs \in I(\bk)} c_{\bk,\bs}N_{\bk,\bs}, 
	\end{equation}  
	we have $f \in \Wrp$ and
	\begin{equation} \nonumber
		\|f\|_{\Wrp}
		\ \ll \
		\biggl\| \biggl(\sum_{\bk \in \ZZdp} 
		\Big|2^{r|\bk|_1} q_\bk\Big|^2 \biggl)^{1/2}\biggl\|_p,
	\end{equation}
	whenever the right hand side is finite.
\end{theorem}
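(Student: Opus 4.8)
The plan is to deduce the inverse inequality from the Littlewood--Paley characterization of $\Wrp$ together with a single quantitative spectral-localization estimate for the B-spline blocks $q_\bk$, and then to assemble the two square functions by a discrete-convolution argument. First I would record the equivalent norm coming from the Fourier-multiplier description of $\Wrp$ stated above: for $1<p<\infty$ there is a smooth dyadic frequency decomposition $\{\delta_\bm\}_{\bm\in\ZZdp}$ (a mixed de la Vall\'ee--Poussin / Littlewood--Paley partition in the Fourier variable) such that
$$
\|f\|_{\Wrp} \ \asymp \ \Big\| \Big(\sum_{\bm \in \ZZdp} 2^{2r|\bm|_1}\, |\delta_\bm(f)|^2\Big)^{1/2}\Big\|_p .
$$
This mixed-smoothness Littlewood--Paley theorem follows from the vector-valued Mikhlin-type multiplier theorem on $L_p(\TTd)$, $1<p<\infty$, applied coordinatewise, and it reduces the task to comparing the square function built from $\{\delta_\bm\}$ with the one built from $\{q_\bk\}$.

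The heart of the argument is an almost-orthogonality estimate: I would show that there is a $\sigma$ with $r<\sigma\le \ell-1$ such that
$$
\|\delta_\bm\, q_\bk\|_p \ \ll \ 2^{-\sigma\,|\bm-\bk|_1}\,\|q_\bk\|_p, \quad \bm,\bk \in \ZZdp .
$$
Since the block $q_\bk=\prod_{i}\big(Q_{k_i}-Q_{k_i-1}\big)$ is built from the cardinal B-spline $M=M_\ell$ of order $\ell$, the tensor structure lets me prove this one coordinate at a time. For the high-frequency tail $m_i\ge k_i$ I would use that the Fourier transform of $M_\ell$ decays like $|\xi|^{-\ell}$, so projecting the scale-$k_i$ spline onto the higher dyadic block $m_i$ costs a factor $2^{-\ell(m_i-k_i)}$; for the low-frequency part $m_i<k_i$ I would use that $Q_{k_i}-Q_{k_i-1}$ reproduces $\Pp_{\ell-1}$ and therefore annihilates low frequencies up to order $\ell$, producing a complementary gain $2^{-\sigma(k_i-m_i)}$. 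Converting these Fourier estimates into $L_p$ estimates via the Bernstein/Nikol'skii inequalities for the spline blocks (which permit up to $\ell-1$ derivatives in $L_p$) yields the displayed bound, and the hypothesis $r<\ell-1$ is exactly what guarantees a usable margin $\sigma>r$.

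Finally I would assemble the pieces. Writing $\delta_\bm(f)=\sum_\bk \delta_\bm q_\bk$ and inserting the decay estimate, the weighted square function becomes a discrete convolution in the variable $\bm-\bk$ against the summable sequence $2^{-(\sigma-r)|\cdot|_1}$, so the scales effectively decouple. The passage from this blockwise bound to the $L_p$ norm of the full square function is not a matter of summing $L_p$ norms; it requires a vector-valued (Fefferman--Stein type) maximal inequality, and this $L_p$-vector-valued assembly is where I expect the main technical obstacle to lie. Once it is in place one obtains
$$
\Big\| \Big(\sum_{\bm} 2^{2r|\bm|_1} |\delta_\bm f|^2\Big)^{1/2}\Big\|_p \ \ll \ \Big\| \Big(\sum_{\bk} \Big|2^{r|\bk|_1} q_\bk\Big|^2\Big)^{1/2}\Big\|_p .
$$
Applying the same estimates to the partial sums of \eqref{InverseThm:Representation} shows that finiteness of the right-hand side forces the series to be Cauchy, hence convergent, in $\Wrp$ to a function $f$; combined with the displayed comparison this gives $f\in\Wrp$ together with the asserted norm bound.
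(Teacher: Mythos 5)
The paper itself contains no proof of Theorem \ref{InverseThm}: it is quoted from \cite[Theorem 3.2]{DD2018}, so your proposal can only be compared with the standard argument behind that reference. Your overall skeleton --- the mixed Littlewood--Paley characterization of $\Wrp$ for $1<p<\infty$, coordinate-wise interaction estimates between the dyadic projections $\delta_\bm$ and the spline blocks $q_\bk$, and a Fefferman--Stein vector-valued assembly --- is indeed the right architecture. But your central lemma is false as stated, and this is a genuine gap, not a repairable technicality of the assembly step.

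The two-sided almost-orthogonality bound $\|\delta_\bm q_\bk\|_p \ll 2^{-\sigma|\bm-\bk|_1}\|q_\bk\|_p$ with a margin $\sigma>r$ cannot hold in the low-frequency regime $m_i<k_i$. In this theorem $q_\bk=\sum_{\bs\in I(\bk)}c_{\bk,\bs}N_{\bk,\bs}$ is an \emph{arbitrary} spline at scale $\bk$, not the image of anything under $Q_{k}-Q_{k-1}$, so no polynomial-reproduction property is available; and even for such images, the fact that $Q_{k}-Q_{k-1}$ annihilates polynomial \emph{inputs} does not give its \emph{output} vanishing moments or small low-frequency content (quasi-interpolation differences are not wavelets). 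Concretely, by the partition of unity $\sum_{\bs\in I(\bk)}N_{\bk,\bs}\equiv 1$, the choice $q_\bk\equiv 1$ gives $\delta_{\bzero}q_\bk=q_\bk$, hence $\|\delta_{\bzero}q_\bk\|_p=\|q_\bk\|_p$ for every $\bk$, with no decay at all in $|\bk|_1$. The correct argument is asymmetric, and it shows exactly where the hypotheses enter: for $m_i\le k_i$ one uses only uniform boundedness of $\delta_\bm$ (in pointwise maximal-function form), and the geometric factor comes from the weights themselves, since $2^{r|\bm|_1}|\delta_\bm q_\bk| = \prod_i 2^{-r(k_i-m_i)}\cdot 2^{r|\bk|_1}|\delta_\bm q_\bk|$ is summable over $k_i\ge m_i$ precisely because $r>0$; for $m_i>k_i$ one uses Bernstein's inequality for splines of order $\ell$ (which admit $\ell-1$ derivatives in $L_p$) to gain $2^{-(\ell-1)(m_i-k_i)}$, which beats the weight loss $2^{r(m_i-k_i)}$ precisely because $r<\ell-1$. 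With these corrected kernels, the discrete-convolution plus Fefferman--Stein assembly you sketched (and honestly flagged as the technical obstacle, though you did not carry it out) does go through; as written, however, your proof rests on a nonexistent low-frequency margin $\sigma>r$, so the convolution kernel you sum against is not available.
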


\begin{theorem}\label{EqNormThm}
	Let $1 < p < \infty$ and $1/p < r < \ell - 1$.
	Then we have
	\begin{equation} \nonumber
		\biggl\| \biggl(\sum_{\bk \in \ZZdp} 
		\Big|2^{r|\bk|_1} q_\bk(f)\Big|^2 \biggl)^{1/2}\biggl\|_p
		\ \asymp \
		\|f\|_{\Wrp},
		\quad  \forall f \in \Wrp. 
	\end{equation}
\end{theorem}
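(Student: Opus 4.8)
The statement is exactly the synthesis of the two preceding theorems, so the plan is to extract the two one-sided estimates and then observe that the admissible ranges of $r$ overlap in precisely the interval assumed here. Fix $f \in \Wrp$ with $1/p < r < \ell - 1$, and write
$$
\Sigma(f) := \Bigl\| \Bigl(\sum_{\bk \in \ZZdp} \bigl|2^{r|\bk|_1} q_\bk(f)\bigr|^2 \Bigr)^{1/2}\Bigr\|_p
$$
for the square-function quantity on the left of the claim.

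First I would establish the $\ll$ half. Since $1/p < r < \ell - 1$ forces $1/p < r < \ell$, the hypotheses of Theorem \ref{DirectThm} are met. Applying it, $f$ is represented by the canonical B-spline quasi-interpolation series \eqref{eq:B-splineRepresentation} of Lemma \ref{lemma[representation]} with components $q_\bk(f)$, the series converges in the norm of $\Wrp$, and the direct inequality \eqref{DirectIneq} gives $\Sigma(f) \ll \|f\|_{\Wrp}$. Besides supplying one inequality, this step is what guarantees that $\Sigma(f)$ is finite, which is the standing proviso needed to invoke the inverse estimate.

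For the reverse bound I would apply Theorem \ref{InverseThm}. Its hypothesis $0 < r < \ell - 1$ holds under our assumption, and it applies to any function on $\TTd$ presented as a B-spline series \eqref{InverseThm:Representation} whose square-function norm is finite. The function $f$ is exactly of this type: by the previous paragraph it equals its own series \eqref{eq:B-splineRepresentation}, the components are $q_\bk = q_\bk(f)$, and $\Sigma(f) < \infty$. Theorem \ref{InverseThm} therefore yields $\|f\|_{\Wrp} \ll \Sigma(f)$. Combining the two displays gives $\Sigma(f) \asymp \|f\|_{\Wrp}$ for every $f \in \Wrp$, which is the assertion.

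The only point requiring care, and the thing I would check explicitly rather than treat as an obstacle, is the compatibility of hypotheses: the range $1/p < r < \ell$ of Theorem \ref{DirectThm} intersected with the range $0 < r < \ell - 1$ of Theorem \ref{InverseThm} is precisely $1/p < r < \ell - 1$, which is why the equivalence is stated exactly for this interval. One must also ensure that the series fed into Theorem \ref{InverseThm} is the same representation produced by Theorem \ref{DirectThm}; since both theorems are phrased through the common decomposition operators $q_\bk$, this matching is automatic and no independent identification of coefficients is needed. There is no genuine analytic difficulty left at this stage—the substantive work (the Littlewood--Paley-type decomposition and the associated multiplier and maximal-function estimates for the B-spline expansions) has already been carried out in proving the direct and inverse theorems.
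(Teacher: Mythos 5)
Your proposal is correct and matches the intended argument: the paper itself gives no separate proof of this equivalence (it cites \cite[Theorems 3.1--3.3]{DD2018}), and the equivalence is precisely the combination of Theorem \ref{DirectThm} and Theorem \ref{InverseThm} on the overlap $1/p < r < \ell - 1$ of their hypotheses, exactly as you argue. Your attention to the two non-obvious points---that the direct inequality also supplies the finiteness needed to invoke the inverse theorem, and that both theorems speak about the same decomposition $q_\bk(f)$ so no coefficient identification is required---is exactly what makes the deduction complete.
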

Theorems \ref{DirectThm}-- \ref{EqNormThm} were proven in \cite[Theorems 3.1--3.3]{DD2018}.

Theorems on B-spline quasi-interpolation sampling representations with discrete equivalent quasi-norm in terms of coefficient functionals have been proved in \cite{DD2009}--\cite{DD2016}, \cite{DU2015} for various non-periodic  Besov spaces and periodic Besov spaces (see also \cite[Section 5]{DTU18B} for comments and bibliography). We recall the periodic version from \cite[Corollary 3.1]{DD2018}.

\begin{theorem} 
	Let $0 < p, \theta \le \infty$ and  $1/p < r < \min\{2\ell, 2\ell - 1 + 1/p\}$. 
	Then a  periodic function $f \in B_{p, \theta}^r$ can be represented
	by the B-spline series \eqref{eq:B-splineRepresentation}
	satisfying the relation
	\begin{equation} \nonumber
		\biggl( \sum_{\bk \in \ZZdp} \
		2^{r|\bk|_1\theta}\|q_\bk(f)\|^{\theta}_p\biggl)^{1/\theta} 
		\ \asymp \ 
		\|f\|_{B_{p, \theta}^r}
	\end{equation}
	with the sum over $\bk$ changing to the supremum when $\theta = \infty$.
\end{theorem}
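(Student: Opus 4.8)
The plan is to prove the two-sided estimate by establishing separately the upper bound (a direct, Jackson-type inequality) and the lower bound (an inverse, Bernstein-type inequality); the representation \eqref{eq:B-splineRepresentation} itself is already furnished by Lemma~\ref{lemma[representation]}. The whole argument rests on two univariate building blocks for the detail operators $Q_{k_i} - Q_{k_i-1}$, which are then tensorised over the $d$ coordinate directions and summed by a discrete Hardy inequality. Since here the index $\bk$ sits \emph{outside} the $L_p(\TTd)$-norm (in an $\ell_\theta$-sum, not inside an $\ell_2$ square function as in Theorems~\ref{DirectThm}--\ref{EqNormThm}), no Littlewood--Paley/Fefferman--Stein machinery is needed, and the estimates go through for the full range $0 < p,\theta \le \infty$ once the triangle inequality is replaced by the $\nu$-triangle inequality $\|f+g\|^\nu \le \|f\|^\nu + \|g\|^\nu$ with $\nu := \min(1,p,\theta)$.

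For the direct estimate I would start from the polynomial-reproduction property $Q(g)=g$ for $g \in \Pp_{\ell-1}$ together with the locality and $C$-boundedness of $Q$. In one variable this yields the Whitney--Jackson bound $\|(Q_{k_i}-Q_{k_i-1})(g)\|_p \ll \omega_\ell(g,2^{-k_i})_p$. Applying it coordinate by coordinate to $q_\bk = \prod_{i=1}^d (Q_{k_i}-Q_{k_i-1})$ and tracking which directions actually carry a detail produces, for each $\bk$, a bound on $\|q_\bk(f)\|_p$ by the mixed modulus $\omega_\ell^{[d]}(f,2^{-\bk})_p$ (with the appropriate subset refinements indexed by $u \subset [d]$). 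Multiplying by $2^{r|\bk|_1}$, taking the $\ell_\theta(\ZZdp)$-norm, and converting the resulting dyadic sum into the defining integral of $\|f\|_{B_{p, \theta}^r}$ by a discrete Hardy inequality gives the upper estimate; the hypothesis $r>1/p$ furnishes the embedding $B_{p, \theta}^r \hookrightarrow C(\TTd)$ that makes the sampling functionals $c_{\bk,\bs}(f)$ well defined and also secures convergence of the coarse-scale contributions.

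For the inverse estimate I would bound the mixed modulus $\omega_\ell^u(f,\bt)_p$ of $f=\sum_\bk q_\bk(f)$ by splitting the sum over $\bk$ according to scale. On each coordinate $i\in u$ I distinguish the fine scales $2^{-k_i}\lesssim t_i$, where a Bernstein inequality for B-splines at dyadic scale $2^{k_i}$ gives $\|\Delta_\bh^{\ell,u}(q_\bk(f))\|_p \ll \prod_{i\in u}(2^{k_i}t_i)^\ell\,\|q_\bk(f)\|_p$, from the coarse scales $2^{-k_i}\gtrsim t_i$, where the trivial bound $\|\Delta_\bh^{\ell,u}(q_\bk(f))\|_p \ll \|q_\bk(f)\|_p$ is used. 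Substituting these into the Besov seminorm $|f|_{B_{p, \theta}^{r,u}}$, interchanging summation and integration, and applying Hardy's inequality in the opposite direction reconstructs the discrete quantity on the right-hand side. Here the upper restriction $r<\min\{2\ell,\,2\ell-1+1/p\}$ enters: it is exactly the threshold below which the Bernstein exponent $\ell$ (governed by the order and smoothness of the cardinal B-spline $M=M_\ell$) dominates the weight $2^{r|\bk|_1}$, so that the fine-scale geometric series converge.

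The main obstacle is the bookkeeping forced by the tensor-product structure: one must simultaneously carry the sum over subsets $u\subset[d]$ in the Besov seminorm and the coordinatewise split into fine and coarse scales, matching each subset against the directions in which the modulus has saturated. A clean way to organise this is to prove the one-dimensional Jackson and Bernstein inequalities first, reduce the periodic statement to the real-line quasi-interpolation operators of \eqref{def:Q}--\eqref{def:Lambda} by periodisation, and then tensorise, so that the multivariate estimate follows from iterated univariate ones together with a multidimensional discrete Hardy inequality. For $p<1$ or $\theta<1$ every application of the triangle inequality must be downgraded to the $\nu$-version, which rescales the exponents in the Hardy steps but leaves the admissible range of $r$ unchanged.
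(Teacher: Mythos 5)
The first thing to note is that the paper contains no proof of this statement: it is recalled verbatim from \cite[Corollary 3.1]{DD2018} (``We recall the periodic version from...''), so your proposal can only be measured against the method of that source and its predecessors. Your skeleton --- univariate Jackson and Bernstein inequalities for the detail operators $Q_k-Q_{k-1}$, tensorization over the $d$ coordinates, discrete Hardy inequalities, and the $\nu$-triangle inequality with $\nu=\min(1,p,\theta)$ for small $p,\theta$ --- is indeed that method, and you are right that no Littlewood--Paley machinery is needed here, since the $\ell_\theta$-sum sits outside the $L_p$-norm (in contrast to Theorems \ref{DirectThm}--\ref{EqNormThm}). One caveat on the direct part: the operators $Q_k$ involve point evaluations, so ``$C$-boundedness'' gives nothing in $L_p$; the Jackson bound must be run locally, via Whitney's inequality on knot intervals together with the locality of the mask, using the continuity of $f$ guaranteed by $r>1/p$. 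Your mention of a ``Whitney--Jackson'' bound suggests you intend this, but it should be said explicitly.

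The genuine gap is in the Bernstein step. Your displayed inequality $\|\Delta_\bh^{\ell,u}(q_\bk(f))\|_p \ll \prod_{i\in u}(2^{k_i}t_i)^{\ell}\,\|q_\bk(f)\|_p$ is false for $p>1$: a B-spline of order $\ell$ is a piecewise polynomial of degree $\ell-1$ that is only $C^{\ell-2}$ across its knots, and the sharp estimate, obtained by localizing at the knots, is $\|\Delta_h^{\ell}N_{k,s}\|_p \asymp (2^k h)^{\ell-1+1/p}\|N_{k,s}\|_p$ for $2^k h\le 1$; since $\ell-1+1/p<\ell$ when $p>1$ and $2^kh\le 1$, the true quantity exceeds your claimed bound. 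This is not cosmetic: the exponent $\ell-1+1/p$ is precisely the origin of the second term in the min in the hypothesis. With the correct exponent, the fine-scale geometric series converge exactly when $r<\ell-1+1/p$, which combined with the Jackson restriction $r<\ell$ gives the admissible range $r<\min\{\ell,\ell-1+1/p\}$; with your exponent $\ell$ the accounting would give $r<\ell$ and can never produce a ``$-1+1/p$'' term, so your assertion that it yields ``exactly'' the stated threshold is unfounded. Note also that the range as printed in the survey, $r<\min\{2\ell,2\ell-1+1/p\}$, cannot be reached by any argument with splines of order $\ell$: for $p\ge 1$ it exceeds $\ell-1+1/p$, beyond which a single B-spline $N_{\bk,\bs}$ no longer belongs to $B_{p,\theta}^r$, so no Bernstein-type inverse inequality for order-$\ell$ spline layers can hold; the factor $2$ is an artifact of the source's notation, where the spline order is denoted $2r$, and read consistently with the present paper's conventions (and with its own Besov definition, which needs modulus order $\ell>r$, and with Theorem \ref{EqNormThm}) the hypothesis should be $1/p<r<\min\{\ell,\ell-1+1/p\}$. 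A correct write-up should derive this threshold and flag the discrepancy rather than claim to recover the printed one.
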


\subsection{Sampling recovery  by Smolyak sparse-grid algorithms}
\label{Sampling recovery}
Based on the B-spline quasi-interpolation representation \eqref{eq:B-splineRepresentation}, we construct linear sampling algorithms on  Smolyak sparse grids induced by partial sums of the series in \eqref{eq:B-splineRepresentation} as follows.
For $m \in \NN$, the well known periodic Smolyak grid of points $G^d(m) \subset \TTd$ is defined as
\begin{equation}  \nonumber
	G^d(m)
	:= \
	\{\bx = 2^{-\bk} \bs: \bk \in \NNd, \ |\bk|_1 = m, \ \bs \in I(\bk)\}.
\end{equation}
Here and in what follows, we use the notations: 
$\bx \by := (x_1y_1,..., x_dy_d)$; 
$2^\bx := (2^{x_1},...,2^{x_d})$
 for $\bx, \by \in {\RR}^d$;
$x_i$ denotes the $i$th coordinate of $\bx \in \RR^d$, i.e., $\bx =: (x_1,..., x_d)$. 

For $m \in \NN_0$, we define the operator $R_m$ by 
\begin{equation*}
	R_m(f) 
	:= \ 
	\sum_{|\bk|_1 \le m} q_\bk(f)
	\ = \
	\sum_{|\bk|_1 \le m} \ \sum_{\bs \in I(\bk)} c_{\bk,\bs}(f)\, N_{\bk,\bs}.
\end{equation*} 
For functions $f$ on $\TTd$, $R_m$ defines the linear sampling algorithm 
on the Smolyak grid $G^d(m)$ 
\begin{equation*} 
	R_m(f) 
	\ = \ 
	S_n(\bX_n,\bPhi_n,f) 
	\ = \ 
	\sum_{\by \in G^d(m)} f(\bx) \varphi_{\bx}, 
\end{equation*} 
where $n := \ |G^d(m)|$, $\bX_n:= \{\bx \in G^d(m)\}$, $\bPhi_n:= \{\varphi_{\bx}\}_{\bx \in G^d(m)}$  and for 
$\bx =  2^{-\bk} \bs$,  $\varphi_\bx$ are explicitly constructed as linear combinations of at most 
at most $m_0$ B-splines $N_{\bk,\bj}$ for some $m_0 \in \NN$ which is independent of $\bk,\bs,m$ and $f$.  The operator $R_m$ is also called Smolyak (sparse-gird) sampling algorithm initiated and used by him \cite{Smo63} for quadrature and interpolation for functions having mixed smoothness. It plays an important role in sampling recovery of multivariate functions and its applications (see \cite{BuGr04}, \cite{DTU18B} for comments and bibliography). 

\begin{theorem} \label{thm[|f -  R_m(f)|_p<]}
	Let $1 < p,q < \infty$ and $1/p < r < \ell$.
	Then we have
	\begin{equation} \nonumber
		\big\|f -  R_m(f) \big\|_q
		\ \ll \
		\|f\|_{\Wrp} \times
		\begin{cases}
			2^{-rm} m^{(d-1)/2}, \ & p \ge q, \\
			2^{-(r - 1/p + 1/q)m} \, \ & p < q,
		\end{cases}
		\quad  \forall f \in \Wrp. 
	\end{equation}
\end{theorem}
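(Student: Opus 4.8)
The plan is to exploit the B-spline quasi-interpolation representation and the discrete equivalent norm for $\Wrp$ established in Theorems~\ref{DirectThm}--\ref{EqNormThm}, reducing the error $\|f - R_m(f)\|_q$ to a sum over the ``high-frequency'' levels $|\bk|_1 > m$. Writing $f - R_m(f) = \sum_{|\bk|_1 > m} q_\bk(f)$, the idea is to estimate the $L_q$-norm of this tail using a Littlewood-Paley / Bernstein-type argument. First I would record the needed building blocks: the Bernstein-type inequality controlling $\|q_\bk(f)\|_q$ in terms of $\|q_\bk(f)\|_p$ with the factor $2^{(1/p - 1/q)_+ |\bk|_1}$ arising from the embedding between $L_p$ and $L_q$ on the spline spaces of level $\bk$, and the corresponding discrete norm equivalence. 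The two cases $p \ge q$ and $p < q$ in the statement exactly mirror whether or not this embedding factor is present, so I expect to split the argument at this point.

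For the case $p \ge q$, I would use the Littlewood-Paley characterization \eqref{DirectIneq}: since $L_q \hookrightarrow L_p$ fails but the square-function control goes the favorable way, I would bound $\|f - R_m(f)\|_q$ by the $\ell_2$-combination $\big\|\big(\sum_{|\bk|_1 > m} |q_\bk(f)|^2\big)^{1/2}\big\|_q$ and then insert and extract the weight $2^{r|\bk|_1}$, applying Cauchy--Schwarz over the levels $|\bk|_1 > m$. The weighted square function $\big\|\big(\sum_{\bk} |2^{r|\bk|_1} q_\bk(f)|^2\big)^{1/2}\big\|_q \ll \|f\|_{\Wrp}$ is controlled by Theorem~\ref{EqNormThm} (with $q$ in place of $p$ after using $q \le p$ to pass to the larger norm), and the geometric decay in $2^{-r|\bk|_1}$ together with the counting estimate $|\{\bk \in \ZZdp : |\bk|_1 = j\}| \asymp j^{d-1}$ produces the factor $2^{-rm} m^{(d-1)/2}$: the exponent $(d-1)/2$ is the signature of summing $j^{d-1}$ against a geometric tail after the Cauchy--Schwarz loss of a square root over the at most $O(m^{d-1})$ contributing levels at the critical scale.

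For the case $p < q$, the embedding between spline spaces of a fixed level costs a factor $2^{(1/p-1/q)|\bk|_1}$, so I would instead estimate $\|q_\bk(f)\|_q \ll 2^{(1/p-1/q)|\bk|_1}\|q_\bk(f)\|_p$ levelwise (a mixed-smoothness Nikol'skii inequality for the B-spline blocks), then use the triangle inequality in $L_q$ together with the $\ell_1$-to-$\ell_2$ control of $\|q_\bk(f)\|_p$ supplied by the discrete norm, summing the geometric series $\sum_{|\bk|_1 > m} 2^{-(r - 1/p + 1/q)|\bk|_1}$. Here the dominant level $|\bk|_1 = m+1$ controls the sum outright and no logarithmic factor survives, which is why the bound is the clean $2^{-(r - 1/p + 1/q)m}$; the condition $r > 1/p$ guarantees that even after subtracting $1/p - 1/q$ the exponent $r - 1/p + 1/q$ stays positive so the geometric series converges.

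The main obstacle I anticipate is the careful interchange between the vector-valued (square-function) norm and the level-by-level norms in a way compatible with the mixed-smoothness structure: one must justify the Littlewood-Paley passage in $L_q$ for $p \ge q$ (which genuinely uses $1 < p, q < \infty$ and the boundedness of the associated operators, rather than naive summation) and verify the levelwise Nikol'skii-type inequality for $p < q$ with a constant uniform in $\bk$. The counting of lattice points $|\bk|_1 = j$ and the resulting power $j^{d-1}$ is routine, but pinning down the precise interplay of Cauchy--Schwarz and the geometric decay to land exactly on $m^{(d-1)/2}$ (and confirming that the $p < q$ case loses no logarithm) is the delicate quantitative heart of the argument.
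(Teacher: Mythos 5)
Your case $p \ge q$ follows the paper's own route: pointwise Cauchy--Schwarz with the weights $2^{\pm r|\bk|_1}$, the square-function bound of Theorem~\ref{DirectThm}, and the counting $|\{\bk \in \ZZdp : |\bk|_1 = j\}| \asymp j^{d-1}$ yield $2^{-rm}m^{(d-1)/2}$. One caution on the reduction: you should pass to the larger norm at the level of the error itself, $\|f-R_m(f)\|_q \le \|f-R_m(f)\|_p$, and then run the whole argument at exponent $p$. Invoking Theorem~\ref{EqNormThm} ``with $q$ in place of $p$'' is not available here, because $r>1/p$ does not imply $r>1/q$ when $q \le p$, so the hypotheses of that theorem may fail at exponent $q$.

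The case $p<q$ as you propose it has a genuine gap. The triangle inequality plus the levelwise Nikol'skii bound $\|q_\bk(f)\|_q \ll 2^{(1/p-1/q)|\bk|_1}\|q_\bk(f)\|_p$ and $\|q_\bk(f)\|_p \ll 2^{-r|\bk|_1}\|f\|_{\Wrp}$ leads to the sum
\begin{equation*}
\sum_{|\bk|_1>m} 2^{-(r-1/p+1/q)|\bk|_1} \ \asymp \ 2^{-(r-1/p+1/q)m}\, m^{d-1},
\end{equation*}
because the ``dominant level'' $|\bk|_1=m+1$ is not a single term: it contains $\asymp m^{d-1}$ multi-indices, so even that one level already carries the factor $m^{d-1}$. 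Your claim that no logarithmic factor survives is therefore exactly what this method cannot deliver; replacing the $\ell_1$-summation by Cauchy--Schwarz over levels still loses $m^{(d-1)/2}$ (and the needed Minkowski-type inequality would only hold for $p\ge 2$). The paper avoids any such loss by a different mechanism: it applies the embedding $\|g\|_q \ll \|g\|_{W^{1/p-1/q}_p(\TTd)}$ (from \cite[Lemma 3]{Be1974}) to the whole tail $g=\sum_{|\bk|_1>m}q_\bk(f)$, then Theorem~\ref{InverseThm} at smoothness $1/p-1/q$ to bound this by the weighted square function $\bigl\|\bigl(\sum_{|\bk|_1>m}|2^{(1/p-1/q)|\bk|_1}q_\bk(f)|^2\bigr)^{1/2}\bigr\|_p$, and finally extracts the factor $\sup_{|\bk|_1>m}2^{-(r-1/p+1/q)|\bk|_1}=2^{-(r-1/p+1/q)(m+1)}$ \emph{pointwise inside} the square function, after which Theorem~\ref{DirectThm} gives $\ll\|f\|_{\Wrp}$. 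Because the multiplicity of each level stays absorbed in the $\ell_2$ structure and the geometric factor is pulled out as a supremum rather than summed, no counting factor appears. Without this embedding step (or an equivalent device), the levelwise argument cannot recover the clean bound $2^{-(r-1/p+1/q)m}$.
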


\begin{proof} This theorem was proven in \cite[Theorem 4.1]{DD2018}. To understand the basic role of Theorems \ref{DirectThm}--\ref{EqNormThm} playing in its proof, let us recall that short proof. Let $f$ be a function in $\Wrp$.  Since $r > 1/p$, $f$ is continuous on $\TTd$ and consequently, we obtain by Lemma~\ref{lemma[representation]}
	\begin{equation} \label{remainder}
		f -  R_m(f) \ = \sum_{|\bk|_1 > m} \ q_\bk(f) 
	\end{equation}
	with uniform convergence.
	
	We first consider the case $p \ge q$. Due to the inequality  $\|f\|_q  \le \|f\|_p $, it is sufficient prove this case of the theorem for $p=q$. From \eqref{remainder} and Theorem  \ref{DirectThm} we have
	\begin{equation*}
		\begin{split}
			\big\|f -  R_m(f) \big\|_p
			\ &= \
			\biggl\|\sum_{|\bk|_1 > m} \ q_\bk(f) \biggl\|_p
			\ \le \
			\biggl\|\biggl(\sum_{|\bk|_1 > m} 2^{-2r|\bk|_1} \biggl)^{1/2} 
			\biggl(\sum_{|\bk|_1 > m} \big|2^{r|\bk|_1}  q_\bk(f) \big|^2 \biggl)^{1/2}\biggl\|_p
			\\[1ex] 
			\ &\le \
			\biggl(\sum_{|\bk|_1 > m} 2^{-2r|\bk|_1} \biggl)^{1/2} 
			\biggl\|\biggl(\sum_{\bk \in \ZZdp} \big|2^{r|\bk|_1}  q_\bk(f) \big|^2 \biggl)^{1/2}\biggl\|_p
				\\[1ex] 
				\ &\ll \
			2^{-rm} m^{(d-1)/2}\,\|f\|_{\Wrp}.
		\end{split}  
	\end{equation*}
	We next consider the case $p < q$. From \cite[Lemma 3]{Be1974}  one can prove the inequality
	\begin{equation} \nonumber
		\|f\|_q
		\ \ll \
		\|f\|_{W^{1/p - 1/q}_p}(\TTd), \quad \forall f \in W^{1/p - 1/q}_p(\TTd).
	\end{equation}
	Hence, by \eqref{remainder}, Theorems  \ref{InverseThm} and \ref{DirectThm} we derive that
	\begin{equation*}
		\begin{split}
			\big\|f -  R_m(f) \big\|_q
			\ &\ll \
			\biggl\|\sum_{|\bk|_1 > m} \ q_\bk(f) \biggl\|_{W^{1/p - 1/q}_p(\TTd)}
			\ \ll \
			\biggl\|\biggl(\sum_{|\bk|_1 > m} \big|2^{(1/p - 1/q)|\bk|_1}  q_\bk(f) \big|^2 \biggl)^{1/2}\biggl\|_p
			\\[1ex] 
			\ &\le \
			2^{-(r - 1/p + 1/q)m}
			\biggl\|\biggl(\sum_{\bk \in \ZZdp} \big|2^{r|\bk|_1}  q_\bk(f) \big|^2 \biggl)^{1/2}\biggl\|_p
				\\[1ex] 
				\ &\ll \
			2^{-(r - 1/p + 1/q)m}\,\|f\|_{\Wrp}.
		\end{split}  
	\end{equation*}
	The theorem is completely proven.
	\hfill
\end{proof}

Throughout the present paper, for a normed space $X$, we denote by boldface letter $\bX$ in unit ball in $X$.

From Theorem~\ref{thm[|f -  R_m(f)|_p<]} and  the lower bounds in Theorem~\ref{theorem[r_n^s><]} below
we also obtain

\begin{corollary} 
	Let $1 < p,q < \infty$ and $1/p< r < \ell$.
	Then we have
	\begin{equation}  \nonumber
		\sup_{f \in \Urp} \big\|f -  R_m(f) \big\|_q
		\ \asymp \
		\begin{cases}
			2^{-rm} m^{(d-1)/2}, \ & p \ge q, \\
			2^{-(r - 1/p + 1/q)m} \, \ & p < q.
		\end{cases}
	\end{equation}
\end{corollary}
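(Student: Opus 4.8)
The plan is to establish the two matching bounds separately: the upper bound is immediate from the theorem just proved, while the lower bound carries the content.

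For the upper bound I would simply pass to the supremum over $f \in \Urp$ in Theorem~\ref{thm[|f -  R_m(f)|_p<]}. Since that estimate holds for every $f \in \Wrp$ with an implied constant independent of $f$, taking $\sup_{f \in \Urp}$ yields $\sup_{f \in \Urp}\|f - R_m(f)\|_q \ll 2^{-rm} m^{(d-1)/2}$ when $p \ge q$ and $\ll 2^{-(r-1/p+1/q)m}$ when $p < q$. No further work is needed here.

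For the lower bound I would exploit that $R_m$ is itself a linear sampling algorithm using values on the Smolyak grid $G^d(m)$, hence a particular Smolyak sparse-grid algorithm built from $n := |G^d(m)| \asymp 2^m m^{d-1}$ sampled values. Consequently its worst-case error over $\Urp$ cannot drop below the corresponding Smolyak sampling $n$-width, so $\sup_{f \in \Urp}\|f - R_m(f)\|_q$ is bounded below by the quantity whose lower estimate is supplied by Theorem~\ref{theorem[r_n^s><]}. Since the lower bound of a width is, by definition of the infimum, a lower bound for every admissible algorithm, the $p \ge q$ and $p < q$ lower bounds of Theorem~\ref{theorem[r_n^s><]} (read in the level parameter $m$) give $\sup_{f \in \Urp}\|f - R_m(f)\|_q \gg 2^{-rm}m^{(d-1)/2}$ and $\gg 2^{-(r-1/p+1/q)m}$ respectively, matching the upper bounds and delivering the claimed $\asymp$.

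Should a self-contained argument be preferred, I would instead exhibit fooling functions: for $p \ge q$ take $f^* = \sum_{|\bk|_1 = m+1}\varepsilon_\bk g_\bk$, a sum over the $\asymp m^{d-1}$ diagonal blocks of frequency-localized B-spline bumps $g_\bk$, normalized so that $\|f^*\|_{\Wrp}\asymp 1$ and chosen to vanish at every node of $G^d(m)$, whence $R_m(f^*)=0$ and $\|f^*-R_m(f^*)\|_q=\|f^*\|_q$. Bounding $\|f^*\|_q$ from below via the square-function characterization of Theorem~\ref{EqNormThm} together with a Khintchine-type averaging over the signs $\varepsilon_\bk$ produces the factor $m^{(d-1)/2}$; for $p<q$ a single block suffices and the shift to the exponent $r-1/p+1/q$ comes from the embedding used inside the proof of Theorem~\ref{thm[|f -  R_m(f)|_p<]}. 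The main obstacle is precisely this $m^{(d-1)/2}$ factor in the regime $p \ge q$: capturing it needs the $\ell_2$ (square-function) structure of the $\Wrp$-norm valid for $1<p<\infty$ rather than a crude blockwise estimate, and when invoking Theorem~\ref{theorem[r_n^s><]} one must match the level $m$ to the cardinality $n \asymp 2^m m^{d-1}$ consistently.
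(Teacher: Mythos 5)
Your proposal is correct and follows essentially the same route as the paper: the upper bound by taking the supremum in Theorem~\ref{thm[|f -  R_m(f)|_p<]}, and the lower bound by observing that $R_m$ is an admissible Smolyak-grid algorithm on $n=|G^d(m)|\asymp 2^m m^{d-1}$ nodes, so its worst-case error is bounded below by the lower bounds for $\varrho^{\rm{s}}_n(\Urp,L_q(\TTd))$ in Theorem~\ref{theorem[r_n^s><]}, which translate back to $2^{-rm}m^{(d-1)/2}$ and $2^{-(r-1/p+1/q)m}$ since $\log n\asymp m$. The additional fooling-function sketch is not needed and is not part of the paper's argument.
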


If the approximation error is measured in the norm of $L_\infty(\TTd)$, we have \cite[Theorem~4.3]{DD2018}
\begin{theorem} \label{theorem[|f -  R_m(f)|_infty><]}
	Let $1 < p < \infty$ and $1/p < r < \ell$.
	Then we have
	\begin{equation}  \nonumber
		\sup_{f \in \Urp} \big\|f -  R_m(f) \big\|_\infty
		\ \asymp \
		2^{-(r-1/p)m} m^{(d-1)(1-1/p)}.
	\end{equation}
\end{theorem}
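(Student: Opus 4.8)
The plan is to pull everything back to the B-spline blocks $q_\bk(f)$ furnished by Theorems~\ref{DirectThm}--\ref{EqNormThm} and to estimate the tail of the B-spline series of Lemma~\ref{lemma[representation]} in $L_\infty(\TTd)$ one hyperbolic layer at a time. Since $r>1/p$, every $f\in\Wrp$ is continuous, and as in the proof of Theorem~\ref{thm[|f -  R_m(f)|_p<]} we have the uniformly convergent remainder
\[
f-R_m(f)\ =\ \sum_{|\bk|_1>m} q_\bk(f)\ =\ \sum_{j>m} A_j(f),\qquad A_j(f):=\sum_{|\bk|_1=j} q_\bk(f),
\]
so that $\|f-R_m(f)\|_\infty\le\sum_{j>m}\|A_j(f)\|_\infty$. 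First I would bound each layer in $L_p$: applying the Littlewood--Paley inequality behind Theorem~\ref{DirectThm} to the subcollection $\{\bk:|\bk|_1=j\}$ and then enlarging the index set to all of $\ZZdp$ gives
\[
\|A_j(f)\|_p\ \ll\ \Big\|\Big(\sum_{|\bk|_1=j}|q_\bk(f)|^2\Big)^{1/2}\Big\|_p\ =\ 2^{-rj}\Big\|\Big(\sum_{|\bk|_1=j}\big|2^{r|\bk|_1}q_\bk(f)\big|^2\Big)^{1/2}\Big\|_p\ \ll\ 2^{-rj}\,\|f\|_{\Wrp}.
\]

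The decisive ingredient is a Nikol'skii-type inequality that transfers the estimate from $L_p$ to $L_\infty$ on a single layer: for every $g$ whose B-spline expansion lives on $\{\bk:|\bk|_1=j\}$,
\[
\|g\|_\infty\ \ll\ 2^{j/p}\,j^{(d-1)(1-1/p)}\,\|g\|_p.
\]
Granting this, I would combine it with the layer bound to obtain $\|A_j(f)\|_\infty\ll 2^{-(r-1/p)j}j^{(d-1)(1-1/p)}\|f\|_{\Wrp}$ and then sum over $j>m$; as $r-1/p>0$, the geometric factor makes the term at $j\asymp m$ dominate and produces the claimed upper bound $2^{-(r-1/p)m}m^{(d-1)(1-1/p)}$.

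The hard part will be establishing this layer inequality when $1<p<2$. For $p\ge2$ it is elementary: one estimates $\|g\|_\infty\le\sum_{|\bk|_1=j}\|q_\bk\|_\infty\ll 2^{j/p}\sum_{|\bk|_1=j}\|q_\bk\|_p$, uses H\"older over the $\asymp j^{d-1}$ blocks to create the factor $(j^{d-1})^{1-1/p}=j^{(d-1)(1-1/p)}$, and dominates $\big(\sum_{|\bk|_1=j}\|q_\bk\|_p^p\big)^{1/p}$ by the square function through the pointwise inclusion $\ell^p\subset\ell^2$. For $p<2$ this inclusion reverses, and from $\Wrp\hookrightarrow B_{p,2}^r(\TTd)$ one only controls $\{\|q_\bk\|_p\}$ in $\ell^2$, not $\ell^p$; the blockwise bound then degrades to the larger exponent $j^{(d-1)/2}$. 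Reaching the sharp $j^{(d-1)(1-1/p)}$ therefore demands exploiting cancellation in the integrated norm rather than summing blockwise, i.e.\ a genuine sharp Nikol'skii inequality on hyperbolic layers, which I would prove directly from the B-spline representation; this is exactly the step I expect to be the main obstacle.

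For the matching lower bound it suffices to exhibit one fooling function. I would take $f^\ast$ to vanish at every node of the Smolyak grid $G^d(m)$, which forces $R_m(f^\ast)=0$ and hence $\|f^\ast-R_m(f^\ast)\|_\infty=\|f^\ast\|_\infty$. Concretely, set $f^\ast=c\sum_{|\bk|_1=m+1}N_{\bk,\bs_\bk}$, one tensor-product B-spline bump per block of the layer, with the supports arranged to share a common point $\bx_0$, so that $f^\ast(\bx_0)\asymp\#\{\bk:|\bk|_1=m+1\}\asymp m^{d-1}$. Computing $\|f^\ast\|_{\Wrp}$ through the square function, a dyadic-annulus decomposition around $\bx_0$ combined with a lattice-point count of $\{\bk\le\ba:|\bk|_1=m+1\}$ yields $\|f^\ast\|_{\Wrp}\asymp c\,2^{(r-1/p)m}m^{(d-1)/p}$. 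Normalizing $\|f^\ast\|_{\Wrp}\asymp1$ and inserting $f^\ast(\bx_0)$ reproduces $\|f^\ast\|_\infty\gg 2^{-(r-1/p)m}m^{(d-1)(1-1/p)}$; the delicate point is precisely that square-function estimate, which realizes the extremal configuration saturating the layer Nikol'skii inequality.
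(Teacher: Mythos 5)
The paper itself states this theorem without proof, citing \cite[Theorem 4.3]{DD2018}, so your proposal must be measured against what a complete argument actually requires.

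Your layer decomposition and your treatment of $p\ge 2$ are correct: the block inequality $\|q_\bk(f)\|_\infty \ll 2^{|\bk|_1/p}\|q_\bk(f)\|_p$, H\"older over the $\asymp j^{d-1}$ blocks of a layer, the pointwise inequality $\sum_\bk |a_\bk|^p \le \big(\sum_\bk |a_\bk|^2\big)^{p/2}$ (valid only for $p\ge2$), and Theorem \ref{DirectThm} do yield $\|A_j(f)\|_\infty \ll 2^{-(r-1/p)j} j^{(d-1)(1-1/p)}\|f\|_{\Wrp}$, and the geometric sum over $j>m$ finishes. The genuine gap is exactly where you locate it, $1<p<2$, but it is a gap and not a deferrable technicality, because it is the whole content of the theorem in that range: from Theorem \ref{DirectThm} one only controls $\{2^{r|\bk|_1}\|q_\bk(f)\|_p\}_\bk$ in $\ell^2$ when $p<2$, and every soft route built on that (Cauchy--Schwarz over the tail, or $\Wrp\hookrightarrow B^r_{p,2}(\TTd)$) produces the exponent $m^{(d-1)/2}$, strictly larger than the claimed $m^{(d-1)(1-1/p)}$. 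The layer Nikol'skii inequality you posit, $\|g\|_\infty \ll 2^{j/p} j^{(d-1)(1-1/p)}\|g\|_p$ for \emph{arbitrary} layer functions, cannot simply be invoked: its exact trigonometric analogue in the range $1\le p<2$ is not known (the available upper bound there carries $j^{(d-1)/2}$), so this is not a routine lemma. What actually closes the gap, uniformly in $1<p<\infty$, is a different and stronger use of the hypothesis $f\in\Wrp$, namely a Jawerth--Franke type embedding for dominating mixed smoothness, $\big(\sum_{\bk\in\ZZdp} \big(2^{(r-1/p)|\bk|_1}\|q_\bk(f)\|_\infty\big)^p\big)^{1/p} \ll \|f\|_{\Wrp}$ (note the third index $p$, not $\max(p,2)$); with it, H\"older over $\{|\bk|_1>m\}$ against the weight $2^{-(r-1/p)|\bk|_1}$, whose $\ell^{p'}$-norm is $\asymp 2^{-(r-1/p)m} m^{(d-1)/p'}$, gives the upper bound in one line. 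Alternatively, your layer inequality is plausibly provable for B-splines (not for trigonometric blocks) by localization, using that two blocks $N_{\bk,\bs}$, $N_{\bk',\bs'}$ with $|\bk|_1=|\bk'|_1=j$ sharing a point have intersection of measure $\ll 2^{-j-|\bk-\bk'|_1/2}$, so that $\|g\|_p^p$ really collects the $\ell^p$ sum of aligned peak coefficients; but either route is a substantive missing ingredient, and as written your upper bound stands only for $p\ge2$.

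Your lower bound has a second, fixable, gap: you assert but never verify that the bumps $N_{\bk,\bs_\bk}$, $|\bk|_1=m+1$, can share a common point \emph{and} vanish on $G^d(m)$, and for generic placement this is false. The support of $N_{\bk,\bs_\bk}$ is a box of volume $\asymp 2^{-m}$, and for every $\bk'$ with $|\bk'|_1=m$ that is not coordinatewise $\le\bk$ this box is longer than the mesh $2^{-k'_i}$ in some direction, hence in general contains points of the component grid $\{2^{-\bk'}\bs'\}\subset G^d(m)$; then $R_m(f^\ast)\ne 0$ and the identity $\|f^\ast-R_m(f^\ast)\|_\infty=\|f^\ast\|_\infty$ collapses. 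The standard repair is to center all bumps at a point with coordinates far from every dyadic rational, e.g.\ $\bx_0=(1/3,\dots,1/3)$, and to work on the layer $|\bk|_1=m+d+1$: then for each $\bk'$ with $|\bk'|_1=m$ some coordinate has $k_i\ge k'_i+2$, so the short side of the box, containing $x_{0,i}$ and of length $\le 2^{-k'_i-2}<\tfrac13 2^{-k'_i}$, lies strictly between consecutive points of $2^{-k'_i}\ZZ$. With that fix, the rest of your computation is correct: Theorem \ref{InverseThm} plus the annulus decomposition and the count of $\{\bk\le\ba:\,|\bk|_1=m+1\}$ indeed give $\|f^\ast\|_{\Wrp}\asymp c\,2^{(r-1/p)m}m^{(d-1)/p}$, and normalizing yields the matching lower bound $2^{-(r-1/p)m}m^{(d-1)(1-1/p)}$.
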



It worths to notice the following. For approximation of functions from $\Urp$,	we could take the sampling operator 
$$
R_m^{\square}(f):= \sum_{|\bk|_\infty\le m} q_\bk(f)
$$ 
on the  traditional standard grid 
$$
G^d_\square(m):= \big\{ 2^{-\bk}\bs: |\bk|_\infty  = m, \  \bs \in I(\bk)\big\}.
$$ 
However, it is easy to verify that the error of approximation of $f \in \Urp$ by  $R_m^{\square}(f)$ is the same as  by $R_m(f)$. On the other hand, the sparsity of   the grid $G^d(m)$ in the operator $R_m(f)$, is much higher than the sparsity of the grids $G^d_\square(m)$ comparing 
$|G^d(m)| \asymp 2^m m^{d-1}$  with $|G^d_\square(m)| \asymp 2^{dm}$.  

A natural question arises are there  sampling algorithms that use the Smolyak sparse grids $G^d(m)$ which give  bounds better than $R_m(f)$ for the Sobolev class $\Urp$. To answer this question, 
let us introduce the Smolyak sampling $n$-width $\varrho^{{s}}_n(\bW)_q$ characterizing  optimality of sampling recovery  on Smolyak grids $G^d(m)$ with respect to the function class $\bW$ by
\begin{equation} \nonumber
\varrho^{\rm{s}}_n(\bW,L_q(\TTd))
\ := \ \inf_{|G^d(m)| \le n, \, \Phi_m} \  \sup_{f \in \bW} \, \|f - S_m(\bPhi_m,f)\|_q,
\end{equation}
where for a family $\bPhi_m = \{\varphi_\bx\}_{\bx \in G^d(m)}$ of functions we define the linear sampling algorithm
$S_m(\bPhi_m,\cdot)$ on Smolyak grids $G^d(m)$ by
\begin{equation*}
S_m(\bPhi_m,f)
\ = \
\sum_{\by \in G^d(m)} f(\bx) \varphi_\bx.
\end{equation*}
The upper letter $\rm{s}$ indicates that we restrict to Smolyak grids here. From the definition it follows that 
\begin{equation} \label{rho_n < rho_n^s}
	\varrho_n(\bW,L_q(\TTd))
	\ \le \
\varrho^{\rm{s}}_n(\bW,L_q(\TTd)).
\end{equation}

The following theorem \cite[Theorem 4.2]{DD2018} confirms the asymptotic optimality of the Smolyak sampling algorithms  $R_m$  for the sampling recovery by using  the sparse Smolyak grids $G^d(m)$ of the Sobolev class $\Urp$.

\begin{theorem} \label{theorem[r_n^s><]}
	Let $1 < p,q < \infty$ and $r > 1/p$. For $n \in \NN$, let $m_n$ be the largest integer number  such that 
	$|G^d(m_n)| \le n$.
	Then we have
	\begin{equation} \label{r_n^s><]}
	\sup_{f \in \Urp} \big\|f -  R_{m_n}(f) \big\|_q
	\asymp 
	\varrho^{\rm{s}}_n(\Urp,L_q(\TTd)) 
	\asymp 
	\begin{cases}
	\biggl(\frac{(\log n)^{d-1}}{n}\biggl)^{r} (\log n)^{(d-1)/2}, \ & p \ge q, 
	\\[1.5ex]
	\biggl(\frac{(\log n)^{d-1}}{n}\biggl)^{(r-1/p+1/q)},   \ & p < q.
	\end{cases}
	\end{equation}
\end{theorem}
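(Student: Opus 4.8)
Denote by $a_n$ the piecewise right-hand side of the claimed equivalence. The plan is to squeeze the three quantities between a common upper and lower bound of order $a_n$. Because $R_{m_n}$ is a Smolyak sampling algorithm supported on $G^d(m_n)$ with $|G^d(m_n)|\le n$, it is admissible in the infimum defining $\varrho^{\rm{s}}_n$, so
\[
\varrho^{\rm{s}}_n(\Urp,L_q(\TTd)) \ \le\ \sup_{f\in\Urp}\big\|f-R_{m_n}(f)\big\|_q .
\]
Hence it suffices to prove the two one-sided estimates
\[
\sup_{f\in\Urp}\big\|f-R_{m_n}(f)\big\|_q \ \ll\ a_n
\qquad\text{and}\qquad
\varrho^{\rm{s}}_n(\Urp,L_q(\TTd)) \ \gg\ a_n ;
\]
together with the displayed inequality these force all three quantities to be $\asymp a_n$.

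The first estimate is immediate from Theorem~\ref{thm[|f -  R_m(f)|_p<]} applied with $m=m_n$, which bounds the supremum by $2^{-rm_n}m_n^{(d-1)/2}$ when $p\ge q$ and by $2^{-(r-1/p+1/q)m_n}$ when $p<q$. It remains to convert the scale $m_n$ into $n$. By the maximality of $m_n$ and the growth relation $|G^d(m)|\asymp 2^m m^{d-1}$ one has $|G^d(m_n)|\le n\le|G^d(m_n+1)|\asymp|G^d(m_n)|$, whence $n\asymp 2^{m_n}m_n^{d-1}$; consequently $m_n\asymp\log n$ and $2^{-m_n}\asymp (\log n)^{d-1}/n$. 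Substituting these into the two cases turns the bounds into exactly $a_n$.

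For the lower bound I would pass to the radius of information. Fix an admissible pair $(m,\bPhi_m)$ with $m\le m_n$. Since $S_m(\bPhi_m,\cdot)$ is linear and every node of $G^d(m)$ is a sampling point, any $g\in\Urp$ with $g|_{G^d(m)}=0$ has vanishing data, so $S_m(\bPhi_m,g)=0$ and thus
\[
\sup_{f\in\Urp}\big\|f-S_m(\bPhi_m,f)\big\|_q \ \ge\ \sup\big\{\|g\|_q:\ g\in\Urp,\ g|_{G^d(m)}=0\big\}.
\]
It therefore suffices to exhibit, for $m=m_n$, a grid-vanishing $g$ of unit $\Wrp$-norm with $\|g\|_q\gg a_n$; coarser grids $m<m_n$ only enlarge this radius, since $2^{-rm}m^{(d-1)/2}$ is decreasing in $m$. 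In the span of the B-splines $N_{\bk,\bs}$ carried by the boundary layer $|\bk|_1=m_n+1$ the dimension is $\asymp 2^{m_n}m_n^{d-1}\gg n$, so a dimension count already furnishes nonzero $g$ vanishing at all $\le n$ nodes. In the case $p<q$ one may take $g$ to be essentially a single normalized bump $N_{\bk,\bs}$ with $|\bk|_1\asymp m_n$ localized in a cell free of nodes; the discrete equivalent norms of Theorems~\ref{DirectThm}--\ref{EqNormThm} give $\|g\|_{\Wrp}\asymp 2^{r|\bk|_1}\|g\|_p$ and $\|g\|_q/\|g\|_p\asymp 2^{|\bk|_1(1/p-1/q)}$, so that $\|g\|_q/\|g\|_{\Wrp}\asymp 2^{-(r-1/p+1/q)m_n}\asymp a_n$, as required.

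The genuinely delicate regime is $p\ge q$, where the extra factor $(\log n)^{(d-1)/2}=m_n^{(d-1)/2}$ must be produced. Here a test function concentrated on a single value of $|\bk|_1$ is useless: every such grid-vanishing function satisfies $\|g\|_{\Wrp}\asymp 2^{r|\bk|_1}\|g\|_p$ and $\|g\|_q\le\|g\|_p$, hence has ratio $\|g\|_q/\|g\|_{\Wrp}\ll 2^{-rm_n}$, missing the factor. The factor has to come from a $g$ spread across many levels, which I would obtain from the Smolyak interpolation operator $I_m$ associated with $G^d(m)$ (the grid being unisolvent for the corresponding spline space): for $g=(I-I_m)f$ one has $g|_{G^d(m)}=0$ automatically, while the aliasing inherent in $I_m$ forces $g$ to carry mass simultaneously on the $\asymp m_n^{d-1}$ multi-indices of the critical layer together with their coarse aliases. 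Evaluating $\|g\|_q$ and $\|g\|_{\Wrp}$ through the square-function norms of Theorems~\ref{DirectThm}--\ref{EqNormThm}, the $\ell_2$-summation over those $\asymp m_n^{d-1}$ indices contributes an $m_n^{d-1}$, whose square root supplies the missing $m_n^{(d-1)/2}$ and makes the Cauchy--Schwarz step in the proof of Theorem~\ref{thm[|f -  R_m(f)|_p<]} sharp in the reverse direction. Choosing $f$ so that $\|g\|_{\Wrp}\asymp\|f\|_{\Wrp}$ while $\|g\|_q\gg 2^{-rm_n}m_n^{(d-1)/2}\|f\|_{\Wrp}$, and verifying that this lower bound holds against every linear reconstruction on the grid, is the main obstacle; it is precisely the content carried out in \cite{DD2018}.
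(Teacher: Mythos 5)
Your upper-bound half is fine, and it is in substance all the paper itself supplies: the survey does not prove this theorem but quotes it from \cite[Theorem 4.2]{DD2018}, and the one ingredient it does prove is Theorem \ref{thm[|f -  R_m(f)|_p<]}, which you combine correctly with $n\asymp|G^d(m_n)|\asymp 2^{m_n}m_n^{d-1}$, hence $m_n\asymp\log n$ and $2^{-m_n}\asymp(\log n)^{d-1}/n$. Your reduction of the lower bound to grid-vanishing fooling functions (radius of information, nestedness of the grids $G^d(m)$) is also sound. The gap is that neither of your lower-bound constructions works as stated, and for the main case $p\ge q$ you concede this by deferring to \cite{DD2018}. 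Concretely: (a) for $p<q$, a nonzero $g$ obtained by dimension count in the layer span carries no control whatsoever of $\|g\|_q/\|g\|_p$; you need this ratio to be of extremal Nikolskii type, $\asymp 2^{m_n(1/p-1/q)}$, which a generic element of a codimension-$n$ subspace need not attain. The single-bump alternative also fails, because a cell free of nodes at level $|\bk|_1=m_n+O(1)$ need not exist: for each fixed $\bk$ there are only $\asymp 2^{m_n}$ cells, against $|G^d(m_n)|\asymp 2^{m_n}m_n^{d-1}$ nodes, so the pigeonhole goes the wrong way by a factor $m_n^{d-1}$, and repairing it by raising the level to $m_n+(d-1)\log_2 m_n$ loses a factor $m_n^{-(d-1)(r-1/p+1/q)}$ in the final bound. (b) For $p\ge q$ you diagnose correctly that the factor $m_n^{(d-1)/2}$ must come from an $\ell_1$-versus-$\ell_2$ imbalance over the $\asymp m_n^{d-1}$ indices of a layer, but no fooling function is actually produced, and ``this is precisely the content of \cite{DD2018}'' is not a proof.

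The missing idea, which settles both cases at once, is to use functions that vanish \emph{at} the Smolyak nodes rather than trying to avoid them. Let $v_k\ge 0$ be $1$-periodic, smooth, and vanishing on the dyadic grid of step $2^{-k}$ (a $\sin^2(\pi 2^k x)$-type function, or its spline analogue adapted to the mesh $\ell^{-1}2^{-k}\ZZ$), and put $t_\bk(\bx)=\prod_{i=1}^d v_{k_i}(x_i)$ with $|\bk|_1=m-d+1$. If $|\bk'|_1=m$, then $k'_i\le k_i$ for at least one $i$ (otherwise $|\bk'|_1\ge|\bk|_1+d>m$), and every node with this $\bk'$ has its $i$-th coordinate on the grid where $v_{k_i}$ vanishes; hence every such $t_\bk$ vanishes on all of $G^d(m)$. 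For $p\ge q$ take $g=\lambda\sum_{|\bk|_1=m-d+1}t_\bk$: nonnegativity gives $\|g\|_q\ge\|g\|_1\asymp\lambda m^{d-1}$, whereas the square-function bound (Theorem \ref{InverseThm}) yields $\|g\|_{\Wrp}\ll\lambda 2^{rm}m^{(d-1)/2}$, because the $\asymp m^{d-1}$ blocks of the layer enter in $\ell_2$; choosing $\lambda\asymp 2^{-rm}m^{-(d-1)/2}$ produces a unit-ball fooling function with $\|g\|_q\gg 2^{-rm}m^{(d-1)/2}$, exactly the claimed rate. For $p<q$ a single localized term $\prod_i v_{k_i}(x_i)\,\phi(2^{k_i}x_i-s_i)$, supported in one cell yet still vanishing on $G^d(m)$ through the factors $v_{k_i}$, has $\|g\|_q/\|g\|_{\Wrp}\asymp 2^{-m(r-1/p+1/q)}$. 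This vanishing construction (carried out in B-spline form) is what the lower-bound proof in \cite{DD2018} contains and what your sketch leaves out.
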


Another interesting question is when the  Smolyak sampling algorithms $R_{m_n}$ given as in Theorem~\ref{theorem[r_n^s><]}, are asymptotically optimal for the linear sampling $n$-widths $\varrho_n(\Urp,L_q(\TTd))$. So far there are known only a few cases when the asymptotic optimality  of the Smolyak sampling algorithms $R_{m_n}$ is confirmed as in the following theorem. Moreover, to our knowledge, excepting Smolyak sparse-grid sampling algorithms  by using $B$-spine quasi-interpolation or de la Vall\'ee Poussin kernels, there is no asymptotically optimal linear sampling algorithms  of another type for $\varrho_n(\Urp,L_q(\TTd))$ with $1\le p,q \le \infty$.

\begin{theorem} \label{theorem[r_n,p<q]} Let $r > 1/p$ and 
	for $n \in \NN$, let $m_n$ be the largest integer number  such that 
	$|G^d(m_n)| \le n$.
	Then we have that
		\begin{itemize}
		\item[{\rm(i)}] for $1 < p < q \le 2$  or $2\le p < q < \infty$, 
	\begin{equation}  \label{[r_n,p<q]}
		\sup_{f \in \Urp} \big\|f -  R_{m_n}(f) \big\|_q
		\asymp 
	\varrho_n(\Urp,L_q(\TTd)) 
	\ \asymp \
	\biggl(\frac{(\log n)^{d-1}}{n}\biggl)^{(r-1/p + 1/q)}, and
	\end{equation}
\item[{\rm(ii)}] 
\begin{equation*} 
		\sup_{f \in \bW^r_2(\TTd)} \big\|f -  R_{m_n}(f) \big\|_\infty
	\asymp 
	\varrho_n(\bW^r_2(\TTd), L_\infty(\TTd))	
	\ \asymp \ 
\biggl(\frac{(\log n)^{d-1}}{n}\biggl)^{(r-1/2)}(\log n)^{(d-1)/2} \,.
\end{equation*}
\end{itemize}
\end{theorem}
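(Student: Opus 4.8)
The plan is to trap each of the three quantities in \eqref{[r_n,p<q]} (and in the corresponding display of (ii)) between two copies of the stated rate, taking the upper bound from the already-established convergence of the Smolyak algorithm $R_{m_n}$ and the lower bound from the known asymptotic order of the Kolmogorov $n$-width. For the upper bounds, note that the Smolyak grid $G^d(m_n)$ carries $|G^d(m_n)| \le n$ points, so $R_{m_n}$ is an admissible linear sampling algorithm and hence $\varrho_n(\Urp,L_q(\TTd)) \le \sup_{f \in \Urp}\|f - R_{m_n}(f)\|_q$ in case (i), and analogously with $L_q$ replaced by $L_\infty$ and $p=2$ in case (ii). It therefore suffices to estimate the sampling error of $R_{m_n}$ from above. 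In case (i), where $p<q$, I would invoke Theorem~\ref{thm[|f -  R_m(f)|_p<]}, which gives $\sup_f\|f - R_m(f)\|_q \ll 2^{-(r-1/p+1/q)m}$, and in case (ii) Theorem~\ref{theorem[|f -  R_m(f)|_infty><]} with $p=2$, giving $\sup_f\|f-R_m(f)\|_\infty \ll 2^{-(r-1/2)m}m^{(d-1)/2}$. Passing from the scale $m_n$ to the sample count $n$ through the cardinality estimate $|G^d(m)| \asymp 2^m m^{d-1}$ — so that $m_n \asymp \log n$ and $2^{-m_n} \asymp (\log n)^{d-1}/n$ — converts these two bounds precisely into the right-hand sides appearing in the theorem.

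For the lower bounds I would use the elementary chain \eqref{eq-relations}, $d_n \le \lambda_n \le \varrho_n$, so that any lower bound for the Kolmogorov $n$-width transfers at once to $\varrho_n$. It thus remains to show $d_n(\Urp,L_q(\TTd)) \gg \left(\frac{(\log n)^{d-1}}{n}\right)^{r-1/p+1/q}$ in case (i), and $d_n(\bW^r_2(\TTd),L_\infty(\TTd)) \gg \left(\frac{(\log n)^{d-1}}{n}\right)^{r-1/2}(\log n)^{(d-1)/2}$ in case (ii). These are exactly the known sharp asymptotic orders of the Kolmogorov widths of mixed-smoothness classes, which I would quote from the literature (see, e.g., \cite[Section 5]{DTU18B}); in case (i) one moreover has $d_n \asymp \lambda_n$, so the linear width gives the same bound. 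The restriction in (i) to $1<p<q\le 2$ or $2\le p<q<\infty$ — that is, $p$ and $q$ on the same side of $2$ — is precisely the regime in which $d_n$ carries the clean order $(\cdots)^{r-1/p+1/q}$ with no additional logarithmic factor, matching the Smolyak rate; for $p<2<q$ the widths acquire extra factors and the equivalence with the sampling rate breaks down, which is why those cases are excluded.

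Combining the two directions yields, in case (i), the squeeze $\left(\frac{(\log n)^{d-1}}{n}\right)^{r-1/p+1/q} \ll d_n(\Urp,L_q(\TTd)) \le \varrho_n(\Urp,L_q(\TTd)) \le \sup_{f \in \Urp}\|f - R_{m_n}(f)\|_q \ll \left(\frac{(\log n)^{d-1}}{n}\right)^{r-1/p+1/q}$, forcing all three to be of the stated order; case (ii) is entirely analogous with $L_q$ replaced by $L_\infty$ and $p=2$. The main obstacle is the lower-bound input, and most acutely the Kolmogorov width $d_n(\bW^r_2(\TTd),L_\infty(\TTd))$ of part (ii): the $L_\infty$-widths of mixed-smoothness classes are considerably more delicate than their $L_q$ counterparts, and securing the sharp factor $(\log n)^{(d-1)/2}$ — rather than a weaker power of $\log n$ — is the genuinely hard step, typically requiring a careful discretization to finite-dimensional widths together with a volume/small-ball argument on a dyadic block of hyperbolic-cross frequencies.
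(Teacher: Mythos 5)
Your upper-bound half is fine and is exactly the route the cited sources take: $R_{m_n}$ uses at most $n$ samples, so $\varrho_n(\Urp,L_q(\TTd))\le\sup_{f\in\Urp}\|f-R_{m_n}(f)\|_q$, the error of $R_m$ is controlled by Theorem~\ref{thm[|f -  R_m(f)|_p<]} for $p<q$ and by Theorem~\ref{theorem[|f -  R_m(f)|_infty><]} for $p=2,\ q=\infty$, and $|G^d(m)|\asymp 2^m m^{d-1}$ converts $2^{-m_n}$ into $(\log n)^{d-1}/n$. (Note the paper itself gives no proof of this theorem: it is quoted, with (i) attributed to \cite{DD2018}, \cite{BU2017}, \cite{BDSU2016} and (ii) to \cite{Tem1993}.)

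The genuine gap is in your lower bounds. Your assertion that ``$p$ and $q$ on the same side of $2$'' is precisely where $d_n$ has the order $\bigl((\log n)^{d-1}/n\bigr)^{r-1/p+1/q}$ is false in the upper regime: for $2\le p<q<\infty$ the Kashin--Temlyakov estimates give $d_n(\Urp,L_q(\TTd))\asymp\bigl(n^{-1}(\log n)^{d-1}\bigr)^{r}$ (embed $\bW^r_p(\TTd)\subset\bW^r_2(\TTd)$ and use $d_n(\bW^r_2(\TTd),L_q(\TTd))\asymp(n^{-1}(\log n)^{d-1})^{r}$ for $2\le q<\infty$, see \cite[Section 4.3]{DTU18B}), which is of \emph{strictly smaller} order than the target rate; your squeeze through $d_n\le\varrho_n$ therefore only yields the exponent $r$, not $r-1/p+1/q$, and your claim $d_n\asymp\lambda_n$ also fails there. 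Case (ii) is worse: already for $d=1$ one has $d_n(\bW^r_2(\TT),L_\infty(\TT))\asymp n^{-r}$ while $\varrho_n\asymp n^{-r+1/2}$, and in the mixed case $d_n(\bW^r_2(\TTd),L_\infty(\TTd))\ll(n^{-1}(\log n)^{d-1})^{r}(\log n)^{(d-1)/2}$; the Kolmogorov-width lower bound you propose to prove (by whatever volume or small-ball refinement) is simply not true, so this step cannot be repaired within your scheme. The correct route, and effectively the one used in the literature behind this theorem, is through the middle term of \eqref{eq-relations}, $\varrho_n\ge\lambda_n$: for $2\le p<q<\infty$ the duality $\lambda_n(\bW^r_p(\TTd),L_q(\TTd))=\lambda_n(\bW^r_{q'}(\TTd),L_{p'}(\TTd))$ with $1<q'<p'\le 2$ reduces the linear width to the lower regime, where it does have order $\bigl((\log n)^{d-1}/n\bigr)^{r-1/p+1/q}$; for (ii) one may either use the Hilbert-source trace argument $\lambda_n(\bW^r_2(\TTd),L_\infty(\TTd))^2\gg\sum_{k>n}d_k(\bW^r_2(\TTd),L_2(\TTd))^2\asymp n^{1-2r}(\log n)^{2r(d-1)}$, whose square root is exactly $\bigl((\log n)^{d-1}/n\bigr)^{r-1/2}(\log n)^{(d-1)/2}$, or quote Temlyakov's direct fooling-function lower bound for sampling recovery from \cite{Tem1993}. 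Only in the sub-case $1<p<q\le 2$ does your Kolmogorov-width argument go through as written.
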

In this theorem,	the claim(i) was proven in   
	\cite{DD2018} for $r > \max\{1/p,1/2\}$ and in 
	\cite{BU2017} for $r > 1/p$. The special case $p=2<q$ was proven in \cite{BDSU2016}.  The claim (ii) was proven in \cite{Tem1993}. 

For the H\"older-Nikol'skii function  classes $\bH^r_p(\TTd)$, we have the following right asymptotic of order of sampling $n$-widths.
\begin{theorem} \label{theorem[r_n,p<q-H]} Let $r > 1/p$ and 
	for $n \in \NN$, let $m_n$ be the largest integer number  such that 
	$|G^d(m_n)| \le n$.
	Then we have that
	\begin{itemize}
		\item[{\rm(i)}] for $1 < p < q \le 2$, 
		\begin{equation*}  \label{[r_n,p<q]}
			\sup_{f \in \bH^r_p(\TTd)} \big\|f -  R_{m_n}(f) \big\|_q
			\asymp 
			\varrho_n(\bH^r_p(\TTd),L_q(\TTd)) 
			\ \asymp \
			\biggl(\frac{(\log n)^{d-1}}{n}\biggl)^{(r-1/p + 1/q)}(\log n)^{(d-1)/q}, and
		\end{equation*}
		\item[{\rm(ii)}] 
		\begin{equation*} 
			\sup_{f \in \bH^r_\infty(\TT^2)} \big\|f -  R_{m_n}(f) \big\|_\infty
			\asymp 
			\varrho_n(\bH^r_\infty(\TT^2), L_\infty(\TT^2))	
			\ \asymp \ 
			\biggl(\frac{\log n}{n}\biggl)^r(\log n) \,.
		\end{equation*}
	\end{itemize}
\end{theorem}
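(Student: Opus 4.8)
The three quantities in each claim are squeezed between a constructive upper bound for the specific Smolyak algorithm $R_{m_n}$ and a matching lower bound for $\varrho_n$: since $R_{m_n}$ is a sampling algorithm using $|G^d(m_n)|\le n$ nodes, one always has $\varrho_n(\bH^r_p(\TTd),L_q(\TTd))\le\sup_{f\in\bH^r_p(\TTd)}\|f-R_{m_n}(f)\|_q$, so it suffices to prove the upper bound for $\sup_f\|f-R_{m_n}(f)\|_q$ and a lower bound $\varrho_n\gg(\cdot)$ of the same order; the middle term is then trapped. Throughout I would use $H^r_p(\TTd)=B^r_{p,\infty}(\TTd)$, so the B-spline quasi-interpolation characterization in the last theorem of Subsection~\ref{B-spline Q-I} (case $\theta=\infty$) gives $\sup_{\bk\in\ZZdp}2^{r|\bk|_1}\|q_\bk(f)\|_p\asymp\|f\|_{H^r_p(\TTd)}$; in particular $\|q_\bk(f)\|_p\ll 2^{-r|\bk|_1}$ for $f\in\bH^r_p(\TTd)$. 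Recall also $m_n\asymp\log n$ and $2^{-m_n}\asymp(\log n)^{d-1}/n$, coming from $|G^d(m_n)|\asymp 2^{m_n}m_n^{d-1}\asymp n$.

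For the upper bound in (i) I would begin exactly as in the proof of Theorem~\ref{thm[|f -  R_m(f)|_p<]} from $f-R_{m_n}(f)=\sum_{|\bk|_1>m_n}q_\bk(f)$. The new ingredient, replacing the $\ell_2$ block summation used in the Sobolev case (Theorem~\ref{theorem[r_n,p<q]}), is the $\ell_q$ block summation valid for $q\le 2$: by the Littlewood--Paley equivalence $\|\sum_\bk g_\bk\|_q\asymp\|(\sum_\bk|g_\bk|^2)^{1/2}\|_q$ combined with the $(q/2)$-triangle inequality for the quasi-norm of $L_{q/2}$, one gets $\|\sum_\bk g_\bk\|_q\ll(\sum_\bk\|g_\bk\|_q^q)^{1/q}$. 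Feeding in the Nikol'skii inequality for the frequency-localized blocks, $\|q_\bk(f)\|_q\ll 2^{(1/p-1/q)|\bk|_1}\|q_\bk(f)\|_p$, together with $\|q_\bk(f)\|_p\ll 2^{-r|\bk|_1}$, yields $\|q_\bk(f)\|_q\ll 2^{-(r-1/p+1/q)|\bk|_1}$. Since $r-1/p+1/q>0$, summing over $|\bk|_1=m>m_n$ with $\#\{\bk\in\ZZdp:|\bk|_1=m\}\asymp m^{d-1}$ gives a geometric-type series dominated by its first term, so $\|f-R_{m_n}(f)\|_q\ll m_n^{(d-1)/q}2^{-(r-1/p+1/q)m_n}\asymp(\log n)^{(d-1)/q}\big(\tfrac{(\log n)^{d-1}}{n}\big)^{r-1/p+1/q}$, the claimed order. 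The upper bound in (ii) is easier: for $p=q=\infty$ only the triangle inequality $\|\sum q_\bk(f)\|_\infty\le\sum\|q_\bk(f)\|_\infty\ll\sum_{|\bk|_1>m_n}2^{-r|\bk|_1}$ is available, and for $d=2$ the count $\#\{|\bk|_1=m\}\asymp m$ turns this into $\asymp m_n 2^{-rm_n}\asymp(\log n)(\log n/n)^r$.

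For the lower bounds I would invoke the standard fooling principle: for a symmetric convex class and any fixed $n$ nodes, no algorithm using those nodes beats $\sup\{\|g\|_q:g\in\bH^r_p(\TTd),\ g \text{ vanishes at all nodes}\}$, whence $\varrho_n(\bH^r_p(\TTd),L_q(\TTd))\gg\inf_{\text{nodes}}\sup\{\|g\|_q\}$. I would build the fooling function at the critical diagonal level $|\bk|_1=m$ with $m=m_n+O(1)$ chosen so the total number $\asymp 2^m m^{d-1}$ of B-splines $N_{\bk,\bs}$ exceeds $n$ by a fixed factor. For a fixed fraction of the $\asymp m^{d-1}$ directions $\bk$ one selects a translate $N_{\bk,\bs_\bk}$ whose support avoids all nodes, assigning coefficient $c_\bk\asymp 2^{-(r-1/p)m}$ so that $\|q_\bk(g)\|_p\asymp 2^{-rm}$ and hence $g:=\sum c_\bk N_{\bk,\bs_\bk}\in\bH^r_p(\TTd)$. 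If the selected B-splines have pairwise essentially disjoint supports, then (again using $q\le2$) $\|g\|_q^q\asymp\sum_{\bk}\|c_\bk N_{\bk,\bs_\bk}\|_q^q\asymp m^{d-1}2^{-(r-1/p+1/q)mq}$, giving $\|g\|_q\gg m^{(d-1)/q}2^{-(r-1/p+1/q)m}$, which matches the upper bound and closes the squeeze.

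The crux, and the step I expect to be the main obstacle, is precisely this geometric/combinatorial selection in the lower bound. One must show that after \emph{any} placement of $n$ nodes a fixed fraction of the $\asymp m^{d-1}$ diagonal sub-blocks still carries a node-free translate, \emph{and} that these translates---whose supports are anisotropic dyadic boxes of differing shapes but common volume $\asymp 2^{-m}$---can be chosen with controlled mutual overlap. Only controlled overlap lets the $\ell_q$-lower bound extract the sharp power $(\log n)^{(d-1)/q}$; fully overlapping supports would collapse it to the weaker $(\log n)^{(d-1)/2}$. For (ii) this overlap/packing analysis becomes the genuinely hard endpoint problem: at $p=q=\infty$ the $\ell_q$-summation degenerates into a supremum, and the full factor $m^{d-1}=\log n$ (for $d=2$) must be recovered by a delicate small-ball / Temlyakov-type argument, which is exactly why the claim is stated only for $d=2$, the correct order in higher dimensions being open.
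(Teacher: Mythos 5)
Your upper bounds are essentially correct and use exactly the machinery one expects: the B-spline representation of $H^r_p(\TTd)=B^r_{p,\infty}(\TTd)$ (the $\theta=\infty$ case of the representation theorem in Subsection~\ref{B-spline Q-I}), Nikol'skii's inequality between block norms, $\ell_q$-summation of blocks for $1<q\le 2$, and the plain triangle inequality for $p=q=\infty$, $d=2$; note that the paper itself contains no proof of this theorem, only the citations \cite{DD1991} for (i) and \cite{Tem2015} for (ii). The genuine gap is in your lower bounds. For (i), the combinatorial selection you rely on \emph{provably does not exist}, so no bookkeeping about overlaps can repair it: if the adversary places the $n$ nodes as a digital $(t,m',d)$-net in base $2$ with $2^{m'}\asymp n$ points (for $d=2$, the Hammersley $(0,m',2)$-net), then every elementary dyadic box of volume $2^{t-m'}$ contains a node; since each support $\supp N_{\bk,\bs}$ with $|\bk|_1=m$ contains such a box whenever $m\le \log_2 n - C_d$, \emph{every} translate in \emph{every} direction is hit at all such levels. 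Hence node-free translates exist, even in a single direction, only at levels where $2^m$ exceeds a constant times $n$, and your construction can never produce a fooling function of $L_q$-norm larger than $\asymp (\log n)^{(d-1)/q}\, n^{-(r-1/p+1/q)}$. This misses the claimed lower bound by the factor $\bigl((\log n)^{d-1}\bigr)^{r-1/p+1/q}$. The sharp lower bound in (i) does not come from fooling sample nodes at all: it follows from the inequalities \eqref{eq-relations}, $\varrho_n\ge\lambda_n\ge d_n$, combined with the known order of the Kolmogorov widths of $\bH^r_p(\TTd)$ in $L_q(\TTd)$ for $1<p<q\le 2$, which carries the same extra factor $(\log n)^{(d-1)/q}$ and is established by finite-dimensional width estimates on hyperbolic-cross layers (see \cite[Section 4.3]{DTU18B}) --- exactly the route the paper uses for the Sobolev analogue in Theorem~\ref{theorem[r_n,p<q-Opt]}.

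For (ii) the gap is even more basic: you explicitly defer the lower bound to ``a delicate small-ball / Temlyakov-type argument'' without supplying it, so claim (ii) is simply not proved in your proposal --- and the deferral hides something essential, not routine. The naive single-level repair (choose $m$ with $2^m\asymp n\log n$ so that some point $x_0$ lies in a node-free box of every direction $\bk$ with $|\bk|_1=m$, then sum the corresponding B-splines and normalize by $2^{-rm}$) only yields a fooling value of order $n^{-r}(\log n)^{1-r}$ at $x_0$, far below the claimed $n^{-r}(\log n)^{r+1}$. Recovering the full factor $\log n$ requires the dedicated, sampling-specific construction of \cite{Tem2015}; without reproducing it, the squeeze between your upper bound and $\varrho_n$ does not close.
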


In this theorem,	the claim (i) was proven in   \cite{DD1991}, and  the claim (ii) in \cite{Tem2015}. Moreover, the claim (i) was the first result on the wright asymptotic order of sampling $n$-widths for classes of functions having a mixed smoothness. We refer the reader to \cite[Section 5]{DTU18B} for results on sampling recovery and sampling $n$-widths for functions from  Besov spaces $B_{p, \theta}^r(\TTd)$.


\subsection{Sampling recovery in reproducing kernel Hilbert spaces}
\label{Sampling recovery in RK Hilbert spaces}

In the previous section, we presented various aspects of sampling recovery  by using Smolyak algorithms $R_m(f)$ on sparse grids $G^d(m)$, in particular, their asymptotic optimalities in terms of linear sampling $n$-widths  $\varrho^{\rm{s}}_n(\Urp,L_q(\TTd))$ and $\varrho_n(\Urp,L_q(\TTd))$. The right asymptotic order of  $\varrho_n(\Urp,L_q(\TTd))$ ($r > 1/p$) can be  achieved by  the Smolyak sampling algorithms  $R_{m_n}(f)$  in the cases $1 < p < q \le 2$  or $2\le p < q < \infty$ or $p = 2, \ q = \infty$. It is interesting to notice that all these cases are restricted by the strict inequality $p < q$. 

It is a dilemma that the problem of right asymptotic order of the sampling $n$-widths $\varrho_n(\Urp,L_p(\TTd))$ for $1 \le p \le \infty$ has been open for long time, see Open Problem 4.1 in \cite{DTU18B}. From very recent results of \cite{DKU2023} on inequality between the linear sampling  and Kolmogorov $n$-widths of the unit ball of a reproducing kernel Hilbert subspace of the space $L_2(\Omega; \nu)$ one can immediately deduce the right asymptotic order of  $\varrho_n(\bW^r_2,L_2(\TTd))$  which solved the outstanding Open Problem 4.1 in \cite{DTU18B} for the particular case $p=2$. This open problem is still  not solved for the case $p\not= 2$.  Unfortunately, even in the solved case $p=2$, we do not know any explicit asymptotically optimal linear sampling algorithm since its proof is based on an inequality between the linear sampling  and Kolmogorov $n$-widths. The problem of  construction of  asymptotically optimal linear sampling algorithms for this case is still open.

In this section, we present some results of \cite{DKU2023} and their consequences on right asymptotic order of  $\varrho_n(\Urp,L_q(\TTd))$ for the case 
$1 < q \le 2 \le  p < \infty$. 

Let $H$ be a separable reproducing kernel (RK) Hilbert space on a $\Omega$ and  $\nu$   a positive measure on $\Omega$ such that $H$ is compactly embedded into the space $L_2(\Omega; \nu)$. We say that $H$ satisfy the finite trace assumption if there holds the condition
\begin{equation}\label{trace assumption2}
	\int_{\Omega} K(\bx,\bx)  \nu(\rd \bx)  \ < \ \infty,
\end{equation}
where $K(\cdot,\cdot)$ is the RK of $H$. 

Recall that $\bH$ denotes the unit ball of $H$. From the definitions we already know the inequality 
$\varrho_n(\bH, L_2(\Omega; \nu)) 
\ge  d_n(\bH, L_2(\Omega; \nu)).
$
The following theorem claims an inverse inequality \cite[Theorem 1]{DKU2023}.

\begin{theorem} \label{thm: rho_n < sum d_n}
	Let $H$ be a separable RK  Hilbert space on the domain $\Omega$ and  $\mu$   a positive measure such that $H$ is compactly embedded into $L_2(\Omega; \nu)$. Assume that $H$ satisfy the finite trace assumption \eqref{trace assumption2}. Then there is a constant $\lambda \in \NN$ such that  
	\begin{equation}  \label{[r_n,p<q]}
		\varrho_{\lambda n}(\bH, L_2(\Omega; \nu))^2 
		\ \le \
		\frac{1}{n} \sum_{m \ge n} d_m (\bH, L_2(\Omega; \nu))^2.
	\end{equation}
\end{theorem}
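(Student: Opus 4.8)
The plan is to prove the asserted inequality by constructing a random \emph{weighted least-squares} recovery operator, estimating its expected squared error in terms of the singular numbers of the embedding, and then derandomizing.

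First I would diagonalize the compact embedding $\mathrm{Id}\colon H\hookrightarrow L_2(\Omega;\nu)$: let $\sigma_1\ge\sigma_2\ge\cdots\ge0$ be its singular numbers, with $(\phi_j)$ orthonormal in $H$ and $(e_j)$ orthonormal in $L_2(\Omega;\nu)$ satisfying $\mathrm{Id}\,\phi_j=\sigma_j e_j$. Because $H$ is a Hilbert space the Kolmogorov widths are exactly these numbers, $d_m(\bH,L_2(\Omega;\nu))=\sigma_{m+1}$, and the finite-trace assumption \eqref{trace assumption2} reads $\sum_j\sigma_j^2=\int_\Omega K(\bx,\bx)\,\nu(\rd\bx)<\infty$; in particular the right-hand side of the asserted inequality equals $\tfrac1n\sum_{j>n}\sigma_j^2$ and is finite. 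It therefore suffices to construct, for each $m$, a linear sampling algorithm using $m$ values whose worst-case squared error over $\bH$ is $\ll\tfrac1m\sum_{j\ge cm}\sigma_j^2$ for some absolute $c\in(0,1)$; taking $m=\lambda n$ with $\lambda$ a large enough fixed integer (so that $c\lambda n\ge n$ and the hidden constant is $\le\lambda$) then yields the asserted inequality with the clean constant $1$.

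To build such an algorithm I would fix $k:=\lceil cm\rceil<m$, set $V_k:=\mathrm{span}\{e_1,\dots,e_k\}$, and introduce the importance-sampling density on $\Omega$ (with respect to $\nu$) given by the mixture of the reciprocal Christoffel function of $V_k$ and a $\sigma_j^2$-weighted tail density,
\[
\rho(\bx):=\tfrac12\,\frac1k\sum_{l\le k}|e_l(\bx)|^2+\tfrac12\,\frac{\sum_{j>k}\sigma_j^2\,|e_j(\bx)|^2}{\sum_{j>k}\sigma_j^2},
\]
which is a probability density since each $e_j$ is $L_2(\Omega;\nu)$-normalized. One draws $\bx_1,\dots,\bx_m$ i.i.d.\ from $\rho$ and lets $S(f)\in V_k$ be the least-squares fit of the data $\{f(\bx_i)\}$ with weights $1/(m\rho(\bx_i))$; this is a legitimate linear sampling algorithm using $m$ function values, so its worst-case error over $\bH$ bounds the corresponding sampling width. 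Writing $f=P_{V_k}f+u$ with $u\perp V_k$, the map $S$ reproduces $V_k$, whence by orthogonality $\|f-S(f)\|_{L_2}^2=\|u\|_{L_2}^2+\|S(u)\|_{L_2}^2$. The first summand of $\rho$ forces the weighted design matrix with entries $(m\rho(\bx_i))^{-1/2}e_l(\bx_i)$ to have Gram matrix $G^{\ast}G$ close to the identity with positive probability, controlling $(G^{\ast}G)^{-1}$ and bounding the stochastic term, while the tail summand of $\rho$ replaces the crude worst-case $\sup_{\bH}\|u\|_{L_2}^2=\sigma_{k+1}^2$ by an averaged tail. This is the standard weighted-least-squares estimate, yielding
\[
\sup_{f\in\bH}\EE\,\|f-S(f)\|_{L_2(\Omega;\nu)}^2\ \ll\ \sigma_{k+1}^2+\frac1m\sum_{j>k}\sigma_j^2\ \ll\ \frac1m\sum_{j\ge c'm}\sigma_j^2,
\]
where in the last step, with $c'=c/2$, monotonicity of $(\sigma_j)$ absorbs $\sigma_{k+1}^2$ because the $\asymp m$ numbers $\sigma_j^2$ with $c'm\le j\le cm$ are each $\ge\sigma_{k+1}^2$. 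A realization attaining this expectation furnishes a deterministic node set, giving the required $m$-point algorithm and hence the asserted inequality.

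I expect the genuine obstacle to be the conditioning step under the tight budget $m\asymp k$: a direct matrix Chernoff or Bernstein estimate only guarantees $G^{\ast}G\approx I$ once $m\gtrsim k\log k$, which would merely bound $\varrho_{n\log n}$ rather than $\varrho_{\lambda n}$. Removing this logarithm -- so that $O(k)$ nodes suffice -- requires a subsampling / spectral-sparsification argument in the spirit of Marcus--Spielman--Srivastava (as exploited by Nagel--Sch\"afer--Ullrich), applied to an initially oversampled point set, while simultaneously keeping all constants under control so that the final bound retains the sharp form $\tfrac1n\sum_{m\ge n}d_m^2$ with constant $1$. Balancing these two competing demands through the mixture density and the choice of $\lambda$ is the technical heart of the argument.
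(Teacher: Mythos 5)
The paper itself does not prove this theorem: it is quoted as \cite[Theorem 1]{DKU2023}, so the only meaningful comparison is with the argument in that reference. Your plan is a faithful reconstruction of exactly that argument --- diagonalize the compact embedding so that $d_m(\bH,L_2(\Omega;\nu))=\sigma_{m+1}$ and the finite trace equals $\sum_j\sigma_j^2$, run weighted least squares on random nodes drawn from a mixture of the normalized Christoffel function of the first $k$ singular functions and a tail-weighted density, and then subsample to linearly many nodes --- so you have identified the correct route, and your constant bookkeeping (absorbing all constants into $\lambda$, and using monotonicity of the $\sigma_j$ to dominate $\sigma_{k+1}^2$ by an averaged tail) is sound.

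Nevertheless, as a proof the text has two genuine gaps. First, the derandomization step is invalid as written: you bound $\sup_{f\in\bH}\EE\,\|f-S(f)\|_{L_2(\Omega;\nu)}^2$ and then claim that ``a realization attaining this expectation furnishes a deterministic node set.'' Supremum over $f$ and expectation over the nodes do not commute; a bound on $\sup_f\EE$ produces, for each fixed $f$, a good node set, but not one node set that works simultaneously for all $f\in\bH$, which is what the sampling width requires. The actual argument controls the \emph{worst-case} error by operator norms of random matrices --- the inverse Gram matrix on the conditioning event together with an infinite tail matrix whose expected operator norm is bounded via the finite trace --- so that a single positive-probability event yields one configuration good for all $f$; your write-up needs this reordering to $\EE\sup_f$ before any realization can be selected. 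Second, the step you yourself call the ``technical heart'' --- passing from $m\gtrsim k\log k$ to $m\asymp k$ nodes --- is not a refinement but is precisely the content of the theorem: with matrix Chernoff/Bernstein concentration alone one only recovers the older Krieg--Ullrich bound, i.e.\ the statement with $\varrho_{\lambda n}$ replaced by $\varrho_{Cn\log n}$. In \cite{DKU2023} the removal of the logarithm rests on a dedicated frame-subsampling theorem, proved by an iterative application of the Marcus--Spielman--Srivastava (Weaver $KS_2$) theorem, with explicit quantitative constants that can then be absorbed into $\lambda$ (see also \cite{KPUU2023} for a sharpening). Gesturing at this ingredient without executing it leaves the proof incomplete at its decisive point, so the proposal should be regarded as a correct strategy outline rather than a proof.
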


From this theorem we can deduce an important consequence \cite[Corollary 2]{DKU2023}.

\begin{corollary} \label{corollary: rho_n <  d_n}
	Under the assumptions of Theorem \ref{thm: rho_n < sum d_n} assume that  
	\begin{equation}  \label{[d_n<]}
		d_n(\bH, L_2(\Omega; \nu))
		\ \ll \
		n^{-\alpha} log^{-\beta} n
	\end{equation}
for some $\alpha \ge 1/2$ and $\beta \in \RR$. Then
	\begin{equation}  \label{[r_n,p<q]}
		\varrho_n(\bH, L_2(\Omega; \nu))
		\ \ll \
		\begin{cases}
				n^{-\alpha} log^{-\beta} n  & \ \ \text{if} \ \ \alpha > 1/2, \\
				n^{-\alpha} log^{-\beta + 1/2} n & \ \ \text{if} \ \ \alpha = 1/2 \ \text{and} \ \beta > 1/2.
		\end{cases}
	\end{equation}
Moreover, there exists $\bH$ such that these bounds are sharp.
\end{corollary}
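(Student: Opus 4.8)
The plan is to feed the decay assumption \eqref{[d_n<]} into the summed inequality of Theorem~\ref{thm: rho_n < sum d_n} and then estimate the resulting tail, treating the two regimes $\alpha>1/2$ and $\alpha=1/2$ separately. First I would insert $d_m(\bH,L_2(\Omega;\nu))^2 \ll m^{-2\alpha}(\log m)^{-2\beta}$ into the right-hand side of the bound of Theorem~\ref{thm: rho_n < sum d_n}, obtaining
$$
\varrho_{\lambda n}(\bH,L_2(\Omega;\nu))^2
\ \ll \
\frac1n \sum_{m\ge n} m^{-2\alpha}(\log m)^{-2\beta}.
$$
The summand is eventually monotone, so the tail is estimated by comparison with the integral $\int_n^\infty x^{-2\alpha}(\log x)^{-2\beta}\,\rd x$. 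When $\alpha>1/2$ the exponent $2\alpha>1$ makes this convergent with value $\asymp n^{1-2\alpha}(\log n)^{-2\beta}$, so $\varrho_{\lambda n}^2 \ll n^{-2\alpha}(\log n)^{-2\beta}$. When $\alpha=1/2$ and $\beta>1/2$, the substitution $u=\log x$ gives $\int_n^\infty x^{-1}(\log x)^{-2\beta}\,\rd x \asymp (\log n)^{1-2\beta}$, whence $\varrho_{\lambda n}^2 \ll n^{-1}(\log n)^{1-2\beta}$.

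Taking square roots yields $\varrho_{\lambda n} \ll n^{-\alpha}(\log n)^{-\beta}$ in the first case and $\varrho_{\lambda n} \ll n^{-1/2}(\log n)^{1/2-\beta} = n^{-\alpha}(\log n)^{-\beta+1/2}$ in the second. To pass from the index $\lambda n$ to a general index I would use that $n \mapsto \varrho_n$ is non-increasing: given $N\in\NN$, put $n=\lfloor N/\lambda\rfloor$, so that $\lambda n\le N$ and $\varrho_N(\bH,L_2(\Omega;\nu)) \le \varrho_{\lambda n}(\bH,L_2(\Omega;\nu))$, while $n \asymp N$ absorbs the fixed factor $\lambda\in\NN$ into the implied constants. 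This delivers exactly the two claimed upper bounds for $\varrho_n(\bH,L_2(\Omega;\nu))$.

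For the sharpness assertion, the elementary lower bound $\varrho_n(\bH,L_2(\Omega;\nu)) \ge d_n(\bH,L_2(\Omega;\nu))$ recorded before Theorem~\ref{thm: rho_n < sum d_n} already matches the upper bound throughout the regime $\alpha>1/2$; hence there it suffices to exhibit any $\bH$ with $d_n\asymp n^{-\alpha}(\log n)^{-\beta}$, which a diagonal space realized as an RKHS whose embedding into $L_2(\Omega;\nu)$ has singular numbers of precisely this order provides. The delicate point is the critical case $\alpha=1/2$, where the upper bound carries an extra factor $(\log n)^{1/2}$ that the comparison $\varrho_n\ge d_n$ cannot reproduce. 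Here I would construct an explicit diagonal RKHS with singular numbers $\asymp n^{-1/2}(\log n)^{-\beta}$ and establish a matching lower bound $\varrho_n(\bH,L_2(\Omega;\nu)) \gg n^{-1/2}(\log n)^{1/2-\beta}$ for the sampling width directly, showing that the logarithmic loss is genuine and not an artifact of the method.

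I expect this last step to be the main obstacle: it requires a true lower bound on the sampling width that is strictly larger than $d_n$ in the critical scaling, so the trivial comparison with Kolmogorov widths does not suffice and one must argue against all choices of sampling points and weights for the specifically engineered space.
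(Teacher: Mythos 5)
Your derivation of the two upper bounds is correct and is exactly the intended deduction: the paper itself states this corollary without proof, citing \cite[Corollary 2]{DKU2023}, and the only route it indicates is the one you follow --- insert the decay \eqref{[d_n<]} into the tail sum of Theorem~\ref{thm: rho_n < sum d_n}, estimate that sum by integral comparison (the hypothesis $\beta>1/2$ being precisely what makes the critical sum $\sum_{m\ge n} m^{-1}(\log m)^{-2\beta}$ converge), take square roots, and remove the factor $\lambda$ using the monotonicity of $n\mapsto\varrho_n$. All of these steps are sound as written.

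The genuine gap is in the final assertion, ``there exists $\bH$ such that these bounds are sharp.'' For $\alpha>1/2$ your argument is fine: the comparison $\varrho_n(\bH,L_2(\Omega;\nu))\ge d_n(\bH,L_2(\Omega;\nu))$ recorded before Theorem~\ref{thm: rho_n < sum d_n}, together with a diagonal RKHS whose singular numbers decay like $n^{-\alpha}(\log n)^{-\beta}$ (the finite trace assumption \eqref{trace assumption2} holds because $2\alpha>1$), settles sharpness. But in the critical case $\alpha=1/2$ you only announce what must be done: construct a concrete $\bH$ with $d_n\asymp n^{-1/2}(\log n)^{-\beta}$ and prove the lower bound $\varrho_n(\bH,L_2(\Omega;\nu))\gg n^{-1/2}(\log n)^{1/2-\beta}$, a bound that must hold against \emph{every} choice of $n$ sampling nodes and recovery maps and that lies strictly above the Kolmogorov width, so it cannot follow from $\varrho_n\ge d_n$ or from any inequality stated in this paper. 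This is not a routine verification; it is a separate, substantial theorem in the literature (the construction of Hinrichs, Krieg, Novak and Vyb\'iral showing that the $\sqrt{\log n}$ loss at the critical exponent is unavoidable), and without it the ``moreover'' clause of the corollary remains unproven. Your proposal correctly isolates this obstacle --- which is to your credit --- but it does not overcome it, so the sharpness part of the statement is left without proof.
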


\begin{theorem} \label{theorem: rho_n><-p=2}
	Let $r > 1/2$. Then
	\begin{equation}  \label{[rho_n-p=2]}
		\varrho_n(\bW^r_2(\TTd), L_2(\TTd))
		\ \asymp \
		\biggl(\frac{(\log n)^{d-1}}{n}\biggl)^{- r}.
	\end{equation}
\end{theorem}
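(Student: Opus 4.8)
The plan is to sandwich the quantity $\varrho_n(\bW^r_2(\TTd),L_2(\TTd))$ between matching lower and upper bounds, exploiting that in the case $p=q=2$ both the domain and target are Hilbert spaces, so the machinery of Subsection~\ref{Sampling recovery in RK Hilbert spaces} applies directly. The first thing I would check is that $W^r_2(\TTd)$ is a reproducing kernel Hilbert space on $\TTd$ compactly embedded into $L_2(\TTd)$ and that it satisfies the finite trace assumption \eqref{trace assumption2}. Using the Fourier description of the norm given in \eqref{def[Wrp]}, the embedding $W^r_2(\TTd)\hookrightarrow L_2(\TTd)$ diagonalizes in the trigonometric basis with eigenvalues $\lambda_{\bs}=\prod_{j=1}^d(1+|s_j|^2)^{-r}$, and the trace is $\sum_{\bs\in\ZZd}\lambda_{\bs}=\prod_{j=1}^d\big(\sum_{s\in\ZZ}(1+|s|^2)^{-r}\big)$, which is finite precisely when $r>1/2$. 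Thus the hypothesis $r>1/2$ simultaneously makes point evaluation bounded (so $H=W^r_2(\TTd)$ is an RKHS) and guarantees the finite trace condition; this is the structural fact that unlocks Theorem~\ref{thm: rho_n < sum d_n} and its corollary.

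Next I would record the classical two-sided asymptotic for the Kolmogorov $n$-width, namely
\begin{equation} \nonumber
	d_n(\bW^r_2(\TTd),L_2(\TTd))
	\ \asymp \
	\biggl(\frac{(\log n)^{d-1}}{n}\biggl)^{r}.
\end{equation}
In the Hilbert setting $d_n^2$ equals the $(n{+}1)$-th largest eigenvalue $\lambda_{\bs}$, so this reduces to counting the number of $\bs$ with $\prod_{j=1}^d(1+|s_j|^2)\le R$, which is a hyperbolic-cross count of order $R^{1/2}(\log R)^{d-1}$; inverting $n\asymp R^{1/2}(\log R)^{d-1}$ and evaluating $\lambda\asymp R^{-r}$ yields the stated order. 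Since $d_n\le\lambda_n\le\varrho_n$ by \eqref{eq-relations}, this immediately gives the lower bound $\varrho_n(\bW^r_2(\TTd),L_2(\TTd))\gg\big((\log n)^{d-1}/n\big)^{r}$.

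For the upper bound I would invoke Corollary~\ref{corollary: rho_n < d_n} with $H=W^r_2(\TTd)$ and $\nu$ the normalized Lebesgue measure on $\TTd$. The Kolmogorov-width bound above is of the form $d_n\ll n^{-\alpha}\log^{-\beta}n$ with $\alpha=r$ and $\beta=-r(d-1)$. Because $r>1/2$ we have $\alpha>1/2$, so we fall into the first alternative of \eqref{[r_n,p<q]}, which returns exactly
\begin{equation} \nonumber
	\varrho_n(\bW^r_2(\TTd),L_2(\TTd))
	\ \ll \
	n^{-r}(\log n)^{r(d-1)}
	\ = \
	\biggl(\frac{(\log n)^{d-1}}{n}\biggl)^{r},
\end{equation}
with no spurious logarithmic loss (the loss would only appear in the borderline regime $\alpha=1/2$, which $r>1/2$ excludes). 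Combining this with the lower bound closes the estimate.

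The main obstacle is not the computation but the verification that the abstract RKHS framework of Theorem~\ref{thm: rho_n < sum d_n} genuinely covers $W^r_2(\TTd)$: one must confirm compact embedding and the finite trace \eqref{trace assumption2}, which is where the hypothesis $r>1/2$ is used in an essential way. The conceptual point worth emphasizing is that $r>1/2$ plays a double role, ensuring both the trace condition and that the decay exponent $\alpha=r$ sits strictly above the critical value $1/2$, so that Corollary~\ref{corollary: rho_n < d_n} transfers the Kolmogorov-width rate to the linear sampling width without any extra factor; everything else is the classical eigenvalue count already available in the literature.
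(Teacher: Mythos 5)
Your proposal is correct and takes essentially the same route as the paper's own proof: both establish that $W^r_2(\TTd)$ is a reproducing kernel Hilbert space satisfying the finite trace assumption \eqref{trace assumption2} precisely when $r>1/2$, take the lower bound from $d_n\le\varrho_n$, and obtain the matching upper bound by applying Corollary \ref{corollary: rho_n <  d_n} (with $\alpha=r>1/2$, $\beta=-r(d-1)$) to the known Kolmogorov width asymptotic $d_n(\bW^r_2(\TTd),L_2(\TTd))\asymp\bigl((\log n)^{d-1}/n\bigr)^{r}$ from \cite[Theorem 4.3.1]{DTU18B}. The only difference is presentational: you verify the trace condition and the width order by explicit Fourier/hyperbolic-cross eigenvalue computations, whereas the paper asserts the trace condition via the Bernoulli kernel and cites the width result directly.
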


\begin{proof}
	This theorem can be considered a particular case of Theorem \ref{theorem: rho_n><,p<2} below. It is a consequence of Corollary  \ref{corollary: rho_n <  d_n}. However, for completeness, let us give an independent proof based on Corollary \ref{corollary: rho_n <  d_n}.
	Notice that $W^r_2(\TTd)$ is an RK Hilbert space. The RK of  $W^r_2(\TTd)$ is the Bernoulli kernel $K =F_r$ defined as in \eqref{F_r}. For $r > 1/2$, the space $W^r_2(\TTd)$ satisfies the finite trace assumption \eqref{trace assumption2}.
	Hence, this corollary directly follows from Corollary \ref{corollary: rho_n <  d_n} and the known result 
	\begin{equation}  \label{[rho_n-p=2]}
		d_n(\bW^r_2(\TTd), L_2(\TTd))
		\ \asymp \
		\biggl(\frac{(\log n)^{d-1}}{n}\biggl)^{- r},
	\end{equation}
see, e.g., \cite[Theorem 4.3.1]{DTU18B}.
\hfill
\end{proof}	

Theorem \ref{thm: rho_n < sum d_n} can be extended to another situation for $\varrho_n(\bW, L_2(\Omega; \nu))$ when $\bW$ is not the unit ball of an RK Hilbert space. This extension allows to obtain a result more general than Theorem \ref{theorem: rho_n><-p=2}.

{\bf Assumption A.} Let $\bW$ be a class of complex-valued functions on the set $\Omega$. We say that $\bF$ satisfies Assumption A, if there is a metric on $\bW$ such that $\bW$ is continuously embedded into 
$L_2(\Omega; \nu)$, separable, and for each $\bx \in \Omega$, the function evaluation $f \mapsto f(\bx)$ is continuous on $\bW$.

The following theorem and corollary have been proven also  in \cite[Theorem~3 and Corollary~4]{DKU2023} (see also \cite[Proposition 11]{KPUU2023} for a slight  improvement).

\begin{theorem} \label{thm: rho_n < sum d_n,p<2}
	Assume that $F$ satisfies Assumption A. Then for every $0 < \tau < 2$, there is a constant $\lambda \in \NN$ such that  
	\begin{equation}  \label{[r_n,p<q]}
		\varrho_{\lambda n}(\bW, L_2(\Omega; \nu))^\tau
		\ \le \
		\frac{1}{n} \sum_{m \ge n} d_m (\bW, L_2(\Omega; \nu))^\tau.
	\end{equation}
\end{theorem}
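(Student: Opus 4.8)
The plan is to run the same weighted least-squares machinery that underlies Theorem \ref{thm: rho_n < sum d_n}, but to replace the singular value decomposition of the embedding $H \hookrightarrow L_2(\Omega;\nu)$---unavailable here, since $\bW$ need not be the unit ball of a Hilbert space---by the nested family of almost optimal Kolmogorov subspaces of $\bW$. The constant $\lambda \in \NN$ will be the dimension-free oversampling factor produced by a subsampling step. Throughout, Assumption A is what makes the construction legitimate: continuity of the point evaluations $f \mapsto f(\bx)$ on $\bW$, together with separability, guarantees the measurability needed to draw random sampling points and to pass from an averaged bound to a single good point configuration.

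First I would fix $n$ and choose, for each level $j$, a subspace $V_j \subset L_2(\Omega;\nu)$ with $\dim V_j \asymp 2^j$ realizing $\sup_{f \in \bW}\inf_{g\in V_j}\|f-g\|_{L_2} \ll d_{2^j}(\bW, L_2(\Omega;\nu))$, arranged to be nested. On the top level $V_J$ with $2^J \asymp n$ I would build a sampling density $\varrho$ as a \emph{mixture}: a Christoffel-type part $\tfrac1n\sum_k |e_k|^2$ associated to an $L_2(\nu)$-orthonormal basis of $V_J$, plus tail pieces supported on the higher levels and weighted by the widths $d_m$. Drawing $\asymp n\log n$ i.i.d.\ points from $\varrho$ and then invoking the Marcus--Spielman--Srivastava/Weaver subsampling theorem, I would thin these to at most $\lambda n$ points on which the weighted Gram matrix stays uniformly well conditioned on $V_J$; the resulting weighted least-squares operator $S$ is linear, uses $\le \lambda n$ function values, and reproduces $V_J$. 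For $f \in \bW$ with best $V_J$-approximant $g$, the reproduction property gives $f - Sf = (f-g) - S(f-g)$, so $\|f - Sf\|_{L_2} \le \|f-g\|_{L_2} + \|S(f-g)\|_{L_2}$, and stability bounds the second term by a weighted empirical $\ell_2$-norm of the residual $f-g$ at the retained points.

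It then remains to take $\tau$-th powers and average over the random draw. The whole point of the mixture density is that $\mathbb{E}\,\|S(f-g)\|_{L_2}^\tau$ is controlled not merely by $d_n^\tau$ but by the full tail $\tfrac1n\sum_{m\ge n} d_m^\tau$: the tail components of $\varrho$ redistribute sampling mass across the higher levels so that, after Fubini, each level $2^j$ contributes $\tfrac{2^j}{n}\,d_{2^j}^\tau$ and these sum to $\tfrac1n\sum_{m\ge n}d_m^\tau$. For $0<\tau\le 1$ I would assemble the per-level contributions with the $\tau$-triangle inequality; for $1<\tau<2$ with H\"older and the embedding $\ell_\tau \hookrightarrow \ell_2$. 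A finite expectation then yields one realization of at most $\lambda n$ points with $\sup_{f\in\bW}\|f-Sf\|_{L_2}^\tau \ll \tfrac1n\sum_{m\ge n}d_m^\tau$, which is the claim.

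The main obstacle is exactly the step the Hilbert-space proof gets for free: there, Parseval turns the recovery error into the orthogonal tail $\sum_{m\ge n}\sigma_{m+1}^2 = \sum_{m\ge n}d_m^2$, whereas here there is neither an orthogonal projection onto $V_n$ nor a spectral tail to sum. Reproducing the tail $\sum_{m\ge n}d_m^\tau$ from the Kolmogorov widths alone---via the tail-weighted mixture density and best-approximation estimates at each level---is the crux, and it is where the strict inequality $\tau<2$ supplies the needed slack, keeping the relevant $\tau$-th moments finite and the bad-event probabilities summable across levels. This is precisely why the endpoint $\tau=2$ is excluded here and treated separately in Theorem \ref{thm: rho_n < sum d_n}.
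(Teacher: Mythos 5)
Your proposal assembles the right toolkit (Christoffel-weighted least squares, tail-weighted mixture densities, Marcus--Spielman--Srivastava subsampling) and correctly locates the crux, but the step where you claim to resolve it is the one that fails. Note first that the paper itself does not prove this theorem; it cites \cite{DKU2023} and \cite{KPUU2023}, and the proof there is not a direct run of the least-squares machinery on $\bW$ but a \emph{reduction to the Hilbert case}: one takes near-optimal Kolmogorov subspaces $V_j$ of dimension $\asymp 2^j$, writes each $f\in\bW$ as a telescoping sum $f=\sum_j h_j$ of differences of near-best approximations, with $h_j\in V_j+V_{j-1}$ and $\|h_j\|_{L_2(\Omega;\nu)}\lesssim d_{2^{j-1}}$, and then builds an auxiliary Hilbert space $H$ as a weighted direct sum of these finite-dimensional levels, the level weights being suitable powers of the widths. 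The restriction $\tau<2$ is exactly what makes the embedding constant $\sup_{f\in\bW}\|f\|_H$ finite while keeping the spectral tail of $H$ dominated by $\bigl(\tfrac1n\sum_{m\ge n}d_m^\tau\bigr)^{2/\tau}$, and Assumption A is what makes $H$ a bona fide reproducing kernel Hilbert space (well-defined, continuous point evaluations). Theorem \ref{thm: rho_n < sum d_n} is then applied to $H$ and the conclusion transfers to $\bW\subset c\,\bH$.

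Your direct route has two concrete gaps. First, the ``Fubini'' step: the assertion that level $j$ contributes $\tfrac{2^j}{n}d_{2^j}^\tau$ to $\mathbb{E}\sup_{f\in\bW}\|S(f-g)\|_{L_2}^\tau$ is unsubstantiated. For a \emph{fixed} $f$, importance sampling gives $\mathbb{E}$ of the squared empirical norm of $h_j^f$ at most $\|h_j^f\|_{L_2}^2\lesssim d_{2^j}^2$, but this is not uniform in $f$, and $\sup_f\mathbb{E}\neq\mathbb{E}\sup_f$. To get uniformity you must use the Christoffel envelope $\sup_{f}|h_j^f(\bx)|^2\lesssim d_{2^j}^2\,k_j(\bx)$ with $\int k_j\,\nu(\rd\bx)\asymp 2^j$, which yields $\mathbb{E}\sup_f(\cdot)\lesssim d_{2^j}^2\,2^j$, i.e.\ per-level contributions $d_{2^j}^\tau 2^{j\tau/2}$ after Jensen --- off from your claimed $d_{2^j}^\tau 2^j/n$ by a factor that is polynomial in $n$ at the dominant levels $2^j\asymp n$ (for $d_m\asymp m^{-\alpha}$ this loses a factor $n^{\tau/2}$ and destroys the bound); the mixture weights cancel in expectation, so they cannot repair this. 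A pointwise-in-$\bx$, uniform-in-$f$ bound whose integral equals the spectral tail is precisely what the RKHS kernel provides and what a class satisfying only Assumption A lacks. Second, the order of operations: you first average over the i.i.d.\ draw to obtain the tail bound and then thin by subsampling, but subsampling destroys the distributional structure, so an expectation bound for the full i.i.d.\ set does not transfer to the retained $\lambda n$ points. In the Hilbert-case proof the tail control is phrased as an operator-norm/trace bound for a positive semidefinite system built from the residual kernel, which is exactly the kind of quantity Weaver/MSS-type subsampling preserves; without a kernel there is no such reformulation. Repairing both defects essentially forces you back to the auxiliary-RKHS reduction used in the cited works.
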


\begin{corollary} \label{corollary: rho_n <  d_n,p<2}
	Assume that $\bW$ satisfies Assumption A and that  
	\begin{equation}  \label{[d_n<]}
		d_n(\bW, L_2(\Omega; \nu))
		\ \ll \
		n^{-\alpha} log^{-\beta} n
	\end{equation}
	for some $\alpha > 0$ and $\beta \in \RR$. Then
	\begin{equation}  \label{[r_n,p<q]}
		\varrho_n(\bW, L_2(\Omega; \nu))
		\ \ll \
		\begin{cases}
			n^{-\alpha} log^{-\beta} n & \ \ \text{if} \ \ \alpha > 1/2, \\
				n^{-\alpha} log^{-\beta + 1/2} n & \ \ \text{if} \ \ \alpha = 1/2 \ \text{and} \ \beta > 1, \\
			1 & \ \ \text{otherwise}.
		\end{cases}
	\end{equation}
	Moreover, there exists $H$ such that these bounds are sharp.
\end{corollary}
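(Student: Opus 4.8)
The plan is to derive the corollary from Theorem~\ref{thm: rho_n < sum d_n,p<2} by choosing the exponent $\tau \in (0,2)$ adapted to the given decay and then estimating the tail $\tfrac1n\sum_{m\ge n} d_m^\tau$ via comparison with the integral $\int_n^\infty x^{-\alpha\tau}(\log x)^{-\beta\tau}\,\rd x$. The ``otherwise'' regime is immediate and needs no input from the theorem: since $\bW$ is continuously embedded in $L_2(\Omega;\nu)$ it is bounded there, so the trivial algorithm $S_0=0$ already gives $\varrho_n(\bW,L_2(\Omega;\nu))\le \sup_{f\in\bW}\|f\|_{L_2(\Omega;\nu)}\ll 1$, which is all that is claimed.

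For the principal case $\alpha>1/2$ I would fix any $\tau$ with $1/\alpha<\tau<2$, which is possible precisely because $\alpha>1/2$. Then $\alpha\tau>1$, and inserting $d_m\ll m^{-\alpha}(\log m)^{-\beta}$ the tail converges, the integral comparison giving $\sum_{m\ge n} d_m^\tau \ll n^{1-\alpha\tau}(\log n)^{-\beta\tau}$. Dividing by $n$ and invoking Theorem~\ref{thm: rho_n < sum d_n,p<2} yields $\varrho_{\lambda n}(\bW,L_2(\Omega;\nu))^\tau \ll n^{-\alpha\tau}(\log n)^{-\beta\tau}$; taking $\tau$-th roots and replacing $n$ by $n/\lambda$ (which alters only the implied constant, since $\lambda\in\NN$ is fixed and the widths are non-increasing) gives $\varrho_n \ll n^{-\alpha}(\log n)^{-\beta}$. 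This argument is insensitive to the sign of $\beta$, so it covers all $\beta\in\RR$ in the first case.

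I expect the borderline $\alpha=1/2$ to be the main obstacle. Here the naive substitution breaks down: for every admissible $\tau<2$ one has $\alpha\tau=\tau/2<1$, so $\sum_{m\ge n} d_m^\tau$ diverges and Theorem~\ref{thm: rho_n < sum d_n,p<2} becomes vacuous. The endpoint $\tau=2$, where $\tfrac1n\sum_{m\ge n} d_m^2 \ll \tfrac1n(\log n)^{1-2\beta}$ converges for $\beta>1/2$ and produces the extra factor $(\log n)^{1/2}$ upon taking square roots, is exactly the reproducing-kernel situation covered by Corollary~\ref{corollary: rho_n < d_n}, but it is unavailable under the bare hypothesis of Assumption~A. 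The bound $n^{-1/2}(\log n)^{-\beta+1/2}$ in the present generality must therefore come from a dedicated borderline argument valid for $\tau<2$, and the stronger requirement $\beta>1$ recorded here (versus $\beta>1/2$ in the Hilbert case) is precisely the price of removing the reproducing-kernel structure. I would locate where this loss enters—plausibly through the dependence of the constant $\lambda=\lambda(\tau)$ as $\tau\uparrow2$—and carry out the borderline summation of \cite{DKU2023} with that $\tau$-dependence made explicit; this is the step I expect to require the most care.

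For the sharpness assertion I would exhibit a diagonal reproducing-kernel Hilbert space $H$ (which satisfies Assumption~A) whose Mercer eigenvalues are prescribed so that $d_m(\bH,L_2(\Omega;\nu))\asymp m^{-\alpha}(\log m)^{-\beta}$, using that in the Hilbert case $d_m$ equals the $m$-th singular number. When $\alpha>1/2$ the trivial inequality $\varrho_n\ge d_n$ from \eqref{eq-relations} already matches the upper bound; at the endpoint $\alpha=1/2$ the upper bound exceeds $d_n$ by the factor $(\log n)^{1/2}$, so there one must invoke the sharp lower bound $\varrho_n\gg n^{-1/2}(\log n)^{-\beta+1/2}$ established in \cite{DKU2023}. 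Assembling these lower bounds against the upper bounds above gives the stated sharpness.
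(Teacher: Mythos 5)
Your derivation of the case $\alpha>1/2$ is correct and is the natural way to extract it from Theorem~\ref{thm: rho_n < sum d_n,p<2}: with $\tau\in(1/\alpha,2)$ one has $\alpha\tau>1$, the tail sum obeys $\frac1n\sum_{m\ge n}d_m^\tau\ll n^{-\alpha\tau}(\log n)^{-\beta\tau}$ for either sign of $\beta$, and passing from $\varrho_{\lambda n}$ to $\varrho_n$ costs only a constant since $\lambda$ is fixed and sampling widths are non-increasing. Two caveats. First, your justification of the ``otherwise'' case is logically off: continuous embedding of the metric space $\bW$ into $L_2(\Omega;\nu)$ does \emph{not} imply that the set $\bW$ is bounded in $L_2$; boundedness comes from the standing convention in this paper that widths are taken of compact sets. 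Second, be aware that the paper itself gives no proof of this corollary: it is quoted from \cite[Corollary~4]{DKU2023} (see also \cite[Proposition~11]{KPUU2023}), so there is no internal argument to compare against, and the corollary is in fact not a formal consequence of Theorem~\ref{thm: rho_n < sum d_n,p<2} alone.

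The genuine gap is the borderline case $\alpha=1/2$, $\beta>1$, which you flag but do not prove, and whose proposed repair cannot work as described. You correctly observe that for $d_m\ll m^{-1/2}(\log m)^{-\beta}$ the series $\sum_{m\ge n}d_m^\tau$ diverges for every fixed $\tau<2$, since the exponent $\tau/2<1$ makes it diverge regardless of the logarithmic factor; but this divergence persists for any $n$-dependent choice $\tau_n<2$ as well. Consequently the right-hand side of the inequality in Theorem~\ref{thm: rho_n < sum d_n,p<2} is $+\infty$ throughout this regime, and no bookkeeping of the constant $\lambda(\tau)$ as $\tau\uparrow 2$ can extract a finite bound from an infinite one. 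The borderline bound $n^{-1/2}(\log n)^{-\beta+1/2}$ therefore requires a structurally different statement than the theorem quoted here --- in \cite{DKU2023} this regime is handled by rerunning the subsampling machinery with truncated (finite) width sums and explicit control of all parameters, not by substituting the decay hypothesis into the infinite tail sum. The same deferral affects your sharpness argument: for $\alpha>1/2$ the matching lower bound $\varrho_n\ge d_n$ on a diagonal reproducing kernel Hilbert space is fine, but at $\alpha=1/2$ the optimality of the extra factor $(\log n)^{1/2}$ needs an example whose sampling widths genuinely exceed its Kolmogorov widths by that factor, which you cite rather than construct. As it stands, one of the three cases of the statement, and the sharpness claim, remain unestablished.
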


From these results we obtain

\begin{theorem} \label{theorem: rho_n><,p<2}
	Let $r > 1/2$ and $1 < q \le 2 \le  p < \infty$. Then
	\begin{equation}  \label{[rho_n-p=2]}
		\varrho_n(\bW^r_p(\TTd), L_q(\TTd))
		\ \asymp \
		\biggl(\frac{(\log n)^{d-1}}{n}\biggl)^{- r}.
	\end{equation}
\end{theorem}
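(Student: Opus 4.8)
The plan is to establish matching upper and lower bounds. For the upper bound, I would exploit the embedding structure $1 < q \le 2 \le p < \infty$ to reduce to the Hilbert-space case $p=q=2$. Specifically, since $\TTd$ has finite measure and $q \le 2$, we have the continuous embedding $L_2(\TTd) \hookrightarrow L_q(\TTd)$, so $\varrho_n(\bW^r_p(\TTd), L_q(\TTd)) \ll \varrho_n(\bW^r_p(\TTd), L_2(\TTd))$. On the other side, since $p \ge 2$, the unit ball $\bW^r_p(\TTd)$ is contained (up to an absolute constant) in $\bW^r_2(\TTd)$, because $\|\cdot\|_p \ge \|\cdot\|_2$ on a probability space forces the reverse inclusion of unit balls at the level of the defining densities $\varphi$. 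Combining these two reductions gives
\begin{equation} \nonumber
	\varrho_n(\bW^r_p(\TTd), L_q(\TTd))
	\ \ll \
	\varrho_n(\bW^r_2(\TTd), L_2(\TTd))
	\ \asymp \
	\biggl(\frac{(\log n)^{d-1}}{n}\biggl)^{r},
\end{equation}
where the last step is exactly Theorem \ref{theorem: rho_n><-p=2}. Alternatively, and more in the spirit of the general machinery, one can invoke Theorem \ref{thm: rho_n < sum d_n,p<2} and Corollary \ref{corollary: rho_n <  d_n,p<2} directly: one checks that $\bW^r_p(\TTd)$ satisfies Assumption A as a subset of $L_2(\TTd;\nu)$ (with $\nu$ the normalized Lebesgue measure), that the Kolmogorov widths satisfy $d_n(\bW^r_p(\TTd), L_2(\TTd)) \ll ((\log n)^{d-1}/n)^{r}$ for this range of $p$, and that $r > 1/2$ places us in the regime $\alpha = r > 1/2$ of the corollary, yielding the same upper bound.

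For the lower bound, I would use the chain of inequalities \eqref{eq-relations} together with a monotonicity-in-$p$-and-$q$ argument to reduce to a known Kolmogorov-width lower bound. Since $q \ge 1$ and approximation in a weaker norm is easier, the smallest case $q$ closest to the largest source space gives the governing rate; concretely, the lower bound $\varrho_n(\bW^r_p(\TTd), L_q(\TTd)) \ge d_n(\bW^r_p(\TTd), L_q(\TTd))$ from \eqref{eq-relations} reduces matters to bounding the Kolmogorov width from below. Here I would embed a suitable finite-dimensional subproblem: restricting to trigonometric polynomials with frequencies in a dyadic hyperbolic-cross layer, the mixed-smoothness structure produces the factor $((\log n)^{d-1}/n)^{r}$ via the standard hyperbolic-cross cardinality count $|G^d(m)| \asymp 2^m m^{d-1}$ and the $2^{-rm}$ scaling of the smoothness norm. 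Because $p \ge 2 \ge q$, the relevant $L_p \to L_q$ norm comparisons on these polynomial subspaces cost no extra logarithmic factors (all the $L_s$ norms are equivalent up to constants on a fixed-dimensional trigonometric block after correct normalization), so the lower bound matches the upper bound.

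The main obstacle I anticipate is the lower bound, specifically verifying that no logarithmic gain is lost when passing between the $L_p$-based source norm and the $L_q$-based target norm across the regime $1 < q \le 2 \le p < \infty$. The upper bound reductions are essentially soft (monotonicity of widths under continuous embeddings plus the already-proven Hilbert case), but the lower bound requires a genuine construction: one must exhibit a family of well-separated functions in $\bW^r_p(\TTd)$ whose mutual $L_q$-distances remain of order $((\log n)^{d-1}/n)^{r}$ even after the optimal rank-$n$ (or $n$-sample) algorithm is applied. The cleanest route is probably to quote the known sharp two-sided bounds for $d_n(\bW^r_p(\TTd), L_q(\TTd))$ in this parameter range from \cite[Section 4]{DTU18B} and then close the gap using the general sampling-to-Kolmogorov comparison of Corollary \ref{corollary: rho_n <  d_n,p<2}, which is precisely engineered to convert an $\alpha = r > 1/2$ Kolmogorov decay into the same sampling-width decay without a logarithmic penalty.
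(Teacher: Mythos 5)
Your proposal is correct and follows essentially the same route as the paper: the upper bound comes from Corollary \ref{corollary: rho_n <  d_n,p<2} (after checking Assumption A for $\bW^r_p(\TTd)$ in $L_2(\TTd)$ when $r>1/2$) combined with the known Kolmogorov-width asymptotics $d_n(\bW^r_p(\TTd),L_q(\TTd)) \asymp \bigl((\log n)^{d-1}/n\bigr)^{r}$ for $1<q\le p<\infty$ from \cite[Theorem 4.3.1]{DTU18B}, the case $1<q<2$ is reduced to $q=2$ via the embedding $L_2(\TTd)\hookrightarrow L_q(\TTd)$, and the lower bound is exactly $\varrho_n \ge d_n$ from \eqref{eq-relations}. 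Your alternative reduction of the source class via $\bW^r_p(\TTd)\subset\bW^r_2(\TTd)$ (so as to quote Theorem \ref{theorem: rho_n><-p=2} directly) is a harmless variant of the same machinery, since that theorem is itself proved by the same width-comparison corollary; no new construction is needed for the lower bound, as you correctly suspected.
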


\begin{proof}
	Notice that for $r > 1/2$ and $q=2$, the set $\bW^r_p(\TTd)$ satisfies Assumption A.
	Hence, the asymptotic order \eqref{[rho_n-p=2]} for $q=2$ directly follows from Corollary \ref{corollary: rho_n <  d_n,p<2} and the known result 
	\begin{equation}  \label{d_n-p >q}
		d_n(\bW^r_p(\TTd), L_q(\TTd))
		\ \asymp \
	\biggl(\frac{(\log n)^{d-1}}{n}\biggl)^{- r}
	\end{equation}
for $1< q \le p < \infty$,	see, e.g., \cite[Theorem 4.3.1]{DTU18B}. The case $1 < q < 2$ is implied from the case $q =2$, \eqref{d_n-p >q} and the inequalities 
	\begin{equation*} 
	\varrho_n(\bW^r_p(\TTd), L_q(\TTd))
	\ \ll \
	\varrho_n(\bW^r_p(\TTd), L_2(\TTd)),
\end{equation*}
and
	\begin{equation*}  
	\varrho_n(\bW^r_p(\TTd), L_q(\TTd))
	\ \gg \
	d_n(\bW^r_p(\TTd), L_q(\TTd)).
\end{equation*}
	\hfill 
\end{proof}	

We have a similar result for the H\"older-Nikol'skii  classes. 

\begin{theorem} \label{theorem: rho_n><,p<2,H}
	Let $r > 1/2$ and $1 < q \le 2 \le  p < \infty$. Then
	\begin{equation}  \label{[rho_n-p=2-H]}
		\varrho_n(\bH^r_p(\TTd), L_q(\TTd))
		\ \asymp \
		\biggl(\frac{(\log n)^{d-1}}{n}\biggl)^{- r}(\log n)^{(d-1)/2}.
	\end{equation}
\end{theorem}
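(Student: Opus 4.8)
The plan is to prove Theorem \ref{theorem: rho_n><,p<2,H} by adapting the argument used for the Sobolev case in Theorem \ref{theorem: rho_n><,p<2}, with the extra logarithmic factor $(\log n)^{(d-1)/2}$ traced back to the corresponding factor in the Kolmogorov $n$-widths of the H\"older-Nikol'skii classes. The key observation is that the whole machinery of Section \ref{Sampling recovery in RK Hilbert spaces} only needs a lower bound $\varrho_n \gg d_n$ (true by definition) and an upper bound obtained by passing through $q=2$ via the monotonicity $\varrho_n(\cdot, L_q) \ll \varrho_n(\cdot, L_2)$ for $q\le 2$, together with Corollary \ref{corollary: rho_n <  d_n,p<2}.

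\emph{First} I would reduce to $q=2$. For $1 < q \le 2$ one has the trivial embedding inequality
$\varrho_n(\bH^r_p(\TTd), L_q(\TTd)) \ll \varrho_n(\bH^r_p(\TTd), L_2(\TTd))$, so the upper bound need only be established in $L_2$. \emph{Next}, I would verify that for $r > 1/2$ the class $\bH^r_p(\TTd)$ satisfies Assumption~A with target space $L_2(\TTd)$ (continuous point evaluations follow from the continuous embedding $H^r_p(\TTd) \hookrightarrow C(\TTd)$ when $r > 1/p$, and $r>1/2\ge 1/p$ here since $p\ge 2$). Then Corollary \ref{corollary: rho_n <  d_n,p<2} applies once we supply the Kolmogorov-width asymptotics. \emph{The main input} is the known right asymptotic order of the Kolmogorov $n$-widths of the H\"older-Nikol'skii classes,
\begin{equation*}
d_n(\bH^r_p(\TTd), L_2(\TTd))
\ \asymp \
\biggl(\frac{(\log n)^{d-1}}{n}\biggl)^{r}(\log n)^{(d-1)/2},
\end{equation*}
which is of the form $n^{-\alpha}\log^{-\beta}n$ with $\alpha = r > 1/2$ and $\beta = -r(d-1)-(d-1)/2$. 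Since $\alpha > 1/2$, Corollary \ref{corollary: rho_n <  d_n,p<2} gives the matching upper bound $\varrho_n(\bH^r_p(\TTd), L_2(\TTd)) \ll n^{-r}(\log n)^{r(d-1)+(d-1)/2}$, i.e.\ the right-hand side of \eqref{[rho_n-p=2-H]}.

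\emph{Finally} I would assemble the lower bound, which is immediate from $\varrho_n(\bH^r_p(\TTd), L_q(\TTd)) \gg d_n(\bH^r_p(\TTd), L_q(\TTd))$ and the $L_q$-version of the $d_n$-asymptotics for $1 < q \le p < \infty$ (whose right order already carries the factor $(\log n)^{(d-1)/2}$), so that upper and lower bounds coincide and yield the claimed $\asymp$. \emph{The hard part} is not the sampling-to-Kolmogorov reduction, which is purely formal once Assumption~A is checked, but rather securing the precise Kolmogorov-width asymptotics for $\bH^r_p(\TTd)$ in $L_2$ (and in $L_q$ for the lower bound) with the correct logarithmic power $(\log n)^{(d-1)/2}$; this is where the Besov/H\"older-Nikol'skii structure enters and differs from the Sobolev case, and I would cite the relevant results in \cite{DTU18B} rather than reprove them. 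One subtlety to watch is that Assumption~A and Corollary \ref{corollary: rho_n <  d_n,p<2} are stated for $q=2$, so the genuine content lives entirely in the $L_2$ target, with the $1<q<2$ range obtained only through the two one-line embedding inequalities above.
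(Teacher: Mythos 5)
Your proposal is correct and follows essentially the same route as the paper: the paper likewise proves the result by repeating the argument of Theorem \ref{theorem: rho_n><,p<2} (reduction to $q=2$, Assumption A, Corollary \ref{corollary: rho_n <  d_n,p<2}), with the Sobolev Kolmogorov-width asymptotics \eqref{d_n-p >q} replaced by $d_n(\bH^r_p(\TTd), L_q(\TTd)) \asymp \bigl(\frac{(\log n)^{d-1}}{n}\bigr)^{r}(\log n)^{(d-1)/2}$ from \cite[Theorem 4.3.10]{DTU18B}. Your bookkeeping of $\alpha = r > 1/2$ and the extra $(\log n)^{(d-1)/2}$ factor, as well as the lower bound via $\varrho_n \gg d_n$, matches the paper's intended modification exactly.
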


\begin{proof}
	This corollary can be proven in a way similar to the proof of Theorem \ref{theorem: rho_n><,p<2} with a certain modification, based on Corollary \ref{corollary: rho_n <  d_n,p<2}. In particular, \eqref{d_n-p >q} is replaced with 
		\begin{equation*}  \label{d_n-p >q-H}
		d_n(\bH^r_p(\TTd), L_q(\TTd))
		\ \asymp \
		\biggl(\frac{(\log n)^{d-1}}{n}\biggl)^{- r}(\log n)^{(d-1)/2}
	\end{equation*}
	for $1 < q \le 2 \le  p < \infty$,	see, e.g., \cite[Theorem 4.3.10]{DTU18B}.
	\hfill 
\end{proof}	

We conjecture that the right asymptotic orders \eqref{[rho_n-p=2]} and  \eqref{[rho_n-p=2-H]} are still hold true for $r > 1/p$ and $1 < q \le p < \infty$.

\subsection{Different concepts of optimality in sampling recovery}
\label{Optimalities in sampling recovery}
Optimality in sampling recovery can be understood in different ways in terms of sampling widths. It depends on the restriction of sampling algorithms we consider. In the previous sections, optimalities of sampling recovery based on $n$ function values, are treated in terms of linear sampling $n$-widths $\varrho_n(\bW,X)$ and Smolyak sampling $n$-widths $\varrho^{\rm{s}}_n(\bW,X)$. For the first sampling $n$-widths, we are restricted with linear sampling algorithms, and for the second $n$-widths, with linear sampling algorithms on the Smolyak grids.  Theorems  \ref{theorem[r_n,p<q]}, {\ref{theorem: rho_n><,p<2} and \ref{theorem[r_n^s><]} show that for the Sobolev
class of mixed smoothness $\Urp$, the asymptotic orders of these sampling $n$-widths coincide in some cases and differ in other cases. In this subsection, we consider two more sampling widths which characterize other optimalities of sampling recovery.

The Kolmogorov sampling $n$-width  of the set $\bW$ in $X$ as
$$
\varrho_n^{\rm{k}}(\bW,X):=\inf_{\bx_1,\ldots,\bx_n\in \Omega,\atop
	A_n: \RR^n \to L_n, \ \dim L_n \le n} \ \sup_{f\in \bW}
\|f- A_n(f(\bx_1),\ldots,f(\bx_n))\|_X,
$$
where the inf is taken over all collections $(\bx_k)_{k=1}^n$ of $n$ points in $\Omega$, all mappings $A_n: \RR^n \to L_n$ and all linear subspaces $L_n \subset X$ of dimension at most $n$.

The absolute sampling $n$-width  of the set $\bW$ in $X$ as
$$
\varrho_n^{\rm{abs}}(\bW,X):=\inf_{\bx_1,\ldots,\bx_n\in \Omega,\atop
	A_n: \RR^n \to  X} \ \sup_{f\in \bW}
\|f- A_n(f(\bx_1),\ldots,f(\bx_n))\|_X,
$$
where the inf is taken over all collections $(\bx_k)_{k=1}^n$ of $n$ points in $\Omega$, and all mappings $A_n: \RR^n \to X$.

The sampling $n$-widths $\varrho_n^{\rm{k}}$  and $\varrho_n$   are  inspired by the concepts of the Kolmogorov  $n$-widths and the linear  $n$-widths. 
The sampling $n$-widths $\varrho_n^{\rm{k}}$ was introduced in \cite{DD1990}, $\varrho_n$  in \cite{Tem1993}, $\varrho_n^{\rm{abs}}$  in  \cite{TWW1988B}, and $\varrho_n^{\rm{s}}$  in \cite{DU2015}.

 From the definitions we can see that
\begin{equation} \label{sampling width inequalities}
\varrho_n^{\rm{abs}}(\bW,X)
\le \varrho_n^{\rm{k}}(\bW,X)
\le
\varrho_n(\bW,X)
\le 	\varrho^{\rm{s}}_n(\bW,X).
\end{equation}

For given $d$, $r$ and $p,q$, we  temporarily use the abbreviation:
$$
\rho_n := \rho_n(\bW^r_p(\TTd), L_q(\TTd)),
$$
where $\rho_n$ denotes one of $\varrho_n^{\rm{abs}}$, $\varrho_n^{\rm{k}}$, $\varrho_n$ and $\varrho_n^{\rm{s}}$.

For the univariate Sobolev class $\bW^r_p(\TT)$, it is known that  if $d=1$, $1 \le p,q \le \infty$ and $r > 1/p$, then we have that
\begin{equation} \nonumber
	\varrho_n^{\rm{abs}}
	\ \asymp \
	\varrho_n^{\rm{k}}
	\asymp \
	\varrho_n
	\asymp
	\varrho^{\rm{s}}_n
	\ \asymp \
	n^{- r - (1/p - 1/q)_+},
\end{equation}
showing these sampling $n$-widths have the same asymptotic order (see, e.g., \cite{DTU18B}, \cite{Tem18B}, \cite{NT2006}). The picture may be changed for the multivariate Sobolev class of mixed smoothness $\bW^r_p(\TTd)$ with $d>1$. We list some known particular cases when the asymptotic orders of some of these sampling $n$-widths coincide.

\begin{theorem} \label{theorem[r_n,p<q-Opt]} 
Let $d >1$. 	Then we have that
	\begin{itemize}
		\item[{\rm(i)}] for  $r> 1/p$ and $1 < p < q \le 2$, 
		\begin{equation}  \label{[r_n,p<q-Opt]}
			\varrho_n^{\rm{k}}
			\asymp \
			\varrho_n
			\asymp
			\varrho^{\rm{s}}_n 
			\ \asymp \
			\biggl(\frac{(\log n)^{d-1}}{n}\biggl)^{(r-1/p + 1/q)},
		\end{equation}
		\item[{\rm(ii)}] for $r > 1/2$ and $p=2$ and $q = \infty$,
		\begin{equation*} 
				\varrho_n^{\rm{abs}}
			\ \asymp \
			\varrho_n^{\rm{k}}
		\asymp \
		\varrho_n
		\asymp
		\varrho^{\rm{s}}_n 
			\ \asymp \ 
			\biggl(\frac{(\log n)^{d-1}}{n}\biggl)^{(r-1/2)}(\log n)^{(d-1)/2},
		\end{equation*}
		\item[{\rm(iii)}] for $r > 1/2$ and $1 < q \le 2 \le  p < \infty$,
	\begin{equation*}  \label{[r_n,p<q]}
			\varrho_n^{\rm{abs}}
		\ \asymp \
		\varrho_n^{\rm{k}}
		\asymp \
		\varrho_n
		\asymp
		\biggl(\frac{(\log n)^{d-1}}{n}\biggl)^{- r}.
	\end{equation*}
	\end{itemize}
\end{theorem}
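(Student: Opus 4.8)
The plan is to deduce all three chains from the universal ordering \eqref{sampling width inequalities},
$$
\varrho_n^{\rm{abs}}(\bW,X)\le \varrho_n^{\rm{k}}(\bW,X)\le \varrho_n(\bW,X)\le \varrho^{\rm{s}}_n(\bW,X),
$$
by a squeeze: in each case one matches an upper bound on the \emph{largest} width occurring in the chain against a lower bound on the \emph{smallest} one, so that the ordering forces every intermediate quantity to the same order.

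For the upper bounds I would use the Smolyak algorithm $R_{m_n}$ analysed above. In case (i) the estimate $\varrho^{\rm{s}}_n\ll\big((\log n)^{d-1}/n\big)^{r-1/p+1/q}$ is the $p<q$ branch of Theorem~\ref{theorem[r_n^s><]}. In case (ii) the estimate $\varrho^{\rm{s}}_n\ll\big((\log n)^{d-1}/n\big)^{r-1/2}(\log n)^{(d-1)/2}$ follows from Theorem~\ref{theorem[|f -  R_m(f)|_infty><]} with $p=2$, after rewriting the scale through $|G^d(m_n)|\asymp 2^{m_n}m_n^{d-1}\asymp n$, which turns $2^{-(r-1/2)m_n}$ into $\big((\log n)^{d-1}/n\big)^{r-1/2}$ and $m_n^{(d-1)/2}$ into $(\log n)^{(d-1)/2}$. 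In case (iii) it is essential that $\varrho^{\rm{s}}_n$ is \emph{absent} from the chain: for $p\ge q$ the Smolyak width carries a spurious factor $(\log n)^{(d-1)/2}$ and is too large, so the relevant upper bound is the sharp upper estimate for $\varrho_n$ supplied by Theorem~\ref{theorem: rho_n><,p<2}, itself obtained non-constructively from the reproducing-kernel inequality of Corollary~\ref{corollary: rho_n <  d_n,p<2}.

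The lower bounds depend on how far down the chain one must reach. In case (i) the chain stops at $\varrho_n^{\rm{k}}$, and here it suffices to note that a reconstruction into a fixed $n$-dimensional subspace can never beat best approximation from that subspace, whence $d_n(\bW^r_p(\TTd),L_q(\TTd))\le\varrho_n^{\rm{k}}(\bW^r_p(\TTd),L_q(\TTd))$; the classical Kolmogorov-width asymptotics $d_n\asymp\big((\log n)^{d-1}/n\big)^{r-1/p+1/q}$ for $1<p<q\le2$ (see \cite[Section~4]{DTU18B}) then matches the upper bound. In cases (ii) and (iii) the chain descends to $\varrho_n^{\rm{abs}}$, where the Kolmogorov width is no longer a lower bound, because arbitrary, possibly nonlinear, reconstruction into all of $X$ is allowed. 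Here I would use the information-theoretic bound: for any nodes $\bx_1,\dots,\bx_n$ and any reconstruction $A_n$, set $h:=A_n(0,\dots,0)$; then every $g\in\bW$ with $g(\bx_i)=0$ for all $i$ obeys $\max(\|g-h\|_X,\|{-g}-h\|_X)\ge\|g\|_X$, so $\varrho_n^{\rm{abs}}(\bW,X)\ge d^n(\bW,X)$, the Gelfand $n$-width. For case (iii) the duality $d^n(\bW^r_p(\TTd),L_q(\TTd))\asymp d_n(\bW^r_{q'}(\TTd),L_{p'}(\TTd))$, valid for $1<q\le2\le p<\infty$ (so that $1<p'\le2\le q'<\infty$ and $p'\le q'$), reduces the lower bound to formula \eqref{d_n-p >q}; for case (ii) I would quote the direct $L_\infty$ construction of \cite{Tem1993}.

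The genuinely hard step is the lower bound on $\varrho_n^{\rm{abs}}$ in cases (ii) and (iii): it cannot be reduced to the Kolmogorov width, and through the Gelfand-width reduction above it ultimately rests on producing, for every configuration of $n$ sampling nodes, a function in $\bW^r_p(\TTd)$ that vanishes at all of them yet keeps its $L_q$-norm of the asserted order --- a construction on hyperbolic-cross dyadic blocks carried out in the cited works. This is also precisely why $\varrho_n^{\rm{abs}}$ is omitted from chain (i): there the only available lower bound passes through $d_n$ and therefore cannot reach $\varrho_n^{\rm{abs}}$.
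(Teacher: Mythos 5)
Your proposal is correct, and for part (i) it is essentially the paper's own argument: squeeze via $d_n\le\varrho_n^{\rm k}\le\varrho_n\le\varrho^{\rm s}_n$, with the Smolyak upper bound of Theorem~\ref{theorem[r_n^s><]} and the known Kolmogorov-width order for $1<p<q\le 2$ from \cite[Section 4.3]{DTU18B}. The genuine divergence is in the lower bounds for $\varrho_n^{\rm abs}$ in parts (ii) and (iii). The paper disposes of these in one line by citing the linear-versus-nonlinear comparison results of Creutzig--Wojtaszczyk: \cite[Proposition 13]{CW2004} gives $\varrho_n^{\rm abs}=\varrho_n$ for approximation in $L_\infty$ (part (ii)), and \cite[Proposition 14]{CW2004} gives $\varrho_n\ll\varrho_n^{\rm abs}$ for $1<q\le 2\le p<\infty$ (part (iii)); the known asymptotics of $\varrho_n$ (from Theorem~\ref{theorem[r_n,p<q]}(ii) and Theorem~\ref{theorem: rho_n><,p<2}) then transfer downward through the chain \eqref{sampling width inequalities}. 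You instead prove the $\varrho_n^{\rm abs}$ lower bounds information-theoretically: the fooling argument $\varrho_n^{\rm abs}(\bW,X)\ge\inf_{\{\bx_i\}}\sup\{\|g\|_X:\ g\in\bW,\ g(\bx_i)=0,\ i=1,\dots,n\}$, then Gelfand--Kolmogorov duality plus the order of $d_n(\bW^r_{q'}(\TTd),L_{p'}(\TTd))$ for (iii), and Temlyakov's construction for (ii). This is a valid and more self-contained route, but two points deserve explicit care. First, the ``Gelfand width'' in your bound must be the one defined by functionals continuous on $W^r_p(\TTd)$ (i.e., Gelfand numbers of the compact embedding), not functionals continuous on $L_q(\TTd)$, since point evaluations are not continuous on $L_q$; your explicit argument does produce exactly the right quantity, but the subsequent duality $c_n(I)=d_n(I^*)$ and the lifting $(W^r_p(\TTd))^*\simeq W^{-r}_{p'}(\TTd)$, $d_n$ invariance under convolution with $F_r$, must then be justified or cited in this mixed-smoothness setting. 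Second, in (ii) you rely on the lower bound of \cite{Tem1993} holding for arbitrary nonlinear reconstructions; this is true precisely because that proof is a fooling-function construction, but it is an assumption your write-up leans on rather than verifies. What the paper's route buys is skipping both verifications at the cost of importing the abstract propositions of \cite{CW2004} (which moreover give the equivalence $\varrho_n^{\rm abs}\asymp\varrho_n$ as a statement of independent interest); what your route buys is an explicit classical mechanism, and it correctly explains why $\varrho_n^{\rm abs}$ must be omitted from chain (i) and $\varrho^{\rm s}_n$ from chain (iii).
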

\begin{proof}
The equality  	$\varrho_n^{\rm{abs}} =  \varrho_n$ in the claim (ii) follow from \cite[Proposition 13]{CW2004}, and 
the inequality $\varrho_n \ll	\varrho_n^{\rm{abs}}$ for $r > 1/2$ and $1 < q \le 2 \le  p < \infty$  from \cite[Proposition 14]{CW2004}. Hence this theorem immediately follows from the inequalities \eqref{eq-relations} and \eqref{sampling width inequalities}, Theorem \ref{theorem[r_n^s><]} and known results on Kolmogorov $n$-widths $d_n(\bW^r_p(\TTd), L_q(\TTd))$ (see \cite[Section 4.3]{DTU18B}).
	\hfill 
\end{proof}	

Results of the very recent papers \cite{JUV2023, DT2024} show that in some cases the absolute sampling $n$-widths may decay faster than the Kolmogorov sampling $n$-widths if the dimension $d$ is large sufficiently. In particular, it has been proven \cite{DT2024} the following result.

\begin{theorem} \label{theorem[r_n,p<2-Opt]} 
	Let $d >1$, $r> 1/p$ and $1 < p <  2$, $q=2$. Then we have that
		\begin{equation}  \label{[r_n,p<2]}
			\varrho_n^{\rm{abs}}
			\ \ll \
			\biggl(\frac{(\log n)^{d-1}}{n}\biggl)^{(r-1/p + 1/2)}(\log n)^{-(d-1)(1/p - 1/2) + 3r}.
				\end{equation}
\end{theorem}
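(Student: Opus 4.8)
The plan is to reduce the infinite-dimensional recovery problem to a finite-dimensional compressible-recovery problem on a hyperbolic cross, and then to exploit the fact that for $1<p<2$ point evaluations of the trigonometric system furnish compressed-sensing-type reconstruction guarantees that are strictly better (in the logarithmic factors) than anything a linear map can achieve. Note first that the leading polynomial rate in the claimed bound, $n^{-(r-1/p+1/2)}$, coincides with that of the linear sampling width; the whole content of Theorem~\ref{theorem[r_n,p<2-Opt]} is the logarithmic gain $(\log n)^{-(d-1)(1/p-1/2)}$, which for large $d$ makes $\varrho_n^{\rm{abs}}$ decay faster. Since $\varrho_n^{\rm{abs}}$ permits an arbitrary, in particular nonlinear, reconstruction map $A_n\colon\RR^n\to L_2(\TTd)$, we are free to use $\ell_1$-minimization as the decoder, and this nonlinearity is exactly what unlocks the improvement over $\varrho_n$ and $\varrho_n^{\rm{k}}$.

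First I would fix a truncation level $m$ and work with the finite set of frequencies on a step hyperbolic cross $\Gamma$ of cardinality $N\asymp 2^m m^{d-1}$, splitting $f=P_\Gamma f+(f-P_\Gamma f)$, where $P_\Gamma$ is the orthogonal projection onto $\operatorname{span}\{e^{\pi i(\bs,\cdot)}:\bs\in\Gamma\}$. For $r>1/p$ the tail $f-P_\Gamma f$ is estimated in $L_2$ directly from the mixed-smoothness norm via the block structure underlying Theorem~\ref{EqNormThm}, and by choosing $N$ a sufficiently large fixed power of $n$ (so that $\log N\asymp\log n$) it is driven below the target rate. For the main part, the point is that, blockwise, the coefficient vector of $f\in\bW^r_p(\TTd)$ lies, after weighting by the smoothness factors $2^{r|\bk|_1}$, in a weighted $\ell_p$-ball with $p<2$, controlled by $\|f\|_{\Wrp}\le 1$ through Theorem~\ref{EqNormThm}; such vectors are compressible.

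Next I would construct the sampling operator: draw $n$ sample points and invoke the restricted-isometry estimates of Rudelson--Vershynin and Rauhut for bounded orthonormal systems, which guarantee that with $n\gtrsim s\,(\log N)^{3}$ samples the normalized evaluation matrix on $\Gamma$ has the RIP of order $s$. Then $\ell_1$-minimization reconstructs $P_\Gamma f$ with an $L_2$-error bounded by the best $s$-term approximation error $\sigma_s(\cdot)_2$ of its weighted coefficient vector. Using the compressibility estimate $\sigma_s(\bx)_2\ll s^{-(1/p-1/2)}\|\bx\|_p$ together with the smoothness weights, and accounting for the $(\log)^{d-1}$-fold multiplicity of the Smolyak levels inside $\Gamma$, yields an $L_2$ error of the required polynomial-times-logarithmic form in $s$, $N$ and $n$.

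Finally I would optimize the free parameters $m$ (equivalently $N$) and the sparsity $s$ against the sample budget $n\asymp s(\log N)^3$, balancing the tail term, the best-$s$-term term and the RIP sample count; the $(\log n)^{3r}$ overhead in the stated bound is the residue of the cubic logarithmic factor in the RIP sample count transferred through this optimization. The main obstacle is precisely this logarithmic bookkeeping: one must combine the $(\log n)^{d-1}$ coming from the hyperbolic-cross cardinality, the compressed-sensing gain $s^{-(1/p-1/2)}$ distributed across the $(d-1)$ logarithmic scales, and the $(\log N)^3$ RIP overhead, and show that after optimization they assemble into the single exponent $-(d-1)(1/p-1/2)+3r$. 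Establishing a recovery guarantee uniform over the whole class $\bW^r_p(\TTd)$, rather than pointwise in $f$, and checking that the $\ell_1$-minimization decoder is an admissible map for $\varrho_n^{\rm{abs}}$, are the remaining technical points.
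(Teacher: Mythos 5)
First, a point of comparison: the paper does not prove this theorem at all --- it is stated as a quotation from \cite{DT2024} (with \cite{JUV2023} credited for closely related results), so your proposal can only be measured against the methods of those cited works. Your overall architecture --- truncate to a hyperbolic cross, treat the truncated part as a compressible coefficient vector, sample randomly, decode by $\ell_1$-minimization using RIP bounds for bounded orthonormal systems, and accept a $(\log n)^3$-type overhead from the RIP sample complexity --- is essentially the architecture of \cite{JUV2023}; the route of \cite{DT2024} is different (universal sampling discretization combined with a nonlinear sparse least-squares/greedy decoder and best-$m$-term approximation theorems for $\bW^r_p(\TTd)$), though both are of compressed-sensing type and both produce exactly this interplay of a $(d-1)(1/p-1/2)$ logarithmic gain against a $3r$-type logarithmic penalty.

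However, your central step has a genuine gap. You claim that, after weighting by $2^{r|\bk|_1}$, the Fourier coefficient vector of $f\in\bW^r_p(\TTd)$ lies in a weighted $\ell_p$-ball ``controlled through Theorem~\ref{EqNormThm}''. Theorem~\ref{EqNormThm} is a Littlewood--Paley square-function equivalence in $L_p$ for the B-spline blocks $q_\bk(f)$; it says nothing about $\ell_p$-membership of Fourier coefficients, and no such statement can follow from $L_p$-information alone when $1<p<2$: by Hausdorff--Young, the $L_p$-norm of a dyadic block $\delta_\bk(f)$ controls only the $\ell_{p'}$-norm of its coefficients with $p'=p/(p-1)>2$, which is useless for Stechkin-type compressibility, and repairing this by H\"older within a block of size $2^{|\bk|_1}$ costs a factor $2^{|\bk|_1(2/p-1)}$ that exactly cancels the Stechkin gain $s^{-(1/p-1/2)}$, so the naive route recovers nothing beyond linear truncation. (Switching to the B-spline system, where blockwise $\ell_p$-stability does hold, destroys the other half of your argument, since that system is not a bounded orthonormal system and the RIP sample bounds no longer apply.) The compressibility statement you need --- equivalently a best-$s$-term bound carrying the factor $(\log)^{-(d-1)(1/p-1/2)}$, which is the entire content of the theorem --- is a nontrivial ingredient: in the $\ell_1$-minimization approach it comes from the Hardy--Littlewood inequality $\sum_{\bs} \prod_{j}(1+|s_j|)^{p-2}|\hat f(\bs)|^p \ll \|f\|_p^p$, valid precisely for $1<p\le 2$, while in \cite{DT2024} it comes from Temlyakov's best-$m$-term trigonometric approximation theorems for $\bW^r_p(\TTd)$. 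Until your appeal to Theorem~\ref{EqNormThm} is replaced by one of these, the claimed logarithmic gain is unsupported; the remaining issues you flag (sup-norm control of the tail at the sample points, uniformity of the recovery over the class, and the exact assembly of the exponent $3r$) are real but secondary.
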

If $d > \frac{3r}{1/p-1/2} + 1$, then from \eqref{[r_n,p<q-Opt]} and \eqref{[r_n,p<2]}, one can see that 
\begin{equation*} 
	\varrho_n^{\rm{abs}}
	\ \ll \
(\log n)^{-\alpha}	\varrho_n^{\rm{k}}
\end{equation*}
with $\alpha:= (d-1)(1/p - 1/2) + 3r>0$. 

 Excepting the cases in the claims (ii) and (iii) of   Theorem \ref{theorem[r_n,p<q-Opt]}, the problem of right asymptotic order of the absolute sampling $n$-widths $\varrho_n^{\rm{abs}}$ is still open for $d>1$, $1 \le p,q \le \infty$ and $r > 1/p$.

\section{Weighted sampling recovery}
\label{Weighted sampling recovery}

\subsection{Introducing remarks}
\label{Introducing remarks Sec 3}
In the previous section, we  considered the problem of linear sampling  recovery for functions in unweighted Sobolev spaces $W^r_p(\TTd)$.
In this section, we consider the problems of linear sampling recovery and linear approximation of functions in weighted Sobolev spaces $W^r_p(\RRd;\mu)$. The approximation error is measured by the norm of the weighted space $L_q(\RRd;\mu)$. The optimality of of sampling recovery and linear approximation is treated in terms of linear sampling, linear and Kolmogorov $n$-widths. 

We first introduce weighted  Sobolev spaces of  mixed smoothness.  
Let  $\Omega \subset \RRd$ be a Lebesgue measurable set. Let $v$ be a nonnegative Lebesgue measurable  function on $\Omega$. Denote by $\mu_v$ the measure on $\Omega$ defined via the density function $v$, i.e., for every Lebesgue measurable set $A \subset \Omega$,
$$
\mu_v(A) = \int_A v(\bx) \rd \bx.
$$

For $1 \le p < \infty$, let $L_p(\Omega;\mu_v)$ be the weighted space  of all Lebesgue measurable functions $f$ on $\Omega$ such that the norm
\begin{align} \label{L-Omega}
\|f\|_{L_p(\Omega;\mu_v)} : = 
\bigg( \int_\Omega |f(\bx)|^p  \mu_v(\rd \bx)\bigg)^{1/p} 
=
\bigg( \int_\Omega |f(\bx)|^p v(\bx) \rd \bx\bigg)^{1/p} 
\end{align}
is finite. Due to \eqref{L-Omega}, the density function $v$ is called also a weight and the space $L_p(\Omega;\mu_v)$  a weighted space. For $r \in \NN$, we define the weighted  Sobolev space $W^r_p(\Omega;\mu_v)$ of mixed smoothness $r$  as the normed space of all functions $f\in L_p(\Omega;\mu_v)$ such that the weak (generalized) partial derivative $D^{\bk} f$ belongs to $L_p(\Omega;\mu_v)$ for  every $\bk \in \ZZdp$ satisfying the inequality $|\bk|_\infty \le r$. The norm of a  function $f$ in this space is defined by
\begin{align} \label{W-Omega}
\|f\|_{W^r_p(\Omega;\mu_v)}: = \Bigg(\sum_{|\bk|_\infty \le r} \|D^{\bk} f\|_{L_p(\Omega;\mu_v)}^p\Bigg)^{1/p}.
\end{align}
We write $L_p(\Omega):= L_p(\Omega;\mu_v)$ and $W^r_p(\Omega):=W^r_p(\Omega;\mu_v)$ if  $v(\bx) = 1$ for every $\bx \in \Omega$.

In this section, we are interested in  approximation of functions from  $W^r_p(\Omega;\mu_w)$, where the density function or equivalently, the weight   
\begin{equation} \label{w(bx)}
w(\bx):= w_{\lambda,a,b}(\bx) := \prod_{i=1}^d w(x_i),
\end{equation}
is the tensor product of univariate Freud-type weights
\begin{equation} \label{w(x)}
w(x):= 	w_{\lambda,a,b}(x) := \exp (- a|x|^\lambda + b), \ \ \lambda > 1, \ \ a > 0, \ \ b \in \RR.
\end{equation} 
The approximation error is measured by the norm of the weighted space $L_q(\Omega;\mu_w)$ for $1\le q < \infty$.   The most important parameter in the weight $w_{\lambda,a,b}$ is $\lambda$. The parameter $b$ which produces only a positive constant in the weight $w_{\lambda,a,b}$  is introduced for a certain normalization for instance, for the standard Gaussian weight (see \eqref{Gassian weight} below) which is one of the most important weights. In what follows,  we fix the  parameters $\lambda,a,b$  and  for simplicity, drop them from the notation. As the weight  $w=w_{\lambda,a,b}$ is fixed, we also use the abbreviations: 
$$
L_p(\Omega;\mu):= L_p(\Omega;\mu_w), \ \ \ 
W^r_p(\Omega;\mu):=W^r_p(\Omega;\mu_w).
$$

The standard $d$-dimensional Gaussian measure $\gamma$ with the density function 
\begin{equation} \label{Gassian weight}
g(\bx) = (2\pi)^{-d/2}\exp (- |\bx|_2^2/2).
\end{equation}
is a particular case of measure $\mu_w$.
The well-known  spaces $L_p(\RRd;\gamma)$ and $\Wap(\RRd; \gamma)$ 
are used in many applications.

Notice that any function $f \in W^r_p(\RRd;\mu)$ is equivalent  in the sense of the Lesbegue measure to a continuous (not necessarily bounded) function on $\RRd$ (see \cite[Lemma 3.1]{DD2023}). Hence in what follows   we always assume that the functions $f \in W^r_p(\RRd;\mu)$ are  continuous. We need this assumption for  well defining  sampling algorithms and quadratures for functions $f \in W^r_p(\RRd;\mu)$.

The problems of linear sampling recovery and linear approximation of functions in Gaussian-weighted Sobolev spaces $\Wap(\RRd; \gamma)$ have been studied in \cite{DK2023}.  For the set $\bW^r_p(\RRd;\gamma)$, $r \in \NN$,  and Gaussian-weighted space $L_q(\RRd;\gamma)$, it was proven   in \cite{DK2023}  the following.
 \begin{itemize}
 	\item[(i)] 
 	\begin{equation}\label{rho_n:p=q=2}
 	\varrho_n(\bW^r_2(\RRd;\gamma),L_2(\RRd;\gamma))
 	\asymp 
 	n^{-\frac{r}{2}} (\log n)^{\frac{(d-1)r}{2}} \ \ (r \ge 2),
 	\end{equation}
 	and   
 \begin{equation}\label{lambda_n:p=q=2}
 \lambda_n(\bW^r_2(\RRd;\gamma),L_2(\RRd;\gamma)) 
 =
 d_n(\bW^r_2(\RRd;\gamma),L_2(\RRd;\gamma))
 \asymp 
 n^{-\frac{r}{2}} (\log n)^{\frac{(d-1)r}{2}}, 
 \end{equation}	
 	\item[(ii)] 
 	For $2 < p<\infty$,  
 	\begin{align}\label{rho_n:q=2}
 	\varrho_n(\bW^r_p(\RRd;\gamma),L_2(\RRd;\gamma))
 	\asymp 
 	n^{-r} (\log n)^{(d-1)r},
 	\end{align}
and for  $1\le q < p<\infty$,
\begin{equation}\label{lambda_n:p>q}
\lambda_n(\bW^r_p(\RRd;\gamma),L_q(\RRd;\gamma))
\asymp		
d_n(\bW^r_p(\RRd;\gamma),L_q(\RRd;\gamma))
\asymp 
n^{-r} (\log n)^{(d-1)r}.
\end{equation}			
\end{itemize}	

The proofs of the asymptotic orders in \eqref{rho_n:p=q=2}--\eqref{lambda_n:p>q} require quite different techniques. 

The proof of \eqref{lambda_n:p>q} for the case $1 \leq q <p <\infty$ is based on a linear approximation in the space $L_q(\IId)$ of functions from the classical unweighted  Sobolev spaces $W^r_p(\IId)$. By assembling of integer shifts of this approximation we construct  a linear approximation   in the space $L_q(\IId; \gamma)$ of functions from Gaussian-weighted Sobolev spaces $W^r_p(\RRd;\gamma)$ which preserves   the  convergence rate. 

The proof of \eqref{lambda_n:p=q=2}  is completely different from the proof of the case $1 \leq q <p <\infty$. It is similar to the hyperbolic cross trigonometric approximation  in the Hilbert space $\tilde{L}_2(\IId)$ of periodic functions from the Sobolev space $\tilde{W}_2^\alpha(\IId)$ (see, e.g., \cite{DTU18B} for detail). Here, the approximation is based on finite truncations of the Hermite polynomial expansion of functions to be approximated. 

The results \eqref{rho_n:p=q=2} and \eqref{rho_n:q=2}  are deduced from \eqref{lambda_n:p=q=2} and
\eqref{lambda_n:p>q} and the inequalities beween sampling and Kolmogorov $n$-widths in Corollaries \ref{corollary: rho_n <  d_n}  and \ref{corollary: rho_n <  d_n,p<2}, respectively.

It is interesting to compare the results \eqref{rho_n:p=q=2}--\eqref{lambda_n:p>q} with the known results for the corresponding unweighted cases. For the unweighted class $\bW^r_p(\IId)$, $r \in \NN$,  and unweighted space $L_q(\IId)$, we have the following.
 \begin{itemize}
 
 	\item[(iii)] 
 	\begin{equation}\label{rho_n:p=q=2-unweighted}
 	\varrho_n(\bW^r_2(\IId),L_2(\IId))
 	\asymp 
 	n^{-r } (\log n)^{(d-1)r},
 	\end{equation}
 	and 
 	\begin{equation}\label{lambda_n:p=q=2-unweighted}
 	\lambda_n(\bW^r_2(\IId),L_2(\IId)) 
 	=
 	d_n(\bW^r_2(\IId),L_2(\IId)
 	\asymp 
 	n^{-r} (\log n)^{(d-1)r}, 
 	\end{equation}	
 	
	\item[(iv)] For $2 <  p < \infty$,
	\begin{align}\label{rho_n:q=2-unweighted}
	\varrho_n(\bW^r_p(\IId),L_2(\IId))
	\asymp 
	n^{-r} (\log n)^{(d-1)r}.
	\end{align}
	and for  $1\le q < p<\infty$,
	\begin{equation}\label{lambda_n:p>q-unweighted}
	\lambda_n(\bW^r_p(\IId),L_q(\IId))
	\asymp		
	d_n(\bW^r_p(\IId),L_q(\IId))
	\asymp 
	n^{-r} (\log n)^{(d-1)r}.
	\end{equation}			
\end{itemize}


Inspecting \eqref{rho_n:p=q=2}--\eqref{lambda_n:p>q} and \eqref{rho_n:p=q=2-unweighted}--\eqref{lambda_n:p>q-unweighted}, we can see that there is a significant difference between the cases $p = q = 2$ and the cases $p > q$. If $p > q$, the asymptotic orders of the $n$-widths $\varrho_n$, $\lambda_n$ and $d_n$ are the same for the weighted and unweighted cases. If $p = q = 2$, the  asymptotic orders of these $n$-widths
in the weighted case are twice worse than in the unweighted case. 

In this section,  we extend  the results \eqref{rho_n:q=2} and \eqref{lambda_n:p>q} to the more general case -- the weighted class $\Wpgamma$ and weighted space $L_q(\RRd;\mu)$ for the measure $\mu$ associated with Freud-type weights as the density function given by \eqref{w(bx)}.

For technical  convenience we use the conventions  for $n \in \RR_1$:
  $$
  \varrho_n(F,X) := \varrho_{\lfloor n \rfloor}(F,X),
  $$
$$
d_n(F,X) := d_{\lfloor n \rfloor}(F,X), \ \ \lambda_n(F,X) := \lambda_{\lfloor n \rfloor}(F,X),
$$
and
 $$
A_n := A_{\lfloor n \rfloor}, \ \ R_n := R_{\lfloor n \rfloor}.
$$

\subsection{Linear sampling recovery and approximation}
\label{The case 1 le q < p < infty}
In this subsection, we give shorten proofs of  the results  \eqref{rho_n:p=q=2} and \eqref{lambda_n:p=q=2}  by using of the Hilbert space structure of the Gaussian-weighted  spaces $\bW^r_2(\RRd;\gamma)$ and $L_2(\RRd;\gamma )$ and Corollary \ref{corollary: rho_n <  d_n}, emphasizing possibility of application of the results in \cite{DKU2023} to different situations. We also  extend the results  \eqref{rho_n:q=2} and \eqref{lambda_n:p>q}  to the weighted function class $\BWpgamma$ and weighted space $L_q(\RRd;\mu)$ by using a modification of the  technique from \cite{DK2023}. 

%


We first give shorten proofs of  the results  \eqref{rho_n:p=q=2} and \eqref{lambda_n:p=q=2}. Let $(p_m)_{m \in \NN_0}$ be the sequence of orthonormal polynomials with respect to the univariate Gaussian weight $g(x) = (2\pi)^{-1/2}\exp (- |x|^2/2)$ .
For every multi-degree $\bk\in \ZZdp$, the $d$-variate 
polynomial $H_\bk$ is defined by
\begin{equation*}\label{H_bk}
	p_\bk(\bx) :=\prod_{j=1}^d p_{k_j}(x_j),
	\;\; \bx\in \RRd.
\end{equation*}

It is well-known that the  polynomials $\brab{p_\bk}_{\bk \in \ZZdp}$ constitute an orthonormal basis of the Hilbert space $L_2(\RRd;\gamma)$.
In particular,  every $f \in L_2(\RRd;\gamma)$ can be represented by the series 
\begin{equation}\label{H-series}
	f = \sum_{\bk \in \ZZdp} \hat{f}(\bk) p_\bk \ \ {\rm with} \ \ \hat{f}(\bk) := \int_{\RRd} f(\bx)\, p_\bk(\bx)w(\bx)\rd \bx
\end{equation}
converging in the norm of $L_2(\RRd;\gamma)$, and in addition, there holds  Parseval's identity
\begin{equation}\label{P-id}
	\norm{f}{L_2(\RRd;\mu)}^2= \sum_{\bk \in \ZZdp} |\hat{f}(\bk)|^2.
\end{equation}

For $r \in \NN_0$ and $\bk \in \ZZdp$, we define
\begin{equation*}\label{rho_bk}
	\rho_{r,\bk}: = \prod_{j=1}^d \brac{k_j + 1}^r.
\end{equation*}
For any $r >0$. Denote by $\Hh^r_w$ the space of all   functions $f \in L_2(\RRd;\gamma)$ represented by the  series \eqref{H-series} for which  the norm
\begin{equation}\label{Hh-norm}
	\norm{f}{\Hh^r_g} := \brac{\sum_{\bk \in \ZZdp} \rho_{r,\bk}|\hat{f}(\bk)|^2}^{1/2}
\end{equation}
is finite.

For the proof of the following  norm equivalence  see \cite[Lemma 3.4]{DK2023} (cf. also \cite[pages 687--689]{DILP18}). 
\begin{lemma}\label{lemma:N-eq}
	Let $r \in \NN_0$. Then we have the norm equivalence
	\begin{equation}\label{N-eq}
		\norm{f}{\Wa}
		\asymp 
		\norm{f}{\Hh^r_g}.
	\end{equation}
\end{lemma}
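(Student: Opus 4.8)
The plan is to diagonalise both norms simultaneously in the Hermite basis $\{p_\bn\}_{\bn \in \ZZdp}$ and then reduce the claim to an elementary estimate on falling factorials. The starting point is the differentiation rule for the univariate orthonormal Hermite polynomials: writing $p_m = \mathrm{He}_m/\sqrt{m!}$ with $\mathrm{He}_m' = m\,\mathrm{He}_{m-1}$, one gets $p_m' = \sqrt{m}\,p_{m-1}$, and iterating,
\[
p_m^{(k)} = \sqrt{\tfrac{m!}{(m-k)!}}\,p_{m-k} \quad (m \ge k), \qquad p_m^{(k)} = 0 \quad (k > m).
\]
Taking tensor products, for $\bk$ with $|\bk|_\infty \le r$ this gives $D^\bk p_\bn = \big(\prod_{j=1}^d \sqrt{n_j!/(n_j-k_j)!}\,\big)\,p_{\bn-\bk}$, where the right-hand side is understood to vanish whenever some $k_j > n_j$.

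First I would record the identity on Hermite polynomials (finite expansions). For such an $f = \sum_\bn \hat f(\bn)\,p_\bn$, orthonormality of $\{p_\bn\}$ in $L_2(\RRd;\gamma)$ yields $\|D^\bk f\|_{L_2(\RRd;\gamma)}^2 = \sum_{\bn \ge \bk} \prod_{j=1}^d \tfrac{n_j!}{(n_j-k_j)!}\,|\hat f(\bn)|^2$. Summing over all $\bk$ with $|\bk|_\infty \le r$ and factorising the resulting sum over coordinates produces the exact formula
\[
\|f\|_{\Wa}^2 = \sum_{\bn \in \ZZdp} |\hat f(\bn)|^2 \prod_{j=1}^d \omega_r(n_j),
\qquad \omega_r(n) := \sum_{k=0}^{\min(r,n)} \frac{n!}{(n-k)!}.
\]
Everything then reduces to comparing the multiplier $\prod_j \omega_r(n_j)$ with $\rho_{r,\bn} = \prod_j (n_j+1)^r$.

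The heart of the matter is the one-dimensional two-sided bound $\omega_r(n) \asymp (n+1)^r$, uniform in $n \in \NNo$ with constants depending only on $r$. For the upper bound, each falling factorial satisfies $n!/(n-k)! \le (n+1)^k \le (n+1)^r$ for $0 \le k \le r$, so $\omega_r(n) \le (r+1)(n+1)^r$. For the lower bound one keeps only the top term: when $n \ge r$ one has $n!/(n-r)! = n(n-1)\cdots(n-r+1) \ge (n-r+1)^r \ge (r+1)^{-r}(n+1)^r$ (using $(n-r+1)(r+1) - (n+1) = r(n-r) \ge 0$), while for the finitely many $n < r$ one has $\omega_r(n) \ge 1 \ge r^{-r}(n+1)^r$. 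Taking products over the $d$ coordinates gives $\prod_j \omega_r(n_j) \asymp \rho_{r,\bn}$ with constants depending only on $r$ and $d$, hence $\|f\|_{\Wa} \asymp \|f\|_{\Hh^r_g}$ for all such $f$.

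The remaining and, I expect, most delicate step is the passage from polynomials to the whole space. Here I would invoke that the Hermite polynomials span a dense subspace of $\Wa$ (the standard density statement for Gaussian Sobolev spaces) and that they are dense in $\Hh^r_g$ by the very definition of its norm. Since the two norms are equivalent on this common dense subspace and both spaces are complete, the identity map extends to an isomorphism, giving the asserted equivalence on all of $\Wa$. The point requiring genuine care is to check that for a general $f \in \Wa$ the weak mixed derivatives $D^\bk f$ are indeed obtained by term-by-term differentiation of the Hermite series, equivalently that $\widehat{D^\bk f}(\bn-\bk) = \sqrt{n!/(n-k)!}\,\hat f(\bn)$; this follows from the closedness of the weak-derivative operators combined with the density argument, and it is precisely what legitimises the formal computation above beyond the polynomial case.
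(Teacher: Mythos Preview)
Your argument is correct and is exactly the standard route: diagonalise both norms in the Hermite basis via $p_m^{(k)}=\sqrt{m!/(m-k)!}\,p_{m-k}$, reduce to the scalar comparison $\omega_r(n)\asymp(n+1)^r$, and close by density of Hermite polynomials in $\Wa$. The paper does not give its own proof here but simply refers the reader to \cite[Lemma~3.4]{DK2023} (and \cite[pp.~687--689]{DILP18}), where the same Hermite-diagonalisation argument is carried out; your write-up is a faithful reconstruction of that proof, with the density/closedness step correctly flagged as the only point needing genuine justification.
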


Due to this norm equivalence, we identify the space  $W^r_2(\RRd,\gamma)$ with the space $\Hh^r_g$ for $r \in \NN$.

\begin{theorem} 	\label{theorem:widths:p=q=2}
Let $r >0$. Then we have  the right asymptotic orders
\begin{align}\label{widths:p=q=2}
	\lambda_n(\boldsymbol{\Hh}_g^r, L_2(\RRd;\gamma)) 
	=
	d_n(\boldsymbol{\Hh}_g^r, L_2(\RRd;\gamma))
	\asymp 
	n^{-\frac{r}{2}} (\log n)^{\frac{(d-1)r}{2}}.
\end{align}
\end{theorem}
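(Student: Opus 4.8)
The plan is to exploit the diagonal (ellipsoidal) structure of the problem in the orthonormal Hermite basis. By Parseval's identity \eqref{P-id}, the map $f \mapsto (\hat f(\bk))_{\bk \in \ZZdp}$ is an isometry of $L_2(\RRd;\gamma)$ onto $\ell_2(\ZZdp)$, under which the unit ball $\boldsymbol{\Hh}_g^r$ is carried to the ellipsoid
\begin{equation*}
	\Ee := \Big\{ (c_\bk)_{\bk \in \ZZdp} : \sum_{\bk \in \ZZdp} \rho_{r,\bk} |c_\bk|^2 \le 1 \Big\},
\end{equation*}
whose principal semi-axes are $\sigma_\bk := \rho_{r,\bk}^{-1/2} = \prod_{j=1}^d (k_j+1)^{-r/2}$. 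Since $L_2(\RRd;\gamma)$ is a Hilbert space we already know $\lambda_n = d_n$, so it suffices to compute $d_n(\Ee, \ell_2(\ZZdp))$.

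First I would invoke the classical description of Kolmogorov widths of an ellipsoid in a Hilbert space: if $(\sigma_{(m)})_{m \ge 1}$ denotes the non-increasing rearrangement of the family $(\sigma_\bk)_{\bk \in \ZZdp}$, then $d_n(\Ee, \ell_2) = \sigma_{(n+1)}$. The upper bound comes from projecting onto the coordinate subspace spanned by the $n$ indices carrying the largest semi-axes, so that the approximation error is controlled by the largest omitted semi-axis $\sigma_{(n+1)}$; the matching lower bound is the standard Tikhomirov argument, comparing $\Ee$ with an $(n+1)$-dimensional Euclidean ball of radius $\sigma_{(n+1)}$. Thus the whole problem reduces to the asymptotic behaviour of the rearranged sequence $\sigma_{(n+1)}$.

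The heart of the matter is therefore a lattice-point count. For $\delta \ge 1$ set
\begin{equation*}
	N(\delta) := \#\Big\{ \bk \in \ZZdp : \prod_{j=1}^d (k_j+1) \le \delta \Big\}.
\end{equation*}
Since $\sigma_\bk \ge t$ is equivalent to $\prod_{j=1}^d (k_j+1) \le t^{-2/r}$, the number of semi-axes not smaller than $t$ equals $N(t^{-2/r})$. I would establish the classical hyperbolic-cross estimate $N(\delta) \asymp \delta (\log \delta)^{d-1}$ (equivalently, the Dirichlet divisor-type bound for the number of $\bm \in \NN^d$ with $\prod_j m_j \le \delta$), for instance by induction on $d$ using $\#\{m \le \delta\} \asymp \delta$ for $d=1$ and $\sum_{m \le \delta} N_{d-1}(\delta/m) \asymp \delta(\log\delta)^{d-1}$ for the inductive step. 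Choosing $t = \sigma_{(n+1)}$ and using $N(\sigma_{(n+1)}^{-2/r}) \asymp n$ gives $\sigma_{(n+1)}^{-2/r} (\log n)^{d-1} \asymp n$, where I use that $\log(\sigma_{(n+1)}^{-2/r}) \asymp \log n$. Solving yields
\begin{equation*}
	\sigma_{(n+1)} \asymp \big( n (\log n)^{-(d-1)} \big)^{-r/2} = n^{-r/2} (\log n)^{(d-1)r/2},
\end{equation*}
which is exactly \eqref{widths:p=q=2}.

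The main obstacle is the lattice-point asymptotics $N(\delta) \asymp \delta(\log\delta)^{d-1}$ together with its clean inversion: one must check that the logarithmic factors are handled consistently (so that $\log\delta$ may be replaced by $\log n$) and that the rearranged value $\sigma_{(n+1)}$ is pinned down up to constants from the two-sided counting bounds $N(t^{-2/r}) \le n$ and $N(t^{-2/r}) \ge n+1$. Everything else — the isometry, the ellipsoid reduction, and the Hilbert-space width formula — is routine.
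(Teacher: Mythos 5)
Your proposal is correct and follows essentially the same route as the paper's proof (given in [DK2023, Theorem 3.5] and described there as the analogue of the hyperbolic-cross argument for $d_n(\bW^r_2(\IId),L_2(\IId))$): pass to Hermite coefficients via Parseval, so that the class becomes an ellipsoid whose optimal $n$-dimensional approximant is the span of the Hermite polynomials indexed by a hyperbolic cross, and reduce everything to the lattice count $\#\{\bk: \prod_j(k_j+1)\le\delta\}\asymp \delta(\log\delta)^{d-1}$ together with its inversion. The only cosmetic difference is that you package the projection upper bound and the ball-comparison lower bound as the classical Kolmogorov theorem on widths of ellipsoids, whereas the cited proof carries out the truncation and lower bound explicitly.
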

The proof of  this theorem which is given in \cite[Theorem 3.5]{DK2023},  is similar the proof of the right asymptotic order of the Kolmogorov $n$-widths $d_n(\bW^r_2(\IId),L_2(\IId)$ in the unweighted case. 

From Theorem \ref{theorem:widths:p=q=2} and Corollary \ref{corollary: rho_n <  d_n} we prove

\begin{theorem} 	\label{theorem:rho_n:p=q=2}
Let $r >1$. Then  there holds the right asymptotic order
\begin{align}\label{sampling-widths:p=q=2}
	\varrho_n(\boldsymbol{\Hh}_g^r, L_2(\RRd;\gamma))
	\asymp 
	n^{-\frac{r}{2}} (\log n)^{\frac{(d-1)r}{2}}.
\end{align}		
\end{theorem}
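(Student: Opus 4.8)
The plan is to prove the two matching estimates separately and then combine them. The lower bound is immediate: the chain of inequalities \eqref{eq-relations} gives $\varrho_n(\boldsymbol{\Hh}_g^r, L_2(\RRd;\gamma)) \ge d_n(\boldsymbol{\Hh}_g^r, L_2(\RRd;\gamma))$, and the right-hand side is $\asymp n^{-r/2}(\log n)^{(d-1)r/2}$ by Theorem \ref{theorem:widths:p=q=2}. Hence $\varrho_n \gg n^{-r/2}(\log n)^{(d-1)r/2}$ with no further work, so the whole content is in the upper bound.

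For the upper bound I would invoke Corollary \ref{corollary: rho_n <  d_n} with $H = \Hh^r_g$, $\Omega = \RRd$ and $\nu = \gamma$. This first requires checking the hypotheses of Theorem \ref{thm: rho_n < sum d_n}: that $\Hh^r_g$ is a separable reproducing kernel Hilbert space, compactly embedded into $L_2(\RRd;\gamma)$, satisfying the finite-trace condition \eqref{trace assumption2}. Separability and the Hilbert structure are clear from the orthonormal expansion \eqref{H-series} together with the norm \eqref{Hh-norm}, and compactness of the embedding follows because $\rho_{r,\bk} \to \infty$. To exhibit the reproducing kernel I would pass to the normalized basis: by \eqref{Hh-norm} the family $\{\rho_{r,\bk}^{-1/2} p_\bk\}_{\bk \in \ZZdp}$ is an orthonormal basis of $\Hh^r_g$, so the natural candidate kernel is
$$K(\bx,\by) = \sum_{\bk \in \ZZdp} \rho_{r,\bk}^{-1}\, p_\bk(\bx)\, p_\bk(\by),$$
and one checks that point evaluations are continuous, equivalently that $\sum_{\bk} \rho_{r,\bk}^{-1} |p_\bk(\bx)|^2 < \infty$ for each $\bx$; this is consistent with the standing assumption in this section that members of these spaces are continuous.

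The crucial verification is the finite-trace condition, and I expect the pointwise summability for the RKHS property to be the step requiring the most care, possibly invoking growth estimates for the orthonormal polynomials $p_\bk$. For the integrated trace itself the computation is clean: using orthonormality $\int_{\RRd} p_\bk^2\, \gamma(\rd\bx) = 1$ and the product structure $\rho_{r,\bk} = \prod_{j=1}^d (k_j+1)^r$, I would obtain
$$\int_{\RRd} K(\bx,\bx)\,\gamma(\rd\bx) = \sum_{\bk \in \ZZdp} \rho_{r,\bk}^{-1} = \prod_{j=1}^d \sum_{k=0}^\infty (k+1)^{-r} = \zeta(r)^d < \infty,$$
which is finite precisely because $r > 1$. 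This is exactly where the hypothesis $r>1$ enters.

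Finally, with the hypotheses in place, I would read off the decay exponents from Theorem \ref{theorem:widths:p=q=2}: matching $d_n \ll n^{-\alpha}(\log n)^{-\beta}$ forces $\alpha = r/2$ and $\beta = -(d-1)r/2$. Since $r > 1$ gives $\alpha = r/2 > 1/2$, Corollary \ref{corollary: rho_n <  d_n} applies in its first regime and yields
$$\varrho_n(\boldsymbol{\Hh}_g^r, L_2(\RRd;\gamma)) \ll n^{-r/2}(\log n)^{(d-1)r/2}.$$
Combined with the lower bound above, this establishes \eqref{sampling-widths:p=q=2}. Note that $r>1$ is used twice in the same way: it is needed both for the finite-trace condition and to land in the non-degenerate case $\alpha>1/2$ of the corollary, which is why the borderline $r=1$ is excluded.
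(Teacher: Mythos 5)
Your proposal is correct and follows essentially the same route as the paper: the lower bound from $\varrho_n \ge \lambda_n = d_n$ together with Theorem \ref{theorem:widths:p=q=2}, and the upper bound by verifying that $\Hh^r_g$ is a separable RKHS with kernel $K(\bx,\by)=\sum_{\bk}\rho_{r,\bk}^{-1}p_\bk(\bx)p_\bk(\by)$ satisfying the finite trace condition, then applying Corollary \ref{corollary: rho_n <  d_n}. Your explicit computation $\int_{\RRd}K(\bx,\bx)\,\gamma(\rd\bx)=\zeta(r)^d$ makes precise what the paper only asserts as ``easily seen'' from orthonormality, and your observation that $r>1$ enters both through the trace condition and through the regime $\alpha>1/2$ of the corollary is exactly the role it plays in the paper's argument.
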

\begin{proof}
The lower bound of \eqref{sampling-widths:p=q=2} follows from \eqref{widths:p=q=2} and the inequality 
$$
\varrho_n(\boldsymbol{\Hh}_g^r, L_2(\RRd;\gamma)) 
\ge
\lambda_n(\boldsymbol{\Hh}_g^r, L_2(\RRd;\gamma)).
$$
We verify the upper one. By \eqref{widths:p=q=2}, 
\begin{align} \label{d_n<2}
	d_n(\boldsymbol{\Hh}_g^r, L_2(\RRd;\gamma))
	\ll
	n^{-\frac{r}{2}} (\log n)^{\frac{(d-1)r}{2}}.
\end{align} 
Notice that for $r >1$, $\Hh^r_g$ is a separable reproducing kernel Hilbert space with the reproducing kernel
\begin{equation}\label{RK}
	K(\bx,\by)= \sum_{\bk \in \ZZdp} \rho_{r,\bk}^{-1} \, p_\bk(\bx)p_\bk(\by).
\end{equation}
From the orthonormality of the system $\brab{p_\bk}_{\bk \in \ZZdp}$ it is easily seen that $	K(\bx,\by)$ satisfies the finite trace assumption
\begin{equation}\label{trace assumption}
	\int_{\RRd} K(\bx,\bx) g(\bx) \rd \bx  \ < \ \infty.
\end{equation}
Hence  by Corollary \ref{corollary: rho_n <  d_n} and \eqref{d_n<2} we obtain 
$$
\varrho_n(\boldsymbol{\Hh}_g^r, L_2(\RRd;\gamma)) 
\ll
d_n(\boldsymbol{\Hh}_g^r, L_2(\RRd;\gamma)).
$$
This and \eqref{d_n<2} prove the upper bound of \eqref{sampling-widths:p=q=2}.
\hfill	
\end{proof}

For the Sobolev function  class $\bW^r_2(\RRd;\mu)$, Theorems \ref{theorem:widths:p=q=2} and \ref{theorem:rho_n:p=q=2} are the following results on sampling, linear and Kolmogorov $n$-widths.

\begin{theorem} 	\label{theorem:lambda_n:p=q=2,d>1}
Let $r\in \NN$. Then   there hold the right asymptotic orders
\begin{align}\label{Cor-widths:p=q=2}
	\lambda_n (\bW^r_2(\RRd;\gamma), L_2(\RRd;\gamma))
	=
	 d_n(\bW^r_2(\RRd;\gamma), L_2(\RRd;\gamma))
	\asymp 
	n^{-\frac{r}{2}} (\log n)^{\frac{(d-1)r}{2}}.
\end{align}		
\end{theorem}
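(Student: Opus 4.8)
The plan is to transfer the asymptotic orders already established for the unit ball $\boldsymbol{\Hh}_g^r$ in Theorem~\ref{theorem:widths:p=q=2} to the Sobolev unit ball $\bW^r_2(\RRd;\gamma)$ by means of the norm equivalence of Lemma~\ref{lemma:N-eq}. First I would check that for $r \in \NN$ both hypotheses are simultaneously in force: the restriction $r \in \NN_0$ required by Lemma~\ref{lemma:N-eq} and the positivity $r > 0$ required by Theorem~\ref{theorem:widths:p=q=2}. By Lemma~\ref{lemma:N-eq} there are constants $0 < c_1 \le c_2 < \infty$ with $c_1 \norm{f}{\Hh^r_g} \le \norm{f}{\Wa} \le c_2 \norm{f}{\Hh^r_g}$ for all $f$, so that as sets $W^r_2(\RRd;\gamma) = \Hh^r_g$ and the two unit balls obey the two-sided inclusion
$$
c_2^{-1}\, \boldsymbol{\Hh}_g^r \ \subset \ \bW^r_2(\RRd;\gamma) \ \subset \ c_1^{-1}\, \boldsymbol{\Hh}_g^r.
$$

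Next I would invoke the elementary facts that the Kolmogorov and linear $n$-widths are monotone under set inclusion and positively homogeneous of degree one in the approximated set, i.e. $d_n(tA,X) = t\, d_n(A,X)$ and $\lambda_n(tA,X) = t\, \lambda_n(A,X)$ for $t > 0$, while $A \subset B$ forces $d_n(A,X) \le d_n(B,X)$ and likewise for $\lambda_n$. Feeding the displayed inclusions into these properties gives
$$
d_n(\bW^r_2(\RRd;\gamma), L_2(\RRd;\gamma)) \ \asymp \ d_n(\boldsymbol{\Hh}_g^r, L_2(\RRd;\gamma)),
$$
and the identical relation with $d_n$ replaced throughout by $\lambda_n$, the implied constants depending only on $c_1$ and $c_2$. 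The equality $\lambda_n = d_n$ for both function classes is immediate because $L_2(\RRd;\gamma)$ is a Hilbert space, as recorded in the Introduction.

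Finally, substituting the order supplied by Theorem~\ref{theorem:widths:p=q=2}, namely
$$
\lambda_n(\boldsymbol{\Hh}_g^r, L_2(\RRd;\gamma)) = d_n(\boldsymbol{\Hh}_g^r, L_2(\RRd;\gamma)) \asymp n^{-\frac{r}{2}} (\log n)^{\frac{(d-1)r}{2}},
$$
yields the claimed order \eqref{Cor-widths:p=q=2} for $\bW^r_2(\RRd;\gamma)$. I do not anticipate any genuine obstacle: the statement is essentially a reformulation of Theorem~\ref{theorem:widths:p=q=2} under the identification furnished by Lemma~\ref{lemma:N-eq}, and the only point warranting a moment's care is the compatibility of the integer restriction $r \in \NN$ with the hypotheses of the two cited results, which holds since $\NN \subset \NN_0 \cap (0,\infty)$.
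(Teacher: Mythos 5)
Your proposal is correct and follows essentially the same route as the paper, which states Theorem~\ref{theorem:lambda_n:p=q=2,d>1} as an immediate consequence of Theorem~\ref{theorem:widths:p=q=2} after identifying $W^r_2(\RRd;\gamma)$ with $\Hh^r_g$ via the norm equivalence of Lemma~\ref{lemma:N-eq}. Your explicit verification that the asymptotic orders of $d_n$ and $\lambda_n$ are preserved under equivalent norms (via the two-sided unit-ball inclusions, monotonicity, and homogeneity of the widths) simply fills in the details the paper leaves implicit.
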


\begin{theorem} 	\label{theorem:rho_n:p=q=2,d>1}
Let $r\in \NN$ and $r \ge 2$. Then  there holds the right asymptotic order
\begin{align}\label{W-sampling-widths:p=q=2,W}
	\varrho_n(\bW^r_2(\RRd;\gamma), L_2(\RRd;\gamma))
	\asymp 
	n^{-\frac{r}{2}} (\log n)^{\frac{(d-1)r}{2}},
\end{align}		
\end{theorem}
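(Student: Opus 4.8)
The plan is to reduce Theorem~\ref{theorem:rho_n:p=q=2,d>1} to the already proven Theorem~\ref{theorem:rho_n:p=q=2} through the norm equivalence of Lemma~\ref{lemma:N-eq}. First I would note that, since $r \in \NN \subset \NN_0$, Lemma~\ref{lemma:N-eq} identifies $W^r_2(\RRd;\gamma)$ with $\Hh^r_g$ as a set and furnishes equivalent norms on it. Hence there exist constants $0 < c_1 \le c_2 < \infty$ such that
\begin{equation*}
	c_1\, \boldsymbol{\Hh}_g^r \ \subset \ \bW^r_2(\RRd;\gamma) \ \subset \ c_2\, \boldsymbol{\Hh}_g^r ,
\end{equation*}
where $\boldsymbol{\Hh}_g^r$ and $\bW^r_2(\RRd;\gamma)$ denote the corresponding unit balls.

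Next I would invoke the elementary scaling and monotonicity properties of the linear sampling $n$-width. Because the linear sampling algorithm $S_k(\bX_k,\bPhi_k,\cdot)$ in \eqref{R_n(f)} is linear in $f$, one has $\varrho_n(c\,\bW, X) = c\,\varrho_n(\bW, X)$ for every $c > 0$, and $\bV \subset \bW$ implies $\varrho_n(\bV, X) \le \varrho_n(\bW, X)$. Applying these to the two inclusions above yields
\begin{equation*}
	\varrho_n\big(\bW^r_2(\RRd;\gamma), L_2(\RRd;\gamma)\big)
	\ \asymp \
	\varrho_n\big(\boldsymbol{\Hh}_g^r, L_2(\RRd;\gamma)\big).
\end{equation*}
Since $r \ge 2 > 1$, Theorem~\ref{theorem:rho_n:p=q=2} applies to $\boldsymbol{\Hh}_g^r$ and gives $\varrho_n(\boldsymbol{\Hh}_g^r, L_2(\RRd;\gamma)) \asymp n^{-r/2}(\log n)^{(d-1)r/2}$, which combined with the displayed equivalence proves \eqref{W-sampling-widths:p=q=2,W}.

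I do not expect a serious obstacle here: the whole argument is a transfer of an already-established asymptotic order across a norm equivalence. The one point deserving an explicit line is the homogeneity and monotonicity of $\varrho_n$ under scaling and inclusion of the function class, both of which follow immediately from the linearity of the sampling operator $S_k$ in $f$. The hypothesis $r \ge 2$ enters only to guarantee $r > 1$, which is precisely the range in which $\Hh^r_g$ is a reproducing kernel Hilbert space satisfying the finite trace condition \eqref{trace assumption}, so that Theorem~\ref{theorem:rho_n:p=q=2} is available.
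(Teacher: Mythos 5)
Your proposal is correct and takes essentially the same route as the paper: there, Theorem~\ref{theorem:rho_n:p=q=2,d>1} is stated as an immediate consequence of Theorem~\ref{theorem:rho_n:p=q=2} after identifying $W^r_2(\RRd;\gamma)$ with $\Hh^r_g$ via the norm equivalence of Lemma~\ref{lemma:N-eq}, with the hypothesis $r\ge 2$ serving exactly to ensure $r>1$. You merely make explicit the scaling and monotonicity of $\varrho_n$ used to transfer the asymptotic order between the two equivalent unit balls, which the paper leaves implicit.
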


We stress that the assumptions $r > 1$ in Theorem \ref{theorem:rho_n:p=q=2} for \eqref{sampling-widths:p=q=2}  is vital since  it is a necessary and sufficient condition for $\Hh^r_g$ to be  a separable reproducing kernel Hilbert space 
with the finite trace condition \eqref{trace assumption} and therefore, Corollary \ref{corollary: rho_n <  d_n} can be applied. We conjecture that the consequent right asymptotic order \eqref{W-sampling-widths:p=q=2,W}  still holds true for $r = 1$. Here it may require a different technique.

We now extend the results  \eqref{rho_n:q=2} and \eqref{lambda_n:p>q}  to the weighted function class $\BWpgamma$ and weighted space $L_q(\RRd;\mu)$ for the measure $\mu$ associated with Freud-type weights as the density function given by \eqref{w(bx)}. 

For given $r$ and $p,q$, we  make use of the abbreviations:
$$
\lambda_n := \lambda_n(\BWpgamma,\Lqgamma), \quad d_n := d_n(\BWpgamma,\Lqgamma) , 
$$
$$
\varrho_n := \varrho_n(\BWpgamma,\Lqgamma).
$$
We prove the right asymptotic orders
\begin{align}\label{rho_n:q le 2-w}
\varrho_n
\asymp 
n^{-r} (\log n)^{(d-1)r}
\end{align}
for $1 \le q \le 2 < p<\infty$,  and 
\begin{equation}\label{lambda_n:p>q-w}
\lambda_n
\asymp		
d_n
\asymp 
n^{-r} (\log n)^{(d-1)r}.
\end{equation}	
for  $1\le q < p<\infty$.		
Moreover, we explicitly construct asymptotically optimal  approximation methods for  $\lambda_n$ and $d_n$ in the case $1\le q < p <\infty$.

Let us describe our strategy to prove the upper and lower bounds of \eqref{lambda_n:p>q-w} for linear and Kolmogorov $n$-widths, and construct asymptotically optimal linear approximation methods when $1 \le q < p < \infty$.
For the sampling $n$-widths, the result \eqref{rho_n:q le 2-w} can be deduced from \eqref{lambda_n:p>q-w} and the results from \cite{DKU2023} presented in Subsection \ref{Sampling recovery in RK Hilbert spaces}. 

Let $r\in \NN$, $1 \le q < p < \infty$ and $\alpha >0$, $\beta \ge 0$. Denote by $\tilde{L}_q(\IId)$ and $\tilde{W}^r_p(\IId)$ the subspaces of  $L_q(\IId) $ and $\Wpmix$, respectively,  of all functions $f$ which can be extended to the whole $\RRd$ as $1$-periodic  functions in each variable (denoted again by $f$).
Let $A_m$  be a linear operator  in $\tilde{L}_q(\IId)$ of rank $\leq m$. Assume it holds that
\begin{equation}\label{A_m-Error-a,b-theta}
	\| f - A_m(f) \|_{\tilde{L}_q(\IId)}\leq C m^{-\alpha} (\log m)^\beta \|f\|_{\tilde{W}^r_p(\IId)}, 
	\ \  f\in \tilde{W}^r_p(\IId).
\end{equation}
Then based on  $A_m$, we will construct  a linear operator  
$A_m^\mu$ in $L_q(\RRd;\mu)$ which approximates  $f \in \Wpgamma$  with the same bound as in \eqref{A_m-Error-a,b-theta}. Therefore, with $\alpha = r$ and $\beta = (d-1)r$ we prove the upper bound of \eqref{lambda_n:p>q}.

Fix a number $\theta > 1$ and put $\IId_\theta := \big[-\frac{\theta}{2}, \frac{\theta}{2}\big]^d$.
Denote by $\tilde{L}_q(\IId_\theta)$ and $\tilde{W}^r_p(\IId_\theta)$ the subspaces of  $L_q(\IId_\theta) $ and $W^r_p(\IId_\theta)$, respectively,  of all functions $f$ which can be extended to the whole $\RRd$ as $\theta$-periodic  functions in each variable (denoted again by $f$). 
A linear operator $A_m$  induces the linear operator $A_{\theta,m}$ in $\tilde{L}_q(\IId_\theta)$, defined 
for $f \in \tilde{L}_q(\IId_\theta)$ by 
$$
A_{\theta,m}(f):= A_m(f(\cdot/\theta)).
$$
From  \eqref{A_m-Error-a,b-theta} it follows that
\begin{equation*}\label{A_m-Error-a,b-theta2}
	\| f - A_{\theta,m}(f) \|_{\tilde{L}_q(\IId_\theta)}\leq C m^{-\alpha} (\log m)^\beta \|f\|_{\tilde{W}^r_p(\IId_\theta)}, 
	\ \  f\in \tilde{W}^r_p(\IId_\theta).
\end{equation*}

Since $q<p$,  we can choose a fixed $\delta>0$ such that
\begin{equation}  \label{<e^{-delta k}}
	{e^{\frac{a|\bk + (\theta \sign \bk)/2 |^\lambda}{p}-
			\frac{a|\bk + (\theta\sign \bk)/2 |^\lambda}{q}}}
	\leq 
	C e^{- \delta |\bk|}, \ \  \bk\in \ZZd.
\end{equation}
(Here, $a$ and $\lambda$ are the parameters in the definition of the weigh $w$ in \eqref{w(bx)}--\eqref{w(x)}.)

We define	for $n\in \RR_1$,
\begin{align} \label{xi-int}	
\xi_n :=  \brac{\delta^{-1} \lambda \alpha \log n}^{1/\lambda},
\end{align}
and for $\bk \in \ZZd$,
\begin{align} \label{n_bk}
n_{\bk}=
\begin{cases}
\varrho n  e^{-\frac{a\delta}{\alpha}|\bk|^\lambda} &\text{if} \ |\bk|< \xi_n,
\\
0&\text{if}\  |\bk|\geq \xi_n,
\end{cases}
\end{align}
where $
	\varrho^{-1} := \frac{2(2\pi)^{(d-1)/2}}{d!!}\sum_{s=0}^{\infty} s^d e^{-\frac{a \delta}{\alpha}s^\lambda} < \infty.$
We write
$\IId_{\theta,\bk}:=\bk+\IId_\theta$  for $\bk \in \ZZd$, and denote by
$ f_{\theta,\bk} $ the restriction of $f$ on $\IId_{\theta,\bk}$  for a function $f$ on $\RRd$.  

It is  well-known 
that one can constructively define a unit partition $\brab{\varphi_\bk}_{\bk \in \ZZd}$ such that
\begin{itemize}
	\item[\rm{(i)}] $\varphi_\bk \in C^\infty_0(\RRd)$ and 
	$0 \le \varphi_\bk (\bx)\le 1$, \ \ $\bx \in \RRd$, \ \ $\bk \in \ZZd$;
	\item[\rm{(ii)}]  $\supp \varphi_\bk$ are contained in the interior of  $\IId_{\theta,\bk}$, $\bk \in \ZZd$;
	\item[\rm{(iii)}]  $\sum_{\bk \in \ZZd}\varphi_\bk (\bx)= 1$, \ \ $\bx \in \RRd$;
	\item[\rm{(iv)}]  $\norm{\varphi_\bk }{W^r_p(\IId_{\theta,\bk})} \le C_{r,d,\theta}$, \ \ 
	$\bk \in \ZZd$,
\end{itemize}
(see, e.g., \cite[Chapter VI, 1.3]{Stein1970}). 

By using the items (ii)  and (iv)  we have that if $f \in \Wpgamma$, then
\begin{equation*}\label{f_theta,bk}
	f_{\theta,\bk}(\cdot+\bk)\varphi_\bk (\cdot+\bk)\in \tilde{W}^r_p(\IId_\theta),
\end{equation*}
and it holds that
\begin{equation}  \label{eq:norm-fwid2}
	\|f_{\theta,\bk}(\cdot+\bk)\varphi_\bk(\cdot+\bk)\|_{\tilde{W}^r_p(\IId_\theta)} 
	\ll 
	{e^{\frac{a|\bk + (\theta \sign \bk)/2 |^\lambda}{p}}}\|f\|_{\Wpgamma}.
\end{equation}
We define the linear operator $	A_{\theta,n}^w$ in $L_q(\RRd;\mu)$ by
\begin{equation}  \label{A_n^gamma}
	\brac{A_{\theta,n}^w f}(\bx): = 
	\sum_{|\bk|< \xi_n} 
	\brac{A_{\theta, n_\bk}\tilde{f}_{\theta,\bk}}(\bx-\bk),
\end{equation} 
where $\tilde{f}_{\theta,\bk}(\bx)=f_{\theta,\bk}(\bx+\bk)\varphi_\bk (\bx+\bk)$. The rank of this operator is not greater than $n$.
Indeed, by the definition of  $A_{\theta,n}^w$ and \eqref{n_bk}:
\begin{align} \label{sum n_k}
{\rm rank}\, A_{\theta,n}^w
\le 
\sum_{|\bk|< \xi_n}  {\rm rank}\, A_{\theta, n_\bk} 
\le	\sum_{|\bk|< \xi_n}n_\bk
\le n.
\end{align}

\begin{theorem} \label{thm:approx-general-theta}
	Let $r\in \NN$, $1\le q < p <\infty$ and $\alpha>0$, $\beta \ge 0$, $1 < \theta < 2$.  
	Assume that for any $m \in \RR_1$, there is  a linear operator
	$A_m$  in $\tilde L_q(\IId)$ of rank $\leq m$ 
	such that the convergence rate \eqref{A_m-Error-a,b-theta} holds. Then for any $n \in \RR_1$, based on this linear operator one can construct the linear operator
	$A_{\theta,n}^\mu$ in $L_q(\RRd;\mu)$ of rank $\leq n$ as in \eqref{A_n^gamma}  so that
	\begin{equation}\label{A_m-Error-a,b}
		\| f - A_{\theta,n}^\mu(f) \|_{\Lqgamma}\leq C n^{-\alpha} (\log n)^\beta \|f\|_{\Wpgamma}, 
		\ \  f\in \Wpgamma.
	\end{equation}
\end{theorem}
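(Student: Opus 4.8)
The plan is to measure the error $f - A_{\theta,n}^\mu f$ (with $A_{\theta,n}^\mu$ the operator of \eqref{A_n^gamma}) in the norm of $\Lqgamma$ by localizing onto the cubes $\IId_{\theta,\bk}$ through the unit partition $\brab{\varphi_\bk}_{\bk\in\ZZd}$, and by separating the cubes that are actually approximated ($|\bk|<\xi_n$) from those that are discarded ($|\bk|\ge\xi_n$). Using property (iii) of the partition I would write
\begin{equation*}
	f - A_{\theta,n}^\mu f
	\ = \ \sum_{|\bk|\ge \xi_n} f\varphi_\bk
	\ + \ \sum_{|\bk|< \xi_n}\Big(f\varphi_\bk - \big(A_{\theta,n_\bk}\tilde f_{\theta,\bk}\big)(\cdot-\bk)\Big),
\end{equation*}
where each summand is (after restriction to the fundamental cell, and possibly an auxiliary cutoff $\equiv 1$ on $\supp\varphi_\bk$) supported in $\IId_{\theta,\bk}$. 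Since $1<\theta<2$, the cubes $\IId_{\theta,\bk}$ overlap with a multiplicity bounded by a constant $N=N(d,\theta)$, so that for any family $(g_\bk)$ with $\supp g_\bk\subset\IId_{\theta,\bk}$ one has $\big\|\sum_\bk g_\bk\big\|_{\Lqgamma}^q \ll \sum_\bk \|g_\bk\|_{L_q(\IId_{\theta,\bk};\mu)}^q$. This reduces everything to uniform per-cube estimates.

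For the tail I would bound each term by H\"older's inequality with exponent $p/q>1$: since $0\le\varphi_\bk\le 1$,
\begin{equation*}
	\|f\varphi_\bk\|_{L_q(\IId_{\theta,\bk};\mu)}
	\ \le \
	\|f\|_{L_p(\IId_{\theta,\bk};\mu)}\Big(\int_{\IId_{\theta,\bk}} w(\bx)\,\rd\bx\Big)^{1/q-1/p}
	\ \ll \
	e^{-c|\bk|^\lambda}\,\|f\|_{\Wpgamma},
\end{equation*}
the last step using that $\int_{\IId_{\theta,\bk}}w \ll e^{-c|\bk|^\lambda}$ by the rapid decay of the Freud weight. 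Summing the $q$-th powers over $|\bk|\ge\xi_n$ and inserting the value of $\xi_n$ from \eqref{xi-int} makes this contribution of order $n^{-c'}$ with $c'$ as large as one wishes, hence negligible against $n^{-\alpha}(\log n)^\beta$.

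For the main term I would argue cube by cube. Applying the $\theta$-rescaled bound \eqref{A_m-Error-a,b-theta} to $\tilde f_{\theta,\bk}$, then the weighted-to-unweighted conversion $\|g\|_{L_q(\IId_{\theta,\bk};\mu)}\le\big(\sup_{\IId_{\theta,\bk}}w\big)^{1/q}\|g\|_{L_q(\IId_{\theta,\bk})}$, and finally the norm growth \eqref{eq:norm-fwid2}, gives a per-cube error
\begin{equation*}
	\big\|f\varphi_\bk - (A_{\theta,n_\bk}\tilde f_{\theta,\bk})(\cdot-\bk)\big\|_{L_q(\IId_{\theta,\bk};\mu)}
	\ \ll \
	n_\bk^{-\alpha}(\log n_\bk)^\beta\, e^{a|\bk+(\theta\sign\bk)/2|^\lambda/p}\big(\textstyle\sup_{\IId_{\theta,\bk}}w\big)^{1/q}\|f\|_{\Wpgamma}.
\end{equation*}
The net weight factor here has exponent with leading term $a(1/p-1/q)|\bk|^\lambda<0$, which is exactly the quantity fixed in \eqref{<e^{-delta k}}; the point of choosing $\delta$ small enough is that this $e^{-c|\bk|^\lambda}$ decay dominates the growth $n_\bk^{-\alpha}=(\varrho n)^{-\alpha}e^{a\delta|\bk|^\lambda}$ coming from \eqref{n_bk}, leaving a factor summable over $\bk\in\ZZd$. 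Combined with $(\log n_\bk)^\beta\ll(\log n)^\beta$ (valid since $1\le n_\bk\le\varrho n$), each per-cube error is $\ll n^{-\alpha}(\log n)^\beta e^{-c|\bk|^\lambda}\|f\|_{\Wpgamma}$, so summing the $q$-th powers over $|\bk|<\xi_n$ produces the claimed bound \eqref{A_m-Error-a,b}. The rank estimate ${\rm rank}\,A_{\theta,n}^\mu\le n$ is the content of \eqref{sum n_k} and follows from the normalization defining $\varrho$.

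The step I expect to be the main obstacle is the allocation of the rank budget through the weights $n_\bk$ of \eqref{n_bk}: one must distribute the $n$ degrees of freedom over infinitely many cubes so that, after the weighted-to-unweighted conversion, every cube contributes an error of the \emph{same} order $n^{-\alpha}(\log n)^\beta$ (up to the summable factor $e^{-c|\bk|^\lambda}$) while the total rank stays below $n$. This forces the precise coupling between the smallness of $\delta$, the decay rate of $n_\bk$, and the truncation level $\xi_n$; it is here, rather than in the routine H\"older and overlap arguments, that the inequality \eqref{<e^{-delta k}} and the convergence of $\sum_\bk e^{-(a\delta/\alpha)|\bk|^\lambda}$ used to define $\varrho$ must be exploited exactly. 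A secondary technical point to handle carefully is the genuine localization of the translated approximations $(A_{\theta,n_\bk}\tilde f_{\theta,\bk})(\cdot-\bk)$ to their own cubes, so that the bounded-overlap reduction in the first step is legitimate.
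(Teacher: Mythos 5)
Your proposal is correct and follows essentially the same route as the paper's own proof: the same partition-of-unity decomposition into a main sum over $|\bk|<\xi_n$ and a tail over $|\bk|\ge\xi_n$, the same per-cube chain (weighted-to-unweighted conversion via $\sup_{\IId_{\theta,\bk}}w$, the $\theta$-rescaled bound \eqref{A_m-Error-a,b-theta}, the norm growth \eqref{eq:norm-fwid2}), the same exploitation of \eqref{<e^{-delta k}} against the growth $n_\bk^{-\alpha}=(\varrho n)^{-\alpha}e^{a\delta|\bk|^\lambda/\alpha\cdot\alpha}$ to leave a summable factor, H\"older for the tail, and the rank count \eqref{sum n_k}. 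If anything, your bounded-overlap reduction makes rigorous a step the paper states loosely as an exact per-cube equality, so no gap remains.
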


\begin{proof} 
	We preliminarily  decompose a function in $W^r_p(\RRd;\mu)$ into a sum of functions on  $\RR^d$ having support contained in integer translations of the $d$-cube $\IId_\theta := \big[-\frac{\theta}{2}, \frac{\theta}{2}\big]$. Then a desired linear operator for $W^r_p(\RRd;\mu)$ will be the sum of integer-translated dilations of $A_m$. Details of this construction are presented below.  
	
	First observe that 
	\begin{align*} 
		\RRd = \bigcup_{\bk \in \ZZd}	\IId_{\theta,\bk},
	\end{align*}	
	where $\IId_{\theta,\bk}:= \IId_\theta + \bk$. 
	From the items (ii) and (iii)  it is implied that
	\begin{align}  \label{theta-decomposition}
		f
		=  \sum_{\bk \in \ZZd}f_{\theta,\bk}\varphi_\bk,
	\end{align}
	where  $f_{\theta,\bk}$ denotes the restriction to $\IId_{\theta,\bk}$.
	Hence we have
	\begin{equation} \label{f-A_n'^gf}
	\begin{aligned} 
		\|f- A_{\theta,n}^w(f)\|^q_{L_q(\IId_{\theta,\bk};\mu)}
		&=
		\sum_{|\bk|< \xi_n} 
		\norm{f_{\theta,\bk}\varphi_\bk - \brac{A_{\theta, n_\bk}\tilde{f}_{\theta,\bk}}(\cdot -\bk)}
		{L_q(\IId_{\theta,\bk};\mu)}^q 
		\\
		&
		+ \sum_{|\bk|\geq \xi_n} 	\norm{f_{\theta,\bk}\varphi_\bk}{L_q(\IId_{\theta,\bk};\mu)}^q. 
	\end{aligned}
	\end{equation}
	By	\eqref{n_bk}, \eqref{A_m-Error-a,b-theta} and \eqref{eq:norm-fwid2} we derive the estimates
	\begin{align*}
		&	\norm{f_{\theta,\bk}\varphi_\bk - \brac{A_{\theta, n_\bk}\tilde{f}_{\theta,\bk}}(\cdot -\bk)}
		{L_q(\IId_{\theta,\bk};\mu)}
		\\
		& \ll e^{- \frac{a|\bk - (\theta\sign \bk)/2 |^\lambda}{q}}
		\norm{f_{\theta,\bk}\varphi_\bk - A_{\theta, n_\bk}\tilde{f}_{\theta,\bk}}
		{\tilde{L}_q(\IId_{\theta})} 
		\\
		& \ll  e^{- \frac{a|\bk - (\theta\sign \bk)/2 |^\lambda}{q}}  n_{\bk}^{-\alpha} (\log n_{\bk})^\beta  \|f(\cdot+\bk)\|_{\tilde{W}^r_p(\IId_\theta)}.
		\\
		&
		\ll   e^{\frac{a|\bk + (\theta \sign \bk)/2 |^\lambda}{p}-
			\frac{a|\bk - (\theta\sign \bk)/2 |^\lambda}{q}}
		\Big( n e^{-\frac{a\delta}{\alpha}|\bk|^\lambda} \Big)^{-\alpha} (\log n)^\beta  \|f\|_{\Wpgamma}.
	\end{align*}
	Using  \eqref{<e^{-delta k}} we get
	\begin{align*}
		\norm{f_{\theta,\bk}\varphi_\bk - \brac{A_{\theta, n_\bk}\tilde{f}_{\theta,\bk}}(\cdot -\bk)}
		{L_q(\IId_{\theta,\bk};\mu)} 
		&
		\ll  e^{- a \delta |\bk|^\lambda}  n^{-\alpha}  (\log n)^\beta   \|f\|_{\Wpgamma},
	\end{align*}
	which implies
	\begin{align*}
		\sum_{|\bk|< \xi_n} \norm{f_{\theta,\bk}\varphi_\bk - \brac{A_{\theta, n_\bk}\tilde{f}_{\theta,\bk}}(\cdot -\bk)}
		{L_q(\IId_{\theta,\bk};\mu)}^q 
		&\ll \sum_{|\bk|< \xi_n}   e^{- q a \delta |\bk|^\lambda} 
		\big(n^{-\alpha}  (\log n)^\beta  \big)^q  \|f\|_{\Wpgamma}^q
		\\&
		\ll  \big(n^{-\alpha}  (\log n)^\beta  \big)^q  \|f\|_{\Wpgamma}^q.
	\end{align*}
	We have for a fixed $\varepsilon \in (0,1/2)$,
	\begin{align*}
		\sum_{|\bk|\geq \xi_n} 	\norm{f_{\theta,\bk}\varphi_\bk}{L_q(\IId_{\theta,\bk};\mu)}^q
		& \ll  \sum_{|\bk|\geq \xi_n} 
		e^{- q\big(\frac{a|\bk - (\theta \sign \bk)/2 |^\lambda}{q}-
			\frac{a|\bk + (\theta\sign \bk)/2 |^\lambda}{p}\big)}
		\|f\|_{\Wpgamma}^q
		\\
		&	\ll  \sum_{|\bk|\geq \xi_n}  e^{- q\delta |\bk|^\lambda}  \|f\|_{\Wpgamma}^q
		\ll e^{-q \delta (1-\varepsilon) \xi_n^2}  \|f\|_{\Wpgamma}^q 
		\\
		&	\ll e^{-2q a (1-\varepsilon) \log n}  \|f\|_{\Wpgamma}^q 
		\ll  \big(n^{-\alpha}  (\log n)^\beta  \big)^q \|f\|_{\Wpgamma}^q.
	\end{align*}
	From the last two estimates and \eqref{f-A_n'^gf} we obtain \eqref{A_m-Error-a,b}.
	\hfill
\end{proof}

The lower bound of \eqref{lambda_n:p>q} relies on the following known results on Kolmogorov $n$-widths in the space $\tilde{L}_q(\IId)$  the unweighted Sobolev class $\tilde{\bW}^r_p(\IId)$.

\begin{lemma} \label{lemma: widths}
	Let $r\in \NN$ and $1\le q<p<\infty$.
	Then we have the right asymptotic order
	\begin{equation*}
		d_m(\tilde{\bW}^r_p(\IId),\tilde{L}_q(\IId)) \asymp m^{-r} (\log m)^{(d-1)r}.
	\end{equation*}
	Moreover,   truncations  on certain hyperbolic crosses of the Fourier series form an asymptotically optimal  linear operator $A_n$ in $\tilde{L}_q(\IId)$  of rank  $\le n$ such that 
	\begin{equation}\label{f-A_m f}
		\|f-A_m f\|_{\tilde{L}_q(\IId)} \ll m^{-r} (\log m)^{(d-1)r} \|f\|_{\tilde{W}^r_p(\IId)}, 
		\ \  f\in \tilde{W}^r_p(\IId).
	\end{equation}		
\end{lemma}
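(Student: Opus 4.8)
The plan is to establish the two-sided estimate by proving the upper bound \emph{constructively}, through a Fourier truncation on a step hyperbolic cross (which simultaneously supplies the operator $A_m$ of the ``Moreover'' part), and by obtaining the matching lower bound from the classical lower estimates for Kolmogorov widths of mixed-smoothness classes. Throughout I would work with the dyadic block decomposition $f = \sum_{\bs \in \ZZdp} \delta_\bs(f)$, where $\delta_\bs$ is the orthogonal projection onto the trigonometric polynomials with frequencies in the dyadic block $\rho(\bs) := \{\bk \in \ZZd : 2^{s_j-1}\le |k_j| < 2^{s_j}, \ j \in [d]\}$, together with the Littlewood--Paley equivalences $\|g\|_p \asymp \big\|\big(\sum_\bs |\delta_\bs(g)|^2\big)^{1/2}\big\|_p$ and $\|f\|_{\tilde W^r_p(\IId)} \asymp \big\|\big(\sum_\bs 2^{2r|\bs|_1}|\delta_\bs(f)|^2\big)^{1/2}\big\|_p$, valid for $1 < p < \infty$.

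For the upper bound, for an integer $N$ let $\Gamma(N) := \bigcup_{|\bs|_1 \le N}\rho(\bs)$ be the step hyperbolic cross and let $A_m := S_{\Gamma(N)}$ be the Fourier partial sum over $\Gamma(N)$, with $N$ the largest integer such that $|\Gamma(N)| \le m$; since $|\Gamma(N)| \asymp 2^N N^{d-1}$ this gives ${\rm rank}\, A_m \le m$ and $2^{-rN}\asymp m^{-r}(\log m)^{(d-1)r}$. Assume first $1 < q < p < \infty$. Since $|\bs|_1 > N$ forces $2^{-2r|\bs|_1} \le 2^{-2rN}$, one has the pointwise bound $\sum_{|\bs|_1 > N}|\delta_\bs(f)|^2 \le 2^{-2rN}\sum_{\bs}2^{2r|\bs|_1}|\delta_\bs(f)|^2$, whence, using Littlewood--Paley in $L_q$, then $\|\cdot\|_q \le \|\cdot\|_p$ (valid on the probability space $\IId$ because $q < p$), and finally Littlewood--Paley in $L_p$,
\begin{equation*}
\|f - A_m f\|_{\tilde L_q(\IId)} \asymp \Big\|\Big(\sum_{|\bs|_1 > N}|\delta_\bs(f)|^2\Big)^{1/2}\Big\|_q \le 2^{-rN}\Big\|\Big(\sum_{\bs}2^{2r|\bs|_1}|\delta_\bs(f)|^2\Big)^{1/2}\Big\|_p \ll 2^{-rN}\|f\|_{\tilde W^r_p(\IId)},
\end{equation*}
which is exactly \eqref{f-A_m f}; the boundary case $q = 1$ follows by fixing any $q'$ with $1 < q' < p$ and using $\|\cdot\|_1 \le \|\cdot\|_{q'}$ on $\IId$. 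Since $A_m$ has rank $\le m$, this also yields the upper bound for $d_m$.

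For the lower bound I would follow the standard discretization method. Fix $N$ so that the dimension $D := |Q_N| \asymp 2^N N^{d-1}$ of the space $\Psi_N$ of trigonometric polynomials with frequencies in the single hyperbolic layer $Q_N := \bigcup_{|\bs|_1 = N}\rho(\bs)$ satisfies $2m < D \asymp m$; as $N \asymp \log m$ this gives $2^{-rN}\asymp m^{-r}(\log m)^{(d-1)r}$. Using $\|g\|_{\tilde W^r_p(\IId)}\asymp 2^{rN}\|g\|_p$ for $g \in \Psi_N$, one reduces to $d_m(\tilde{\bW}^r_p(\IId),\tilde L_q(\IId)) \gg 2^{-rN}\,d_m(\{g\in\Psi_N:\|g\|_p\le 1\},\tilde L_q(\IId))$, and a Marcinkiewicz-type discretization identifies the inner width, up to scale-dependent normalizing constants, with a finite-dimensional width $d_m(B_p^D,\ell_q^D)$; the known lower estimate for the latter in the regime $m \le D/2$ (using $q < p$) shows the inner width is $\gg 1$, which gives $d_m \gg 2^{-rN}\asymp m^{-r}(\log m)^{(d-1)r}$.

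I expect the main obstacle to be this lower bound: the upper bound is a short, self-contained square-function computation, whereas the lower bound requires the correct choice of the discretizing point set and careful bookkeeping of the scale-dependent normalization constants relating the $L_q(\IId)$ norm to the $\ell_q^D$ norm across the several scales present in the layer $Q_N$. Since the asymptotic order asserted here is precisely the classical result for Kolmogorov widths of mixed-smoothness Sobolev classes, in practice I would invoke \cite[Theorem 4.3.1]{DTU18B} for the lower bound and record only the explicit hyperbolic-cross operator $A_m$ above, which is what the subsequent assembling construction in Theorem~\ref{thm:approx-general-theta} actually requires.
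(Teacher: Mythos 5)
Your proposal is correct, but it does considerably more than the paper, which offers no proof of this lemma at all: the statement is recorded as a known result, with the reader sent to \cite[Theorems 4.2.5, 4.3.1 \& 4.3.7]{DTU18B} for both the width asymptotics and the optimality of hyperbolic-cross truncation. Your upper bound is exactly the standard proof of the cited approximation theorem: truncation $S_{\Gamma(N)}$ onto the step hyperbolic cross, the Littlewood--Paley equivalences in $L_q$ and $L_p$, the pointwise tail estimate pulling out $2^{-2rN}$, and the embedding $\|\cdot\|_q\le\|\cdot\|_p$ on the probability space $\IId$, with $q=1$ handled through an intermediate $q'\in(1,p)$. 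This is sound and self-contained, and it has the advantage of exhibiting explicitly the operator $A_m$ that the assembling construction of Theorem \ref{thm:approx-general-theta} actually consumes; note also that you correctly obtain the clean rate $2^{-rN}$ with no additional logarithmic factor, precisely because the $\delta_\bs$ are genuine Fourier projections, so the square-function equivalence applies to the tail sum $f-A_mf$ --- in contrast with the B-spline Smolyak sampling bounds of Subsection \ref{Sampling recovery}, where the pointwise Cauchy--Schwarz step costs an extra factor $m^{(d-1)/2}$ in the case $p\ge q$. For the lower bound you end up doing exactly what the paper does, namely citing \cite[Theorem 4.3.1]{DTU18B}, and that is the right call: the discretization sketch you outline is the one place where a genuine gap would open if it were taken literally, since a Marcinkiewicz-type discretization of the $L_q$ norm on the whole hyperbolic layer $Q_N$ using only $\asymp\dim\Psi_N$ points, with uniform control of the scale-dependent normalizations across the very differently shaped blocks $\rho(\bs)$, $|\bs|_1=N$, is a delicate matter (and in general not available), and the known proofs of the lower bound proceed differently. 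Since you flag this obstacle yourself and fall back on the reference, the proposal as written is complete and consistent with how the paper treats the lemma.
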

For details on this lemma see, e.g., in \cite[Theorems 4.2.5, 4.3.1 \& 4.3.7]{DTU18B} and related comments on the asymptotic optimality of the hyperbolic cross approximation.

We are now in the position to prove the main result in this section.

\begin{theorem} \label{theorem:widths: q<p}	
	Let $r\in \NN$ and $1\le q<p<\infty$. Then for any $n \in \RR_1$, based on the linear operator $A_m$ in Lemma \ref{lemma: widths} one can construct the linear operator
	$A_n^\mu$ in  $L_q(\RRd;\mu)$ of rank $\leq n$ as in \eqref{A_n^gamma}  so that there hold the right asymptotic orders
	\begin{equation}\label{eq:dnWg}
		\sup_{f\in \BWpgamma}	\| f - A_n^\mu(f) \|_{\Lqgamma}
		\asymp
		\lambda_n\asymp	d_n \asymp
		n^{-r} (\log n)^{(d-1)r}.
	\end{equation}	
\end{theorem}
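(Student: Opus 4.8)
The plan is to derive the upper bound by feeding the unweighted operator of Lemma~\ref{lemma: widths} into the assembling construction of Theorem~\ref{thm:approx-general-theta}, and to obtain the matching lower bound by localizing the weighted problem to a fixed cube on which the Freud weight is bounded above and below, thereby reducing it to the unweighted periodic Kolmogorov width already recorded in Lemma~\ref{lemma: widths}. Throughout I will use the elementary chain $d_n \le \lambda_n \le \sup_{f\in\BWpgamma}\|f-A_n^\mu(f)\|_{\Lqgamma}$, where the first inequality is the first half of \eqref{eq-relations} and the second holds because $A_n^\mu$ is a linear operator of rank $\le n$. Hence it suffices to prove the upper bound for $\sup_{f}\|f-A_n^\mu(f)\|_{\Lqgamma}$ and the lower bound for $d_n$.

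For the upper bound, Lemma~\ref{lemma: widths} supplies, for each $m\in\RR_1$, a linear operator $A_m$ in $\tilde{L}_q(\IId)$ of rank $\le m$ satisfying \eqref{f-A_m f}, which is precisely the hypothesis \eqref{A_m-Error-a,b-theta} of Theorem~\ref{thm:approx-general-theta} with $\alpha=r$ and $\beta=(d-1)r$. Applying Theorem~\ref{thm:approx-general-theta} then produces the operator $A_n^\mu$ of rank $\le n$ defined by \eqref{A_n^gamma}, for which \eqref{A_m-Error-a,b} gives $\|f-A_n^\mu(f)\|_{\Lqgamma}\ll n^{-r}(\log n)^{(d-1)r}\|f\|_{\Wpgamma}$. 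Taking the supremum over the unit ball $\BWpgamma$ yields $\sup_{f}\|f-A_n^\mu(f)\|_{\Lqgamma}\ll n^{-r}(\log n)^{(d-1)r}$, and by the displayed chain this bounds both $\lambda_n$ and $d_n$ from above.

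For the lower bound I would fix a cutoff $\psi\in C^\infty_0(\RRd)$ equal to $1$ on $\IId$ and supported in a slightly larger cube, and send a periodic $g\in\tilde{W}^r_p(\IId)$ to $f:=c\,g\psi$, where $c>0$ is a normalizing factor chosen so that $f\in\BWpgamma$. Since $g\psi$ is supported in a fixed compact set on which $w=w_{\lambda,a,b}$ is bounded above and below by positive constants, the weighted norms are comparable to the unweighted ones there; in particular $\|f\|_{\Wpgamma}\asymp\|g\psi\|_{W^r_p(\RRd)}\ll\|g\|_{\tilde{W}^r_p(\IId)}$, so $c$ can be chosen uniformly in $g$, and $\|f-h\|_{\Lqgamma}\ge\|f-h\|_{L_q(\IId;\mu)}\asymp\|g-h|_{\IId}\|_{L_q(\IId)}$ for any $h$. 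Restricting an arbitrary $n$-dimensional subspace of $\Lqgamma$ to $\IId$ produces a subspace of $\tilde{L}_q(\IId)$ of dimension $\le n$, whence $d_n\gg d_n(\tilde{\bW}^r_p(\IId),\tilde{L}_q(\IId))\asymp n^{-r}(\log n)^{(d-1)r}$ by Lemma~\ref{lemma: widths} (whose lower bound requires exactly $1\le q<p<\infty$). Combining with the upper bound gives \eqref{eq:dnWg}.

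The routine parts are the two width inequalities and the norm comparisons on the fixed cube; the only point demanding care is the lower-bound reduction, where one must verify that multiplication by $\psi$ keeps the localized periodic functions inside a fixed multiple of $\BWpgamma$ (so that the normalizing constant $c$ is uniform in $g$) and that restriction to $\IId$ does not increase the dimension of the approximating subspace. This localization is exactly the mechanism used for the Gaussian case in \cite{DK2023}, and since it relies only on the boundedness of the Freud weight on compacta, it transfers essentially verbatim.
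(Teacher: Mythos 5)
Your proposal is correct, and its skeleton matches the paper's proof: the upper bound is obtained exactly as in the paper, by feeding the hyperbolic-cross operator $A_m$ of Lemma \ref{lemma: widths} into Theorem \ref{thm:approx-general-theta} with $\alpha=r$, $\beta=(d-1)r$, and then using $d_n \le \lambda_n \le \sup_{f\in\BWpgamma}\|f-A_n^\mu(f)\|_{\Lqgamma}$; the lower bound in both cases reduces to the unweighted periodic width $d_n(\tilde{\bW}^r_p(\IId),\tilde{L}_q(\IId))$ from the same lemma, by embedding the periodic class into $\BWpgamma$ and restricting approximating subspaces to $\IId$.

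The one place where you genuinely deviate is the embedding used in the lower bound. You localize, sending $g \mapsto c\,g\psi$ with a smooth cutoff $\psi$ equal to $1$ on $\IId$, which requires the (routine) Leibniz-rule check that multiplication by $\psi$ is bounded on the mixed-smoothness space, and uses only the boundedness of $w$ above and below on a fixed compact set. The paper instead takes the full periodic extension of $g$ with no cutoff at all: it shows $\|g\|_{\Wpgamma}\ll\|g\|_{\tilde{W}^r_p(\IId)}$ by summing the weighted norm over all integer-shifted cubes, the sum converging because $\sum_{\bk\in\ZZd} e^{-a|\bk-(\sign \bk)/2|^\lambda}<\infty$ for a Freud-type weight, and pairs this with $\|g\|_{\tilde{L}_q(\IId)}\ll\|g\|_{\Lqgamma}$. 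The two mechanisms are interchangeable here: yours needs no decay of the weight beyond local boundedness (so it would transfer to much more general weights), while the paper's avoids the cutoff and the product-rule bookkeeping entirely and is the shorter computation in the Freud setting. Both arguments then restrict an arbitrary $n$-dimensional subspace of $\Lqgamma$ to $\IId$ and invoke the lower bound of Lemma \ref{lemma: widths}; note that this restriction step is unproblematic because for $L_q$ (unlike for $W^r_p$) the periodicity constraint defining $\tilde{L}_q(\IId)$ is vacuous, so the restricted subspace lives in the right space with the right norm.
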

\begin{proof}
	For a fixed $1 < \theta < 2$, we define $A_n^\mu:=  A_{\theta,n}^\mu$  as the linear operator described in  Theorem~\ref{thm:approx-general-theta}.
	The upper bounds  in \eqref{eq:dnWg} follow from \eqref{f-A_m f} and Theorem \ref{thm:approx-general-theta} with $\alpha = r$, $\beta =(d-1)r$.
	
	If $f$ is a $1$-periodic function on $\RRd$ and $f\in  \tilde{W}^r_p(\IId)$, then
	\begin{align*}
		\|f\|_{\Wpgamma}&= \Bigg(e^{db}\sum_{|\br|_\infty \leq r} \int_{\RRd} |D^\br f|^p e^{-a|\bx|^\lambda}\rd \bx\Bigg)^{1/p}
		\\
		& \ll \Bigg(\sum_{|\br|_\infty \leq r} \sum_{\bk\in \ZZd} \int_{\IId} |D^\br f(\bx+\bk)|^p e^{-a|\bx+\bk|^\lambda}\rd \bx\Bigg)^{1/p}
		\\
		& \ll\Bigg(\sum_{|\br|_\infty \leq r}  \int_{\IId} |D^\br f(\bx)|^p \rd \bx \sum_{\bk\in \ZZd}e^{-a|\bk - (\sign \bk)/2 |^\lambda}\Bigg)^{1/p}
		\\
		&
		\ll 
		\|f\|_{\tilde{W}^r_p(\IId)},
	\end{align*}
	and
	$$
	\|f\|_{\tilde{L}_q(\IId)} \leq e^{\frac{db}{q}}e^{\frac{d}{8q}}\|f\|_{\Lqgamma}.
	$$
	Hence we get
	\begin{align*}  
		\lambda_n \ge	d_n \gg	d_n(\tilde{\bW}^r_p(\IId),\tilde{L}_q(\IId)).
	\end{align*}	
	Now Lemma \ref{lemma: widths} implies the lower bounds in \eqref{eq:dnWg}. 
	\hfill 
\end{proof}

\begin{theorem} \label{theorem:rho_n: q le 2 <p}	
	Let $r\in \NN$ and $1\le q \le 2< p<\infty$. Then  there holds the right asymptotic order
	\begin{align}\label{sampling-widths:p>2}
	\varrho_n
	\asymp 
	n^{-r} (\log n)^{(d-1)r}.
	\end{align}		
\end{theorem}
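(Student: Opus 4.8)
The plan is to sandwich $\varrho_n$ between matching bounds, both of order $n^{-r}(\log n)^{(d-1)r}$. The lower bound is immediate: the chain \eqref{eq-relations} gives $\varrho_n \ge \lambda_n \ge d_n$, and Theorem \ref{theorem:widths: q<p} already establishes $d_n \asymp n^{-r}(\log n)^{(d-1)r}$ for all $1 \le q < p < \infty$, hence in particular for $1 \le q \le 2$. Thus $\varrho_n \gg n^{-r}(\log n)^{(d-1)r}$, and it remains only to prove the matching upper bound. For this I would treat the endpoint $q=2$ first and then descend to $1 \le q < 2$ by a soft monotonicity argument.

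For $q=2$ the idea is to invoke the sampling-versus-Kolmogorov inequality of \cite{DKU2023}, i.e. Corollary \ref{corollary: rho_n <  d_n,p<2}, applied to $\bW = \BWpgamma$ with ambient measure $\nu = \mu$ and target space $L_2(\RRd;\mu)$. To this end one verifies that $\BWpgamma$ satisfies Assumption A, taking the metric induced by the norm of $\Wpgamma$. Separability is standard; the continuous embedding $\Wpgamma \hookrightarrow L_2(\RRd;\mu)$ follows from H\"older's inequality together with the finiteness of $\mu$ (since $\lambda > 1$ and $a > 0$, the Freud-type weight \eqref{w(x)} is integrable), which yields $\|f\|_{L_2(\RRd;\mu)} \le \mu(\RRd)^{1/2 - 1/p}\|f\|_{L_p(\RRd;\mu)}$; and the continuity of each point evaluation $f \mapsto f(\bx)$ follows from the local Sobolev embedding of mixed-smoothness spaces into $C$, valid here since $r \ge 1 > 1/p$ (recalling the noted fact that every $f \in \Wpgamma$ has a continuous representative, so that evaluation is a Lipschitz functional with respect to the $\Wpgamma$-metric after localizing to a cube on which the weight is bounded above and below). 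With Assumption A in force, Theorem \ref{theorem:widths: q<p} supplies the hypothesis $d_n(\BWpgamma,L_2(\RRd;\mu)) \ll n^{-r}(\log n)^{(d-1)r}$, which is the Kolmogorov-width assumption of Corollary \ref{corollary: rho_n <  d_n,p<2} with exponents $\alpha = r$ and $\beta = -(d-1)r$. Since $r \ge 1 > 1/2$, we are in the regime $\alpha > 1/2$, and the corollary delivers $\varrho_n(\BWpgamma, L_2(\RRd;\mu)) \ll n^{-r}(\log n)^{(d-1)r}$.

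To pass from $q=2$ to $1 \le q < 2$, I would again exploit that $\mu$ is finite: by H\"older's inequality $\|g\|_{\Lqgamma} \le \mu(\RRd)^{1/q - 1/2}\|g\|_{L_2(\RRd;\mu)}$ for every $g$, so taking the infimum over linear sampling algorithms using at most $n$ values gives $\varrho_n(\BWpgamma,\Lqgamma) \ll \varrho_n(\BWpgamma,L_2(\RRd;\mu))$. Combining this with the $q=2$ upper bound and the lower bound above yields \eqref{sampling-widths:p>2}; this reduction mirrors exactly the one used in the proof of Theorem \ref{theorem: rho_n><,p<2}. The only nonroutine step, and the main obstacle, is the verification of Assumption A --- specifically the continuity of point evaluations on $\BWpgamma$ in the $\Wpgamma$-metric; everything else is a direct citation of Corollary \ref{corollary: rho_n <  d_n,p<2} and Theorem \ref{theorem:widths: q<p} or an elementary consequence of the finiteness of $\mu$.
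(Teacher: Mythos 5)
Your proposal is correct and takes essentially the same route as the paper's own proof: the lower bound via \eqref{eq-relations} and Theorem \ref{theorem:widths: q<p}, the reduction of $1\le q\le 2$ to $q=2$ through the norm inequality $\|\cdot\|_{\Lqgamma}\ll\|\cdot\|_{L_2(\RRd;\mu)}$, and the upper bound for $q=2$ by checking Assumption A for $\BWpgamma$ and invoking Corollary \ref{corollary: rho_n <  d_n,p<2} with the Kolmogorov-width bound from Theorem \ref{theorem:widths: q<p}. Your added detail on verifying Assumption A (separability, the embedding via finiteness of $\mu$, and continuity of point evaluations through the continuous representative) merely fills in steps the paper asserts without proof.
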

\begin{proof}
 The lower bound of \eqref{sampling-widths:p>2} follows from \eqref{eq-relations} and \eqref{eq:dnWg}. By the norm inequality $\|\cdot\|_{L_q(\RRd;\mu)} \ll \|\cdot\|_{L_2(\RRd;\mu)}$ for $1\le q \le 2$,  it is sufficient to prove the upper bound of \eqref{sampling-widths:p>2} for $q=2$.  By \eqref{eq:dnWg} we have that 
\begin{align}\label{d_n<}
d_n
\ll 
n^{-r} (\log n)^{(d-1)r}.
\end{align}		
Notice that 
the separable normed space $\Wpgamma$ is continuously embedded into  $L_2(\RRd;\mu)$, and the evaluation functional $f \mapsto f(\bx)$ is continuous on the space $\Wpgamma$ for each $\bx \in \RRd$. This means that $\BWpgamma$ satisfies Assumption A in Subsection \ref{Sampling recovery in RK Hilbert spaces}. By Corollary \ref{corollary: rho_n <  d_n,p<2} and \eqref{d_n<} we prove the upper bound for $q=2$: 
$$
\varrho_n \ll d_n
\ll
n^{-r} (\log n)^{(d-1)r}.
$$
\hfill 
\end{proof}
	

	\section{Numerical weighted integration   in the space $\Wpgamma$}
\label{Numerical weighted integration p}

\subsection{Introducing remarks}
\label{Introducing remarks sec 4}
The aim of this section  is to present and extend some recent results of \cite{DK2023} on numerical weighted integration over $\mathbb{R}^d$ for functions from weighted  Sobolev spaces $W^r_p(\mathbb{R}^d;\mu)$ of  mixed smoothness $r \in \mathbb{N}$ for $1<p<\infty$ and the measure $\mu$ with the density function given by \eqref{w(bx)}. The next section is devoted to this problem in the case $p=1$ which requires a different approach.
Extending the results from \cite{DK2023} for the Gaussian-weighted case, we prove the right asymptotic order of the quantity of  optimal quadrature  for the Sobolev function class  $\bW^r_p(\mathbb{R}^d;\mu)$ and construct asymptotically optimal quadratures. 

There is a large number of works on high-dimensional unweighted integration over the unit $d$-cube $\IId:=[0,1]^d$ for functions having a mixed smoothness (see  \cite{DTU18B, DKS13, Tem18B} for results and bibliography). However, there are  only a few works  on high-dimensional weighted integration for functions having a mixed smoothness. The problem of  optimal  weighted integration \eqref{If}--\eqref{Int_n} has been studied in \cite{IKLP2015, IL2015,DILP18}  for functions in certain Hermite spaces, in particular, the space $\Hh_{d,r}$ which coincides with $W^r_2(\mathbb{R}^d;\gamma)$ in terms of norm equivalence.
It has been proven in \cite{DILP18} that
\begin{equation*}\label{DILP18}
n^{-r}  (\log n)^{(d-1)/2}
\ll	
\Int_n\big( \bW^r_2(\RRd; \gamma))  
\ll 
n^{-r} (\log n)^{d(2r + 3)/4 - 1/2}.
\end{equation*}
 Moreover,  the upper bound is achieved by  a translated and scaled quasi-Monte Carlo (QMC) quadrature based on Dick's high order digital nets. 

Recently, in \cite[Theorem 2.3]{DK2023} for the space $W^r_p(\mathbb{R}^d, \gamma)$ with  $r\in \NN$ and $1<p<\infty$, we have constructed  an  asymptotically optimal quadrature  $Q_n^\gamma$ of the form \eqref{Q_nf-introduction} which  gives the  asymptotic order  
\begin{equation} 	\label{AsympQuadrature}
\sup_{f\in \bW^r_p(\RRd; \gamma)} \bigg|\int_{\RRd}f(\bx) \gamma(\rd\bx) - Q_n^\gamma f\bigg| 
\asymp
\Int_n\big(\bW^r_p(\RRd; \gamma) \big) 
\asymp
n^{-r} (\log n)^{(d-1)/2}.
\end{equation}

In constructing $Q_n^\gamma$, we  proposed a novel method assembling an asymptotically optimal quadrature for the related Sobolev spaces of the same mixed smoothness $r$ on the unit $d$-cube to the  integer-shifted $d$-cubes which cover $\RRd$. The asymptotically optimal quadrature  $Q_n^\gamma$ 
is based on very sparse  integration nodes contained in a $d$-ball of radius
$\sqrt{\log n}$.

 In this section,  we  show that  the result \eqref{AsympQuadrature} can be extended by a modification of this technique to the  weighted Sobolev spaces $\Wpgamma$ for $1 < p < \infty$.

\subsection{Assembling quadratures}
\label{Assembling quadratures}
In this subsection, based on a quadrature on  the  $d$-cube $\IId:=\big[-\frac{1}{2}, \frac{1}{2}\big]^d$  for numerical integration of functions from classical Sobolev spaces of mixed smoothness $r$ on $\IId$, by assembling we construct  a quadrature on $\RRd$ for numerical integration of functions from weighted Sobolev spaces $\Wpgamma$ which preserves   the  convergence rate.  In the next subsection, as a consequence, we prove the right asymptotic order of 
$\Int_n\big(\BWpgamma\big)$.

Let $r\in \NN$, $1<p<\infty$ and $\alpha>0$, $\beta \ge 0$.  	
Assume that for the quadrature
\begin{equation}\label{Q_m(f)}
	Q_m(f): = \sum_{i=1}^m \lambda_i f(\bx_i), \ \ \{\bx_1,\ldots,\bx_m\}\subset \IId,
\end{equation}
holds the convergence rate
\begin{equation}\label{IntError-a,b}
	\bigg|\int_{\IId} f(\bx) \rd \bx  - Q_m(f)\bigg| \leq C m^{-\alpha} (\log m)^\beta \|f\|_{\Wpmix}, 
	\ \  f\in \Wpmix.
\end{equation}
Then by using  $Q_m$, we will construct  a quadrature  on $\RRd$  which approximates the integral \eqref{If} with the same bound of the error approximation as in \eqref{IntError-a,b}  for 
$f \in \Wpgamma$. 

Our strategy is as follows.  
The weighted integral 
\begin{equation} \label{integral}
\int_{\RRd}f(\bx) w(\bx) \rd\bx
\end{equation}
 can be represented as the sum of component integrals over the  integer-shifted $d$-cubes $\IId_{\bk}$ as 
\begin{align} \label{Int_RRd}
	\int_{\RRd}f(\bx) w(\bx) \rd\bx 
	=  \sum_{\bk \in \ZZd}\int_{\IId_\bk}f_\bk(\bx)w_\bk(\bx)\rd \bx,
\end{align}
where  for $\bk \in \ZZd$, $\IId_\bk:=\bk+\IId$ 
and for a function $h$  on $\RRd$,  
$h_\bk $ denotes the restriction of $h$ to $\IId_\bk$.
For a given $n \in \RR_1$, we take ``shifted" quadratures $Q_{n_\bk}$  of the form \eqref{Q_m(f)} for  approximating the component integrals in the sum in \eqref{Int_RRd}.  
	The integration nodes  in  
	$Q_{n_\bk}$, $\bk \in \ZZd$, are taken so that
	$$
	\sum_{\bk \in \ZZd} \lfloor n_\bk \rfloor \le n.
	$$
In the next step, we  ``assemble" these shifted integration nodes to form a quadrature  $Q_n(f)$ for approximating the integral \eqref{integral}. Let us describe this construction in detail.

It is clear that if $f\in \Wpgamma$, then 
$
f_\bk(\cdot+\bk) \in \Wpmix,
$
and
\begin{equation}\label{eq:norm-fwid}
	\|f_\bk(\cdot+\bk)\|_{\Wpmix} \leq e^{\frac{a|\bk + (\sign \bk)/2 |^\lambda}{p}}\|f\|_{\Wpgamma},
\end{equation}
where $\sign \bk:= \brac{\sign k_1, \ldots , \sign k_d}$ and $\sign  x := 1$ if $x \ge 0$, and $\sign  x := -1$ otherwise for $x \in \RR$.
We have
$$\|w_\bk(\cdot+\bk)\|_{\Wpmix}=\Bigg(\sum_{|\br|_\infty \leq r} \|D^\br w\|_{L_p(\IId_\bk)}^p\Bigg)^{1/p}.$$
A direct computation shows that for any $\bs \in \NN_0^d$ with $|\bs|_\infty \le r$, we have 
$$D^\bs w(\bx)=F_\bs(\bx)w(\bx),$$ 
where 
$$
F_\bs(\bx):= \prod_{i=1}^d F_{s_i} (x_i),
$$ 
and for $s \in \NN_0$, the univariate function $F_s$ is defined by
	\begin{equation}\nonumber
	F_s(x):=  = (\sign (x))^s\sum_{j=1}^s c_{s,j}(\lambda,a) |x|^{\lambda_{s,j}},
\end{equation}
where  $\sign (x):= 1$ if $x \ge 0$, and $\sign (x):= -1$ if  $x < 0$, 
\begin{equation}\label{lambda_{s,s}}
	\lambda_{s,s} = s(\lambda - 1) > \lambda_{s,s-1} > \cdots > \lambda_{s,1} = \lambda - s,
\end{equation}
and  $c_{s,j}(\lambda,a)$ are polynomials in the variables $\lambda$ and $a$ of degree at most $s$ with respect to each variable.
 Therefore, for $\bx \in \IId_\bk$ we get
$$
|D^\bs w(\bx)| \leq  Ce^{-\frac{a|\bk - (\sign \bk)/2 |^\lambda}{\tau'}}\leq Ce^{-\frac{a|\bk|^\lambda}{\tau}}
$$
for some $\tau'$ and $\tau$ such that $1<\tau<\tau'<p<\infty $.  This implies
\begin{align} \label{g_bk}
	\|w_\bk(\cdot+\bk)\|_{\Wpmix}  \leq  Ce^{-\frac{a|\bk|^\lambda}{\tau}}
\end{align}
with $C$ independent of $\bk \in \ZZd$. Since $\Wpmix$ is a multiplication algebra (see \cite[Theorem~3.16]{NgS17}), from \eqref{eq:norm-fwid} and \eqref{g_bk}  we have that 
\begin{align} \label{multipl-algebra1}
	f_{\bk}(\cdot+\bk)w_\bk(\cdot+\bk)\in \Wpmix,
\end{align}
and 
\begin{equation}  
	\begin{aligned} \label{multipl-algebra2}
		\|f_{\bk}(\cdot+\bk)w_\bk(\cdot+\bk)\|_{\Wpmix} 
		& 
		\leq C \|f_{\bk}(\cdot+\bk)\|_{\Wpmix}  \cdot \|w_\bk(\cdot+\bk)\|_{\Wpmix}
		\\
		& \leq C e^{\frac{a|\bk + (\sign \bk)/2 |^\lambda}{p}-\frac{a|\bk|^\lambda}{\tau}}\|f\|_{\Wpgamma}.
	\end{aligned}
\end{equation}
For $1<\tau<p<\infty $,  we choose $\delta>0$ such that
\begin{equation}  \label{[tau]<e^{-delta k}1}
	\max \bigg\{e^{-a|\bk - (\sign \bk)/2 |^\lambda\big(1- \frac{1}{p}\big)},
	e^{\frac{a|\bk + (\sign \bk)/2 |^\lambda}{p}-\frac{a|\bk|^\lambda}{\tau}}\bigg\}
	\leq 
	C e^{-\delta |\bk|^\lambda}
\end{equation}
for $\bk\in \ZZd$, and therefore,
\begin{align}  \label{f_{bk}<}
	\|f_{\bk}(\cdot+\bk)w_\bk(\cdot+\bk)\|_{\Wpmix} 
	& \leq C e^{-\delta |\bk|^\lambda}\|f\|_{\Wpgamma}, \quad  \bk\in \ZZd.
\end{align}
For $n\in \RR_1$, let $\xi_n$ and $n_\bk$ be given as in \eqref{xi-int} and \eqref{n_bk}. We have by \eqref{sum n_k}
\begin{align} 	\label{<n2}
	\sum_{|\bk|< \xi_n}  n_\bk  \le n.
\end{align} 
We define
\begin{equation} \label{I_n^gamma}
	Q_n(f):=\sum_{|\bk|< \xi_n}Q_{n_\bk}(f_{\bk}(\cdot+\bk)w_\bk(\cdot+\bk))
	= \sum_{|\bk|< \xi_n}\sum_{j=1}^{\lfloor n_\bk \rfloor} \lambda_j  f_{\bk}(\bx_j+\bk)w_\bk(\bx_j+\bk),
\end{equation}
or equivalently,
\begin{equation} \label{Q_n^gamma2}
	Q_n(f):=	\sum_{|\bk|< \xi_n}\sum_{j=1}^{\lfloor n_\bk \rfloor} \lambda_{\bk,j} f(\bx_{\bk,j})
\end{equation}
as a quadrature for the approximate weighted integration of  functions $f$ on $\RRd$,
where 
$$\bx_{\bk,j}:= \bx_j+\bk, \ \ \ \lambda_{\bk,j}:= \lambda_j w_\bk(\bx_j+\bk)
$$
 (here for simplicity, with an abuse the dependence of integration nodes and weights on the quadratures  $Q_{n_\bk}$ is omitted).  The  integration nodes of the quadrature $Q_n$  are
\begin{equation} \label{int-nodes}
	\{\bx_{\bk,j}: |\bk|< \xi_n, \, j=1,\ldots,\lfloor n_\bk \rfloor\}
	\subset \RRd,
\end{equation}
and the  integration weights 
$$
(\lambda_{\bk,j}: |\bk|< \xi_n, \, j=1,\ldots,\lfloor n_\bk \rfloor).
$$
Due to \eqref{<n2}, the number of integration nodes  is not greater than $n$.  From the definition we can see that the  integration nodes are contained in the ball of radius  $\xi_n^*:=\sqrt{d}/2 + \xi_n$, i.e., 
$$
\{\bx_{\bk,j}: |\bk|< \xi_n, \, j=1,\ldots,\lfloor n_\bk \rfloor\}
\subset B(\xi_n^*):= \brab{\bx \in \RRd:\, |\bx| \le \xi_n^*}.
$$
 The density of the integration nodes is exponentially decreasing in $|\bk|$ to zero from the origin of $\RRd$ to the boundary of the ball $B(\xi_n^*)$, and  the set of integration nodes is very sparse because of the choice of $n_{\bk}$ as in \eqref{n_bk}.

\begin{theorem} \label{thm:int-general}
	Let $r\in \NN$, $1<p<\infty$ and $\alpha >0$, $\beta \ge 0$.  
	Assume that for any $m \in \RR_1$, there is  a   quadrature  $Q_m$  of the form \eqref{Q_m(f)} satisfying \eqref{IntError-a,b}. Then for  the   quadrature  $Q_{n}$ defined as in \eqref{Q_n^gamma2}, 
	we have
	\begin{equation} 	\label{IntError0}
		\bigg|\int_{\RRd}f(\bx) \mu(\rd\bx) - Q_n(f)\bigg| 
		\ll n^{- \alpha}  (\log n)^\beta \|f\|_{\Wpgamma}, \ \  f \in \Wpgamma.
	\end{equation}
\end{theorem}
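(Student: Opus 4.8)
The plan is to transfer the assembling argument of Theorem~\ref{thm:approx-general-theta} from linear approximation to quadrature. Using the integral decomposition \eqref{Int_RRd} and the definition \eqref{Q_n^gamma2} of $Q_n$ (together with the change of variable that identifies $Q_{n_\bk}\big(f_\bk(\cdot+\bk)w_\bk(\cdot+\bk)\big)$ with an approximation of $\int_{\IId_\bk} f_\bk w_\bk$), I would split the error as
\begin{align*}
\int_{\RRd} f(\bx) w(\bx)\,\rd\bx - Q_n(f)
&= \sum_{|\bk|<\xi_n}\left(\int_{\IId_\bk} f_\bk w_\bk - Q_{n_\bk}\big(f_\bk(\cdot+\bk)w_\bk(\cdot+\bk)\big)\right)\\
&\quad + \sum_{|\bk|\ge\xi_n}\int_{\IId_\bk} f_\bk w_\bk,
\end{align*}
separating the approximated cubes ($|\bk|<\xi_n$, where $n_\bk>0$) from the tail cubes ($|\bk|\ge\xi_n$, where $n_\bk=0$ by \eqref{n_bk}), and estimate the two sums in turn.

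For the first sum, after the change of variable $\bx\mapsto\bx+\bk$ carrying $\IId_\bk$ onto $\IId$, each summand becomes the single-cube quadrature error for $g_\bk := f_\bk(\cdot+\bk)w_\bk(\cdot+\bk)$, which lies in $\Wpmix$ by \eqref{multipl-algebra1}. Applying the hypothesis \eqref{IntError-a,b} to $Q_{n_\bk}$ and then the weight-norm decay \eqref{f_{bk}<} gives
\begin{equation*}
\left|\int_{\IId_\bk} f_\bk w_\bk - Q_{n_\bk}(g_\bk)\right|
\ll n_\bk^{-\alpha}(\log n_\bk)^\beta\, e^{-\delta|\bk|^\lambda}\,\|f\|_{\Wpgamma}.
\end{equation*}
With $n_\bk = \varrho n\,e^{-\frac{a\delta}{\alpha}|\bk|^\lambda}$ from \eqref{n_bk} we have $n_\bk^{-\alpha}=(\varrho n)^{-\alpha}e^{a\delta|\bk|^\lambda}$, while $(\log n_\bk)^\beta\ll(\log n)^\beta$ since $n_\bk\le\varrho n$ and $\beta\ge0$. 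Provided the exponent rate in $n_\bk$ is kept small enough relative to the weight-decay rate $\delta$ of \eqref{f_{bk}<}, the product of the two exponentials is a decaying exponential $e^{-\delta'|\bk|^\lambda}$ with $\delta'>0$; summing the convergent series $\sum_{\bk\in\ZZd}e^{-\delta'|\bk|^\lambda}<\infty$ then bounds the first sum by $\ll n^{-\alpha}(\log n)^\beta\|f\|_{\Wpgamma}$.

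For the tail sum, I would bound each term crudely: since $\IId$ has unit measure and $\Wpmix\hookrightarrow L_1(\IId)$,
\begin{align*}
\left|\int_{\IId_\bk} f_\bk w_\bk\right|
&\le \|f_\bk(\cdot+\bk)w_\bk(\cdot+\bk)\|_{L_1(\IId)}\\
&\ll \|f_\bk(\cdot+\bk)w_\bk(\cdot+\bk)\|_{\Wpmix}
\ll e^{-\delta|\bk|^\lambda}\|f\|_{\Wpgamma}
\end{align*}
by \eqref{f_{bk}<}. Summing over $|\bk|\ge\xi_n$ and using that the exponential dominates the polynomial growth of the lattice-point count gives a bound $\ll e^{-\delta(1-\varepsilon)\xi_n^\lambda}\|f\|_{\Wpgamma}$ for any fixed small $\varepsilon>0$; by the choice \eqref{xi-int}, $\xi_n^\lambda=\delta^{-1}\lambda\alpha\log n$, so this equals $n^{-(1-\varepsilon)\lambda\alpha}\|f\|_{\Wpgamma}$, and since $\lambda>1$ I may fix $\varepsilon$ with $(1-\varepsilon)\lambda\ge1$, making the tail $\ll n^{-\alpha}\|f\|_{\Wpgamma}$ and hence negligible. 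Combining the two bounds yields \eqref{IntError0}.

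The one delicate point --- and the main obstacle --- is the competition of exponentials in the first sum: because $n_\bk$ decreases geometrically in $|\bk|^\lambda$, the factor $n_\bk^{-\alpha}$ \emph{grows} like $e^{a\delta|\bk|^\lambda}$, so one must verify that the weight-induced decay $e^{-\delta|\bk|^\lambda}$ from \eqref{f_{bk}<} --- which itself rests on the choice of $\delta$ in \eqref{[tau]<e^{-delta k}1} and on $\Wpmix$ being a multiplication algebra \eqref{multipl-algebra2} --- strictly dominates it, leaving a genuine net decay rate $\delta'>0$. This is exactly what fixes the admissible exponent rate in \eqref{n_bk}. Everything else --- the change of variable, the reduction $(\log n_\bk)^\beta\ll(\log n)^\beta$, and the geometric summation --- is routine, and the argument runs in close parallel to the proof of Theorem~\ref{thm:approx-general-theta}.
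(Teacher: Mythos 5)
Your proposal is correct and follows essentially the same route as the paper's proof: the same splitting of the error into the approximated cubes $|\bk|<\xi_n$ and the tail cubes $|\bk|\ge\xi_n$, the same combination of the single-cube hypothesis \eqref{IntError-a,b} with the decay estimate \eqref{f_{bk}<}, and the same resolution of the competition between the growth of $n_\bk^{-\alpha}$ and the weight-induced decay $e^{-\delta|\bk|^\lambda}$, which you correctly single out as the one delicate point fixing the admissible exponent rate in \eqref{n_bk}. The only cosmetic difference is in the tail sum, where you use the embedding $\Wpmix\hookrightarrow L_1(\IId)$ together with \eqref{f_{bk}<}, while the paper applies H\"older's inequality directly to the weighted integral and invokes \eqref{[tau]<e^{-delta k}1}; both are one-line bounds resting on estimates already established in the paper.
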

\begin{proof} 
	Let $f\in \Wpgamma$ and $m\in \RR_1$. 	For the  quadrature $I_m$ for functions on $\IId$ in the assumption, by \eqref{IntError-a,b} and \eqref{f_{bk}<},  we have 
	\begin{equation} \label{IntError1}
		\begin{aligned}
			&\bigg|\int_{\IId} f_\bk(\bx+\bk)w_\bk(\bx+\bk)\rd \bx- I_{m}(f_{\bk}(\cdot+\bk)w_\bk(\cdot+\bk))\bigg| 
							\\&
						\ll m^{-r} (\log m)^{\frac{d-1}{2}} 	\|f_{\bk}(\cdot+\bk)w_\bk(\cdot+\bk)\|_{\Wpmix} 
			\\&
			\ll  m^{-\alpha} (\log m)^\beta e^{-\delta |\bk|^\lambda}\|f\|_{\Wpgamma}.
		\end{aligned}
	\end{equation}
	From \eqref{Int_RRd} and \eqref{I_n^gamma}
	it follows that
	\begin{align*}
		\bigg|\int_{\RRd}f(\bx) w(\bx) \rd\bx - Q_n(f)\bigg| 
		&\leq \sum_{|\bk|< \xi_n} \bigg|\int_{\IId_\bk} f_{\bk}(\bx)w_\bk(\bx)\rd \bx - 
		Q_{n_\bk}(f_{\bk}(\cdot+\bk)w_\bk(\cdot+\bk))\bigg| 
		\\ &
		+ \sum_{|\bk|\geq \xi_n}\bigg|\int_{\IId_\bk}f_\bk(\bx)w_\bk(\bx)\rd \bx\bigg|.
	\end{align*}
	For each term in the first sum, by   \eqref{IntError1} we derive  the estimates
	\begin{align*}
		\bigg|\int_{\IId_\bk} f_{\bk}(\bx)w_\bk(\bx)\rd \bx 
		& - Q_{n_\bk}(f_{\bk}(\cdot+\bk)w_\bk(\cdot+\bk))\bigg| 
		\\& = \bigg|\int_{\IId} f_\bk(\bx+\bk)w_\bk(\bx+\bk)\rd \bx- Q_{n_\bk}(f_{\bk}(\cdot+\bk)w_\bk(\cdot+\bk))\bigg|
		\\
		& \ll  n_{\bk}^{- \alpha} (\log n_{\bk})^b e^{-\delta |\bk|^\lambda} \|f\|_{\Wpgamma}
		\\
		& \ll ( n e^{-\frac{\delta}{2\alpha}|\bk|^\lambda} )^{- \alpha} (\log n)^b
		e^{-\delta |\bk|^\lambda}\|f\|_{\Wpgamma}
		\\
		& \ll   n^{- \alpha} (\log n)^be^{-\frac{a|\bk|^\lambda\delta}{2}}\|f\|_{\Wpgamma}.
	\end{align*}
	Hence,
	\begin{align*}
		&\sum_{|\bk|< \xi_n} \bigg|\int_{\IId_\bk} f_{\bk}(\bx)w_\bk(\bx)\rd \bx 
		- Q_{n_\bk}(f_{\bk}(\cdot+\bk)w_\bk(\cdot+\bk))\bigg|
		\\
		& 
	 \ll \sum_{|\bk|< \xi_n} n^{-\alpha}  (\log n)^be^{-\frac{a|\bk|^\lambda\delta}{2}}\|f\|_{\Wpgamma}
		\ll n^{-\alpha}  (\log n)^b\|f\|_{\Wpgamma}.
	\end{align*}
	For each term in the second sum  we get by H\"older's inequality and \eqref{[tau]<e^{-delta k}1},
	\begin{align*}
		\bigg|\int_{\IId_\bk} f_\bk(\bx) w_\bk(\bx)\rd \bx \bigg| 
		&\leq \bigg(\int_{\IId_\bk}|f_\bk(\bx)|^p w_\bk(\bx)\rd \bx \bigg)^{\frac{1}{p}} \bigg(\int_{\IId_\bk}w_\bk(\bx) \rd \bx \bigg)^{1-\frac{1}{p}} 
		\\
		& \ll e^{-a|\bk - (\sign \bk)/2 |^\lambda(1-\frac{1}{p})}\|f\|_{\Wpgamma}
		\\
		&
	\ll e^{-\delta |\bk|^\lambda}\|f\|_{\Wpgamma},
	\end{align*}
		which implies
	\begin{equation}\label{eq-epsilon01}
		\begin{aligned}
			\sum_{|\bk|\geq \xi_n}\bigg|\int_{\IId_\bk} f_\bk(\bx)w_\bk(\bx)\rd \bx\bigg|
			&
			\leq \sum_{|\bk|\geq \xi_n}e^{-\delta |\bk|^\lambda} \|f\|_{\Wpgamma}
			\\
			&  \ll \|f\|_{\Wpgamma}\sum_{s=\lceil\xi_n\rceil}^\infty    e^{-s^\lambda \delta} s^d
			\\
			&
			\le  \|f\|_{\Wpgamma}e^{-\xi_n^\lambda\delta(1-\varepsilon)}\sum_{s=\lceil\xi_n\rceil}^\infty   e^{-s^\lambda \varepsilon \delta} s^d
			\\
			&
		\ll \|f\|_{\Wpgamma}e^{-\xi_n^\lambda\delta(1-\varepsilon)}\sum_{s=0}^\infty   e^{-s \varepsilon \delta} s^d
		\\
		&  \ll  e^{-2\alpha(1-\varepsilon)\log n}\|f\|_{\Wpgamma}		
		\ll n^{-\alpha}  (\log n)^b\|f\|_{\Wpgamma},
		\end{aligned}
	\end{equation}
	with $\varepsilon \in (0,1/2)$. 
	
	Summing up, we prove \eqref{IntError0}.
	\hfill
\end{proof}

Some important quadratures such as the Frolov  quadrature and the QMC quadrature based on Fibonacci lattice rules ($d=2$) are constructively designed for functions  on $\RRd$ with support contained in  the unit  $d$-cube or for $1$-periodic functions. To employ them for  constructing  a quadrature for functions on $\RRd$ we need to modify   those constructions.

Assume that  there is a quadrature $Q_m$ of the form \eqref{Q_m(f)} with the integration nodes
$\{\bx_1,\ldots,\bx_m\}\subset \brac{- \frac{1}{2}, \frac{1}{2}}^d$ and weights $(\lambda_1,\ldots,\lambda_m)$ such that  the convergence rate
\begin{equation}\label{IntError-a,b,F}
	\bigg|\int_{\IId} f(\bx) \rd \bx  - Q_m(f)\bigg| \leq C m^{-\alpha} (\log m)^\beta \|f\|_{\Wpmix}, 
	\ \  f\in \mathring{W}^r_p(\IId)
\end{equation}
holds where $ \mathring{W}^r_p(\IId)$ denotes the space of functions in $W^r_p(\RRd)$ with support contained in $\IId$.
Then based on the quadrature $Q_m$, we propose two constructions of  quadratures  which approximate the integral \eqref{integral} with the same convergence rate for 
$f \in \Wpgamma$.

The first method is a preliminary change of variables  to transform the quadrature $Q_m$  into a quadrature for functions in $\Wpmix$  which gives the same convergence rate, and then apply the  construction as in \eqref{Q_n^gamma2}. Let us describe it.
Let $\psi_k$  be the function defined by
\begin{equation}\label{psin}
	\psi_k(t) = \left\{\begin{array}{rcl}
		C_k\int_0^t \xi^k(1-\xi)^k\,\rd\xi, & t\in [0,1];\\
		1,& t>1;\\
		0 ,& t<0\,,
	\end{array}\right.
\end{equation}
where 
$$
C_k=\big(\int_0^1 \xi^k(1-\xi)^k \,\rd\xi\big)^{-1}. 
$$
If $f \in \Wpmix$,  a change of variable yields that 
$$
\int_{\IId} f(\bx) \rd \bx =\int_{\IId} \big(T_{\psi_k}f\big)(\bx) \rd \bx,
$$
where the function $T_{\psi_k}f$ is defined by
\begin{equation} \label{Tf}
\big(T_{\psi_k}f\big) (\bx) :=\bigg(\prod_{i=1}^d\psi_k'(x_i)\bigg)f\big(\psi_k(x_1),\ldots,\psi_k(x_d)\big).
\end{equation}
 Observe that the support of $T_{\psi_k}f$ is contained in $\IId$. If  $T_{\psi_k} f$ belongs to $ \mathring{W}^\alpha_p(\IId)$,  then a quadrature with the integration nodes  $\{\tilde{\bx}_1,\ldots,\tilde{\bx}_m\}\subset \IId$ and weights $(\tilde{\lambda}_1,\ldots, \tilde{\lambda}_m)$ for the function $f$ can be defined  as
$$
\tilde{Q}_m(f):= Q_m(T_{\psi_k} f) = \sum_{j=1}^m \tilde{\lambda}_j f(\tilde{\bx}_j),
$$ 
where 
$$\tilde{\bx}_j=(\psi_k(x_{j,1}),\ldots,\psi_k(x_{j,d})) \ \ \text{and} \ \ 
\tilde{\lambda}_j=\lambda_j \psi_k'(x_{j,1})\cdot\ldots\cdot\psi_k'(x_{j,d}).
$$  Hence, our task is finding a condition on $k$ so that  the mapping
$$
	T_{\psi_k}: \Wpmix \to  \mathring{W}^r_p(\IId)
$$ 
defined by \eqref{Tf},
is a bounded operator from $\Wpmix$ to $ \mathring{W}^r_p(\IId)$, see, e.g., \cite[Theorem IV.4.1]{Tem93B}. 

The second method is based on the decomposition \eqref{theta-decomposition} of  functions in $W^r_p(\RRd;\mu)$.  Let  $\brab{\varphi_\bk}_{\bk \in \ZZd}$ be the unit partition satisfying items (i)--(iv), which is defined in Subsection \ref{The case 1 le q < p < infty}. 
By the items (ii) and (iii),
the integral \eqref{integral} can be represented as
\begin{align} \label{Int_RRd-F}
	\int_{\RRd}f(\bx) w(\bx) \rd\bx 
	=  \sum_{\bk \in \ZZd}\int_{\IId_{\theta,\bk}} f_{\theta,\bk}(\bx) w_{\theta,\bk}(\bx) \varphi_\bk (\bx) \rd \bx,
\end{align}
where  $f_{\theta,\bk}$ and $w_{\theta,\bk}$ denote the restrictions of $f$ and $w$ on $\IId_{\theta,\bk}$, respectively.
The quadrature \eqref{Q_m(f)} induces the quadrature 
\begin{equation}\label{I_theta,m(f)}
	Q_{\theta,m}(f): = \sum_{i=1}^m \lambda_{\theta,i} f(\bx_{\theta,i}), 
\end{equation}
for functions $f$ on $\IId_\theta$,  where 
$\bx_{\theta,i}:= \theta\bx_i$ and 
$\lambda_{\theta,i}:= \theta \lambda_i $.

Denote by $ \mathring{W}^r_p(\IId_\theta)$ the subspace of function in $W^r_p(\RRd)$ with support contained in $\IId_\theta$.
From  \eqref{IntError-a,b,F} the error bound
	\begin{equation*}\label{IntError-a,b,theta}
		\bigg|\int_{\IId_\theta} f(\bx) \rd \bx  - Q_{\theta,m}(f)\bigg| \ll m^{-\alpha} (\log m)^\beta \|f\|_{W^r_p(\IId_\theta)}
	\end{equation*}
	holds for every $f\in \mathring{W}^r_p(\IId_\theta)$.
Let $f\in \Wpgamma$. It is clear that 
$f_{\theta,\bk}(\cdot+\bk)\in W^r_p(\IId_\theta)$ and
\begin{equation*}\label{ineq-norms}
	\|f_{\theta,\bk}(\cdot+\bk)\|_{W^r_p(\IId_\theta)} 
	\ll e^{\frac{a|\bk + (\theta \sign \bk)/2 |^\lambda}{p}}\|f\|_{\Wpgamma}, \ \
	f\in \Wpgamma, \ \ \bk \in \ZZd.
\end{equation*}
Similarly to \eqref{multipl-algebra1} and \eqref{multipl-algebra2}, by additionally using the items (ii)  and (iv)  we have that 
\begin{equation*} 
	f_{\theta,\bk}(\cdot+\bk)w_{\theta,\bk}(\cdot+\bk)\varphi_\bk (\cdot+\bk)\in \mathring{W}^r_p(\IId_\theta),
\end{equation*}
and 
\begin{align*}
	\|f_{\theta,\bk}(\cdot+\bk)w_{\theta,\bk}(\cdot+\bk)\varphi_\bk(\cdot+\bk)\|_{W^r_p(\IId_\theta)} 
	\ll 
	e^{\frac{a|\bk + (\theta \sign \bk)/2 |^\lambda}{p}-\frac{a|\bk|^\lambda}{\tau}} \|f\|_{\Wpgamma},
\end{align*}
where $\tau$ is a fixed number satisfying the inequalities $1<\tau<p<\infty $.
We choose $\delta>0$ so that
\begin{equation*}  \label{[tau]<e^{-delta k}}
	\max \bigg\{e^{- a|\bk - (\theta \sign \bk)/2 |^\lambda\big(1-\frac{1}{p}\big)}, 
	e^{\frac{a|\bk + (\theta \sign \bk)/2 |^\lambda}{p}-\frac{a|\bk|^\lambda}{\tau}}\bigg\}
	\leq 
	C e^{-\delta |\bk|^\lambda}, \ \  \bk\in \ZZd.
\end{equation*}

For $n\in \RR_1$, let $\xi_n$ and $n_{\bk}$ be given as in \eqref{xi-int}	
and \eqref{n_bk}, respectively. 
Noting \eqref{Int_RRd-F} and \eqref{I_theta,m(f)}, we define
\begin{equation*} 
	Q_{\theta,n}(f):=\sum_{|\bk|< \xi_n}Q_{\theta,n_\bk}
	\big(f_{\theta,\bk}(\cdot+\bk)w_{\theta,\bk}(\cdot+\bk)\varphi_\bk(\cdot+\bk)\big),
\end{equation*}
or equivalently,
\begin{equation} \label{Q_n^gamma3}
	Q_{\theta,n}(f):=
	\sum_{|\bk|< \xi_n}\sum_{j=1}^{\lfloor n_\bk \rfloor} \lambda_{\theta,\bk,j} f(\bx_{\theta,\bk,j})
\end{equation}
as a linear quadrature for the approximate integration of weighted functions $f$ on $\RRd$,
where 
$$
\bx_{\theta,\bk,j}:= \bx_{\theta,i}+\bk, \ \ \
\lambda_{\theta, \bk,j}:= \lambda_{\theta,i} w_\bk(\bx_{\theta,\bk,j})\varphi_\bk(\bx_{\theta,\bk,j}).
$$   The integration nodes and the  weights of the quadrature  $Q_{\theta,n}$ are
\begin{equation} \label{int-nodes-theta}
	\{\bx_{\theta,\bk,j}: |\bk|< \xi_n, \, j=1,\ldots,\lfloor n_\bk \rfloor\}
	\subset \RRd,
\end{equation}
and
$$
(\lambda_{\theta,\bk,j}: |\bk|< \xi_n, \, j=1,\ldots,\lfloor n_\bk \rfloor).
$$
Due to \eqref{<n2}, the number of integration nodes  is not greater than $n$. Moreover, from the definition we can see that the integration nodes are contained in the ball of radius $\xi_{\theta,n}^*:= \theta\sqrt{d}/2 + \xi_n$, i.e., 
$$
\{\bx_{\theta,\bk,j}: |\bk|< \xi_{\theta,n}^*, \, j=1,\ldots,\lfloor n_\bk \rfloor\}
\subset B(\xi_{\theta,n}^*):= \brab{\bx \in \RRd:\, |\bx| \le \xi_{\theta,n}^*}.$$
Notice  that the set of integration nodes \eqref{int-nodes-theta} possesses  similar sparsity properties as  the set \eqref{int-nodes}.

In a way similar to the proof of Theorem \ref{thm:int-general} we derive

\begin{theorem} \label{thm:int-general2}
	Let $r\in \NN$, $1<p<\infty$ and $\alpha >0$, $\beta \ge 0$, $1 < \theta < 2$. 
	Assume that for any $m \in \RR_1$, there is  a   quadrature  $Q_m$  of the form \eqref{Q_m(f)} with 
	$\{\bx_1,\ldots,\bx_m\}\subset \brac{- \frac{1}{2}, \frac{1}{2}}^d$ satisfying \eqref{IntError-a,b,F}. Then for  the   quadrature  $Q_{\theta,n}$ defined as in \eqref{Q_n^gamma3} we have 
	\begin{equation} 	\label{IntError}
		\bigg|\int_{\RRd}f(\bx) \mu(\rd\bx) - 	Q_{\theta,n}(f)\bigg| 
		\ll 
		n^{-\alpha}  (\log n)^\beta  \|f\|_{\Wpgamma}, 
		\ \  f \in \Wpgamma.
	\end{equation}
\end{theorem}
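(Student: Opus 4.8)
The plan is to rerun the proof of Theorem \ref{thm:int-general} verbatim at scale $\theta$, with the exact tiling of $\RRd$ by disjoint integer cubes replaced by the overlapping decomposition \eqref{Int_RRd-F} supplied by the partition of unity $\{\varphi_\bk\}_{\bk\in\ZZd}$. The reason the $\varphi_\bk$ must enter here is that the new hypothesis \eqref{IntError-a,b,F} applies only to functions \emph{compactly supported} inside the cube, i.e.\ to $\mathring{W}^r_p$; multiplying the restriction $f_{\theta,\bk}w_{\theta,\bk}$ by $\varphi_\bk$ pushes the support strictly into the interior of $\IId_{\theta,\bk}$ while, by property (iii), leaving the total integral unchanged. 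First I would record, exactly as before, that the induced rule $Q_{\theta,m}$ of \eqref{I_theta,m(f)} inherits from \eqref{IntError-a,b,F} the scaled error bound $|\int_{\IId_\theta} g - Q_{\theta,m}(g)| \ll m^{-\alpha}(\log m)^\beta \|g\|_{W^r_p(\IId_\theta)}$ for every $g\in\mathring{W}^r_p(\IId_\theta)$, then split $\int_{\RRd} fw$ via \eqref{Int_RRd-F} into the near sum over $|\bk|<\xi_n$, which $Q_{\theta,n}$ from \eqref{Q_n^gamma3} approximates term by term, and the discarded far sum over $|\bk|\ge\xi_n$.

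For the near sum the key input is the norm estimate $\|f_{\theta,\bk}(\cdot+\bk)w_{\theta,\bk}(\cdot+\bk)\varphi_\bk(\cdot+\bk)\|_{W^r_p(\IId_\theta)} \ll e^{\frac{a|\bk+(\theta\sign\bk)/2|^\lambda}{p}-\frac{a|\bk|^\lambda}{\tau}}\|f\|_{\Wpgamma}$ already recorded just before the theorem, which with the stated choice of $\delta$ is $\ll e^{-\delta|\bk|^\lambda}\|f\|_{\Wpgamma}$. Applying the scaled bound with $m=n_\bk$ and inserting $n_\bk=\varrho n\, e^{-\frac{a\delta}{\alpha}|\bk|^\lambda}$ from \eqref{n_bk}, each term is $\ll n_\bk^{-\alpha}(\log n_\bk)^\beta e^{-\delta|\bk|^\lambda} \ll n^{-\alpha}(\log n)^\beta e^{-\frac{a\delta}{2}|\bk|^\lambda}\|f\|_{\Wpgamma}$, precisely as in Theorem \ref{thm:int-general}. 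Summing the convergent series $\sum_{\bk} e^{-\frac{a\delta}{2}|\bk|^\lambda}$ gives the desired $n^{-\alpha}(\log n)^\beta\|f\|_{\Wpgamma}$ for the near part.

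For the far sum I would bound each component integral by H\"older's inequality exactly as in \eqref{eq-epsilon01}, namely $|\int_{\IId_{\theta,\bk}} f_{\theta,\bk}w_{\theta,\bk}\varphi_\bk| \le (\int |f_{\theta,\bk}|^p w_{\theta,\bk})^{1/p}(\int_{\supp\varphi_\bk} w_{\theta,\bk})^{1-1/p}$, which by the weight decay is $\ll e^{-\delta|\bk|^\lambda}\|f\|_{\Wpgamma}$. Summing over $|\bk|\ge\xi_n$ and peeling off $e^{-\xi_n^\lambda\delta(1-\varepsilon)}$ for a fixed $\varepsilon\in(0,1/2)$ bounds the tail by $\ll e^{-\xi_n^\lambda\delta(1-\varepsilon)}\|f\|_{\Wpgamma}$, and by the definition $\xi_n=(\delta^{-1}\lambda\alpha\log n)^{1/\lambda}$ in \eqref{xi-int} this is $\ll n^{-\alpha}(\log n)^\beta\|f\|_{\Wpgamma}$. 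Combining the two parts yields \eqref{IntError}.

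The one genuinely new point compared with Theorem \ref{thm:int-general}, and the step I would treat most carefully, is the verification that $f_{\theta,\bk}(\cdot+\bk)w_{\theta,\bk}(\cdot+\bk)\varphi_\bk(\cdot+\bk)$ actually lies in $\mathring{W}^r_p(\IId_\theta)$ with the displayed exponential norm bound. This uses property (ii) to force the support strictly inside $\IId_{\theta,\bk}$, property (iv) to control $\|\varphi_\bk\|_{W^r_p}$ uniformly in $\bk$, and the multiplication-algebra estimate for $\Wpmix$ (as in \eqref{multipl-algebra1}--\eqref{multipl-algebra2}) to convert the product of three factors into a product of norms; the weight bound $\|w_{\theta,\bk}(\cdot+\bk)\|_{\Wpmix}\ll e^{-a|\bk|^\lambda/\tau}$ together with the analogue of \eqref{eq:norm-fwid} for $f$ then closes it. Everything else is a faithful scale-$\theta$ repetition of the earlier argument.
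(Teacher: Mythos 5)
Your proposal is correct and takes essentially the same route as the paper: the paper itself gives no separate proof of Theorem \ref{thm:int-general2} beyond the remark that it follows ``in a way similar to the proof of Theorem \ref{thm:int-general}'', relying on exactly the ingredients you assemble — the scaled rule \eqref{I_theta,m(f)}, the partition-of-unity decomposition \eqref{Int_RRd-F}, the membership and exponential norm bound for $f_{\theta,\bk}(\cdot+\bk)w_{\theta,\bk}(\cdot+\bk)\varphi_\bk(\cdot+\bk)$ in $\mathring{W}^r_p(\IId_\theta)$, and the near/far splitting governed by $\xi_n$ and $n_\bk$. Your identification of the compact-support issue (resolved via properties (ii)--(iv) and the multiplication-algebra estimate) as the only genuinely new point compared with Theorem \ref{thm:int-general} is precisely what the paper's preparatory material before the theorem establishes.
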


\subsection{Optimal quadrature}
\label{Optimal numerical integration}
In this subsection, we prove the right asymptotic order of the quantity of optimal quadrature as formulated in \eqref{AsympQuadrature} based on Theorem \ref{thm:int-general2} and known results on  numerical integration for functions from $\Wpmix$.

\begin{theorem} \label{thm:main}
	Let $r\in \NN$ and $1<p<\infty$.  Then one can construct  an asymptotically optimal  family of quadratures  of the form  \eqref{Q_n^gamma3} $\big(Q_n^\mu (f)\big)_{n \in \RR_1}$ so that   there hold the right asymptotic orders
	\begin{equation}\label{eq:InWg}
		\sup_{f\in \BWpgamma} \bigg|\int_{\RRd}f(\bx) \mu(\rd\bx) - Q_n^\mu f)(f)\bigg| 
		\asymp
		\Int_n\big(\BWpgamma\big) 
		\asymp 
		n^{-r} (\log n)^{\frac{d-1}{2}}.
	\end{equation}
\end{theorem}

\begin{proof} 
	Let $Q_{{\rm F},m}$ be the  Frolov quadrature for functions in $\mathring{W}^r_p(\IId)$ (see, e.g.,  \cite[Chapter 8]{DTU18B}  for the definition)    in
	the form \eqref{Q_m(f)} with 
	$\{\bx_1,\ldots,\bx_m\}\subset \brac{- \frac{1}{2}, \frac{1}{2}}^d$. 
	It was proven   in \cite{Florov1976} for $p=2$, and in \cite{Skriganov1994} for $1 < p < \infty$  that
	\begin{equation}\label{OptIntegration}
		\bigg|\int_{\IId} f(\bx) \rd \bx  - {I_{{\rm F},m}}\bigg| \leq C n^{-r} (\log n)^{\frac{d-1}{2}} \|f\|_{\mathring{W}^r_p(\IId)}, 
		\ \  f\in \mathring{W}^r_p(\IId).
	\end{equation}		
	For a fixed $1 < \theta < 2$, we define $Q_n^w(f):=  Q_{\theta,n}$  as the quadrature described in  Theorem \ref{thm:int-general2} for $a= r$ and $b = {\frac{d-1}{2}}$, based on  $Q_m = {Q_{{\rm F},m}}$. By 
	Theorem \ref{thm:int-general2} and \eqref{OptIntegration} we prove the upper bound in \eqref{eq:InWg}.
	
	Since for $f\in \mathring{W}^r_p(\IId)$
	$$
	\|f\|_{\Wpgamma} \leq e^{\frac{db}{p}} \|f\|_{\mathring{W}^r_p(\IId)},
	$$
	we get
	\begin{align*}  
		\Int_n\big(\BWpgamma\big) \gg \Int_n(\mathring{\boldsymbol{W}}^r_p(\IId)).
	\end{align*}	
	Hence the lower bound in \eqref{eq:InWg} follows from the lower bound 
	$$
	\Int_n(\mathring{\boldsymbol{W}}^r_p(\IId)) \gg  n^{-r} (\log n)^{\frac{d-1}{2}}
	$$ proven in \cite{Tem1990}. 
	%
	\hfill
\end{proof}

Besides Frolov quadratures, there are many  quadratures for efficient numerical integration for functions on $\IId$ to list. We refer the reader to \cite[Chapter 8]{DTU18B} for  bibliography  and historical comments as well as related results, in particular,
the asymptotic order
\begin{equation*}\label{IntW(IId)}
	\Int_m\big(\BWpmix\big) \asymp m^{-r} (\log m)^{\frac{d-1}{2}}.
\end{equation*}
We recall only some of them, especially those which give asymptotic order of  optimal integration.

The QMC quadrature based on a set of integration nodes  $\{\bx_1,\ldots,\bx_m\}\subset \IId$  is defined as
\begin{equation*}\label{Q_m(f)-QMC}
	Q_m(f) = \frac{1}{m}\sum_{i=1}^m f(\bx_i). 
\end{equation*}
In  \cite{Dick2007, Dick2008} for a prime number $q$  the author introduced higher order digital nets over the finite field 
$\FF_q:=\brab{0, 1, \ldots, q-1}$ equipped with the arithmetic operations modulo $q$.  Such digital nets can achieve the convergence rate $m^{-r}(\log m)^{d r}$ with $m = q^s$ \cite{DP2010} for functions from $W^r_2(\IId)$.
In the recent paper \cite{GSY2018},  the authors have shown that   the asymptotic order  of
$\Int_m\big(\bW^r_2(\IId)\big)$  can be achieved  by Dick's digital nets $\{\bx_1^*,\ldots,\bx_{q^s}^*\}$ of order $(2r + 1)$. Namely, they proved that 
\begin{equation}\label{OptQMC}
	\bigg|\int_{\IId} f(\bx) \rd \bx  -  \frac{1}{m} \sum_{i=1}^m f(\bx_i^*)\bigg| \leq C 
	m^{-r} (\log m)^{\frac{d-1}{2}} \|f\|_{W^r_2(\IId)}, 
	\ \  f\in \ W^r_2(\IId), \ \ m=q^s.
\end{equation}		

In the case $d=2$ the QMC quadrature $Q_m=Q_{\Phi,m}$ based on Fibonacci lattice rules ($d=2$)  is also  asymptotically optimal for numerical integration of  periodic functions in   $\tilde{W}^r_p(\II^2)$, that is,
\begin{equation}\label{OptInt-Fibonacci}
	\bigg|\int_{\II^2} f(\bx) \rd \bx  - Q_{\Phi,m}(f)\bigg| \leq C m^{-r} (\log m)^{\frac{1}{2}} \|f\|_{W^r_p(\TT^2)}, 
	\ \  f\in \tilde{W}^r_p(\II^2),
\end{equation}
where $\tilde{W}^r_p(\II^2)$ denotes the subspace of $W^r_p(\II^2)$ of all functions which can be extended to the whole $\RR^2$ as $1$-periodic  functions in each variable. The estimate \eqref{OptInt-Fibonacci} was proven in \cite{Bakhvalov1963} for $p=2$ and in \cite{Tem1991}
for $1 < p < \infty$.
The QMC quadrature $Q_m=Q_{\Phi,m}$ based on Fibonacci lattice rules ($d=2$)  is defined by
\begin{equation*}\label{Phi_m(f)}
	Q_{\Phi,m}(f): = \frac{1}{b_m} \sum_{i=1}^{b_m} f\bigg(\Big\{\frac{i}{b_m}\Big\} - \frac{1}{2}, \Big\{\frac{ib_{m-1}}{b_m}\Big\} - \frac{1}{2}\bigg),
\end{equation*}
where $b_0 = b_1 = 1$,  $b_m := b_{m-1} + b_{m-2}$ are the Fibonacci numbers and $\brab{x}$ denotes the fractional part of the number $x$.

Therefore,  from Theorems \ref{thm:int-general}--\ref{thm:main} and  \eqref{OptQMC}, \eqref{OptInt-Fibonacci} it follows that the QMC quadratures based on Dick's digital nets  of order $(2r + 1)$  and  Fibonacci lattice rules ($d=2$) can be used for  assembling  asymptotically optimal quadratures  $Q_n^w(f)$ and $Q_{\theta,n}^\gamma$  of the forms \eqref{Q_n^gamma2} and \eqref{Q_n^gamma3} for 
$\Int_n\big(\BWpgamma\big)$, in the particular cases $p=2$, $d \ge 2$, and $1 < p < \infty$, $d=2$, respectively.

The sparse Smolyak grid $SG(\xi)$ in $\IId$ is defined as  the set of points:
\begin{equation*}\label{Smolyak-net}
	SG(\xi):= \brab{\bx_{\bk,\bs}:= 2^{-\bk}\bs \in \ZZd: \, |\bk|_1 \le \xi, \ \ | s_i| \le 2^{k_i - 1}, \ i=1,\ldots,d}, 
	\ \ \xi  \in \RR_1. 
\end{equation*}
For a given $m \in \RR_1$, let $\xi_m$ be the maximal number satisfying $|SG(\xi_m)| \le m$. Then we can constructively define a quadrature $Q_m = {Q_{{\rm S},m}}$ based on the integration nodes in $SG(\xi_m)$ so that
\begin{equation}\label{OptInt-Smolyak}
	\bigg|\int_{\IId} f(\bx) \rd \bx  - {Q_{{\rm S},m}}(f)\bigg| \leq C m^{-r} (\log m)^{(d-1)(r + 1/2)} \|f\|_{\Wpmix}, 
	\ \  f\in W^r_p(\IId).
\end{equation}
To understand this quadrature let us recall  a detailed construction from \cite[p. 760]{DU2015}.
Indeed, the well-known embedding of $\Wpmix$ into the Besov space  of mixed smoothness $B^r_{p,\max(p,2)}(\IId)$  (see, e.g., \cite[Lemma 3.4.1(iv)]{DTU18B}), and the result on B-spline sampling  recovery of functions from the last space   it follows  that one can constructively define a sampling recovery algorithm of the form 
\begin{equation*}\label{R_m(f)}
	R_m(f): = \sum_{\bx_{\bk,\bs}\in SG(\xi_m)} f(\bx_{\bk,\bs}) \phi_{\bk,\bs}
\end{equation*}
with certain B-splines $\phi_{\bk,\bs}$, such that
\begin{equation*}\label{Sampling-Smolyak}
	\norm{f - R_m(f)}{L_1(\IId)}	 \leq C m^{-r} (\log m)^{(d-1)(r + 1/2)} \|f\|_{\Wpmix}, 
	\ \  f\in W^r_p(\IId).
\end{equation*}
Then the quadrature ${Q_{{\rm S},m}}$ can be defined as
\begin{equation*}\label{S_m(f)}
	{Q_{{\rm S},m}}(f): = \sum_{\bx_{\bk,\bs} \in SG(\xi_m)} \lambda_{\bk,\bs} f(\bx_{\bk,\bs}) ,  \ \ 
	\lambda_{\bk,\bs}:= \int_{\IId} \phi_{\bk,\bs}(\bx) \rd \bx,
\end{equation*} 
and \eqref{OptInt-Smolyak} is implied by the obvious inequality 
$$
\big|\int_{\IId} f(\bx) \rd \bx  - S_m(f)\big| \le \norm{f - R_m(f)}{L_1(\IId)}.
$$
Therefore,  from Theorem \ref{thm:int-general} and  \eqref{OptInt-Smolyak} we can see that the  Smolyak quadrature ${Q_{{\rm S},m}}$  can be used for  assembling  a  quadrature  
	${Q_{{\rm S},n}^\mu}$ of the form \eqref{Q_n^gamma2} with ``double" sparse integration nodes which gives the convergence rate
\begin{equation*}
	\bigg|\int_{\RRd}f(\bx) \mu(\rd\bx) - {Q_{{\rm S},n}^\mu}(f)\bigg| 
	\ll
	n^{-r} (\log n)^{(d-1)(r + 1/2)}, \ \ 	f \in \BWpgamma.
\end{equation*}

We  illustrate the integration nodes of the quadratures constructed in this subsection, in comparison with the integration nodes used in \cite{DILP18}. Assume that $\{\bx_1,\ldots,\bx_n\}$ are the integration nodes for  an optimal quadrature $Q_n$  for functions in ${W}^r_p(\II^2)$. Then the integration nodes in \cite{DILP18} are just a dilation of these nodes to the cube $[-C\sqrt{\log n}, C\sqrt{\log n}]^2$. Hence these nodes are distributed similarly on this cube. Differently,  the integration nodes in our construction  are  formed from certain integer-shifted dilations of $\{\bx_1,\ldots,\bx_m\}$ and contained in the ball of radius $C\sqrt{\log n}$. These nodes are dense when they are near the origin and getting sparser as they are farther from the origin.  The illustration is given in Figure \ref{Fig-1}.
\begin{figure}
\begin{tabular}{cc}
	\includegraphics[height=7.5cm]{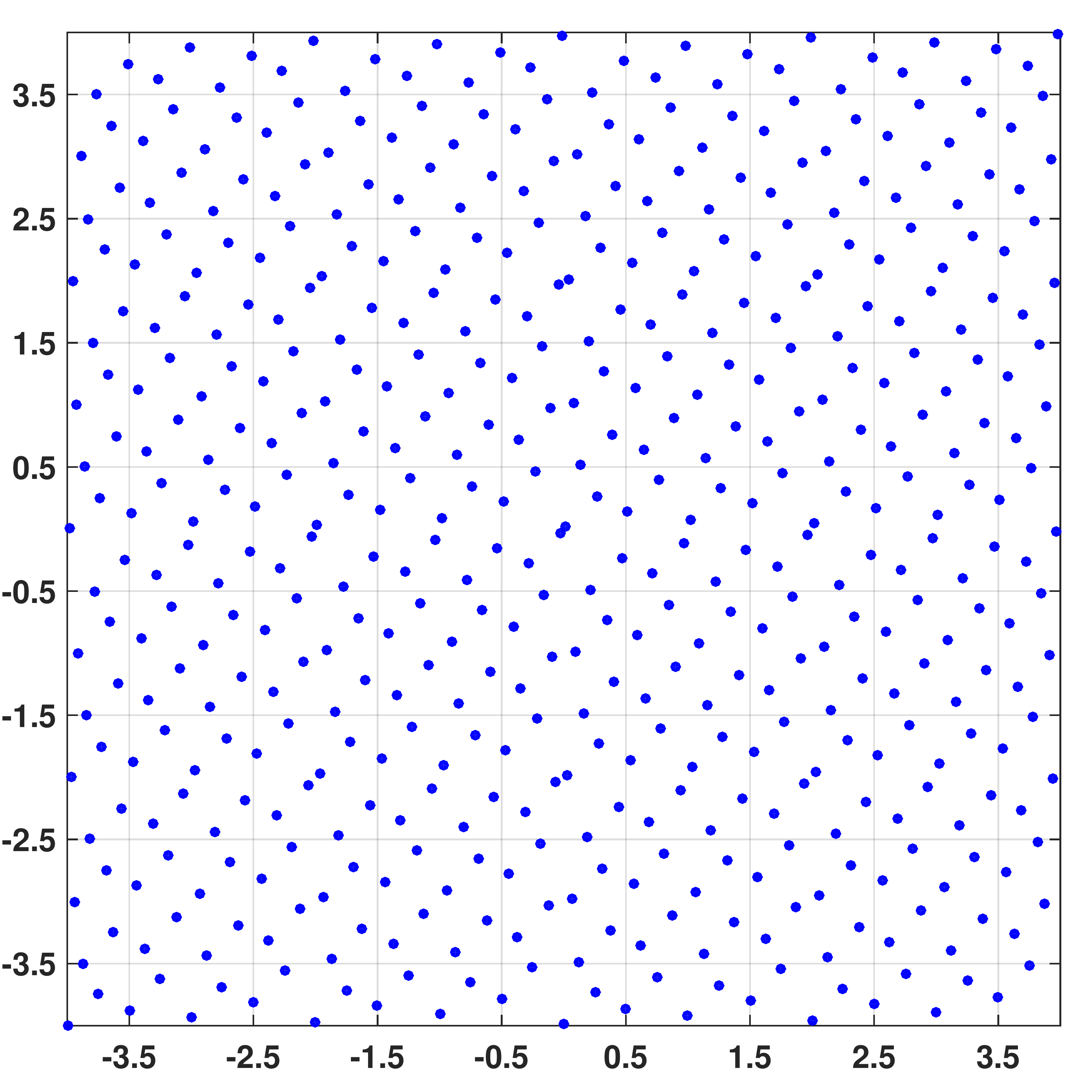}	 &  \includegraphics[height=7.5cm]{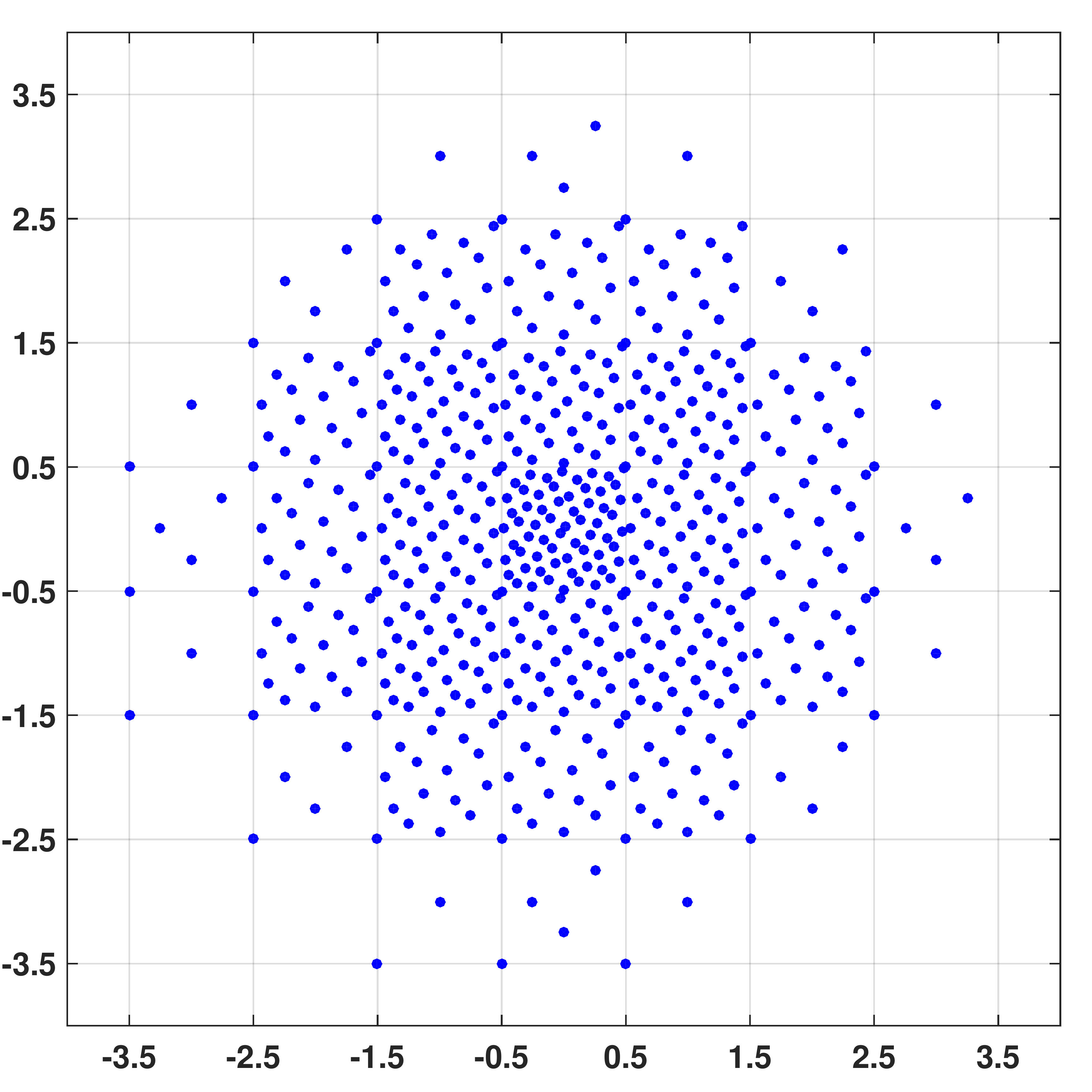}
	\\
	{Point set of Dick et al. \cite{DILP18} (512 points)} & {Point set of our construction (560 points)}
\end{tabular}
\caption{Pictures of integration nodes  from \cite{DK2023}}
\label{Fig-1}
\end{figure}

	\section{Numerical weighted integration in the space $W^r_1(\mathbb{R}^d;\mu)$}
		\label{Numerical weighted integration 1}
	\subsection{Introducing remarks}
\label{Introducing remarks sec 5}
	In this section, we present 
	some recent results of \cite{DD2023} on numerical weighted integration over $\mathbb{R}^d$ for functions from weighted  Sobolev spaces  $W^r_1(\mathbb{R}^d;\mu)$ of  mixed smoothness $r \in \mathbb{N}$, in particular, 
	 upper and lower bounds of the quantity of optimal quadrature of the functions class $W^r_1(\mathbb{R}^d;\mu)$. Here the measure $\mu$ is defined  by the Freud-type weight as the density function given by \eqref{w(bx)}.  
	We first briefly describe these results and then give comments on related works.
	
	 Throughout this section, for $\lambda >1$, we make use of  the notation
	$$
	r_\lambda:= (1 - 1/\lambda)r.
	$$ 
	For  the Sobolev class $\bW^r_1(\mathbb{R}^d;\mu)$, there hold the upper and lower bounds 
	\begin{equation}\label{Int_n(W^r_1)}
	n^{-r_\lambda} (\log n)^{r_\lambda(d-1)} 
	\ll	
	\Int_n(\bW^r_1(\mathbb{R}^d;\mu)) 
	\ll 
	n^{-r_\lambda} (\log n)^{(r_\lambda + 1)(d-1)},
	\end{equation}
	in particular, in the case of Gaussian measure
	\begin{equation}\label{Int_n(W^r_1)-tau=2}
	n^{-r/2} (\log n)^{r(d-1)/2} 
	\ll	
	\Int_n(\bW^r_1(\RRd; \gamma)) 
	\ll 
	n^{-r/2} (\log n)^{(r/2 + 1)(d-1)}.
	\end{equation}
	In  the one-dimensional  case, we have the right asymptotic order
	\begin{equation*}\label{Int_n(W)-introduction-d=1}
	\Int_n(\bW^r_1(\mathbb{R};\mu)) 
	\asymp 
	n^{-r_\lambda}.
	\end{equation*}
	The difference between the upper and lower bounds in \eqref{Int_n(W^r_1)} is the logarithmic factor 
	$(\log n)^{d-1}$.
	
	In Theorem \ref{thm:main}, for $1<p<\infty$, we have constructed  an  asymptotically optimal quadrature  $Q_n^\mu$ of the form \eqref{Q_nf-introduction} which  gives the right asymptotic order  
	\begin{equation} 	\label{w-quadrature-bounds}
	\sup_{f\in \bW^r_p(\mathbb{R}^d;\mu)} \bigg|\int_{\RRd}f(\bx) \mu(\rd\bx) - Q_n^\mu f\bigg| 
	\asymp
	\Int_n\big(\bW^r_p(\mathbb{R}^d;\mu)) \big) 
	\asymp
	n^{-r} (\log n)^{(d-1)/2}.
	\end{equation}
	The results \eqref{Int_n(W^r_1)-tau=2}  and \eqref{w-quadrature-bounds} show a substantial difference of the convergence rates between the cases $p=1$ and $1 < p < \infty$.
	In constructing the asymptotically optimal quadrature $Q_n^\mu$ in \eqref{w-quadrature-bounds}, we used a technique assembling a quadrature for the Sobolev spaces on the unit $d$-cube to the  integer-shifted $d$-cubes. Unfortunately, this technique is not suitable to constructing a quadrature   realizing the upper bound in \eqref{Int_n(W^r_1)} for the space  
	$W^r_1(\mathbb{R}^d;\mu)$ which is the largest among the spaces $W^r_p(\mathbb{R}^d;\mu)$ with $1\le p < \infty$. It requires  a different technique based on Smolyak algorithms \cite{Smo63}. Such a quadrature relies on  sparse grids of  integration nodes which are step hyperbolic crosses in the function domain $\RRd$,  and some generalization of the results on univariate numerical integration by truncated Gaussian quadratures from \cite{DM2003}. To prove the lower bound in \eqref{Int_n(W^r_1)} we adopt a traditional technique to construct for arbitrary $n$ integration nodes a fooling function vanishing at these nodes. 
	
	It is interesting to compare the results  \eqref{Int_n(W^r_1)-tau=2}  and \eqref{w-quadrature-bounds} on	$\Int_n\big(\bW^r_p(\mathbb{R}^d;\mu)\big)$ with known results  on
	$\Int_n\big(\bW^r_p(\IId)\big)$ for the unweighted Sobolev space $W^r_p(\IId)$ of mixed smoothness $r$. For  $1<p<\infty$, there holds the asymptotic order
	\begin{equation*}\label{Int_n-p>1-unweighted}
	\Int_n\big(\bW^r_p(\IId)\big) 
	\asymp 
	n^{-r} (\log n)^{(d-1)/2},
	\end{equation*}
	and for $p=1$ and $r > 1$, there hold the  bounds
	\begin{equation*}\label{Int_n(W)IId}
	n^{-r} (\log n)^{(d-1)/2} 
	\ll	
	\Int_n(\bW^r_1(\IId)) 
	\ll 
	n^{-r} (\log n)^{d-1}
	\end{equation*}
	which are so far  the best known (see, e.g., \cite[Chapter 8]{DTU18B}, for detail).	
	Hence we can see that $\Int_n\big(\bW^r_p(\mathbb{R}^d;\mu)\big)$ and $\Int_n\big(\bW^r_p(\IId)\big)$ have the same asymptotic order  in the case 	$1<p<\infty$, and very different lower and upper bounds in both power and   logarithmic terms in the case $p=1$.
	The right asymptotic orders of the both $\Int_n\big(\bW^r_1(\IId)\big)$ and 
	$\Int_n\big(\bW^r_1(\mathbb{R}^d;\mu)\big)$ are still open problems (cf. \cite[Open Problem 1.9]{DTU18B}).
	
	The problem of numerical integration considered in this section is  related to  the research direction of optimal approximation and integration for functions having mixed smoothness on one hand, and the other research  direction of univariate weighted polynomial approximation and integration on $\RR$, on the other hand.  For survey and bibliography, we refer the reader to the books  \cite{DTU18B,Tem18B} on the first direction, and \cite{Mha1996B,Lu07B,JMN2021} on the other  hand.

	\subsection{Univariate numerical integration}
\label{Univariate integration}

In this subsection, for one-dimensional numerical integration, we present the right asymptotic order of  the quantity of optimal quadrature $\Int_n\big(\bW^r_1(\mathbb{R};\mu)\big)$ and its shortened proof  from \cite{DD2023}.

The following lemma which is implied directly from the definition \eqref{Int_n}  is quite useful  for lower estimation of $\Int_n(\bW)$.
 
\begin{lemma} \label{lemma:Int_n>}
Let $\bW$ be a set of continuous functions on $\RRd$. 	Then we have
\begin{equation} \label{Int_n>}
	\Int_n(\bW) \ge \inf_{\brab{\bx_1,...,\bx_n} \subset \RRd} \ \sup_{f\in \bW: \ f(\bx_i)= 0,\ i =1,...,n}\bigg|\int_{\RRd} f(\bx) \, \mu(\rd\bx)\bigg|.
\end{equation}
\end{lemma}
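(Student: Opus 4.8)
The plan is to prove \eqref{Int_n>} by the classical ``fooling function'' argument: any quadrature is blind to a function that vanishes at all of its integration nodes, so the worst-case error of that quadrature is at least the integral of such a function. First I would fix an arbitrary quadrature $Q_n \in \Qq_n$, say
\[
Q_n(f) = \sum_{i=1}^k \lambda_i f(\bx_i), \qquad k \le n,
\]
with integration nodes $\brab{\bx_1,\dots,\bx_k} \subset \RRd$ (recalling the convention that some nodes may coincide and that $Q_0 \equiv 0$).

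The key observation is that if $f \in \bW$ satisfies $f(\bx_i) = 0$ for every $i = 1,\dots,k$, then $Q_n(f) = 0$, and hence the error of $Q_n$ on such an $f$ equals $\bigl|\int_{\RRd} f(\bx)\,\mu(\rd\bx)\bigr|$ exactly. Restricting the supremum in \eqref{Int_n} over all $f \in \bW$ to the smaller class of those $f$ that vanish at the nodes therefore gives
\[
\sup_{f\in \bW} \Bigl|\int_{\RRd} f(\bx)\,\mu(\rd\bx) - Q_n(f)\Bigr|
\ \ge\
\sup_{f\in \bW:\, f(\bx_i)=0,\, i=1,\dots,k} \Bigl|\int_{\RRd} f(\bx)\,\mu(\rd\bx)\Bigr|.
\]

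To finish, I would pass from the $k$ nodes actually used by $Q_n$ to a configuration of exactly $n$ nodes as appearing on the right-hand side of \eqref{Int_n>}. Since $k \le n$, I can augment $\brab{\bx_1,\dots,\bx_k}$ by $n-k$ arbitrary extra points; the constraint that $f$ vanishes at all $n$ of them is only more restrictive, so the inner supremum over this smaller family of fooling functions can only decrease, and it is in turn bounded below by the infimum over all $n$-point configurations. Chaining these inequalities and then taking the infimum over $Q_n \in \Qq_n$ yields \eqref{Int_n>}. The argument is essentially definitional; the only point demanding care is this bookkeeping between the $k \le n$ nodes of a given quadrature and the fixed $n$ points in the infimum on the right, together with the (harmless) fact that $\bW$ may contain no nonzero fooling function for some node configuration, in which case the corresponding inner supremum is simply $0$ and the bound holds trivially.
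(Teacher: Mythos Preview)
Your proof is correct and is exactly the standard fooling-function argument the paper has in mind; indeed, the paper does not give a separate proof but simply states that the lemma ``is implied directly from the definition \eqref{Int_n}.'' Your write-up spells out precisely this implication, including the minor bookkeeping of padding $k\le n$ nodes to $n$ nodes.
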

	
We now consider the problem of approximation of  integral \eqref{If} for univariate functions from 
$W^r_1(\RR;\mu)$. Let $(p_m(w))_{m \in \NN_0}$ be the sequence of orthonormal polynomials with respect to the weight $w$. In the classical quadrature theory, a possible choice of integration nodes is to take the zeros of the polynomials $p_m(w)$.
Denote by $x_{m,k}$, $1 \le k \le \lfloor m/2 \rfloor$ the positive zeros of 	$p_m(w)$, and by $x_{m,-k} = - x_{m,k}$ the negative ones (if $m$ is odd, then $x_{m,0}= 0$ is also a zero of $p_m(w)$). These zeros are located as 
\begin{equation}\label{zeros-location}
-a_m + \frac{Ca_m}{m^{2/3}} < x_{m,- \lfloor m/2 \rfloor} < \cdots 
< x_{m,-1} < x_{m,1} < \cdots <  x_{m,\lfloor m/2 \rfloor} \le a_m - \frac{Ca_m}{m^{2/3}}, 
\end{equation}
with a positive constant $C$ independent of $m$ (see, e. g., \cite[(4.1.32)]{JMN2021}). Here $a_m$ is the Mhaskar-Rakhmanov-Saff number which is
\begin{equation}\label{a_m}
	a_m = a_m(w)= (\gamma_\lambda m)^{1/\lambda}, \ \ \gamma_\lambda:= \frac{2 \Gamma((1+\lambda)/2)}{\sqrt{\pi} \Gamma(\lambda/2)},
\end{equation}
and $\Gamma$ is the gamma function. Notice that the formula \eqref{a_m} is given in \cite[(4.1.4)]{JMN2021} for the particular case $a=1, \ b=0$ of the weight $w$, but inspecting the definition of Mhaskar-Rakhmanov-Saff number (see, e.g., \cite[Page 116]{JMN2021}), one easily verify that it still holds true for the general weight $w$ for any $a >0$ and  $b\in \RR$.

 For a continuous function on $\RR$, the classical Gaussian quadrature is defined as
\begin{equation} \label{G-Q_mf}
	Q^{\rm G}_mf: = \sum_{|k| \le m/2 } \lambda_{m,k}(w) f(x_{m,k}), 
\end{equation} 
where $\lambda_{m,k}(w)$ are the corresponding Cotes numbers. This quadrature is based on Lagrange interpolation (for details, see, e.g., \cite[1.2. Interpolation and quadrature]{Mha1996B}). Unfortunately, it does not give the optimal convergence rate for functions from $\bW^r_1(\mathbb{R};\mu)$, see comments in Remark \ref{Comment on G-quadrature} below. 

In \cite{DM2003}, for the weight $w(x)$ with $a=1, \ b=0$, the authors proposed truncated Gaussian quadratures which not only improve the convergence rate but also give the asymptotic order of  $\Int_n\big(\bW^r_1(\mathbb{R};\mu)\big)$ as shown in Theorem \ref{thm:Q_n-d=1} below. Let us introduce  in the same manner truncated Gaussian quadratures for the weight 
$w(x)$ with any $a >0$ and  $b\in \RR$.

In what follows, we fix a number $\theta$ with $0 < \theta < 1$, and denote by $j(m)$ the smallest integer satisfying $x_{m,j(m)} \ge \theta a_m$. 
It is useful to remark that
\begin{equation*} \label{x_k+1 - x_k}
d_{m,k} \, \asymp  \, \frac{a_m}{m} \asymp m^{1/\lambda -1}, \ \ |k| \le j(m); \quad x_{m,j(m)} \, \asymp \, m^{1/\lambda},
\end{equation*} 
where $d_{m,k}:= x_{m,k} - x_{m,k-1}$ is the distance between consecutive zeros of  the polynomial $p_m(w)$. These relations were proven  in \cite[(13)]{DM2003} for the case particular $w(x) = e^{-|x|^\lambda}$. From their proofs there, one can easily see that they are still hold true for the general case of the weight $w$. Hence by the definitions and \eqref{zeros-location}, for $m$ sufficiently large we have that 
\begin{equation} \label{<j(m)<}
Cm \le j(m) \le m/2
\end{equation} 
with a positive constant $C$ depending on $\lambda, a, b$ and $\theta$ only. 

For a continuous function on $\RR$, consider 
the truncated Gaussian quadrature 
\begin{equation} \label{G-Q_mf-TG}
	Q^{\rm{TG}}_{2 j(m)}f: = \sum_{|k| \le j(m)} \lambda_{m,k}(w) f(x_{m,k}). 
\end{equation} 
Notice that the number $2j(m)$ of samples in the quadrature $Q^{\rm{TG}}_{2 j(m)}f$ is strictly smalller than $m$ -- the number of samples in the quadrature  $Q^{\rm G}_mf$.  However, due to \eqref{<j(m)<} it has the same asymptotic order as $2 j(m)\asymp m$ when $m$ going to infinity.

\begin{theorem} \label{thm:Q_n-d=1}
	For any $n \in \NN$, let $m_n$ be the largest integer such that $2 j(m_n) \le n$. Then the family of quadratures 	$\big(Q^{\rm{TG}}_{2 j(m_n)}\big)_{n \in \RR_1}$ is  asymptotically optimal  for $\bW^r_1(\mathbb{R};\mu)$, and
	\begin{equation}\label{Q_n-d=1}
		\sup_{f\in \bW^r_1(\mathbb{R};\mu)} \bigg|\int_{\RR}f(x) \mu(\rd x) - Q^{\rm{TG}}_{2 j(m_n)}f\bigg| 
		\asymp
		\Int_n\big(\bW^r_1(\mathbb{R};\mu)\big) 
		\asymp 
		n^{- r_\lambda}.
	\end{equation}
\end{theorem}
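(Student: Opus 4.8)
The plan is to prove the two asymptotic bounds separately and then close the chain of $\asymp$ in \eqref{Q_n-d=1} by a triviality. Since the truncated Gaussian quadrature $Q^{\rm TG}_{2 j(m_n)}$ uses $2j(m_n)\le n$ nodes, it is an admissible quadrature in $\Qq_n$, so $\Int_n(\bW^r_1(\RR;\mu)) \le \sup_{f\in\bW^r_1(\RR;\mu)}\big|\int_\RR f\,\mu(\rd x) - Q^{\rm TG}_{2 j(m_n)}f\big|$. Hence it suffices to establish the upper bound $\sup_f|\cdots| \ll n^{-r_\lambda}$ for this particular family together with the matching lower bound $\Int_n \gg n^{-r_\lambda}$; combined with $2j(m)\asymp m$ and $m_n\asymp n$ (both from \eqref{<j(m)<}) these force all three quantities in \eqref{Q_n-d=1} to be $\asymp n^{-r_\lambda}$.

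For the lower bound I would apply Lemma~\ref{lemma:Int_n>}: given arbitrary nodes $\{x_1,\dots,x_n\}$, I construct a nonnegative fooling function $f\in\bW^r_1(\RR;\mu)$ vanishing at all $x_i$ with $\int_\RR f\,\mu(\rd x)\gg n^{-r_\lambda}$. The correct scale is dictated by the Mhaskar--Rakhmanov--Saff number: near $x\asymp a_n\asymp n^{1/\lambda}$ the natural mesh is $a_n/n\asymp n^{1/\lambda-1}$, and over an interval of this width centered there the weight $w$ varies only by a bounded factor, since $(x+h)^\lambda-x^\lambda\asymp \lambda x^{\lambda-1}h\asymp 1$ when $x\asymp a_n$ and $h\asymp n^{1/\lambda-1}$. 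Partitioning a region of length $\asymp a_n$ around $a_n$ into $2n$ cells of width $h\asymp n^{1/\lambda-1}$, at least $n$ cells are node-free; placing a single scaled bump $f$ of width $h$ and height $H$ in one such cell gives $\int_\RR f\,\mu(\rd x)\asymp Hh\,w(a_n)$, while $\|f\|_{W^r_1(\RR;\mu)}\asymp (H/h^{r-1})w(a_n)$ with the top-order derivative dominating for $h\le 1$. Normalizing $\|f\|_{W^r_1(\RR;\mu)}=1$ cancels the (exponentially small) factor $w(a_n)$ and leaves $\int_\RR f\,\mu(\rd x)\asymp h^r\asymp n^{-r_\lambda}$, as required.

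For the upper bound I would follow and generalize \cite{DM2003}. First invoke a weighted Jackson theorem for the Freud-type weight $w=w_{\lambda,a,b}$: for $f\in W^r_1(\RR;\mu)$ there is a polynomial $P$ of degree $<2m$ with $\|(f-P)w\|_{L_1(\RR)}\ll (a_m/m)^r\|f\|_{W^r_1(\RR;\mu)}\asymp m^{-r_\lambda}\|f\|_{W^r_1(\RR;\mu)}$. Writing
\[
\int_\RR f\,\mu(\rd x) - Q^{\rm TG}_{2 j(m)}f
= \int_\RR (f-P)w\,\rd x + \Big(\int_\RR Pw\,\rd x - Q^{\rm G}_m P\Big) + \big(Q^{\rm G}_m P - Q^{\rm TG}_{2 j(m)}P\big) + Q^{\rm TG}_{2 j(m)}(P-f),
\]
the second term vanishes by exactness of the $m$-point Gaussian rule on polynomials of degree $\le 2m-1$, and the first is $\ll m^{-r_\lambda}\|f\|_{W^r_1(\RR;\mu)}$ by Jackson. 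The third term is the discarded tail $\sum_{j(m)<|k|\le m/2}\lambda_{m,k}(w)P(x_{m,k})$, supported on $|x_{m,k}|>\theta a_m$; using positivity of the Cotes numbers together with infinite--finite range (Nikol'skii-type) inequalities for weighted polynomials it is shown to be of the required order, which is precisely the reason for passing from $Q^{\rm G}_m$ to $Q^{\rm TG}_{2 j(m)}$. The fourth term is controlled by a Marcinkiewicz--Zygmund-type inequality on the inner nodes, $\sum_{|k|\le j(m)}\lambda_{m,k}(w)|(f-P)(x_{m,k})|\ll \|(f-P)w\|_{L_1(\RR)}$ (after a standard local pointwise-to-average reduction using $W^r_1$-smoothness), and is therefore again $\ll m^{-r_\lambda}\|f\|_{W^r_1(\RR;\mu)}$. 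Summing the four contributions and using $m_n\asymp n$ yields the upper bound.

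I expect the main obstacle to be the upper bound, specifically the truncation tail estimate for the third term and the Marcinkiewicz--Zygmund control of the fourth: both rest on delicate properties of the orthonormal polynomials $p_m(w)$, the Cotes numbers $\lambda_{m,k}(w)$, and range inequalities for the Freud-type weight, which in \cite{DM2003} are stated only for $a=1,\ b=0$ and must be verified to persist for general $a>0$, $b\in\RR$. The lower bound, by contrast, is a soft bump-function argument once the weight-adapted scale $h\asymp n^{1/\lambda-1}$ near $a_n$ has been identified.
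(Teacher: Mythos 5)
Your proposal is correct and follows essentially the same route as the paper: the lower bound is the identical fooling-function argument via Lemma~\ref{lemma:Int_n>}, with a smooth bump of width $\delta=n^{1/\lambda-1}$ placed in a node-free interval near the Mhaskar--Rakhmanov--Saff scale $n^{1/\lambda}$ (the paper realizes it as a normalized multiple of $\varphi(\delta^{-1}(\cdot-t_{i-1}))\,w^{-1}$, which amounts to the same thing as your height-rescaled bump since $w$ is essentially constant on an interval of that width at that location). For the upper bound the paper simply invokes the error estimate \eqref{DellaVecchia2003} for the truncated Gaussian rule (proved in \cite{DM2003} for $a=1$, $b=0$ and observed to persist for general $a>0$, $b\in\RR$), which is precisely the inequality that your Jackson/exactness/truncation/Marcinkiewicz--Zygmund decomposition is designed to establish.
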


\begin{proof} 
For $f \in W^r_1(\RR;\mu)$, there holds the inequality
	\begin{equation}\label{DellaVecchia2003}
	\bigg|\int_{\RR}f(x) \mu(\rd x) - Q^{\rm{TG}}_{2 j(m)}f\bigg| 
	\le 
C\brac{m^{-(1 - 1/\lambda)r} \norm{f^{(r)}}{L_1(\RR;\mu)} + e^{-Km}\norm{f}{L_1(\RR;\mu)} }
\end{equation}
with some constants $C$ and $K$ independent of $m$ and $f$.  The inequality \eqref{DellaVecchia2003} implies the upper bound in \eqref{Q_n-d=1}:
	\begin{equation*}\label{Q_n-d=1(2)}
\Int_n\big(\bW^r_1(\mathbb{R};\mu)\big)  \le 
	\sup_{f\in \bW^r_1(\mathbb{R};\mu)} \bigg|\int_{\RR}f(x) \mu(\rd x) - Q^{\rm{TG}}_{2 j(m_n)}f\bigg| 
	\ll
	n^{- r_\lambda}.
\end{equation*}

 In order to prove the lower bound in \eqref{Q_n-d=1} we  apply Lemma \ref{lemma:Int_n>}. Let $\brab{\xi_1,...,\xi_n} \subset \RR$ be arbitrary $n$ points. For a given $n \in \NN$, we put $\delta = n^{1/\lambda - 1}$ and $t_j = \delta j$, $j \in \NN_0$. Then there is $i \in \NN$ with 
$n + 1 \le  i \le 2n + 2$ such that the interval $(t_{i-1}, t_i)$ does not contain any point from the set $\brab{\xi_1,...,\xi_n}$.
Take a nonnegative function $\varphi \in C^\infty_0([0,1])$, $\varphi \not= 0$, and put
	\begin{equation*}\label{b_s}
b_0:= \int_0^1 \varphi(y) \rd y > 0, \quad b_s :=  \int_0^1 |\varphi^{(s)}(y)| \rd y, \ s = 1,...,r.
\end{equation*}		
Define the functions $g$ and $h$ on $\RR$ by
\begin{equation*}\label{g}
g(x):= 
\begin{cases}
\varphi(\delta^{-1}(x - t_{i-1})), & \ \ x \in (t_{i-1}, t_i), \\
0, & \ \ \text{otherwise},	
\end{cases}	
\end{equation*}
and
	\begin{equation*}\label{h}
		h(x):= (gw^{-1})(x).		
	\end{equation*}
By a direct computation we find that 
	\begin{equation*}\label{int-h^(s)w}
		\begin{aligned}
\norm{h}{W^r_1(\RR;\mu)}
	\le C n^{(1-1/\lambda)(k-1)} \le  C n^{(1-1/\lambda)(r-1)}.		
	\end{aligned}
\end{equation*}
If we define 
	\begin{equation*}\label{h-bar}
	\bar{h}:=  C^{-1} n^{-(1-1/\lambda)(r-1)}h,
\end{equation*}
then $\bar{h}$ is nonnegative, 
$\bar{h} \in \bW^r_1(\mathbb{R};\mu)$, $\sup \bar{h} \subset (t_{i-1},t_i)$ and
	\begin{equation*}\label{int-h^(s)w2}
	\begin{aligned}	
		\int_{\RR}(\bar{h}w)(x) \rd x 
 \gg n^{-(1-1/\lambda)r}
	\end{aligned}
\end{equation*}
 Since the interval $(t_{i-1}, t_i)$ does not contain any point from the set $\brab{\xi_1,...,\xi_n}$, we have $\bar{h}(\xi_k) = 0$, $k = 1,...,n$. Hence, by Lemma \ref{lemma:Int_n>},
	\begin{equation}\nonumber
		\Int_n\big(\bW^r_1(\mathbb{R};\mu)\big) 	\ge  \int_{\RR}\bar{h}(x) w(x) \rd x
	 \gg  n^{- r_\lambda}.
\end{equation}
	\hfill
\end{proof}

\begin{remark} \label{Comment on G-quadrature}
{\rm	
In the case of the Gaussian measure $\gamma$, the truncated Gaussian quadratures $Q^{\rm{TG}}_{2 j(m)}$ in Theorem \ref{thm:Q_n-d=1} give
\begin{equation}\label{Q_n-d=1-tau=2}
		\sup_{f\in \bW^1_1(\RR;\mu)} \bigg|\int_{\RR}f(x) \gamma(\rd x) - Q^{\rm{TG}}_{2 j(m_n)}f\bigg| 
\asymp
	\Int_n\big(\bW^1_1(\RR;\gamma)\big) 
	\asymp 
	n^{-r/2}.
\end{equation}
On the other hand, for the full Gaussian quadratures $Q^{\rm G}_n$,  it has been proven in \cite[Proposition 1]{DM2003}	the right asymptotic order
\begin{equation}\nonumber
	\sup_{f\in \bW^1_1(\RR;\gamma)}	\bigg|\int_{\RR}f(x) \gamma(\rd x)  - Q^{\rm G}_nf\bigg|
	\asymp
	n^{-1/6}
\end{equation}
which is much worse than the right asymptotic order of 
$	\Int_n\big(\bW^1_1(\RR;\gamma)\big) \asymp 	n^{-1/2}$ as 
in \eqref{Q_n-d=1-tau=2} for $r = 1$.
}
\end{remark}


	\subsection{Multivariate numerical  integration}
\label{Multivariate integration}

In this section, for multivariate numerical integration ($d \ge 2$), we present some results and their shortened proofs from \cite{DD2023} on upper and lower bounds of  the quantity of optimal quadrature $\Int_n\big(\bW^r_1(\mathbb{R}^d;\mu)\big)$ and  a construction of  quadratures based on  step-hyperbolic-cross grids of integration nodes which give the upper bounds. 

We need some auxiliary lemmata. 

For $\bx \in \RRd$ and $e \subset \brab{1,...,d}$, let $\bx^e \in \RRd$ be defined by $x^e_i := x_i$, $i \in e$, and $x^e_i := 0$ otherwise, and put $\bar{\bx}^e:= \bx - \bx^e$. With an abuse we write 
$(\bx^e,\bar{\bx}^e) = \bx$.

\begin{lemma} \label{lemma:g(bx^e}
	Let $1\le p \le \infty$,  $e \subset \brab{1,...,d}$ and $\br \in \ZZdp$. Assume that $f$ is a function on $\RRd$  such that for every $\bk \le \br$, $D^\bk f \in L_p(\RRd;\mu)$. 
	Put for  $\bk \le \br$ and $\bar{\bx}^e \in \RR^{d-|e|}$,
	\begin{equation*}\label{g(bx^e}
	g(\bx^e): =  D^{\bar{\bk}^e} f(\bx^e,\bar{\bx}^e).
	\end{equation*}
Then $D^\bs g \in L_p(\RR^{|e|};\mu)$ for every $\bs \le \bk^{e}$ and almost every 
$\bar{\bx}^e \in \RR^{d-|e|}$.
\end{lemma}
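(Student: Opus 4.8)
The plan is to combine two standard ingredients: the tensor-product structure of the weight, which lets me slice the weighted $L_p$ norm via Fubini, and the classical fact that weak (mixed) differentiation commutes with restriction to almost every slice.

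First I would record the elementary multi-index identity that drives everything. Since $\bs \le \bk^e$ is supported on $e$ while $\bar{\bk}^e = \bk - \bk^e$ is supported on the complement of $e$, the multi-indices $\bs$ and $\bar{\bk}^e$ have disjoint supports, so $D^\bs D^{\bar{\bk}^e} = D^{\bs + \bar{\bk}^e}$ and
\[
\bs + \bar{\bk}^e \le \bk^e + \bar{\bk}^e = \bk \le \br .
\]
Consequently $D^{\bs + \bar{\bk}^e} f \in L_p(\RRd;\mu)$ by hypothesis. Formally $D^\bs g$ is the restriction to the slice (with $\bar{\bx}^e$ held fixed) of $D^{\bs + \bar{\bk}^e} f$; this is what I must justify and then show lies in $L_p(\RR^{|e|};\mu)$.

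Next I would invoke the Sobolev slicing (restriction-to-slices) theorem. Because the weight $w$ is positive and $C^\infty$, on every bounded cube the weighted space coincides with the unweighted one and the weak derivatives meant in the definition of $W^r_p(\RRd;\mu)$ are the usual distributional derivatives; thus the hypothesis says $f$ has all mixed distributional derivatives $D^\bk f$, $\bk \le \br$, lying locally in $L_p$. The classical restriction property, applied direction by direction over the indices of $e$ on bounded cubes and then exhausting $\RR^{|e|}$, gives that for almost every $\bar{\bx}^e$ the slice $g(\cdot) = (D^{\bar{\bk}^e} f)(\cdot,\bar{\bx}^e)$ has weak $e$-derivatives $D^\bs g$ for every $\bs \le \bk^e$, and that these coincide with the slices $(D^{\bs + \bar{\bk}^e} f)(\cdot,\bar{\bx}^e)$.

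Finally I would run the Fubini estimate. For $1 \le p < \infty$ the factorization $w(\bx) = \prod_{i \in e} w(x_i) \prod_{i \notin e} w(x_i)$ yields
\[
\int_{\RRd} |D^{\bs + \bar{\bk}^e} f(\bx)|^p w(\bx)\, \rd\bx
= \int_{\RR^{d-|e|}} \Big( \int_{\RR^{|e|}} |D^{\bs + \bar{\bk}^e} f(\bx^e,\bar{\bx}^e)|^p \prod_{i \in e} w(x_i)\, \rd\bx^e \Big) \prod_{i \notin e} w(x_i)\, \rd\bar{\bx}^e .
\]
The left-hand side is finite, so by Tonelli the inner integral, which equals $\|D^\bs g\|_{L_p(\RR^{|e|};\mu)}^p$, is finite for almost every $\bar{\bx}^e$ (Lebesgue-a.e., since $w > 0$). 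The case $p = \infty$ is handled identically with the essential supremum replacing the integral. Intersecting over the finitely many $\bs \le \bk^e$ the resulting full-measure sets produces one full-measure set of $\bar{\bx}^e$ on which $D^\bs g \in L_p(\RR^{|e|};\mu)$ for every $\bs \le \bk^e$, which is the assertion. The step I expect to be the main obstacle is the second one: making the commutation of weak differentiation with slicing fully rigorous in the mixed-smoothness setting. The cleanest route is to reduce to the one-dimensional absolutely-continuous-on-lines characterization and iterate over the directions in $e$, checking that the null sets accumulated at each step remain null after intersection; the smoothness and positivity of $w$ guarantee that passing between the weighted and unweighted notions introduces no difficulty.
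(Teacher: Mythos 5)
The paper states this lemma without proof --- it is one of the auxiliary results quoted from \cite{DD2023} --- so there is no in-paper argument to compare yours against, and I can only assess correctness. Your proof is correct, and it is the natural one. All three ingredients are sound: (1) since $\bs$ and $\bar{\bk}^e$ have disjoint supports, $D^\bs D^{\bar{\bk}^e} f = D^{\bs+\bar{\bk}^e} f$ in the weak sense (a two-line integration-by-parts computation against test functions on $\RRd$), and $\bs + \bar{\bk}^e \le \bk \le \br$ places this derivative in $L_p(\RRd;\mu)$ by hypothesis; (2) weak derivatives restrict to almost every slice; (3) Tonelli applied to the tensor-product factorization of $w$, together with positivity of $w$ to identify $\mu$-null and Lebesgue-null sets, gives finiteness of the sliced norm almost everywhere, and intersecting the finitely many null sets (one per $\bs \le \bk^e$) yields the stated quantifier order. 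The only step that genuinely needs care is (2), which you correctly flag; there the cleanest rigorous route is not the ACL iteration over directions that you sketch, but product test functions: for fixed $\bs$ and $\phi \in C_c^\infty(\RR^{|e|})$, test the identity $D^\bs\bigl(D^{\bar{\bk}^e}f\bigr) = D^{\bs+\bar{\bk}^e}f$ against $\phi \otimes \psi$ for all $\psi \in C_c^\infty(\RR^{d-|e|})$ to conclude that the integration-by-parts identity in the $\bx^e$ variables holds for almost every $\bar{\bx}^e$; then run this over a countable family of $\phi$'s dense in the relevant $C^m$ sense and over the finitely many $\bs$, obtaining a single full-measure set of good $\bar{\bx}^e$. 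This sidesteps the null-set bookkeeping that iterating one-dimensional restrictions would require. With that substitution (or with your ACL route carried out carefully), the proposal has no gap.
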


	Assume that there exists a sequence of quadratures $\brac{Q_{2^k}}_{k \in \NN_0}$ with
\begin{equation}\label{Q_2^kf}
	Q_{2^k}f: = \sum_{s=1}^{2^k} \lambda_{k,s} f(x_{k,s}), \ \ \{x_{k,1},\ldots,x_{k,2^k}\}\subset \RR,
\end{equation}
such that 
\begin{equation}\label{IntErrorR}
	\bigg|\int_{\RR} f(x) \mu(\rd x)  - Q_{2^k}f\bigg| 
	\leq C 2^{-\alpha k} \|f\|_{W^r_1(\RR;\mu)}, 
	\ \  \ k \in \NN_0,  \ \ f \in W^r_1(\RR;\mu), 
\end{equation}
for some  number $a>0$ and constant $C>0$. 
Based on a sequence $\brac{Q_{2^k}}_{k \in \NN_0}$ of the form \eqref{Q_2^kf} satisfying \eqref{IntErrorR}, we construct quadratures on $\RRd$ by using the well-known Smolyak algorithm. We define for $k \in \NN_0$, the one-dimensional operators
\begin{equation*}\label{DeltaI}
\Delta_k^Q:= Q_{2^k} - Q_{2^{k-1}}, \  k >0, \ \ \Delta_0^Q:= Q_1,  
\end{equation*}
and 
\begin{equation*}\label{E}
	E_k^Qf:= \int_{\RR} f(x) \mu(\rd x) - Q_{2^k}f.
\end{equation*}
For $\bk \in \NNd$, the $d$-dimensional operators $Q_{2^\bk}$,  $\Delta_\bk^Q$ and $E_\bk^Q$ are defined as the tensor  product of one-dimensional operators:
\begin{equation}\label{tensor-product}
	Q_{2^\bk}:= \bigotimes_{i=1}^d Q_{2^{k_i}}, \ \	
	\Delta_\bk^Q:= \bigotimes_{i=1}^d \Delta_{k_i}^Q, \ \ 	
	E_\bk^Q:= \bigotimes_{i=1}^d E_{k_i}^Q,  
\end{equation}
where $2^\bk:= (2^{k_1},\cdots, 2^{k_d})$ and 
the univariate operators $Q_{2^{k_j}}$, $\Delta_{k_j}^Q$ and $E_{k_j}^Q$ 
 are successively applied to the univariate functions $\bigotimes_{i<j} Q_{2^{k_i}}(f)$, $\bigotimes_{i<j} \Delta_{k_i}^Q(f)$ and $\bigotimes_{i<j} E_{k_i}^Q $, respectively, by considering them  as 
functions of  variable $x_j$ with the other variables held fixed. The operators $Q_{2^\bk}$, $\Delta_\bk^Q$ and $E_\bk^Q$ are well-defined for continuous functions on $\RRd$, in particular for ones from $W^r_1(\RRd;\mu)$.

If $f$ is a continuous function on $\RRd$, then  $Q_{2^\bk}(f)$ is a quadrature on $\RRd$ which is given by
\begin{equation}\label{Q_2^bkf}
	Q_{2^\bk}f = \sum_{\bs=\bone}^{2^\bk} \lambda_{\bk,\bs} f(\bx_{\bk,\bs}), 
	\ \ \{\bx_{\bk,\bs}\}_{\bone \le \bs \le 2^\bk}\subset \RRd,
\end{equation}
where $$
\bx_{\bk,\bs}:= \brac{x_{k_1,s_1},...,x_{k_d,s_d}}, \ \ \ 
\lambda_{\bk,\bs}:= \prod_{i=1}^d \lambda_{k_i,s_i} ,
$$
 and the summation $\sum_{\bs=\bone}^{2^\bk}$ means that the sum is taken over all $\bs$ such that $\bone \le \bs \le 2^\bk$. Hence we derive that 
\begin{equation}\label{Delta_bk}
	\Delta_\bk^Q f =  \sum_{e \subset \brab{1,...,d}} (-1)^{d - |e|}Q_{2^{\bk(e)}} f
	= \sum_{e \subset \brab{1,...,d}} (-1)^{d - |e|}\sum_{\bs=\bone}^{2^{\bk(e)}} \lambda_{\bk(e),\bs} f(\bx_{\bk(e),\bs}), 
\end{equation}
where  $\bk(e) \in \ZZdp$ is defined by $k(e)_i = k_i$, $i \in e$, and 	$k(e)_i = \max(k_i-1,0)$, $i \not\in e$.  We also have
\begin{equation}\label{E_bk}
	E_\bk^Q f =  \sum_{e \subset \brab{1,...,d}} (-1)^{|e|} 
	\int_{\RR^{d - |e|}}Q_{2^{\bk^e}}f(\cdot,\bar{\bx^e} ) w(\bar{\bx}^e) \rd \bar{\bx}^e,  
\end{equation}
where $w(\bar{\bx}^e):= \prod_{j \not\in e} w(x_j)$.

 Notice that as mappings from $C(\RRd)$ to $\RR$, the operators $Q_{2^\bk}$, $\Delta_\bk^Q$ and $E_\bk^Q$  possess commutative and associative properties  with respect to applying the component operators $Q_{2^{k_j}}$, $\Delta_{k_j}^Q$ and $E_{k_j}^Q$.  In particular, we have  for any $e \subset \brab{1,...,d}$,
$$
Q_{2^\bk}f= Q_{2^{\bk^e}}\brac{Q_{2^{\bar{\bk}^e}} f}, \ \ 
\Delta_\bk^Q f= \Delta_{\bk^e}^Q \brac{\Delta_{\bar{\bk}^e}^Q f}, \ \ 
E_\bk^Q f= E_{\bk^e}^Q \brac{E_{\bar{\bk}^e}^Q f},
$$
and for any reordered sequence $\brab{i(1),..., i(d)}$ of $\brab{1,...,d}$, 
\begin{equation}\label{commutative}
	Q_{2^\bk} = \bigotimes_{j=1}^d Q_{2^{k_{i(j)}}}, 
	\ \	\Delta_\bk^Q = \bigotimes_{j=1}^d \Delta_{k_{i(j)}}^Q, \ \ 	
	E_\bk^Q = \bigotimes_{j=1}^d E_{k_{i(j)}}^Q.
\end{equation}
  These properties directly follow from 
\eqref{Q_2^bkf}--\eqref{E_bk}.

\begin{lemma} \label{lemma:E_k}
	Under the assumption \eqref{Q_2^kf}--\eqref{IntErrorR}, we have
	\begin{equation*}\label{E_k}
		\big|E_\bk^Qf\big| 
		\leq C 2^{-\alpha|\bk|_1} \|f\|_{W^r_1(\RRd;\mu)}, 
		\ \  \bk \in \ZZdp, \ \ f \in W^r_1(\RRd;\mu), 
	\end{equation*}
\end{lemma}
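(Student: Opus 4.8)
The plan is to prove the estimate by induction on the dimension $d$, exploiting the tensor-product definition \eqref{tensor-product} together with the commutativity relations \eqref{commutative}. The base case $d=1$ is precisely the hypothesis \eqref{IntErrorR}. For the inductive step I write $\bk = (k_1,\bk')$ with $\bk' = (k_2,\ldots,k_d)$ and factor, using \eqref{commutative}, $E_\bk^Q = E_{\bk'}^Q \circ E_{k_1}^Q$, where $E_{k_1}^Q$ acts on the first variable and turns $f$ into the function $g := E_{k_1}^Q f$ of the remaining $d-1$ variables, while $E_{\bk'}^Q$ is the $(d-1)$-dimensional error operator acting on those variables. Applying the inductive hypothesis to $g$ gives
\[
\big|E_\bk^Q f\big| = \big|E_{\bk'}^Q g\big| \le C\, 2^{-\alpha|\bk'|_1}\, \norm{g}{W^r_1(\RR^{d-1};\mu)},
\]
so the problem reduces to bounding $\norm{E_{k_1}^Q f}{W^r_1(\RR^{d-1};\mu)}$ by $C\, 2^{-\alpha k_1}\norm{f}{W^r_1(\RRd;\mu)}$.

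The key step is the commutation of $E_{k_1}^Q$ with differentiation in the remaining variables: since $E_{k_1}^Q$ is built from integration and point evaluation in the variable $x_1$ alone, for every multi-index $\bs' \in \ZZ^{d-1}_+$ with $|\bs'|_\infty \le r$ one has $D^{\bs'}\big(E_{k_1}^Q f\big) = E_{k_1}^Q\big(D^{\bs'} f\big)$. Lemma~\ref{lemma:g(bx^e} guarantees that $D^{\bs'} f(\cdot,\bx')$ lies in $W^r_1(\RR;\mu)$ for almost every $\bx' \in \RR^{d-1}$, so the univariate bound \eqref{IntErrorR} may be applied pointwise in $x_1$, yielding
\[
\big|E_{k_1}^Q\big(D^{\bs'} f\big)(\bx')\big| \le C\, 2^{-\alpha k_1}\, \norm{D^{\bs'} f(\cdot,\bx')}{W^r_1(\RR;\mu)} \quad \text{for a.e.\ } \bx'.
\]
Integrating this inequality over $\bx' \in \RR^{d-1}$ against $\mu$ and invoking Fubini's theorem, I obtain
\[
\norm{D^{\bs'}\big(E_{k_1}^Q f\big)}{L_1(\RR^{d-1};\mu)} \le C\, 2^{-\alpha k_1} \sum_{j=0}^r \norm{D_1^j D^{\bs'} f}{L_1(\RRd;\mu)},
\]
where $D_1$ denotes differentiation in the first variable.

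Summing over all $\bs'$ with $|\bs'|_\infty \le r$ and noting that each $D_1^j D^{\bs'} f$ with $j \le r$ is a mixed derivative $D^{\bk} f$ with $|\bk|_\infty \le r$, the right-hand side is controlled by $C\, 2^{-\alpha k_1}\norm{f}{W^r_1(\RRd;\mu)}$; this furnishes the required bound on $\norm{E_{k_1}^Q f}{W^r_1(\RR^{d-1};\mu)}$, and combining it with the inductive estimate displayed above completes the induction, since $k_1 + |\bk'|_1 = |\bk|_1$. I expect the main obstacle to be the rigorous justification of the commutation $D^{\bs'} E_{k_1}^Q = E_{k_1}^Q D^{\bs'}$ and of the interchange of integration over $\bx'$ with the univariate error functional in $x_1$; both rest on the regularity and integrability supplied by Lemma~\ref{lemma:g(bx^e}, together with differentiation under the integral sign.
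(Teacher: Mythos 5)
The paper itself gives no proof of this lemma (it is reproduced from \cite{DD2023}, with only a pointer to the standard Smolyak-algorithm error technique of \cite[Section 5.3]{DTU18B}), and your dimension induction --- factoring $E_\bk^Q=E_{\bk'}^Q\circ E_{k_1}^Q$ via \eqref{commutative}, commuting $E_{k_1}^Q$ with the weak derivatives $D^{\bs'}$ in the remaining variables, applying the univariate bound \eqref{IntErrorR} slice-wise with Lemma~\ref{lemma:g(bx^e} supplying the a.e.-slice regularity, and integrating over $\bx'$ by Fubini --- is exactly that technique, and it is correct. The only point requiring more care than Lemma~\ref{lemma:g(bx^e} alone provides is the commutation for the point-evaluation terms, since the quadrature nodes $x_{k_1,s}$ are \emph{fixed} while the lemma is only an a.e. statement: here one uses that $D_1D^{\bs'}f\in L_1(\RRd;\mu)$ (valid because $|(1,\bs')|_\infty\le r$) yields a representative of $D^{\bs'}f$ absolutely continuous in $x_1$ for a.e. $\bx'$, so that the slices at $x_{k_1,s}$ are well-defined and $D^{\bs'}\bigl[f(x_{k_1,s},\cdot)\bigr]=(D^{\bs'}f)(x_{k_1,s},\cdot)$ holds weakly --- which is precisely the obstacle you flag at the end.
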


We say that $\bk \to \infty$, $\bk \in \ZZdp$, if and only if $k_i \to \infty$ for every $i = 1,...,d$.
\begin{lemma} \label{lemma:Delta_k}
	Under the assumption \eqref{Q_2^kf}--\eqref{IntErrorR}, we have that
	for every $ f \in W^r_1(\RRd;\mu)$,
		\begin{equation}\label{Series1}
	\int_{\RRd} f(\bx) \mu(\rd \bx)
	= 	\sum_{\bk \in \ZZdp}\Delta_\bk^Qf 
	\end{equation}
with absolute convergence of the series, and
	\begin{equation}\label{Delta_k}
		\big|\Delta_\bk^Qf\big| 
		\leq C 2^{-\alpha|\bk|_1} \|f\|_{W^r_1(\RRd;\mu)}, 
		\ \  \bk \in \ZZdp.
	\end{equation}
\end{lemma}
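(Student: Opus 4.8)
The plan is to reduce the whole statement to the one-dimensional hypothesis \eqref{IntErrorR}, lift it to $d$ variables through the tensor-product/commutativity structure \eqref{tensor-product}--\eqref{commutative}, and then read off the pointwise bound \eqref{Delta_k} first and the series identity \eqref{Series1} second. A convenient bookkeeping device is to adopt the convention $E_{-1}^Qf := \int_{\RR}f(x)\,\mu(\rd x)$, so that $\Delta_k^Q = E_{k-1}^Q - E_k^Q$ holds uniformly for all $k \in \NNo$ (the case $k=0$ giving $\Delta_0^Q = E_{-1}^Q - E_0^Q = Q_1$). With this convention, combining \eqref{IntErrorR} with the trivial estimate $|E_{-1}^Qg| \le \|g\|_{L_1(\RR;\mu)} \le \|g\|_{W^r_1(\RR;\mu)}$ yields the single uniform univariate bound
\begin{equation*}
	\big|\Delta_k^Q g\big| \ \le \ C\,2^{-\alpha k}\,\|g\|_{W^r_1(\RR;\mu)}, \qquad k \in \NNo,\ \ g \in W^r_1(\RR;\mu).
\end{equation*}

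The central step is to upgrade this scalar estimate to the statement that applying the univariate operator $\Delta_{k_j}^Q$ in the single variable $x_j$ is a bounded map from $W^r_1(\RR^m;\mu)$ into $W^r_1(\RR^{m-1};\mu)$ with norm at most $C\,2^{-\alpha k_j}$. Since $\Delta_{k_j}^Q$ is a finite linear combination of point evaluations in $x_j$, it commutes with every partial derivative $D^{\bm}$ taken in the remaining variables. I would therefore apply the univariate bound above pointwise (in the frozen variables) to $D^{\bm}g$ regarded as a function of $x_j$, and then integrate the result against the product weight in the remaining variables; interchanging the functional $\Delta_{k_j}^Q$ with that integration and with $D^{\bm}$ is exactly the kind of Fubini/integrability statement guaranteed by Lemma~\ref{lemma:g(bx^e}. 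Invoking the commutativity and associativity \eqref{commutative} to write $\Delta_\bk^Q = \Delta_{k_1}^Q\cdots\Delta_{k_d}^Q$ as a successive application, I would then iterate this one-variable estimate $d$ times; the output after the last variable is a scalar and the accumulated constant is $\prod_{j=1}^d 2^{-\alpha k_j} = 2^{-\alpha|\bk|_1}$, which is precisely \eqref{Delta_k}.

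With \eqref{Delta_k} in hand, absolute convergence of the series in \eqref{Series1} is immediate by summing a geometric series,
\begin{equation*}
	\sum_{\bk \in \ZZdp} \big|\Delta_\bk^Q f\big| \ \le \ C\,\|f\|_{W^r_1(\RRd;\mu)}\prod_{j=1}^d \sum_{k=0}^\infty 2^{-\alpha k} \ = \ C\,(1-2^{-\alpha})^{-d}\,\|f\|_{W^r_1(\RRd;\mu)} \ < \ \infty .
\end{equation*}
To identify the value of the sum I would use the Smolyak telescoping: summing over the box $\{\bk \le N\bone\}$ and factoring the tensor product gives $\sum_{\bk \le N\bone}\Delta_\bk^Q = \bigotimes_{j=1}^d\big(\sum_{k=0}^N \Delta_k^Q\big) = \bigotimes_{j=1}^d Q_{2^N} = Q_{2^{N\bone}}$, because $\sum_{k=0}^N\Delta_k^Q = E_{-1}^Q - E_N^Q = Q_{2^N}$ telescopes. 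By Lemma~\ref{lemma:E_k} one has $\big|\int_{\RRd}f\,\mu(\rd\bx) - Q_{2^{N\bone}}f\big| = |E_{N\bone}^Q f| \le C\,2^{-\alpha dN}\|f\|_{W^r_1(\RRd;\mu)} \to 0$ as $N\to\infty$. Since the series is absolutely convergent, its sum is independent of the order of summation and therefore equals the limit of the box partial sums $Q_{2^{N\bone}}f$, i.e.\ $\int_{\RRd}f\,\mu(\rd\bx)$, which is \eqref{Series1}.

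The only genuinely technical obstacle is the one-variable step: making rigorous the interchange of the univariate functional $\Delta_{k_j}^Q$ with differentiation and integration in the remaining variables and verifying that the intermediate functions stay in the appropriate mixed-smoothness spaces after each application. This is exactly what Lemma~\ref{lemma:g(bx^e} is designed to supply; everything else in the argument is telescoping and a geometric series.
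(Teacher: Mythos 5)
Your proofs of the bound \eqref{Delta_k} and of the absolute convergence are sound, and they follow the route this argument has to take: the convention $E_{-1}^Qf:=\int_{\RR}f\,\mu(\rd x)$ giving the uniform univariate estimate $|\Delta_k^Qg|\le C2^{-\alpha k}\|g\|_{W^r_1(\RR;\mu)}$, and then the variable-by-variable lifting (commuting the finite sum of point evaluations with derivatives and with integration in the frozen variables, justified by Lemma \ref{lemma:g(bx^e}) to get $|\Delta_\bk^Qf|\le C2^{-\alpha|\bk|_1}\|f\|_{W^r_1(\RRd;\mu)}$.

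There is, however, a genuine error in your identification of the sum in \eqref{Series1}. You write $\big|\int_{\RRd}f\,\mu(\rd\bx)-Q_{2^{N\bone}}f\big|=|E_{N\bone}^Qf|$ and then invoke Lemma \ref{lemma:E_k}. This identity is false for $d\ge 2$: by \eqref{tensor-product}, $E_\bk^Q$ is the \emph{tensor product} $\bigotimes_{i=1}^d E_{k_i}^Q$ of the univariate error functionals, which is not the error of the tensor-product quadrature. Indeed, by \eqref{E_bk}, $E_{N\bone}^Qf$ is an alternating sum over all subsets $e\subset\{1,\ldots,d\}$ of mixed functionals (quadrature $Q_{2^N}$ in the variables of $e$, exact integration in the remaining variables); for $d=2$ it equals $\int_{\RR^2}fw\,\rd\bx-\int_{\RR}\big(Q_{2^N}f(\cdot,x_2)\big)w(x_2)\,\rd x_2-\int_{\RR}\big(Q_{2^N}f(x_1,\cdot)\big)w(x_1)\,\rd x_1+Q_{2^{(N,N)}}f$, which differs from $\int f\,\mu(\rd\bx)-Q_{2^{(N,N)}}f$ by the two mixed terms. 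So Lemma \ref{lemma:E_k} does not bound the quantity you need, and the convergence of the box partial sums $Q_{2^{N\bone}}f$ to the integral is left unproven.

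The gap is repairable with the machinery you already set up. Writing $I_j$ for the functional of weighted integration in the $j$-th variable, your univariate telescoping gives $\sum_{\bk\le N\bone}\Delta_\bk^Q=\bigotimes_{j=1}^d\big(I_j-E_N^Q\big)$, whence
\begin{equation*}
\int_{\RRd}f(\bx)\,\mu(\rd\bx)-\sum_{\bk\le N\bone}\Delta_\bk^Qf
\ =\ -\sum_{\varnothing\ne e\subset\{1,\ldots,d\}}(-1)^{|e|}
\Bigl[\bigotimes_{j\in e}E_N^Q\otimes\bigotimes_{j\notin e}I_j\Bigr]f .
\end{equation*}
Every term on the right contains at least one univariate error factor, so your one-variable iteration (again via Lemma \ref{lemma:g(bx^e}), with the bound $C2^{-\alpha N}$ for each factor $E_N^Q$ and the trivial bound $|I_jg|\le\|g\|_{L_1(\RR;\mu)}\le\|g\|_{W^r_1(\RR;\mu)}$ for each factor $I_j$, shows that each term is $O\big(2^{-\alpha N}\big)\|f\|_{W^r_1(\RRd;\mu)}$. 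Hence the box partial sums converge to the integral, and together with your absolute-convergence estimate this completes \eqref{Series1}.
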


We now define an  algorithm for quadrature on sparse grids adopted from the algorithm for sampling recovery initiated by Smolyak (for detail see \cite[Sections 4.2 and 5.3]{DTU18B}). For $\xi > 0$, we define the operator
\begin{equation}\nonumber
Q_\xi
:= 	\sum_{|\bk|_1 \le \xi } \Delta_{\bk}^Q.
\end{equation}	
From \eqref{Delta_bk} we can see that $Q_\xi$ is a quadrature on $\RRd$ of the form  \eqref{Q_nf-introduction}:
\begin{equation}\label{Q_xi}
Q_\xi f
= 	\sum_{|\bk|_1 \le \xi } \ \sum_{e \subset \brab{1,...,d}} (-1)^{d - |e|}\ \sum_{\bs=\bone}^{2^{\bk(e)}} \lambda_{\bk(e),\bs} f(\bx_{\bk(e),\bs})
= \sum_{(\bk,e,\bs) \in G(\xi)} \lambda_{\bk,e,\bs} f(\bx_{\bk,e,\bs}), 
\end{equation}
where 
$$
\bx_{\bk,e,\bs}:= \bx_{\bk(e),\bs}, \quad \lambda_{\bk,e,\bs}:= (-1)^{d - |e|}\lambda_{\bk(e),\bs}
$$ 
and 
\begin{equation}\nonumber
G(\xi)	:= \brab{(\bk,e,\bs): \ |\bk|_1 \le \xi, \,   e \subset \brab{1,...,d}, \,  \bone \le \bs \le \bk(e)}
\end{equation}
is a finite set.
The set of integration nodes in this quadrature 
\begin{equation}\nonumber
H(\xi):=\brab{\bx_{\bk,e,\bs}}_{(\bk,e,\bs) \in G(\xi)}
\end{equation}
is a step hyperbolic cross   in the function domain $\RRd$. 
The number of integration nodes in the quadrature $Q_\xi$ is 
\begin{equation}\nonumber
|G(\xi)|
= 	\sum_{|\bk|_1 \le \xi } \ \sum_{e \subset \brab{1,...,d}}2^{|\bk(e)|_1}
\end{equation}
which can be estimated as 
\begin{equation}\label{|G(xi)|}
|G(\xi)|
\asymp	\sum_{|\bk|_1 \le \xi }2^{|\bk|_1} \ \asymp \ 2^\xi \xi^{d - 1}, \ \ \xi \ge 1.
\end{equation}
This quadrature plays a crucial role in the proof of the upper bound in  \eqref{Int_n(W^r_1)}.
 
From Lemmata \ref{lemma:g(bx^e}--\ref{lemma:Delta_k},  by using a modification of a  technique  for establishing upper bounds of the error of unweighted sampling recovery by Smolyak algorithms of functions having mixed smoothness  on a bounded domain (see, e.g., \cite[Section 5.3]{DTU18B} and \cite[Section 6.9]{TB18} for detail of related techniques), we prove the following upper bound.		
\begin{lemma} \label{lemma:Q_xi-error}
	Under the assumption \eqref{Q_2^kf}--\eqref{IntErrorR}, we have that
\begin{equation} \label{upperbound}
	\bigg|\int_{\RRd} f(\bx) \mu(\rd \bx)  -  Q_\xi f\bigg| 
	\leq C 2^{-\alpha\xi} \xi^{d - 1} \|f\|_{W^r_1(\RRd;\mu)}, 
	\ \ \xi \ge 1, \ \ f \in W^r_1(\RRd;\mu). 
\end{equation}
\end{lemma}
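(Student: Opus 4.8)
The plan is to reduce the Smolyak quadrature error to a tail of the telescoping series established in Lemma~\ref{lemma:Delta_k}, and then estimate that tail by a purely combinatorial argument; this is the standard Smolyak truncation scheme adapted to the present weighted setting. First I would invoke the series representation \eqref{Series1}, which gives the exact integral as $\int_{\RRd} f(\bx)\,\mu(\rd\bx) = \sum_{\bk \in \ZZdp} \Delta_\bk^Q f$ with \emph{absolute} convergence. Since $Q_\xi f = \sum_{|\bk|_1 \le \xi} \Delta_\bk^Q f$ by the definition of the operator $Q_\xi$, subtracting and using absolute convergence to justify the rearrangement yields
\[
\int_{\RRd} f(\bx)\,\mu(\rd\bx) - Q_\xi f = \sum_{|\bk|_1 > \xi} \Delta_\bk^Q f .
\]

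Next I would apply the triangle inequality together with the per-term bound \eqref{Delta_k} from the same lemma, namely $|\Delta_\bk^Q f| \le C\,2^{-\alpha|\bk|_1}\,\|f\|_{W^r_1(\RRd;\mu)}$, to obtain
\[
\bigg|\int_{\RRd} f(\bx)\,\mu(\rd\bx) - Q_\xi f\bigg| \le C\,\|f\|_{W^r_1(\RRd;\mu)} \sum_{|\bk|_1 > \xi} 2^{-\alpha|\bk|_1}.
\]
Everything then reduces to the scalar tail sum $S(\xi) := \sum_{\bk \in \ZZdp,\ |\bk|_1 > \xi} 2^{-\alpha|\bk|_1}$, which carries no more information about $f$.

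To evaluate $S(\xi)$ I would group lattice points by the value $m = |\bk|_1$. Since $\ZZdp$ consists of $d$-tuples of nonnegative integers, the number of $\bk$ with $|\bk|_1 = m$ equals $\binom{m+d-1}{d-1} \asymp m^{d-1}$, so $S(\xi) \asymp \sum_{m > \xi} m^{d-1} 2^{-\alpha m}$. Because $\alpha > 0$, the geometric factor dominates the polynomial weight and the tail is comparable to its leading term: writing $m = m_0 + \ell$ with $m_0 := \lfloor \xi\rfloor + 1$ and $\ell \ge 0$, and using convexity $(m_0+\ell)^{d-1} \le 2^{d-1}\bigl(m_0^{d-1} + \ell^{d-1}\bigr)$ together with the convergence of $\sum_{\ell \ge 0}\ell^{d-1}2^{-\alpha\ell}$, one finds $S(\xi) \le C\,2^{-\alpha m_0}\, m_0^{d-1} \le C'\,2^{-\alpha\xi}\,\xi^{d-1}$ for $\xi \ge 1$, where the last step uses $m_0 > \xi$ and $m_0 \le 2\xi$. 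Substituting this into the previous display produces exactly \eqref{upperbound}.

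I do not expect a genuine obstacle here: the real analytic content — both the absolute convergence of the telescoping series \eqref{Series1} and the exponential decay \eqref{Delta_k} of the increments $\Delta_\bk^Q f$ — is already packaged in Lemma~\ref{lemma:Delta_k}, which itself rests on Lemmata~\ref{lemma:g(bx^e}--\ref{lemma:E_k} and the univariate rate \eqref{IntErrorR}. The only point requiring care is the tail estimate for $S(\xi)$, and the single subtlety there is that the logarithmic factor must emerge with the \emph{correct} exponent $d-1$ rather than $d$; this is precisely the assertion that summing the geometric tail does not inflate the power of $\xi$, which is secured by the leading-term domination described above.
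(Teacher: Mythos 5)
Your proposal is correct and follows essentially the same route as the paper: both reduce the error to the tail $\sum_{|\bk|_1 > \xi} \Delta_\bk^Q f$ via the absolutely convergent series \eqref{Series1}, apply the per-term bound \eqref{Delta_k}, and then estimate the scalar tail sum $\sum_{|\bk|_1 > \xi} 2^{-\alpha|\bk|_1} \ll 2^{-\alpha\xi}\xi^{d-1}$. The only difference is that the paper states this last tail estimate without proof, whereas you supply the (correct) level-by-level counting argument with $\binom{m+d-1}{d-1} \asymp m^{d-1}$ points per level.
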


\begin{proof}  From  Lemma \ref{lemma:Delta_k} we derive that for $\xi \ge 1$ and $f \in W^r_1(\RRd;\mu)$,
	\begin{equation}\nonumber
		\begin{aligned}
	\bigg|\int_{\RRd} f(\bx) w(\bx)\rd \bx  -  Q_\xi f\bigg| 
			& \le
				\sum_{|\bk|_1 > \xi } \big|\Delta_\bk^Qf\big| 
				\le 	C\sum_{|\bk|_1 > \xi } 2^{-\alpha|\bk|_1}  \|f\|_{W^r_1(\RRd;\mu)}
		\\
		&\leq 
	C \|f\|_{W^r_1(\RRd;\mu)} \sum_{|\bk|_1 > \xi } 2^{-\alpha|\bk|_1} 
	\leq C 2^{-\alpha\xi} \xi^{d - 1} \|f\|_{W^r_1(\RRd;\mu)}.
		\end{aligned}
	\end{equation}
	\hfill
\end{proof}
\begin{remark} \label{remark1}
	{\rm	
		From Theorem \ref{thm:Q_n-d=1} we can see that the truncated Gaussian quadratures $Q^{\rm{TG}}_{2 j(m)}$ 
		form  a sequence $\brac{Q_{2^k}}_{k \in \NN_0}$ of the form \eqref{Q_2^kf} satisfying  \eqref{IntErrorR} with $a = r_\lambda$. 
	}
\end{remark}

\begin{theorem} \label{theorem:Int_n(W)}
	We have that
	\begin{equation}\label{Int_n(W)}
	n^{-r_\lambda} (\log n)^{r_\lambda(d-1)} 
	\ll	
	\Int_n(\bW^r_1(\mathbb{R}^d;\mu)) 
	\ll 
	n^{-r_\lambda} (\log n)^{(r_\lambda + 1)(d-1)}.
	\end{equation}
\end{theorem}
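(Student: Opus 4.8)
The plan is to prove the two bounds by entirely separate arguments: the upper bound is essentially bookkeeping on top of the Smolyak machinery already assembled in this subsection, while the lower bound comes from a fooling-function construction through Lemma~\ref{lemma:Int_n>}. For the upper bound I would first invoke Remark~\ref{remark1}, which guarantees that the truncated Gaussian quadratures $Q^{\rm{TG}}_{2 j(m)}$ of Theorem~\ref{thm:Q_n-d=1} furnish a univariate sequence $(Q_{2^k})_{k\in\NN_0}$ satisfying the hypothesis \eqref{Q_2^kf}--\eqref{IntErrorR} with exponent $\alpha = r_\lambda$. Feeding this sequence into the Smolyak operator $Q_\xi$, Lemma~\ref{lemma:Q_xi-error} gives the error bound $2^{-r_\lambda\xi}\xi^{d-1}$ on $\bW^r_1(\RRd;\mu)$, while \eqref{|G(xi)|} bounds the number of nodes by $|G(\xi)|\asymp 2^\xi\xi^{d-1}$. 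It then remains only to calibrate $\xi$ against $n$: choosing $\xi$ as the largest integer with $|G(\xi)|\le n$ gives $2^\xi\asymp n(\log n)^{-(d-1)}$ and $\xi\asymp\log n$, so $2^{-r_\lambda\xi}\asymp n^{-r_\lambda}(\log n)^{r_\lambda(d-1)}$; multiplying by the residual factor $\xi^{d-1}\asymp(\log n)^{d-1}$ yields exactly the stated upper bound $n^{-r_\lambda}(\log n)^{(r_\lambda+1)(d-1)}$.

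For the lower bound I would start from Lemma~\ref{lemma:Int_n>}, reducing the estimate to the task of exhibiting, for an \emph{arbitrary} node set $\{\bx_1,\dots,\bx_n\}\subset\RRd$, a function $f\in\bW^r_1(\RRd;\mu)$ that vanishes at all the nodes while $|\int_{\RRd} f(\bx)\,\mu(\rd\bx)|$ is of the desired magnitude. The building block is the univariate bump from the proof of Theorem~\ref{thm:Q_n-d=1}: rescaling the fixed profile $\varphi\in C^\infty_0$ to an anisotropic box of side $\delta_i\asymp 2^{-k_i(1-1/\lambda)}$ in direction $i$ (the bulk spacing of the level-$k_i$ truncated Gaussian zeros) and forming the tensor product produces a bump whose weighted integral and whose $W^r_1(\RRd;\mu)$-norm both carry the same factor $w(\boldsymbol{c})$ at its center $\boldsymbol{c}$. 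This factor cancels in the ratio, so the weight-free scaling survives and each such bump at total level $|\bk|_1=\xi$ contributes integral $\asymp\prod_i\delta_i^r=2^{-r_\lambda\xi}$, independently of where in the effective domain it is placed. The target value $n^{-r_\lambda}(\log n)^{r_\lambda(d-1)}$ is precisely $2^{-r_\lambda\xi}$ at the calibration $2^\xi\xi^{d-1}\asymp n$, i.e.\ the single dominant Smolyak layer $|\bk|_1=\xi$ stripped of the summation slack $\xi^{d-1}$ that inflates the upper bound.

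The hard part is extracting the logarithmic gain $(\log n)^{r_\lambda(d-1)}$ over the naive estimate. A single node-free box cannot deliver it: since each node lies in exactly one cell of each of the $\asymp\xi^{d-1}$ shapes $\bk$ with $|\bk|_1=\xi$, an adversarial placement of $n\asymp 2^\xi\xi^{d-1}$ nodes can occupy every cell of every shape at that scale, so an empty box is forced only at $\xi>\log_2 n$, which gives merely $n^{-r_\lambda}$; and separated, disjoint-support bumps never improve the integral-to-norm ratio. The improvement must therefore come from a \emph{coherent} multiscale fooling function whose high-order mixed derivatives partially cancel, so that its $W^r_1(\RRd;\mu)$-norm grows more slowly than the number of its constituent pieces while its weighted integral does not — a genuinely hyperbolic-cross effect rather than a local one — combined with a verification that this object can still be arranged to vanish at the $n$ prescribed nodes. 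Managing this construction, especially the interplay between the anisotropic weight scaling and node-avoidance, is the crux of the argument and is also the source of the remaining $(\log n)^{d-1}$ discrepancy between the two bounds, which is precisely why the exact asymptotic order is left open.
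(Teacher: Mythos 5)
Your upper bound is correct and is exactly the paper's argument: Remark \ref{remark1} supplies the univariate hypothesis \eqref{Q_2^kf}--\eqref{IntErrorR} with $\alpha=r_\lambda$, Lemma \ref{lemma:Q_xi-error} and the node count \eqref{|G(xi)|} control the Smolyak quadrature $Q_{\xi_n}$, and your calibration of $\xi_n$ against $n$ reproduces $n^{-r_\lambda}(\log n)^{(r_\lambda+1)(d-1)}$ verbatim.

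The lower bound, however, is a genuine gap: you end by asserting that the logarithmic gain requires a ``coherent multiscale fooling function'' with cancellations among mixed derivatives, and you never construct it. No such object is needed, and the impossibility argument by which you dismiss the simple approach is wrong. The paper's proof is precisely the single-bump argument you discard: for arbitrary nodes $\bxi_1,\dots,\bxi_n$ it exhibits a family of \emph{pairwise disjoint} boxes $\{K_\bs\}_{\bs\in\Gamma_d(M_n)}$ whose cardinality is $\asymp M_n(\log M_n)^{d-1}\ge n+1$ (see \eqref{|Gamma|}), each of which carries a bump $\bar h$ proportional to $gw^{-1}$ satisfying the norm bound \eqref{int-D^br} and the integral bound \eqref{int-h-bar2}, i.e.\ the integral-to-norm ratio $\asymp M_n^{-r_\lambda}$ — this is exactly the weight cancellation you yourself noted. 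Since the boxes are pairwise disjoint, each node lies in at most \emph{one} of them, so some box is node-free, and Lemma \ref{lemma:Int_n>} yields $\Int_n(\bW^r_1(\RRd;\mu))\gg M_n^{-r_\lambda}\asymp n^{-r_\lambda}(\log n)^{r_\lambda(d-1)}$, the entire log gain coming from $M_n\asymp n(\log n)^{-(d-1)}$.

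Your counting argument fails because it tacitly assumes that the cells of the $\asymp\xi^{d-1}$ shapes $\bk$ with $|\bk|_1=\xi$ tile one common region, so that a single node deletes one cell from \emph{every} shape and $n\asymp 2^\xi\xi^{d-1}$ nodes can occupy everything. But the candidate boxes are not taken to overlap across shapes: a box of shape $\bk$, with sides $\delta_i\asymp 2^{-k_i(1-1/\lambda)}$, keeps the full ratio $\prod_i\delta_i^{r}=2^{-r_\lambda\xi}$ exactly when it sits where $|x_i|^{\lambda-1}\ll\delta_i^{-1}$, i.e.\ $|x_i|\lesssim 2^{k_i/\lambda}$, and one places the $\asymp 2^{|\bk|_1}$ boxes of shape $\bk$ in the shell $|x_i|\asymp 2^{k_i/\lambda}$; after a harmless thinning of the shapes these shells are pairwise disjoint for distinct shapes of the same level. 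This is the ``step hyperbolic cross in the function domain'' geometry that the index set $\Gamma_d(M)$ encodes, and it is precisely what the weighted setting offers and the unweighted cube does not: one gets $\asymp 2^\xi\xi^{d-1}$ pairwise disjoint full-ratio boxes, whereas any fixed bounded region admits only $\asymp 2^\xi$. Hence each node kills at most one candidate, a node-free box exists already at $2^\xi\xi^{d-1}\asymp n$ rather than at $2^\xi\asymp n$, and your (correct) observation that disjointly supported bumps cannot improve the integral-to-norm ratio is beside the point — only one bump is ever used; disjointness serves solely to count node-free locations.
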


\begin{proof}  
Let us first prove the upper bound in \eqref{Int_n(W)}. We  will construct a quadrature of the form \eqref{Q_xi} which realizes it.	
In order to do this, we take the truncated Gaussian quadrature 	$Q^{\rm{TG}}_{2 j(m)}f$ defined in \eqref{G-Q_mf}. For every $k \in \NN_0$, let $m_k$ be the largest number such that $2j(m_k) \le 2^k$.  Then we have 
$2j(m_k) \asymp 2^k$. 
For the sequence of quadratures  $\brac{Q_{2^k}}_{k \in \NN_0}$ with 
$$
Q_{2^k}:= Q^{\rm{TG}}_{2 j(m_k)} \in \Qq_{2^k},
$$
 from Theorem \ref{thm:Q_n-d=1} it follows that
\begin{equation*}\label{IntErrorR2}
	\bigg|\int_{\RR} f(x) w(x)\rd x - Q_{2^k}f\bigg| 
	\leq C 2^{- r_\lambda k} \|f\|_{W^r_1(\RR;\mu)}, 
	\ \  \ k \in \NN_0,  \ \ f \in W^r_1(\RR;\mu), 
\end{equation*}
This means that the assumption \eqref{Q_2^kf}--\eqref{IntErrorR} holds for $a = r_\lambda$. To prove the upper bound in \eqref{Int_n(W)} we approximate the integral \eqref{integral}
  by the quadrature $Q_{\xi}$ which is formed from the sequence $\brac{Q_{2^k}}_{k \in \NN_0}$.
For every $n \in \NN$, let $\xi_n$ be the largest number such that $|G(\xi_n)| \le n$.  Then the corresponding operator $Q_{\xi_n}$ defines a quadrature belonging to $\Qq_n$. From \eqref{|G(xi)|} it follows 
$$
\ 2^{\xi_n} \xi_n^{d - 1} \asymp |G(\xi_n)|  \asymp n.
$$ 
Hence we deduce the asymptotic equivalences
$$
\ 2^{-\xi_n}  \asymp n^{-1} (\log n)^{d-1}, \ \  \xi_n \asymp \log n,
$$
which together with Lemma \ref{lemma:Q_xi-error}  yield that
	\begin{equation*}\label{Q_xi-error}
		\begin{aligned}
		\Int_n(\bW^r_1(\mathbb{R}^d;\mu)) 
		&\le 
	\sup_{f\in \bW^r_1(\mathbb{R}^d;\mu)}	
	\bigg|\int_{\RRd} f(\bx) w(\bx)\rd \bx  -  Q_{\xi_n}f\bigg| 
		\\
		&
		\leq 
		C 2^{- r_\lambda \xi_n} \xi_n^{d - 1}
		\asymp  	n^{-r_\lambda} (\log n)^{(r_\lambda + 1)(d-1)}.
		\end{aligned}
	\end{equation*}
The upper bound in \eqref{Int_n(W)} is proven.

We now prove the lower bound  in \eqref{Int_n(W)} by using the inequality \eqref{Int_n>} in Lemma \ref{lemma:Int_n>}. For $M \ge 1$, we define the set 
	\begin{equation*}\label{Gamma-set}
\Gamma_d(M):= \brab{\bs \in \NNd: \, \prod_{i=1}^d s_i \le 2M, \ s_i \ge M^{1/d}, \ i=1,...,d}.
\end{equation*}
Then we have 
	\begin{equation}\label{|Gamma|}
	|\Gamma_d(M)| \asymp M (\log M)^{d-1}, \ \ M>1.
\end{equation}
For a given $n \in \NN$, let $\brab{\bxi_1,...,\bxi_n} \subset \RRd$ be arbitrary  $n$ points. Denote by $M_n$  the smallest number such that $|\Gamma_d(M_n)| \ge n + 1$. We define the $d$-cube $K_\bs$ for $\bs \in \ZZdp$ of size 
$$
\delta:= M_n^{\frac{1/\lambda - 1}{d}}
$$
 by
	\begin{equation*}\label{K_bs}
K_\bs:= \prod_{i=1}^d K_{s_i}, \ \ K_{s_i}:= (\delta s_i, \delta s_{i-1}).
\end{equation*}
 Since  $|\Gamma_d(M_n)| > n$, there exists a multi-index 
$\bs \in \Gamma_d(M_n)$ such that $K_\bs$ does not contain any point from $\brab{\bxi_1,...,\bxi_n}$.

We take a nonnegative function $\varphi \in C^\infty_0([0,1])$, $\varphi \not= 0$, and put
\begin{equation}\label{b_s-d}
	b_0:= \int_0^1 \varphi(y) \rd y > 0, \quad b_s :=  \int_0^1 |\varphi^{(s)}(y)| \rd y, \ s = 1,...,r.
\end{equation}		
For $i= 1,...,d$, we define the univariate functions $g_i$ in variable $x_i$ by
\begin{equation}\label{g_i}
	g_i(x_i):= 
	\begin{cases}
		\varphi(\delta^{-1}(x_i - \delta s_{i-1})), & \ \ x_i  \in K_{s_i}, \\
		0, & \ \ \text{otherwise}.	
	\end{cases}	
\end{equation}
Then the multivariate functions $g$ and $h$ on $\RRd$ are defined by
	\begin{equation*}\label{g(bx)}
	g(\bx):= \prod_{i=1}^d g_i(x_i),
\end{equation*}
and  
\begin{equation}\label{h(bx)}
	h(\bx):= (gw^{-1})(\bx)= \prod_{i=1}^d g_i(x_i)w^{-1}(x_i)=:
	\prod_{i=1}^d h_i(x_i).		
\end{equation}
Let us estimate the norm $\norm{h}{W^r_1(\RRd;\mu)}$. For every $\bk \in \ZZdp$ with 
$0 \le |\bk|_\infty \le r$,  we prove the inequality
\begin{equation}\label{int-D^br}
	\int_{\RRd}\big|(D^{\bk} h) w\big|(\bx) \rd \bx 
	\le  C   M_n^{(1 - 1/\lambda)(r - 1)}.
\end{equation}
This  inequality means that $h \in W^r_1(\RRd;\mu)$ and  
$$
\norm{h}{W^r_1(\RRd;\mu)} \le  C   M_n^{(1 - 1/\lambda)(r - 1)}.
$$
 If  we define 
\begin{equation*}\label{h-bar}
	\bar{h}:=  C^{-1} M_n^{-(1-1/\lambda)(r-1)}h,
\end{equation*}
then $\bar{h}$ is nonnegative, $\bar{h} \in \bW^r_1(\mathbb{R};\mu)$, $\sup \bar{h} \subset K_\bs$ and by \eqref{b_s-d}--\eqref{h(bx)},
\begin{equation}\nonumber
	\begin{aligned}	
		\int_{\RRd}(\bar{h}w)(\bx) \rd \bx 
		&=  C^{-1} M_n^{-(1-1/\lambda)(r-1)}\int_{\RRd}(h w)(\bx) \rd \bx = \prod_{i=1}^d \int_{K_{s_i}} g_i(x_i) \rd x_i
		\\
		&
		=  C^{-1} M_n^{-(1-1/\lambda)(r-1)} \brac{b_0 \delta}^d =  C' M_n^{- r_\lambda}.
	\end{aligned}
\end{equation}
From the definition of $M_n$ and \eqref{|Gamma|} it follows that
$$M_n(\log M_n)^{d-1} \asymp |\Gamma(M_n)| \asymp n,$$ 
 which implies that $M_n^{-1} \asymp n^{-1} (\log n)^{d-1}$. This allows to receive the estimate
\begin{equation}\label{int-h-bar2}
		\int_{\RRd}(\bar{h}w)(\bx) \rd \bx 
		=  C' M_n^{- r_\lambda}
		\gg
		n^{-r_\lambda} (\log n)^{r_\lambda(d-1)}.
\end{equation}
 Since the interval $K_\bs$ does not contain any point from the set $\brab{\bxi_1,...,\bxi_n}$ which has been arbitrarily chosen, we have 
 $$\bar{h}(\bxi_k) = 0, \ \ k = 1,...,n.
 $$
  Hence, by  Lemma \ref{lemma:Int_n>} and \eqref{int-h-bar2} we have that
\begin{equation*}\label{int-h-bar3}
	\Int_n(\bW^r_1(\mathbb{R}^d;\mu)) 
	\ge
	 \int_{\RRd}\bar{h}(\bx) w(\bx) \rd \bx
	\gg  n^{-r_\lambda} (\log n)^{r_\lambda(d-1)}.
\end{equation*}
The lower bound in \eqref{Int_n(W)} is proven.
	\hfill
\end{proof}

\begin{remark}
	{\rm	
		Let us analyse some properties of the quadratures $Q_\xi$ and their integration nodes $H(\xi)$ which give the upper bound in \eqref{Int_n(W)}. 
		\\
		\\
		1. The set of integration nodes $H(\xi)$ in the quadratures $Q_\xi$ which are formed from the non-equidistant zeros of the orthonornal polynomials $p_m(w)$,
		is a  step hyperbolic cross   \emph{on the function domain} $\RRd$. This is a contrast to the  classical theory of  approximation of multivariate periodic functions having mixed smoothness for which the classical step hyperbolic crosses of integer points are \emph{on the frequency domain} $\ZZd$ (see, e.g., \cite[Section 2.3]{DTU18B} for detail). The terminology 'step hyperbolic cross' of integration nodes is borrowed from there. In Figure \ref{Fig-2}, in particular, the step hyperbolic cross in the right picture is designed for the Hermite weight $w(\bx)= \exp(- x_1^2 - x_2^2)$ ($d=2$). The set  $H(\xi)$ also completely differs from the classical Smolyak grids of fractional dyadic points \emph{on the function domain} $[-1,1]^d$ (see Figure \ref{Fig-3} for $d=2$) which are used in sparse-grid sampling recovery and numerical integration for functions having a mixed smoothness (see, e.g., \cite{DTU18B} and \cite{BuGr04} for detail). 
		\\ 
		\\
		2. The set  $H(\xi)$ is  very sparsely distributed inside the $d$-cube 
		$$K(\xi): = \brab{\bx \in \RRd: \, |x_i| \le C2^{\xi/\lambda}, \ i =1,...,d},$$
		for some constant $C > 0$.
		Its diameter  which is the length of its symmetry axes is  $2C2^{\xi/\lambda}$, i.e.,  the size of $K(\xi)$. 
		The number of integration nodes in $H(\xi)$ is  $|G(\xi)|\asymp  2^\xi \xi^{d - 1}$. For the integration nodes $H(\xi)=\brab{\bx_{\bk,e,\bs}}_{(\bk,e,\bs) \in G(\xi)}$, we have that
		\begin{equation}\nonumber
		\min_{\substack{(\bk,e,\bs), (\bk',e',\bs') \in G(\xi) \\ (\bk,e,\bs) \not= (\bk',e',\bs')} } 
		\ \min_{1 \le i \le d} \left|\brac{x_{\bk,e,\bs}}_i - \brac{x_{\bk',e',\bs'}}_i \right|  \ \asymp \  2^{- (1 - 1/\lambda)\xi} \ \to 0, \ \text{when} \ \xi \to \infty.
		\end{equation}	
		On the other hand, the diameter of $H(\xi) $ is going to $\infty$ when $\xi \to \infty$.
	}
\end{remark}

\begin{figure}
	\centering{
		\begin{tabular}{cc}
			\includegraphics[height=7.0cm]{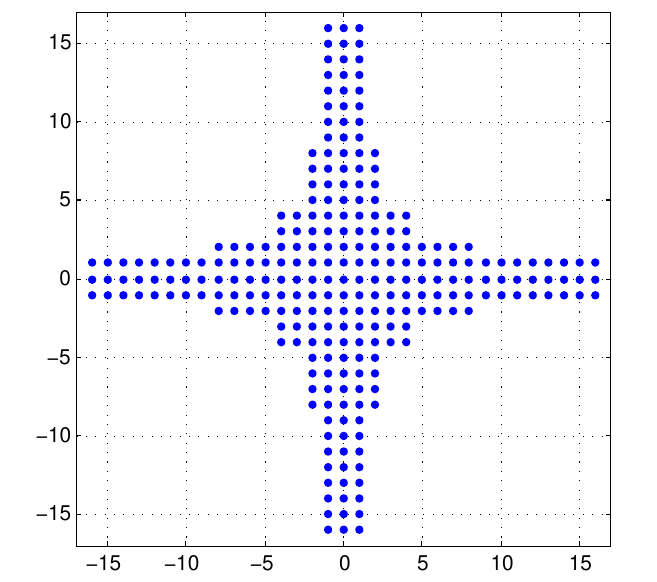}	 &  
			\includegraphics[height=7.0cm]{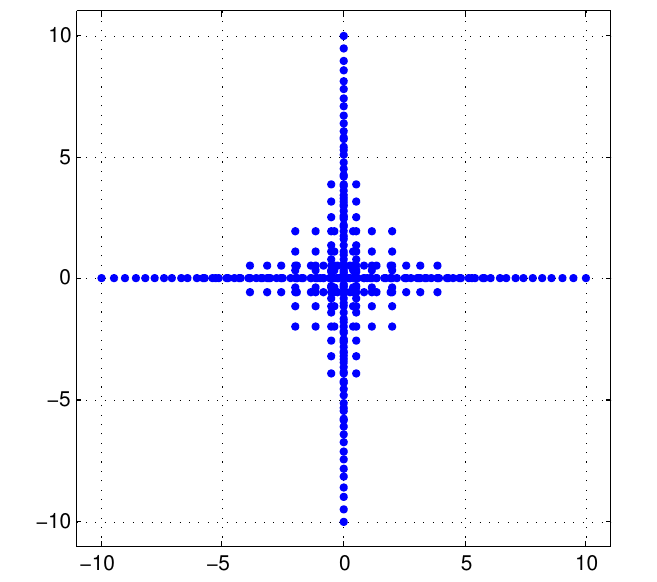}
			\\
			{A classical step hyperbolic cross} & { A Hermite step hyperbolic cross}
		\end{tabular}
	}
	\caption{Pictures of step hyperbolic crosses in $\RR^2$  from \cite{DD2023}}
	\label{Fig-2}
\end{figure}

\begin{figure}
	\centering{
		\begin{tabular}{cc}
			\includegraphics[height=7.0cm]{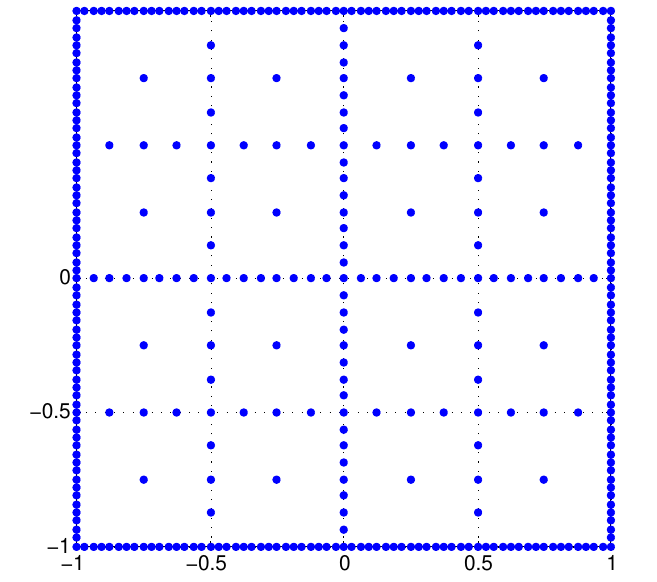}	
		\end{tabular}
	}
	\caption{A picture of Smolyak grid in $[-1,1]^2$ from \cite{DD2023}}
	\label{Fig-3}
\end{figure}

\subsection{Extension to Markov-Sonin weights}
\label{Extension}

In this subsection, we extend the results of the previous subsection to  Markov-Sonin weights. A univariate Markov-Sonin weight is a function of the form 
\begin{equation} \nonumber 
	w_\beta(x):= 	|x|^\beta \exp (- a|x|^2 + b), \ \ \beta > 0, \ \ a > 0, \ \ b \in \RR
\end{equation}
(here $\beta$ is indicated in the notation to distinguish Markov-Sonin weights $w_\beta$ and Freud-type weight $w$).  
A $d$-dimensional Markov-Sonin weight is defined as
\begin{equation} \nonumber
	w_\beta(\bx):= \prod_{i=1}^d w_\beta(x_i).
\end{equation}
Markov-Sonin weights are not of the form \eqref{w(x)} and have a singularity at $0$. 
We will keep all the notations  and definitions in Sections 
\ref{Introduction}--\ref{Multivariate integration}  with replacing $w$ by $w_\beta$ and $\mu$ by $\mu_\beta$, pointing some modifications.

Denote $\mathring{\RR}^d:= \brac{\RR \setminus \brab{0}}^d$ and 
$\mathring{\Omega}:= \Omega \cap \mathring{\RR}^d$. Besides the spaces $L_p(\Omega;\mu_\beta)$ and $W^r_{p}(\Omega;\mu_\beta)$ we consider also the spaces $L_p\big(\mathring{\Omega};\mu_\beta\big)$ and $W^r_{p}\big(\mathring{\Omega};\mu_\beta\big)$ which are defined in a similar manner. For the space $W^r_{p}\big(\mathring{\Omega};\mu_\beta\big)$, we require one of the following restrictions on $r$ and $\beta$ to be satisfied:
\begin{itemize}
	\item [{\rm (i)}]
	 $\beta > r - 1$;
	\item [{\rm (ii)}] 
	$0 <\beta < r - 1$ and $\beta$ is not an integer, for $f \in W^r_{p}\big(\mathring{\Omega};\mu_\beta\big)$, the derivative $D^\bk f$ can be extended to a continuous function on $\Omega$ for all $\bk \in \ZZdp$ such that 
	$|\bk|_\infty \le r - 1 -\lceil \beta \rceil$. 
	\end{itemize}

Let $(p_m(w_\beta))_{m \in \NN}$ be the sequence of orthonormal polynomials with respest to the weight $w_\beta$.
Denote again by $x_{m,k}$, $1 \le k \le \lfloor m/2 \rfloor$ the positive zeros of 	$p_m(w_\beta)$, and by $x_{m,-k} = - x_{m,k}$ the negative ones (if $m$ is odd, then $x_{m,0}= 0$ is also a zero of $p_m(w_\beta)$). 
If $m$ is even, we add $x_{m,0} := 0$.
These nodes are located as 
\begin{equation} \nonumber
	-\sqrt{m} + Cm^{-1/6} < x_{m,- \lfloor m/2 \rfloor} < \cdots 
	< x_{m,-1} < x_{m,0} < x_{m,1} < \cdots <  x_{m,\lfloor m/2 \rfloor} \le \sqrt{m} - Cm^{-1/6}, 
\end{equation}
with a positive constant $C$ independent of $m$ (the Mhaskar-Rakhmanov-Saff number is 
$a_m(w_\beta) = \sqrt{m}$).

In the case (i),  the truncated Gaussian quadrature is defined by
\begin{equation} \nonumber 
	Q^{\rm{TG}}_{2 j(m)}f: = \sum_{1 \le |k| \le j(m)} \lambda_{m,k}(w_\beta) f(x_{m,k}), 
\end{equation} 
and in  the case (ii) by
\begin{equation} \nonumber 
	Q^{\rm{TG}}_{2 j(m)}f: = \sum_{0 \le |k| \le j(m)} \lambda_{m,k}(w_\beta) f(x_{m,k}),
\end{equation} 
where $\lambda_{m,k}(w_\beta)$ are the corresponding Cotes numbers. 

In the same ways, by using related results in \cite{MO2004} we can prove the following counterparts of Theorems \ref{thm:Q_n-d=1} and \ref{theorem:Int_n(W)}  for the unit ball 
$\bW^r_{1,w_\beta}\big(\mathring{\RR}^d\big)$ of the Markov-Sonin weighted Sobolev space $W^r_{1,w_\beta}\big(\mathring{\RR}^d\big)$ of mixed smoothness $r \in \NN$. 
\begin{theorem} \label{thm:Q_n-d=1,SMW}
	For any $n \in \NN$, let $m_n$ be the largest integer such that $2 j(m_n) \le n$. Then the family of  quadratures 	$\big(Q^{\rm{TG}}_{2 j(m_n)}\big)_{n \in \RR_1}$ is  asymptotically optimal  for $\bW^r_1\big(\mathring{\RR};\mu_\beta\big)$ and
	\begin{equation*}
		\sup_{f\in \bW^r_1\big(\mathring{\RR};\mu_\beta\big)}
		\bigg|\int_{\RR} f(x) \mu_\beta(\rd x) - Q^{\rm{TG}}_{2 j(m_n)}f\bigg| 
		\asymp
		\Int_n\big(\bW^r_1\big(\mathring{\RR};\mu_\beta\big) \big)
		\asymp 
		n^{- r/2}.
	\end{equation*}
\end{theorem}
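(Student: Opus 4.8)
The plan is to follow the scheme of the proof of Theorem \ref{thm:Q_n-d=1} verbatim in structure, replacing the Freud-type quadrature error estimate by its Markov--Sonin counterpart drawn from \cite{MO2004}, and handling the singularity of $w_\beta$ at the origin through the two admissibility regimes (i) and (ii). Note that since the ``Freud part'' of $w_\beta$ carries the exponent $2$, the Mhaskar--Rakhmanov--Saff number is $a_m(w_\beta)=\sqrt m$, so that $1-1/\lambda = 1/2$ and the target rate is $n^{-r/2}$; the analogues of \eqref{<j(m)<} remain valid, giving $2j(m_n)\asymp m_n \asymp n$.

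For the upper bound I would invoke the relevant result of \cite{MO2004}, which for $f\in W^r_1\big(\mathring\RR;\mu_\beta\big)$ asserts
$$
\bigg|\int_{\RR}f(x)\,\mu_\beta(\rd x) - Q^{\rm{TG}}_{2 j(m)}f\bigg|
\le C\left( m^{-r/2}\,\norm{f^{(r)}}{L_1(\RR;\mu_\beta)} + e^{-Km}\,\norm{f}{L_1(\RR;\mu_\beta)} \right).
$$
The two forms of $Q^{\rm{TG}}_{2 j(m)}$ (summing over $1\le |k|\le j(m)$ in case (i) and over $0\le |k|\le j(m)$ in case (ii)) correspond precisely to whether the central node, where $w_\beta$ is singular, must be discarded or may be retained; the restrictions (i) and (ii) on $r$ and $\beta$ guarantee that $f$ and its relevant derivatives stay controlled near $0$. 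Substituting $m=m_n$ and using $2j(m_n)\asymp m_n\asymp n$ yields the quadrature-error bound $\ll n^{-r/2}$, hence $\Int_n\big(\bW^r_1\big(\mathring\RR;\mu_\beta\big)\big)\ll n^{-r/2}$.

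For the lower bound I would apply Lemma \ref{lemma:Int_n>} and build a fooling function exactly as in the proof of Theorem \ref{thm:Q_n-d=1} with $\lambda=2$. Given arbitrary nodes $\brab{\xi_1,\ldots,\xi_n}$, set $\delta:=n^{-1/2}$ and $t_j:=\delta j$; among the indices $n+1\le i\le 2n+2$ at least one interval $(t_{i-1},t_i)$, lying at distance $\asymp\sqrt n\asymp a_n$ from the origin and therefore safely away from the singularity, contains none of the $\xi_k$. On this interval I would place the scaled bump $g(x):=\varphi(\delta^{-1}(x-t_{i-1}))$, set $h:=(gw_\beta^{-1})$, and verify $\norm{h}{W^r_1(\RR;\mu_\beta)}\le C n^{(r-1)/2}$ by direct computation. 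The normalization $\bar h:=C^{-1}n^{-(r-1)/2}h$ then lies in $\bW^r_1\big(\mathring\RR;\mu_\beta\big)$, vanishes at every $\xi_k$, and satisfies $\int_\RR \bar h\,w_\beta\gg n^{-r/2}$, so Lemma \ref{lemma:Int_n>} delivers the matching lower bound. Combining the two estimates gives both $\Int_n\big(\bW^r_1\big(\mathring\RR;\mu_\beta\big)\big)\asymp n^{-r/2}$ and the asymptotic optimality of $\big(Q^{\rm{TG}}_{2 j(m_n)}\big)_{n\in\RR_1}$.

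The main obstacle is the upper bound: one must ensure that the truncated Gaussian quadrature estimate of \cite{MO2004} applies with the general Markov--Sonin normalization (arbitrary $a>0$, $b\in\RR$), and, crucially, that the treatment of the origin is consistent across the two regimes --- in particular, that in regime (ii) the continuous extension of $D^{\bk}f$ to $\RR$ legitimizes retaining the central node $x_{m,0}=0$, whereas in regime (i) one must argue that discarding it costs only an exponentially small term absorbed into $e^{-Km}\norm{f}{L_1(\RR;\mu_\beta)}$.
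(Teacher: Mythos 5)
Your proposal is correct and takes essentially the same route as the paper: the paper proves this theorem only by remarking that the argument of Theorem~\ref{thm:Q_n-d=1} carries over ``in the same ways'' with $\lambda=2$ (so $r_\lambda=r/2$), replacing the truncated-Gaussian error estimate of \cite{DM2003} by its Markov--Sonin counterpart from \cite{MO2004} for the upper bound, and reusing the fooling-function construction with Lemma~\ref{lemma:Int_n>} for the lower bound. Your additional care about the singularity at the origin (the interval $(t_{i-1},t_i)$ sitting at distance $\asymp\sqrt{n}$ from $0$, and the central node being kept or discarded according to regimes (i) and (ii)) is precisely what the paper's definitions of $Q^{\rm{TG}}_{2j(m)}$ in the two regimes are designed to handle.
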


\begin{theorem} \label{theorem:Int_n(W)SMW}
	We have that
	\begin{equation*}\label{Int_n(W)}
		n^{-r/2} (\log n)^{(d-1)r/2} 
		\ll	
		\Int_n\bW^r_1\big(\mathring{\RR}^d;\mu_\beta\big)) 
		\ll 
		n^{-r/2} (\log n)^{(d-1)(r/2 + 1)}.
	\end{equation*}
\end{theorem}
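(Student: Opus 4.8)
The plan is to reproduce, essentially verbatim, the two-sided argument of Theorem~\ref{theorem:Int_n(W)}, exploiting the observation that as far as the orthogonal-polynomial quadrature theory is concerned the Markov-Sonin weight $w_\beta$ behaves like a Freud weight with exponent $\lambda = 2$; this is exactly why the exponent $r_\lambda = (1-1/\lambda)r$ of \eqref{Int_n(W)} specializes to $r/2$ in the target statement. The only genuinely new input is the univariate building block, Theorem~\ref{thm:Q_n-d=1,SMW}, which I would prove first by following the proof of Theorem~\ref{thm:Q_n-d=1} with the Freud estimates from \cite{DM2003} replaced by their Markov-Sonin counterparts in \cite{MO2004}. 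Here the Mhaskar-Rakhmanov-Saff number is $a_m(w_\beta)=\sqrt m$, so the node spacing is $d_{m,k}\asymp m^{-1/2}$ and $x_{m,j(m)}\asymp\sqrt m$, and the truncated Gaussian quadrature obeys an error bound of the form $m^{-r/2}\|f^{(r)}\|_{L_1(\RR;\mu_\beta)}+e^{-Km}\|f\|_{L_1(\RR;\mu_\beta)}$, giving the univariate order $n^{-r/2}$.

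For the upper bound I would feed the dyadic family $Q_{2^k}:=Q^{\rm{TG}}_{2j(m_k)}$, with $m_k$ chosen so that $2j(m_k)\asymp 2^k$, into the tensor-product Smolyak machinery of Subsection~\ref{Multivariate integration}. Since Theorem~\ref{thm:Q_n-d=1,SMW} furnishes precisely the hypothesis \eqref{Q_2^kf}--\eqref{IntErrorR} with $\alpha=r/2$, Lemmata~\ref{lemma:E_k}--\ref{lemma:Q_xi-error} apply unchanged, so the operator $Q_\xi=\sum_{|\bk|_1\le\xi}\Delta_\bk^Q$ has error $\ll 2^{-(r/2)\xi}\xi^{d-1}\|f\|_{W^r_1(\mathring{\RR}^d;\mu_\beta)}$. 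Because the node-count relation $|G(\xi)|\asymp 2^\xi\xi^{d-1}$ of \eqref{|G(xi)|} is weight-independent, the calibration $|G(\xi_n)|\le n$ yields $2^{-\xi_n}\asymp n^{-1}(\log n)^{d-1}$ and $\xi_n\asymp\log n$, and substitution gives the upper bound $n^{-r/2}(\log n)^{(r/2+1)(d-1)}$.

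For the lower bound I would again invoke Lemma~\ref{lemma:Int_n>} and build a fooling function supported in a single cube $K_\bs$ of a step-hyperbolic-cross family. With $\lambda=2$ the scale becomes $\delta=M_n^{-1/(2d)}$, while the index set $\Gamma_d(M_n)$ is kept exactly as in the proof of Theorem~\ref{theorem:Int_n(W)}; since $|\Gamma_d(M_n)|>n$ by \eqref{|Gamma|}, some admissible $K_\bs$ avoids the $n$ prescribed nodes. Taking bump functions $g_i$ and setting $\bar h=C^{-1}M_n^{-(1-1/\lambda)(r-1)}\,g\,w_\beta^{-1}$ with $g=\prod_i g_i$, I would normalize $\bar h$ to lie in $\bW^r_1(\mathring{\RR}^d;\mu_\beta)$; it vanishes at all nodes and has integral $\asymp M_n^{-r/2}\asymp n^{-r/2}(\log n)^{(d-1)r/2}$ after using $M_n(\log M_n)^{d-1}\asymp n$.

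The main obstacle is controlling the interaction of the fooling function with the singularity of $w_\beta$ on the coordinate hyperplanes and verifying the norm estimate $\|\bar h\|_{W^r_1(\mathring{\RR}^d;\mu_\beta)}\ll M_n^{(1-1/\lambda)(r-1)}$ under the side conditions (i) or (ii) on $(r,\beta)$. The crucial point is that the constraint $s_i\ge M_n^{1/d}$ built into $\Gamma_d(M_n)$ forces $\delta s_i\ge M_n^{1/(2d)}\to\infty$, so the support $K_\bs$ is bounded uniformly away from the origin; there the factor $|x_i|^\beta$ is a smooth, essentially constant weight and the singularity is inert. Condition (ii), demanding the continuous extendability of the low-order derivatives when $0<\beta<r-1$, is exactly what guarantees that $h=g\,w_\beta^{-1}$ and its relevant derivatives remain integrable, so the product-rule computation bounding $\int_{\RRd}|(D^\bk h)w_\beta|(\bx)\,\rd\bx$ transfers intact from the Freud case.
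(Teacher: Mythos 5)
Your proposal is correct and takes essentially the same route as the paper: the paper proves this theorem only by remarking that the arguments of Theorems \ref{thm:Q_n-d=1} and \ref{theorem:Int_n(W)} carry over verbatim once the Freud-weight estimates of \cite{DM2003} are replaced by the Markov--Sonin results of \cite{MO2004}, which is exactly your plan with $\lambda=2$ (so $r_\lambda=r/2$), including the key observation that the constraints in $\Gamma_d(M_n)$ keep the fooling cube far from the coordinate hyperplanes so the singularity of $w_\beta$ is harmless. (Only a cosmetic quibble: conditions (i)/(ii) on $(r,\beta)$ are restrictions defining the space that matter for the quadrature upper bound; the compactly supported fooling function satisfies them trivially rather than needing them for integrability.)
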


\medskip
\noindent
{\bf Acknowledgments:}  
A part of this work was done when  the author was working at the Vietnam Institute for Advanced Study in Mathematics (VIASM). He would like to thank  the VIASM  for providing a fruitful research environment and working condition. He expresses  thanks to Dr. David Krieg for pointing out the papers \cite{KPUU2023} and \cite{CW2004} and for useful comments related to these papers.

%

\bibliographystyle{abbrv}
\bibliography{AllBib}
\end{document}